\documentclass{article}

\usepackage{enumerate}
\usepackage{expl3}
\usepackage{setspace}
\usepackage[margin=3cm]{geometry}

\usepackage{graphicx}
\usepackage{tikz-cd}
\usepackage[all,cmtip]{xy}

\usepackage{amsmath}
\usepackage{amssymb}
\usepackage{amsthm}
\usepackage{mathtools}

\usepackage[
backend=biber,
style=alphabetic,
sorting=ynt
]{biblatex}
\usepackage[pdfencoding=auto,psdextra,colorlinks=true,linkcolor=black,citecolor=blue,urlcolor=blue]{hyperref}

\newtheorem{theorem}{Theorem}[section]
\newtheorem{lemma}[theorem]{Lemma}
\newtheorem{proposition}[theorem]{Proposition}
\newtheorem{corollary}[theorem]{Corollary}
\theoremstyle{definition}
\newtheorem{definition}[theorem]{Definition}
\newtheorem{terminology}[theorem]{Terminology}
\newtheorem{remark}[theorem]{Remark}
\newtheorem{example}[theorem]{Example}
\newtheorem{notation}[theorem]{Notation}

\DeclareMathOperator{\Ad}{Ad}
\DeclareMathOperator{\Alg}{Alg}

\DeclareMathOperator{\End}{End}
\DeclareMathOperator{\GL}{GL}

\DeclareMathOperator{\HAlg}{HAlg}
\DeclareMathOperator{\Hol}{Hol}
\DeclareMathOperator{\Id}{Id}

\DeclareMathOperator{\Lie}{Lie}

\DeclareMathOperator{\ad}{ad}

\newcommand{\Bis}{\mathrm{Bis}}

\newcommand{\calG}{\mathcal{G}}
\newcommand{\calH}{\mathcal{H}}
\newcommand{\calI}{\mathcal{I}}
\newcommand{\calK}{\mathcal{K}}
\newcommand{\calM}{\mathcal{M}}
\newcommand{\calO}{\mathcal{O}}

\newcommand{\F}{\mathcal{F}}
\newcommand{\FHT}{\mathrm{FHT}}

\newcommand{\G}{\mathcal{G}}
\newcommand{\GammaPath}{\mathcal{P}\Gamma}
\newcommand{\grpd}{\rightrightarrows}
\newcommand{\hol}{\mathrm{hol}}
\newcommand{\HT}{\mathrm{HT}}

\newcommand{\Inn}{\mathrm{Inn}}

\newcommand{\into}{\hookrightarrow}
\newcommand{\inverse}{\mathbf{i}}

\newcommand{\m}{\mathbf{m}}

\newcommand{\R}{\mathbb{R}}
\newcommand{\SC}{\mathcal{SZ}}
\newcommand{\src}{\mathbf{s}}

\newcommand{\THT}{\mathrm{THT}}
\newcommand{\til}{\widetilde}
\newcommand{\trg}{\mathbf{t}}
\newcommand{\unit}{\mathbf{u}}

\title{On the holonomy of Lie algebroids}
\author{Pooja Joshi and Joel Villatoro}
\date{ }
\begin{document}

\maketitle
\begin{abstract}
We introduce a holonomy groupoid for Lie algebroids. This construction generalizes both the holonomy groupoid of a foliation and the adjoint representation of a Lie algebra. We prove that the resulting groupoid is longitudinally smooth and compute its Lie algebroid. When the  algebroid of the holonomy groupoid coincides with the given Lie algebroid, the holonomy groupoid is the canonical terminal integration: every source-connected integration admits a unique morphism into it inducing the identity infinitesimally. To construct the groupoid structure on the holonomy groupoid, we utilize the notion of the "flow product" of time-dependent sections of a Lie algebroid. This provides an alternative way to define the groupoid structure for the Weinstein groupoid.

\end{abstract}

\setcounter{tocdepth}{1}
\tableofcontents
\newpage

\section{Introduction}

In the most general sense, a foliation is a decomposition of a smooth manifold into a disjoint union of submanifolds called leaves.
There are a few different variants of the notion of a foliation in the literature, including regular foliations, singular foliations, and  Riemannian singular foliations. However, they all tend to require (or imply) that the manifold locally splits into a product of a leaf and a transverse structure.

The local splitting condition means that the leaves of a foliation are initial submanifolds and have a well-defined ``transverse geometry.''
In the context of foliations, the term \emph{holonomy} refers to the global structure of this transverse data.
The holonomy of a foliation encodes information about how the leaves twist around one another and plays a central role in classical results such as the Reeb stability theorem~\cite{ReebStability} and the Thurston stability theorem~\cite{ThurstonStability}.

The study of foliations and the study of Lie algebroids/groupoids are closely intertwined.
There are many connections between the two. The most direct relationship is that Lie algebroids give rise to singular foliations. In the other direction, regular foliations can be viewed as a special case of Lie algebroids.

It is well-known  that the holonomy of a regular foliation can be encoded in a groupoid called the holonomy groupoid of the foliation.
Classically, the holonomy groupoid is constructed by placing an equivalence relation on the set of paths tangent to the foliation.
The equivalence relation is called ``holonomy'', and two paths are considered holonomic if the transverse geometry is twisted in the same way along both paths.

Androulidakis and Skandalis~\cite{Androulidakis+Skandalis} introduced a generalization of the holonomy groupoid to singular foliations using an alternative construction based on bisubmersions.
Their construction overcame some of the technical difficulties that arose from the less rigid transverse geometry of singular foliations. However, a price paid for this generalization is that the holonomy groupoid of a singular foliation is not necessarily a Lie groupoid, since it fails to be a smooth manifold in general.
Still, it is rather close to being a smooth manifold. 
Debord~\cite{LongtitudinalSmoothnesDebord} showed that the holonomy groupoid of a singular foliation was fully smooth when restricted to a leaf (this is called ``longitudinal smoothness'').

Later, Garmendia and Villatoro~\cite{SingJoelAlf} developed a new construction of the Androulidakis-Skandalis holonomy groupoid, which more closely resembled the classical construction and was defined in terms of paths.
Crucial to this construction is the notion of a ``holonomy transformation'' (originally due to Androulidakis and Zambon~\cite{Androulidakis+Zambon}),  which is an equivalence class on the set of germs of diffeomorphisms between transverse slices to the foliation.

The goal of this article is to extend the work of~\cite{SingJoelAlf} to the setting of Lie algebroids. We construct  a holonomy groupoid for a Lie algebroid and establish several of its basic properties. In particular, we show that it is longitudinally smooth and compute its Lie algebroid.
As a small application, we will use the holonomy groupoid to establish a sufficient condition for the existence of a ``minimal integral'' of a Lie algebroid.

\subsection*{Summary of Construction}

The two main ingredients of our construction are  $\Gamma(A)$-paths and  holonomy transformations.
These are developed in Section~\ref{section:gamma(A)-homotopy}.

\subsubsection*{\texorpdfstring{$\Gamma(A)$-paths}{Gamma(A)-paths}}
Given a Lie algebroid $A \to M$, we define a $\Gamma(A)$-path to be a pair $(\alpha(t),p_0)$,  where $\alpha(t) \in \Gamma(A)$ is a compactly supported time-dependent section of $A$ for $t \in [0,1]$, and $p_0 \in M$ is a point.
We write $\GammaPath(A)$ to denote the set of all $\Gamma(A)$-paths. 
We call such pairs $(\alpha(t),p_0)$ \emph{paths} because each determines an  underlying classical path $\gamma(t)$ in $M$ induced by the flow along $\alpha(t)$.

Sections of $A$ induce a flow on the total space of the Lie algebroid and so for each $\Gamma(A)$-path $(\alpha(t),p_0)$ we can consider the flow $\Phi^t_\alpha \colon A \to A$ which is a Lie algebroid automorphism of $A$.
This is usually referred to as the ``adjoint flow'' of $\alpha(t)$. It is a generalization of the adjoint representation of a Lie algebra.

The space of $\Gamma(A)$-paths comes equipped with a natural groupoid operation which we call the "flow product." We use this instead of the usual concatenation approach because it is well behaved with respect to the notion of holonomy. To our knowledge, the formula for the flow product is due to Duistermaat and Kolk~\cite{Lie-groups-Duistermaat},  but our use of it in the groupoid context is novel.

\subsubsection*{Holonomy transformations}
A holonomy transformation is an equivalence class of a germ of Lie algebroid isomorphisms  restricted to a slice.
The notion of a holonomy transformation is an adaptation of a closely related construction arising in the theory of singular foliations.
The singular foliation version was introduced by Androulidakis and Zambon~\cite{Androulidakis+Zambon}.

Given a point $x \in M$ and a slice $S_x$ through $x$, we can consider the restricted algebroid $A_{S_x} \to S_x$, which is a Lie algebroid over the slice.
Given a slice $S_x$ through $x$ and a slice $S_y$ through $y$, we can consider the germs of Lie algebroid isomorphisms from $A_{S_x}$ to $A_{S_y}$ which map $x$ to $y$.
The collection of all such germs is called the \emph{full holonomy transformation groupoid} and is denoted $\FHT(A)$. 
The objects of $\FHT(A)$ are slices through points in $M$, and the morphisms are germs of Lie algebroid isomorphisms between the corresponding slice algebroids.

A holonomy transformation is called \emph{trivial} if it is the time-1 flow of a time-dependent family of sections of $A$ which vanish at $x$. The
trivial holonomy transformations form a normal subgroupoid of $\FHT(A)$, which we denote by $\THT(A)$.
The orbits of $\THT(A)$ consists of all slices through a given point $x$. 
From this, one obtains the \emph{reduced holonomy transformation groupoid} $\HT(A)$, defined as the quotient of $\FHT(A)$ by $\THT(A)$. Its space of objects is naturally identified with $M$. 

\subsubsection*{Holonomy homomorphism}

The set of $\Gamma(A)$-paths carries a natural groupoid structure over $M$ arising  from the composition of  adjoint flows. 
Furthermore, given a $\Gamma(A)$-path $(\alpha(t),p_0)$,  the adjoint flow defines a germ of a Lie algebroid automorphism around $p_0$ and hence an element of $\HT(A)$.
This assignment is a groupoid homomorphism, which we call the \emph{holonomy homomorphism}:
\[ \Hol \colon \GammaPath(A) \to \HT(A). \]
Two $\Gamma(A)$-paths are said to be \emph{holonomic} if they have the same image under the holonomy homomorphism.
The holonomy groupoid of $A$ is the quotient of $\GammaPath(A)$ by the holonomy equivalence relation,  and is denoted $\Hol(A)$.

\subsubsection*{Groupoid picture}
We can also  define holonomy for a general Lie groupoid. 
This notion coincides with the holonomy of an algebroid in the sense that the holonomy of a source-connected groupoid agrees with the holonomy of its underlying Lie algebroid.

Given an element $g \in \calG$ of a Lie groupoid, a local bisection $\sigma$ through $g$ defines a germ of a Lie algebroid isomorphism from $A$ near the source of $g$ to $A$ near the target of $g$.
Such a Lie algebroid isomorphism defines an element of $\HT(A)$, and this element is independent of the choice of local bisection. 
From this, one obtains a groupoid homomorphism from $\calG$ to $\HT(A)$, and two elements of $\calG$ are said to be holonomic if they have the same image under this homomorphism.
The holonomy groupoid of $\calG$ is the quotient of $\calG$ by the holonomy equivalence relation and is denoted $\Hol(\calG)$.

The groupoid notion of holonomy is compatible with the corresponding algebroid notion. More precisely, if $\calG$ is a source-connected Lie groupoid with Lie algebroid $A$, then $\Hol(\calG)$ is canonically isomorphic to $\Hol(A)$ (see Corollary~\ref{Cor-Groupoid-Hol-Equal-Algeb-Hol})

\subsubsection*{Algebroid of the holonomy groupoid}
Since the holonomy groupoid of a Lie algebroid is longitudinally smooth, its restriction to a single leaf has  a well-defined Lie algebroid. 
This Lie algebroid can be constructed directly from the original Lie algebroid. It is called the \emph{holonomy Lie algebroid} of $A$, denoted by $\HAlg(A)$.

This object can be constructed directly using the notion of a \emph{strongly central element} of a Lie algebroid.
Given $a \in A$, we say that $a$ is strongly central if there exists a section $\alpha \in \Gamma(A)$ that extends $a$ and takes values only in the centers of the isotropy Lie algebras of $A$.
We write $\SC(A)$ to denote the set of all strongly central elements of $A$.

The holonomy algebroid of $A$ is the quotient of $A$ by the strongly central elements and is denoted by $\HAlg(A)$. 
Calling $\HAlg(A)$ a Lie algebroid is a bit of an abuse of terminology,  since the rank of $\SC(A)$ is not constant.
It would be more correct to regard $\HAlg(A)$ as a singular algebroid in general.
Nevertheless, like the holonomy groupoid, $\HAlg(A)$ is longitudinally smooth in the sense that its restriction to any leaf is  a Lie algebroid.

\subsection*{Main Results}
Apart from the construction of the holonomy groupoid, the principal technical results of this article are the following:
\begin{enumerate}
\item The holonomy groupoid $\Hol(A)$ is longitudinally smooth. That is, it is a Lie groupoid when restricted to a single orbit (Theorem~\ref{theorem:hol(A)-is-long-smooth}).
\item Given a leaf $L$ of $M$, the Lie algebroid of $\Hol(A)|_L$ is canonically isomorphic to the holonomy algebroid $\HAlg(A)|_L$ (Theorem~\ref{theorem:holonomy-algebroid-isomorphism}).
\item If the set of strongly central elements of $A$ is trivial, then the holonomy groupoid $\Hol(A)$ is an adjoint integration of $A$ (Theorem~\ref{theorem:holonomy-adjoint-integration}). 
By this we mean, given a source-connected Lie groupoid $\calG$ with Lie algebroid $A$, there exists a unique groupoid homomorphism $\calG \to \Hol(A)$ which is the identity on $A$.
\end{enumerate}
The last point means that, under appropriate circumstances, the holonomy groupoid can serve as a ``terminal'' integration of the Lie algebroid.

In addition to the main results, Section~\ref{section:examples} contains a collection of examples illustrating the scope of the construction. In particular, the holonomy groupoid recovers the classical adjoint representation for Lie algebras, the pair groupoid for the tangent algebroid, and the usual holonomy groupoid of a foliation. Further examples show how the construction encodes isotropy representations for transitive and action Lie algebroids and specializes naturally to regular Poisson manifolds.

\subsection*{Acknowledgements}

Pooja Joshi was supported by the National Science Foundation (Award Number DMS-2350181). She would like to thank her Ph.D. advisor, Xiang Tang, for  encouraging her to pursue this project and for his continued support. 
\\
\noindent Joel Villatoro would like to thank Marco Zambon for hosting him and numerous discussions when this project was initiated. He was funded in part by the following sources while preparing this article: Fonds Wetenschappelijk Onderzoek (FWO Project G083118N); the National Science Foundation (Award Number 2137999).

\newpage
\section{Preliminaries and Notation}
\subsection{Vector bundles}
\begin{notation}
    Given a vector bundle automorphism \( F \colon E \to E \) covering a diffeomorphism \( f \colon M \to M \), we write:
    \[ F_* \colon \Gamma(E) \to \Gamma(E), \qquad (F_* \eta)_p := F(\eta_{f^{-1}(p)})\]
    to denote the push-forward of sections of \( E \) along \( F \).
\end{notation}
\begin{notation}
    Given a vector bundle \( E \to M \) and a section \( \eta \in \Gamma(E) \) we write \( \eta_p \in E_p \) to denote the value of \( \eta \) at a point \( p \in M \).
\end{notation}
\begin{notation}
    Given a Lie group $G$, we write $\mathfrak{g}$ to denote its Lie algebra.
\end{notation}

\subsection{Lie algebroids and Lie groupoids}

\begin{notation}
In this article, we will write $\calG \grpd M$ to mean that $\calG$ is the set of arrows for a groupoid with objects $M$. Most often we will be considering Lie groupoids. As is typical, we do not require the object manifold $\calG$ to be Hausdorff.

We will write $\src \colon \calG \to M$ and $\trg \colon \calG \to M$ to denote the source and target map of a groupoid. Given an object $x \in M$,  we write $1_x \in \calG$ to denote the associated unit arrow. We may write $1_M \subset \calG$ to denote the entire submanifold of unit elements. The multiplication operation as a map will be denoted by $\m$ but multiplying elements will usually be expressed using a small dot or just concatenation:
\[ \m(g,h) = g \cdot h = gh.\]
The inverse map will be denoted by $\inverse$ and inverses of elements will be expressed as negative powers $\inverse(g) = g^{-1}$.
\end{notation}

\begin{notation}
    We will typically denote Lie algebroids in terms of the total space (usually $A$). Given a Lie algebroid $A$, the anchor map will be denoted by $\rho_A \colon A \to TM$ and the Lie bracket will be denoted by $[\alpha,\beta]_A$. The $A$ superscript may be omitted when it is clear from context, which will be most of the time.
\end{notation}

Lie algebroids are the infinitesimal version of a Lie groupoid. Our convention will be to identify the Lie algebroid of a Lie groupoid with the restriction of the source distribution to the units.

\begin{definition}
    Suppose $\calG \grpd M$ is a Lie groupoid. The \emph{source distribution} is the kernel of $d s \colon T\calG \to TM$,  and is denoted by  $T^s \calG$. The \emph{Lie algebroid associated to $\calG$} is defined to be:
    \[ \Lie(\calG) := T^s\calG|_{1_M}.\]
    We write $\Lie(\calG)$ for the Lie algebroid associated to $\calG$.
    
\end{definition}

Under this convention, the Lie bracket on sections of $A$ is induced by the Lie bracket on \emph{right}-invariant sections of $T^s \calG$.

The operation $\Lie$ is actually a functor from the category of Lie groupoids to the category of Lie algebroids. Given a smooth groupoid homomorphism $F \colon \calG \to \calH$, define
\[ \Lie (F) := d F|_{\Lie(\calG)}.\]

\section{Holonomy of a Lie algebroid}\label{section:gamma(A)-homotopy}

The algebroid holonomy groupoid of a Lie algebroid simultaneously generalizes the holonomy of a foliation and the  adjoint representation of a Lie algebra.
It is a groupoid that encodes information about the holonomy of the characteristic foliation in a way that keeps track of how the holonomy interacts with the isotropy Lie algebras.

However, in order to understand this notion, we first need to discuss the topic of the ``adjoint flow'' of a Lie algebroid. This can be found in previous literature \cite{Rui+Marius}  as the ``flow of a section.''

To establish some notation throughout this section and for the rest of the paper: Given a vector bundle $E \to M$,  we write $\Gamma_c(E)$ to denote the vector space of compactly-supported sections of $E$. We will write $\Gamma^t_c(E)$ to denote the set of time-dependent compactly-supported sections:
\[ \Gamma^t_c(E) := \{ \eta \colon [0,1] \to \Gamma_c(E)  \}. \]
When we discuss parameterized sections that are compactly supported, we always assume that the compact support is \emph{uniform} in the parameter, meaning that a single compact set works for all values of the parameter.
\subsection{Adjoint flows}

\begin{definition}
    Suppose \( \alpha (t) \in \Gamma(A)  \) is a time-dependent section of \( A \). An \emph{adjoint flow equation} for \( \alpha \) is an initial value problem of the form:
    \begin{equation}\label{eqn:adjoint.flow.equation} 
        \frac{d}{dt} \beta = [\beta, \alpha(t)] \qquad \beta(0) = \beta_0.
    \end{equation}
    For some initial condition \( \beta_0 \in \Gamma(A) \).

    The \emph{adjoint flow} of \( \alpha(t) \), written \( \Phi^t_\alpha \) is the unique one parameter family of Lie algebra automorphisms which satisfies:
    \begin{equation}\label{eqn:adjoint.flow} 
        \frac{d}{dt} \Phi^t_\alpha = [\Phi^t_\alpha, \alpha(t)] \qquad \Phi^0_\alpha = Id_A.
    \end{equation}
    The base map of such Lie algebra automorphisms is \( \phi^t_X \colon M \to M \) and is given by the flow of \( X = \rho(\alpha(t)) \).
\end{definition}
Our first lemma tells us that adjoint flows always exists so long as the vector field \( \rho(\alpha(t)) \) is complete. The argument relies on the theory of vector bundle derivations (see Appendix~\ref{appendix:derivations} for a brief version).
\begin{lemma}
    Suppose \( \alpha(t) \in \Gamma(A) \) is a time-dependent section of \( A \) and \( X(t) := \rho(\alpha(t)) \) is a complete vector field. Then one-parameter family of vector bundle automorphisms satisfying Equation~\ref{eqn:adjoint.flow} exists for all time.
\end{lemma}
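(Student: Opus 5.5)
The plan is to realize the operator $\beta \mapsto [\beta,\alpha(t)]$ as a time-dependent derivation of the vector bundle $A$, in the sense of Appendix~\ref{appendix:derivations}, and then integrate that derivation. Concretely, set $D_t(\beta) := [\beta,\alpha(t)] = -[\alpha(t),\beta]$. By the Leibniz rule for the algebroid bracket,
\[ D_t(f\beta) = f D_t(\beta) - (\rho(\alpha(t)) f)\,\beta , \]
so $D_t$ is a derivation of $E = A$ with symbol $-X(t)$, where $X(t) = \rho(\alpha(t))$. Smooth dependence on $t$ is inherited from $\alpha$.

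The core step is to show that any time-dependent derivation whose symbol is a complete time-dependent vector field integrates to a one-parameter family of vector bundle automorphisms $\Psi^t$ covering the flow of the symbol. In this family, sections evolve by $\frac{d}{dt}\Psi^t_*\beta = D_t(\Psi^t_*\beta)$, or equivalently, with the pushforward conventions fixed in the Notation, by Equation~\ref{eqn:adjoint.flow}.

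For this integration I would use the standard correspondence between derivations of $E$ and linear vector fields on the total space of $E$ (or equivalently on $E^*$). The derivation $D_t$ determines a linear vector field $\widetilde X(t)$ on $A$ that projects to $\pm X(t)$; the sign follows the conventions above. Because $\widetilde X(t)$ is linear, its flow is fiberwise linear wherever it is defined. Because the base vector field is complete, flow lines cannot escape to infinity in the base direction. The flow along a fiber over an integral curve $\gamma$ of $X$ is governed by a linear ODE. In a local trivialization it reads $\dot v = B(t,\gamma(t))v$, where $B$ is the zeroth order part of $D_t$. Linear ODEs on a compact time interval $[0,1]$ have solutions existing on the whole interval, so the flow exists for all $t\in[0,1]$.

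The main obstacle is making this fiberwise argument global. Local trivializations only cover pieces of $\gamma$, so I would cover the compact image $\gamma([0,1])$ by finitely many trivializing charts and concatenate the linear solutions, using uniqueness on the overlaps. This shows that the flow of $\widetilde X$ is complete and defines vector bundle automorphisms $\Psi^t$ covering $\phi^t_X$. Finally, differentiating $\Psi^t_*\beta$ and using that $\widetilde X$ corresponds to $D_t$ gives Equation~\ref{eqn:adjoint.flow} with $\Phi^t_\alpha := \Psi^t$ and $\Phi^0_\alpha = \Id_A$. Uniqueness then follows from uniqueness of ODE solutions.
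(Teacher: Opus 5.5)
Your proposal is correct and is essentially the paper's argument. The paper also treats the bracket with $\alpha(t)$ as a complete time-dependent derivation of $A$: it uses $D(t)=[\alpha(t),\cdot\,]$, the negative of your $D_t$, so its symbol is $X(t)$ and the flow equation reads $\frac{d}{dt}\Phi^t_\alpha=-D(t)\circ\Phi^t_\alpha$. It then integrates this by citing Appendix~\ref{appendix:derivations}, whose mechanism is the one you reprove with linear vector fields: linear ODEs along integral curves of the complete symbol in local trivializations, with time-dependence handled by suspending to $M\times\R$.

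There is one sign slip in your general integration claim. If sections evolve by $\frac{d}{dt}\Psi^t_*\beta=D_t(\Psi^t_*\beta)$, then testing on $f\beta$ shows that $\Psi^t$ covers the flow of \emph{minus} the symbol of $D_t$, not ``the flow of the symbol.'' For your $D_t$, minus the symbol is $X(t)$, so the automorphisms cover $\phi^t_X$ and your final conclusion is right.
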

\begin{proof}
    \( \alpha(t) \) defines a time-dependent family of derivations on \( A \) by taking:
    \[ D(t) \colon \Gamma(A) \to \Gamma(A) \qquad D(\beta) = [\alpha(t),\beta]. \]
    The symbol of this derivation is \( X(t) \). Since \( X(t) \) is complete it follows that there exists a one-parameter family of vector bundle automorphisms \( \Phi^t_\alpha \) satisfying:
    \[ \frac{d}{dt} \Phi^t_\alpha = - D(t) \circ \Phi^t_\alpha \qquad \Phi^0_\alpha = Id_A. \]
    However, this is the same equation as Equation~\ref{eqn:adjoint.flow}.

Using lemma \ref{lemma:algebroidflow:morphism}, we prove that $\Phi^t_\alpha$ is a Lie algebroid automorphism. 
   
\end{proof}
 
\begin{example}
Suppose \( A = TM \) is the tangent algebroid of a manifold \( M \). If \( X(t) \) is a complete, time-dependent vector field, let \( \phi^t_X \colon M \to M \) denote the flow of \( X \). A standard computation shows that:
\[ \frac{d}{dt} d\phi^t_X = [d\phi^t_X, X], \qquad d \phi^0_X = Id_{TM}. \]
Hence the adjoint flow \( \Phi^t_X \) of \( X \) is the same as \( d \phi^t_X \).
\end{example}
\subsection{\texorpdfstring{\(\Gamma(A)\)-paths}{Gamma(A)-paths}}
The construction of the holonomy groupoid is very similar in spirit to the construction which appears in \cite{SingJoelAlf}.
The idea is to measure the action of the adjoint flow of a section on the transverse part of the Lie algebroid by using time-dependent families of sections.
\begin{definition}
    Suppose \( A \) is a Lie algebroid. A \( \Gamma(A) \)-path consists of a pair \( (\alpha, p_0) \in \Gamma^t_c(A) \times M \). In other words, \( \alpha(t) \) is a compactly supported, time-dependent section of \( A \) and \( p_0 \in M \) is a point in \( M \). We denote the set of all \( \Gamma(A) \)-paths by  
    \[ \GammaPath(A) = \Gamma^t_c(A) \times M. \]
\end{definition}

\begin{definition}
    Suppose \( a(t) \) is an \( A\)-path. We say that a time-dependent section \( \alpha(t) \) \emph{extends} \( a(t) \) if  \( \alpha(t)_{p(t)} = a(t) \), where \( p(t) = \pi(a(t)) \) is the underlying path of \( a(t) \).
\end{definition}
\( \Gamma(A) \)-paths are closely related to the classical notion of an \( A \)-path but they are not quite the same.
In general, given a \( \Gamma(A) \)-path, \( (\alpha(t),p_0) \), it is possible to construct an \( A \)-path by taking:
\[ a(t) := \alpha(t)_{p(t)} \]
where \( p(t) \) is the unique integral curve of \( \rho(\alpha(t)) \) with initial condition \( p(0) = p_0 \).
\subsection{Homotopies of \texorpdfstring{\( \Gamma(A) \)-paths}{Gamma(A)-paths}}
The notion of \(A \)-homotopy of \( A \)-paths can be extended to a notion of homotopy for \( \Gamma(A) \)-paths. For this we will discuss two parameter families of sections of $A$. To set some notation, given a two-parameter family of sections $\alpha(t,s) \in \Gamma(A)$ we will write:
\[ \Phi^{t,s}_\alpha \colon \Gamma(A) \to \Gamma(A) \]
to denote the flow of $\alpha(t,s)$ \emph{in the $t$-direction}.

For the underlying flow in the manifold we will write:
\[ \phi^{t,s}_\alpha \colon M \to M.\]

To understand when a two-parameter family of sections constitutes a homotopy, we need the notion of a complement.
\begin{definition}
    Suppose \( \alpha(t,s) \in C^\infty([0,1]^2,\Gamma(A)) \) is a two parameter family of sections of \( A \). The \emph{complement} of \( \alpha \) is the unique two parameter family of sections \( \beta(t,s) \in C^\infty([0,1]^2, \Gamma(A)) \) which satisfies the following initial value problem:
    \[ \frac{\partial \beta}{\partial t} - \frac{\partial \alpha}{\partial s}  = [\beta, \alpha], \qquad \beta(0,s) = 0. \]
  
\end{definition}
Intuitively, when one has a two parameter family of sections \( \alpha \) where we think of \( \alpha \) as representing the derivative in the ``\( t \)-direction'' the complement is the corresponding derivative in the ``\( s \)-direction.''
The complement of a two parameter family of sections always exists so long as \( \alpha(t,s) \) is compactly supported since it can be constructed explicitly via the adjoint flow of \( \alpha \) (see Lemma \ref{lemma:complement.of.two.parameter.family}):
\begin{equation}\label{eqn:compliment.of.two.parameter.family.formula} 
\beta(t,s) = \Phi^{t,s}_\alpha \int_0^t (\Phi^{u,s}_\alpha)^{-1} \left( \frac{\partial \alpha}{\partial s} (u,s) \right) du. 
\end{equation}
The complement is a crucial ingredient for defining homotopies of \( \Gamma(A) \)-paths.
\begin{definition}
Suppose $\alpha_0$ and $\alpha_1 \in \Gamma^t_c(A)$ are two time-dependent sections of $A$. Let $p \in M$ be fixed. We say that a two parameter family $\alpha(t,s)$ which interpolates $\alpha_0$ and $\alpha_1$ is a \emph{homotopy at $p$} if the complement $\beta(t,s)$ satisfies:
\[ \beta(t,s)_{\phi^{t,s}_\alpha(p)} = 0.\]

In such a case, we say that the $\Gamma(A)$-paths $(\alpha_0,p)$ and $(\alpha_1,p)$ are homotopic.
\end{definition}
Note that if $(\alpha_0, p)$ is homotopic to $(\alpha_1,p)$ and $\gamma_0(t) = \phi_{\alpha_0}^t(p)$ and $\gamma_1(t) = \phi^t_{\alpha_1} (p)$ are the associated underlying paths, then $\gamma_0$ is classically homotopic to $\gamma_1$. This is a consequence of compatibility between the anchor map and the notion of the complement:
$$\frac{\partial \gamma}{\partial s} (t,s) = \rho(\beta(t,s))_{\gamma(t,s)}. \qquad \text{(see Lemma~\ref{lemma:flows.of.complements.for.integral.curves})} $$ 

This homotopy equivalence relation corresponds precisely with the classical notion of homotopy for \( A \)-paths. In the following sense: If \( a_0(t) \) and \( a_1(t) \) are homotopic \( A \)-paths then any pair of extensions \( \alpha_0(t) \) and \( \alpha_1(t) \) of \( a_0(t) \) and \( a_1(t) \) respectively are homotopic as \( \Gamma(A) \)-paths(see Lemma \ref{lemma:A:homotopypaths:Gamma(A)paths}). Conversely, if \( (\alpha(t,s),p_0) \) is a homotopy between two \( \Gamma(A) \)-paths then the underlying \( A\)-paths are homotopic as well (see Lemma  \ref{lemma:A:homotopypaths:Gamma(A)paths})

\begin{example}\label{example:reparameterization.as.homotopy}
Suppose $\rho \colon [0,1] \to [0,1]$ is a smooth function that is the identity on the endpoints and suppose $\alpha_0(t)$ is a time-dependent section of $A$. 
Then we can construct another section:
$$\alpha_\rho(t) = \rho'(t) \alpha_0(\rho(t)) $$
which we call the reparameterization of $\alpha_0$ by $\rho$. Such a reparameterization is homotopic to $\alpha_0$ at all points $p \in M$. Indeed, all reparameterizations are homotopic to one another.

To see why, consider a smooth function $\rho(t,s) \colon [0,1]^2 \to [0,1]$ which satisfies $\rho(0,s) = 0$ and $\rho(1,s) = 1$ for all $s \in [0,1]$. 
Then we can define a two parameter family of sections:
$$ \alpha(t,s) = \frac{\partial \rho}{\partial t} (t,s) \alpha_0(\rho(t,s)) $$
This $\alpha$ interpolates between the reparameterization of $\alpha_0$ by $\rho(t,0)$ and the reparameterization of $\alpha_0$ by $\rho(t,1)$.

A direct calculation shows that the complement of $\alpha$ is given by:
$$ \beta(t,s) = \frac{\partial \rho}{\partial s} (t,s) \alpha_0(\rho(t,s)) $$
Since $\partial_s \rho (1,s) = 0$ it follows that $\beta(1,s) = 0$ and hence $\alpha$ constitutes a homotopy. This homotopy is valid for all points since $\beta(1,s)$ vanishes everywhere.
\end{example}


\subsection{Holonomy of a \texorpdfstring{\( \Gamma(A) \)-path}{Gamma(A)-path}}Let us now define the holonomy equivalence relation. In order to do this, we must introduce the notion of the holonomy transformation groupoid. To begin, let us define the notion of a slice.

\begin{definition}
    Let \( x \in M \). A \emph{slice through \( x \)} is an embedded submanifold \( S \subset M \) such that,  for all \( y \in S \),  we have that:
    \begin{itemize}
 \item \(  T_y M = T_y S + \rho(A_y) \)
 \item \( T_x S \cap \rho(A_x) = 0 \)
    \end{itemize}
    Given such a slice \( S \) the \emph{slice algebroid} is the Lie algebroid \( A_S \to S \) defined by:
    \[ A_S = \rho^{-1}(TS). \]
\end{definition}
A slice through \( x \) intersects the characteristic foliation in a clean transversal at \( x \) and is transversal to the characteristic foliation at every point of the slice. In general, the transversality will not be clean at points other than \( x \) unless the characteristic foliation is regular. Note that the slice algebroid is not the same as restricting the original algebroid to the slice as a vector bundle as this would not be a Lie algebroid in general. The well-definedness of the slice algebroid is a consequence of the transversality condition (see Lemma~\ref{lemma:slice.algebroid.well.defined}).
\begin{definition}
    A \emph{holonomy transformation} from a slice \( S_x \) through \( x \) and a slice \( S_y \) through \( y \) is a germ of a Lie algebroid isomorphism from \( A_{S_x} \) to \( A_{S_y} \) around \( x \in S_x \). We write \( \Alg(A_{S_x},A_{S_y}) \) to denote the set of all holonomy transformations from \( A_{S_x} \) to \( A_{S_y} \).
    
    The \emph{full holonomy transformation groupoid}, denoted \( \FHT(A) \), is the groupoid consisting of all holonomy transformations between all slices through points in \( M \):
    \[ \FHT(A) := \bigsqcup_{x,y \in M} \bigsqcup_{S_x,S_y} \Alg(A_{S_x}, A_{S_y}) \]
    where \( S_x \) and \( S_y \) range over all slices through \( x \) and \( y \) respectively.

    We say a holonomy transformation \( \Theta \in \FHT(A) \) is \emph{trivial} if it is the time-1 flow of a time-dependent family of sections of \( A \) which vanish at \( x \). We write \( \THT(A) \) to denote the normal subgroupoid of \( \FHT(A) \) consisting of all trivial holonomy transformations.
    \[ \THT(A) := \bigsqcup_{x \in M} \bigsqcup_{S_x} \{ \Theta \in \FHT(A) \ : \ \Theta = \Phi^1_\alpha \text{ for } \alpha(t) \in I_x \Gamma(A) \}. \]

    The \emph{(reduced) holonomy transformation groupoid}, written \( \HT(A) \), the quotient groupoid:
    \[ \HT(A) := \FHT(A) / \THT(A). \]
    \begin{example}
        The full holonomy transformation groupoid and the reduced holonomy transformation groupoids of a Lie algebra are the same, i.e., equal to $Aut(\mathfrak{g})$.
        
    \end{example}

Note that, since every pair of slice algebroids through a point can be related by a trivial holonomy transformation, the objects of \( \HT(A) \) can be identified with points in \( M \).

See Lemma~\ref{lemma:trivial.holonomy.transformations.are.normal.subgroupoid} for for a proof that \( \THT(A) \) is a normal subgroupoid of \( \FHT(A) \) and hence the quotient \( \HT(A) \) is well-defined. See Lemma~\ref{lemma:slice.algebroids.connected.by.trivial.holonomy} for a proof that any pair of slice algebroids through the same point can be connected by a trivial holonomy transformation.
\end{definition}
\begin{lemma}
    Suppose \( \Phi \in \Alg_x(A,A) \) is a germ of a Lie algebroid isomorphism around \( x \in M \). Then for any slice \( S_x \) through \( x \in M \) we have that \( \Phi|_{S_x} \colon A_{S_x} \to A_{\Phi(x)} \) defines an element of \( \HT(A) \). We claim that the equivalence class of this element does not depend on the choice of slice \( S_x \).
\end{lemma}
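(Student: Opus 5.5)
Two things have to be checked: that $\Phi|_{S_x}$ is a holonomy transformation, and that its class in $\HT(A)$ does not depend on the slice.

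\emph{Well-definedness.} A germ of Lie algebroid isomorphism $\Phi\colon A\to A$ covers a diffeomorphism germ $\phi$ and intertwines the anchors: $\rho\circ\Phi = d\phi\circ\rho$. Hence $\phi(S_x)$ is again a slice through $y=\phi(x)$. Indeed, $d\phi$ carries $T_zS_x+\rho(A_z)$ onto $T_{\phi(z)}\phi(S_x)+\rho(A_{\phi(z)})$, and it carries $T_xS_x\cap\rho(A_x)$ onto the corresponding intersection at $y$. Moreover $\Phi$ maps $\rho^{-1}(TS_x)$ onto $\rho^{-1}(T\phi(S_x))$. So $\Phi|_{S_x}$ is a germ of Lie algebroid isomorphism $A_{S_x}\to A_{\phi(S_x)}$, the slice algebroids being well defined by Lemma~\ref{lemma:slice.algebroid.well.defined}. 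It therefore defines an element of $\FHT(A)$, and hence a class in $\HT(A)$.

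\emph{Independence of the slice.} Let $S_x$ and $S'_x$ be two slices through $x$. By Lemma~\ref{lemma:slice.algebroids.connected.by.trivial.holonomy} there is a trivial holonomy transformation $\Theta=\Phi^1_\alpha|_{S_x}\colon A_{S_x}\to A_{S'_x}$ with $\alpha(t)\in I_x\Gamma(A)$. The plan is to show
\[
\Phi|_{S'_x}\circ\Theta \;=\; \Theta'\circ\Phi|_{S_x},
\]
where $\Theta'$ is a trivial holonomy transformation from $\phi(S_x)$ to $\phi(S'_x)$. Then $\Phi|_{S_x}$ and $\Phi|_{S'_x}$ differ by composition with elements of $\THT(A)$ on both sides, so they have the same class in $\HT(A)=\FHT(A)/\THT(A)$. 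The normality in Lemma~\ref{lemma:trivial.holonomy.transformations.are.normal.subgroupoid} handles any needed rearrangement.

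\emph{Choice of $\Theta'$.} We take $\Theta'=\Phi^1_{\Phi_*\alpha}$, the adjoint flow of the pushed-forward time-dependent section $\Phi_*\alpha(t)$, after cutting it off by a bump function near $y$ so that it is compactly supported and globally defined. Since $\alpha(t)_x=0$, we have $(\Phi_*\alpha)(t)_y=0$, so $\Theta'$ is trivial.

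\emph{Naturality of adjoint flows.} The key identity is $\Phi\circ\Phi^t_\alpha\circ\Phi^{-1}=\Phi^t_{\Phi_*\alpha}$ near $y$. To prove it, differentiate the left side in $t$ and use that $\Phi_*$ preserves brackets, $\Phi_*[\beta,\alpha]=[\Phi_*\beta,\Phi_*\alpha]$. This shows the left side satisfies Equation~\ref{eqn:adjoint.flow} for $\Phi_*\alpha$, and uniqueness of solutions then gives equality. Restricting to $S_x$ yields the commuting square above, with $\Theta'$ mapping $A_{\phi(S_x)}$ to $A_{\phi(S'_x)}$.

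\emph{Main obstacle.} The difficulty is that everything is only germinal. $\Phi$ is defined near $x$, whereas adjoint flows need global compactly supported sections, and flows of sections vanishing only at the single point $x$ can move other points of $S_x$ out of the domain of $\Phi$. The fix is to shrink $\alpha$'s support (or rescale its time dependence) so that the base flow keeps a smaller neighborhood of $x$ inside the domain of $\Phi$ for $t\in[0,1]$. The comparison then holds on that neighborhood, which is all that germs require. One must also check that cutting off $\Phi_*\alpha$ away from $y$ leaves its flow unchanged near $y$. This follows from locality of the adjoint flow: the flow at a point depends only on $\alpha$ along the integral curve through it.
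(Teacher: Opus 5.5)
Your proof is correct and is essentially the paper's argument. Both use the square $\Phi|_{S'_x}\circ\Theta=(\Phi\Theta\Phi^{-1})\circ\Phi|_{S_x}$ and get triviality of $\Phi\Theta\Phi^{-1}$ from $\Phi\circ\Phi^t_\alpha\circ\Phi^{-1}=\Phi^t_{\Phi_*\alpha}$; you prove this identity directly, while the paper cites normality of $\THT(A)$, which is itself proved via Lemma~\ref{lemma:flows.conjugate.by.automorphisms}.

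Your checks that $\phi(S_x)$ is a slice, and your germ and cut-off discussion, supply details the paper omits. Two small corrections to that discussion. No shrinking or rescaling of $\alpha$ is needed, and rescaling would change $\Theta$: since $x$ is fixed by the base flow, continuity on $[0,1]$ already keeps a neighborhood of $x$ inside the domain of $\Phi$. Also, the adjoint flow at a point depends on the germ of $\alpha$ along the trajectory, not only on its values there.
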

\begin{proof}
    Let \( S_x \) and \( S_x' \) be two slices through \( x \). Write \( S_y = \Phi(S_x) \) and \( S'_y = \Phi(S_x') \) to denote the slices through \( \Phi(x) \). 
    We need to show that the holonomy transformation \( \Phi|_{S_x} \colon A_{S_x} \to A_{S_y} \) is equivalent to the holonomy transformation \( \Phi|_{S_x'} \colon A_{S_x'} \to A_{S'_y} \) in \( \HT(A) \). By Lemma~\ref{lemma:slice.algebroids.connected.by.trivial.holonomy} there exists a time-dependent family of sections \( \alpha(t) \in \Gamma(A) \) with the property that the germ of \( \Theta := \Phi^1_\alpha|_{S_x} \) around \( x \) defines a trivial holonomy transformation from \( S_x \) to \( S_x' \). Therefore, we obtain a commutative diagram:
    \[
    \begin{tikzcd}
        S_x \arrow[r, " \Phi|_{S_x} "] \arrow[d, "\Theta"'] & S_y \arrow[d, "\Phi \Theta \Phi^{-1}"] \\
        S_x' \arrow[r, " \Phi|_{S_x'} "'] & S'_y
    \end{tikzcd}
    \]
    Since \( \THT(A) \) is a normal subgroupoid (Lemma~\ref{lemma:trivial.holonomy.transformations.are.normal.subgroupoid}), it follows that \( \Phi \Theta \Phi^{-1} \) is a trivial holonomy transformation from \( S_y \) to \( S'_y \). 
    Hence, the holonomy transformation \( \Phi|_{S_x} \) is equivalent to the holonomy transformation \( \Phi|_{S_x'} \) in \( \HT(A) \).
\end{proof}
Due to the above lemma, one can define an element of \( \HT(A) \) from a germ of a Lie algebroid isomorphism around a point \( x \) without needing to specify a slice through \( x \). This means that the following definition makes sense.
\begin{definition}
We say that two \( \Gamma(A) \)-paths \( (\alpha(t),p_0) \) and \( (\beta(t),p_0) \) are \emph{holonomic} if the time-1 flow of \( (\alpha(t),p_0) \) and the time-1 flow of \( (\beta(t),p_0) \) define the same element of \( \HT(A) \).

The \emph{algebroid holonomy groupoid} of \( A \), written \( \Hol(A) \grpd M \), is the groupoid whose objects are points in \( M \) and whose morphisms from \( x \) to \( y \) are holonomy equivalence classes of \( \Gamma(A) \)-paths from \( x \) to \( y \). We write:
\[ \Hol \colon \GammaPath(A) \to \Hol(A) \]
to denote the quotient map which sends a \( \Gamma(A) \)-path to its holonomy equivalence class.

\end{definition}

\begin{example}
    For $A = \mathfrak{g} \rightarrow \{\ast\}$, $\Hol(A)$ recovers $\Inn(\mathfrak{g}),$
as in Example \ref{example:Lie-algebra}. For $A=TM$, it recovers the pair groupoid on connected components, as in Example \ref{example:holonomy-algebroid-tangent-bundle}. For $A=\F\subset TM$, it recovers the usual holonomy groupoid of the foliation, as in Example \ref{example:algebroid-holonomy-foliations}.

\end{example}

\subsection{Flow product groupoid structure}
In the previous subsection, we defined the holonomy groupoid only as a set but we still need to explain how it obtains a groupoid structure. The approach we will take is (somewhat) new and is based on the fact that the space of time-dependent sections of \( A \) has a natural group structure, which we call the ``flow product''.

The formula we use for the flow product is originally due to Duistermaat and Kolk~\cite{Lie-groups-Duistermaat}. They used it in the context of finite dimensional Lie groups. However, it turns out that it works well in this infinite dimensional context and provides us with a nice way to define the groupoid structure on $\Pi_1(A)$ and $\Hol(A)$. 
\begin{definition}
Suppose \( \alpha, \beta \in \Gamma^t_c(A)\) are compactly supported time-dependent sections of \( A \). The \emph{flow product} of \( \alpha \) and \( \beta \) is the time-dependent section \( \alpha \bullet \beta \) defined by:
\[ \alpha \bullet \beta (t) := \alpha(t) + (\Phi^t_\alpha)_* \beta(t). \]
\end{definition}
This definition is motivated by the following fact (see Lemma~\ref{lemma:flow.product.compatible.with.flows}):
\begin{equation}\label{eqn:flow.product} \Phi^t_{\alpha \bullet \beta} = \Phi^t_\alpha \circ \Phi^t_\beta. \end{equation}
One important consequence of this is the observation that the flow product forms an honest group structure.
\begin{lemma}
The flow product makes $\Gamma^t_c(A)$ into a diffeological group.
\end{lemma}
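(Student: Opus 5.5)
We must show that the flow product on $\Gamma^t_c(A)$ is associative, has a unit and has inverses, and that multiplication and inversion are smooth for the natural diffeology. A plot $U \to \Gamma^t_c(A)$ is a family $\alpha_u(t)$ depending smoothly on $(u,t,p)$, with support locally uniform in $u$.

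First note that $\alpha \bullet \beta$ lies in $\Gamma^t_c(A)$. The flow $\Phi^t_\alpha$ exists for all $t\in[0,1]$, since $\rho(\alpha(t))$ is compactly supported and hence complete. Moreover $\Phi^t_\alpha$ is the identity outside $\mathrm{supp}\,\alpha$. So $(\Phi^t_\alpha)_*\beta(t)$ is supported in $\mathrm{supp}\,\beta \cup \phi^t_\alpha(\mathrm{supp}\,\beta)$, which is contained in a compact set uniform in $t$.

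The group axioms I would derive from Equation~\eqref{eqn:flow.product} together with the direct formula.
\begin{itemize}
\item \emph{Unit.} The unit is $0$: we have $\Phi^t_0 = \Id$, so $0\bullet\beta=\beta$, and $\alpha\bullet 0 = \alpha$.
\item \emph{Associativity.} I would compute directly:
\[(\alpha\bullet\beta)\bullet\gamma = \alpha + (\Phi^t_\alpha)_*\beta + (\Phi^t_{\alpha\bullet\beta})_*\gamma,\]
and by Equation~\eqref{eqn:flow.product} the last term equals $(\Phi^t_\alpha)_*(\Phi^t_\beta)_*\gamma$. On the other side,
\[\alpha\bullet(\beta\bullet\gamma)=\alpha+(\Phi^t_\alpha)_*\bigl(\beta+(\Phi^t_\beta)_*\gamma\bigr).\]
Since $(\Phi^t_\alpha)_*$ is linear, the two expressions agree.
\item \emph{Inverses.} I would set $\alpha^{-1}(t) := -(\Phi^t_\alpha)^{-1}_*\alpha(t)$. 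Then $\alpha\bullet\alpha^{-1} = \alpha - \alpha = 0$ directly. Right inverses in an associative monoid in which every element has a right inverse are automatically two-sided, so this suffices.
\end{itemize}
As a cross-check, $\Phi^t_{\alpha^{-1}} = (\Phi^t_\alpha)^{-1}$ should hold. This follows by differentiating $(\Phi^t_\alpha)^{-1}$ and checking that it satisfies the adjoint flow equation for $\alpha^{-1}$; the proof of this uses uniqueness of solutions of the derivation ODE from Appendix~\ref{appendix:derivations}.

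The main obstacle is diffeological smoothness. One must show that if $\alpha_u$ is a smooth family, then $\Phi^t_{\alpha_u}$ is a smooth family of vector bundle automorphisms, jointly smooth in $(u,t)$ and in the point of $A$. I would handle this by realising $\Phi^t_\alpha$ as the flow of a time-dependent linear vector field on the total space $A$. Equivalently, the derivation $[\alpha(t),\cdot]$ corresponds to a linear vector field $\widehat{\alpha(t)}$ on $A$ covering $\rho(\alpha(t))$, whose coefficients depend smoothly on $(u,t)$. Smooth dependence of ODE solutions on parameters then gives joint smoothness of the flow, and compact support gives completeness.

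With that in hand, $(\Phi^t_{\alpha_u})_*\beta_u(t) = \Phi^t_{\alpha_u}\circ\beta_u(t)\circ(\phi^t_{\alpha_u})^{-1}$ is smooth in $(u,t,p)$. The same argument shows the inverse formula is smooth. Finally, the support is locally uniform in $u$ by the support estimate above, applied over compact neighbourhoods in $U$.
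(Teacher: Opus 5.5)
Your proposal is correct and matches the paper's proof on the algebraic side. It uses the same unit $0$, the same inverse $\alpha^{-1}(t)=-(\Phi^t_\alpha)^{-1}_*\alpha(t)$, and proves associativity by the identical computation via $\Phi^t_{\alpha\bullet\beta}=\Phi^t_\alpha\circ\Phi^t_\beta$ and linearity of push-forward.

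The paper never checks closure under uniform compact support, that the inverse is two-sided, or that multiplication and inversion are diffeologically smooth, so your additions there are genuine and sound. One small correction: the linear vector field on $A$ is complete because $\rho(\alpha)$ is complete and the fibre equation is linear, not because the field is compactly supported on $A$.
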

\begin{proof}
    The identity element is the zero section and the inverse of a time-dependent section \( \alpha(t) \) is given by:
    \[ \alpha^{-1}(t) = - (\Phi^t_\alpha)_*^{-1} \alpha(t). \]
    Let us verify that the flow product is associative. Suppose \( \alpha(t) \), \( \beta(t) \), and \( \eta(t) \) are compactly supported, time-dependent sections of \( A \). We need to show that:
\[ (\alpha \bullet \beta) \bullet \eta (t) = \alpha \bullet (\beta \bullet \eta)(t). \]
By a direct computation:
\begin{align*}
    (\alpha \bullet \beta) \bullet \eta (t) & = (\alpha \bullet \beta)(t) + (\Phi^t_{\alpha \bullet \beta})_* \eta(t) \\
    & = \alpha(t) + (\Phi^t_\alpha)_* \beta(t) + (\Phi^t_{\alpha \bullet \beta})_* \eta(t) \\
    & = \alpha(t) + (\Phi^t_\alpha)_* \beta(t) + (\Phi^t_\alpha \circ \Phi^t_\beta)_* \eta(t) \\
    & = \alpha(t) + (\Phi^t_\alpha)_* (\beta(t) + (\Phi^t_\beta)_* \eta(t)) \\
    & = \alpha \bullet (\beta \bullet \eta)(t).
\end{align*}
\end{proof}
Similarly, the flow product makes \( \Gamma(A) \)-paths into a diffeological \emph{groupoid}. Indeed, it can be thought of as an action groupoid for the action of the group of time-dependent sections on \( M \).
\begin{definition}
The groupoid structure on \( \GammaPath(A) \) is defined as follows. The source and target maps are given by:
\[ \src(\alpha,p) := p, \qquad \trg(\alpha,p_0) :=  \phi^1_{\alpha}(p_0). \]
The unit map is given by:
\[ \unit(p) = (0,p). \]
the inverse map is given by:
\[ i(\alpha,p) = (- (\Phi^t_\alpha)_*^{-1} \alpha, \phi^1_\alpha(p)) \]
and the composition is given by the flow product. If \( (\alpha(t),p) \) and \( (\beta(t),q) \) are \( \Gamma(A) \)-paths where \( \phi^1_\beta (q) = p \) then the composition is given by:
\[ (\alpha,p) \bullet (\beta,q) = (\alpha \bullet \beta, q). \]
\end{definition}
The flow product is compatible with both the homotopy and holonomy equivalence relations.
\begin{proposition}\label{Proposition:Flow-product-compatible-holonomy}
    The flow-product is compatible with holonomy. In other words:
    \[ \Hol(( \alpha,p) \bullet (\beta,q)) = \Hol(\alpha,p) \cdot \Hol(\beta,q) \]
    whenever this expression is well-defined.
\end{proposition}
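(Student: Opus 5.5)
The plan is to reduce the statement to the flow identity~\eqref{eqn:flow.product}, $\Phi^t_{\alpha\bullet\beta} = \Phi^t_\alpha\circ\Phi^t_\beta$ (Lemma~\ref{lemma:flow.product.compatible.with.flows}), evaluated at $t=1$, together with the slice-independence lemma proved just before the definition of holonomic paths.

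First recall how $\Hol(\alpha,p)$ is defined. Since $\alpha$ is compactly supported, $\rho(\alpha(t))$ is complete, so $\Phi^1_\alpha$ is a global Lie algebroid automorphism of $A$ covering $\phi^1_\alpha$. Its germ at $p$ is an element of $\Alg_p(A,A)$. By the slice-independence lemma, restricting this germ to any slice $S_p$ gives a well-defined class in $\HT(A)$ from $p$ to $\phi^1_\alpha(p)$, and this class is $\Hol(\alpha,p)$. The same description applies to $(\beta,q)$ and to $(\alpha\bullet\beta,q)$.

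The proof then has three steps.

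\textbf{Step 1 (endpoints).} From~\eqref{eqn:flow.product} at the base level, $\phi^1_{\alpha\bullet\beta} = \phi^1_\alpha\circ\phi^1_\beta$. Since the composition is defined, $\phi^1_\beta(q)=p$. So $(\alpha\bullet\beta,q)$ runs from $q$ to $\phi^1_\alpha(p)$, and both sides of the claimed identity are arrows between the same objects.

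\textbf{Step 2 (compatible slices).} Choose a slice $S_q$ through $q$. Because $\Phi^1_\beta$ is a Lie algebroid automorphism, it preserves the anchor. Hence $\phi^1_\beta$ carries $S_q$ onto a submanifold $S_p := \phi^1_\beta(S_q)$ that again satisfies both transversality conditions, so $S_p$ is a slice through $p$. Moreover $\Phi^1_\beta$ maps $\rho^{-1}(TS_q)$ onto $\rho^{-1}(TS_p)$, so it restricts to a Lie algebroid isomorphism $A_{S_q}\to A_{S_p}$. The same argument gives a slice $S_r := \phi^1_\alpha(S_p)$ through $r = \phi^1_\alpha(p)$.

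\textbf{Step 3 (composition).} With these slices, $\Phi^1_\alpha|_{S_p}\circ\Phi^1_\beta|_{S_q}$ is a genuine composition in $\FHT(A)$. By~\eqref{eqn:flow.product} it equals $\Phi^1_{\alpha\bullet\beta}|_{S_q}$. Multiplication in $\HT(A)=\FHT(A)/\THT(A)$ is induced from composition in $\FHT(A)$; this is well defined because $\THT(A)$ is a normal subgroupoid (Lemma~\ref{lemma:trivial.holonomy.transformations.are.normal.subgroupoid}). Passing to classes therefore gives $\Hol(\alpha\bullet\beta,q)=\Hol(\alpha,p)\cdot\Hol(\beta,q)$. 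The slice-independence lemma guarantees that these particular choices of slices compute the holonomy classes.

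The main obstacle is Step 2: checking carefully that an algebroid automorphism sends slices to slices, and slice algebroids to slice algebroids, at least at the level of germs. I would argue pointwise. Writing $f=\phi^1_\beta$ and $F=\Phi^1_\beta$, the relation $df\circ\rho = \rho\circ F$ gives $df(\rho(A_y)) = \rho(A_{f(y)})$. Applying $df$ to $T_yM = T_yS_q+\rho(A_y)$ yields the first slice condition at $f(y)$, and applying it to $T_qS_q\cap\rho(A_q)=0$ yields the clean intersection at $p$. The identification of slice algebroids then follows because $F$ respects the anchor, and their brackets agree because $F$ is a Lie algebroid morphism.
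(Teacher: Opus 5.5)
Your proposal is correct and follows the paper's argument: the paper also deduces the claim directly from $\Phi^1_{\alpha\bullet\beta}=\Phi^1_\alpha\circ\Phi^1_\beta$, by observing that the germ of the composite at the source point is the composite of the two germs. Your Step~2, showing that an algebroid automorphism carries slices to slices and slice algebroids to slice algebroids so that the composition in $\FHT(A)$ is literal, spells out a detail the paper leaves implicit.
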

\begin{proof}
    Suppose we have \( \Gamma(A) \)-paths \( (\alpha(t),p_0) \) and \( (\beta(t),q_0) \) where the source of \( (\alpha(t),p_0) \) equals the target of \( (\beta(t),q_0) \) is \( q_0 \). 
    
    We need to show that the time-1 flow of \( \alpha \bullet \beta \) is equivalent to the composition of the time-1 flow of \( \alpha \) and the time-1 flow of \( \beta \) in the reduced holonomy transformation groupoid \( \HT(A) \). In fact, a stronger statement holds. By Equation~\ref{eqn:flow.product} we have that:
\[ \Phi^1_{\alpha \bullet \beta} = \Phi^1_\alpha \circ \Phi^1_\beta. \]
    This implies that the holonomy transformation induced by the germ of \( \Phi^1_{\alpha \bullet \beta} \) at \( q_0 \) is the same as the composition of the holonomy transformation defined by the germ of \( \Phi^1_\alpha \) around \( p_0 \) composed with the holonomy transformation defined by \( \Phi^1_\beta \) at \( q_0 \). 
\end{proof}

\begin{proposition}\label{proposition:flow.product.compatible.with.homotopy}
    The flow product is compatible with homotopy. 
    In other words, if \( (\alpha(t,s),p_0) \) and \( (\alpha'(t,s),q_0) \) are homotopies where the source of \( (\alpha(t,s),p_0) \) equals the target of \( (\alpha'(t,s),q_0) \) is \( q_0 \), then \( ((\alpha \bullet \alpha')(t,s),q_0) \) is a homotopy as well.
\end{proposition}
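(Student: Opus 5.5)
The plan is to compute the complement of $\gamma := \alpha \bullet \alpha'$ explicitly in terms of the complements $\beta$ of $\alpha$ and $\beta'$ of $\alpha'$, and then read off the vanishing condition along the underlying curve. The natural guess, by analogy with the Lie group formula $\partial_s(gh)(gh)^{-1} = \partial_s g\, g^{-1} + \Ad_g(\partial_s h\, h^{-1})$, is
\[ \beta_\gamma(t,s) = \beta(t,s) + (\Phi^{t,s}_\alpha)_* \beta'(t,s). \]
First I check that this candidate satisfies $\beta_\gamma(0,s)=0$, which is immediate since $\beta(0,s)=\beta'(0,s)=0$ and $\Phi^{0,s}_\alpha=\Id$. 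Uniqueness of the complement (it is the solution of a linear initial value problem, given by Equation~\ref{eqn:compliment.of.two.parameter.family.formula}) then reduces the identification to verifying the differential equation $\partial_t \beta_\gamma - \partial_s \gamma = [\beta_\gamma, \gamma]$.

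For the verification, I need two derivative formulas for $\Psi^{t,s} := (\Phi^{t,s}_\alpha)_*$ acting on a section $\eta$. The $t$-derivative comes straight from the definition of the adjoint flow: $\partial_t(\Psi \eta) = [\Psi\eta, \alpha] + \Psi(\partial_t \eta)$, with sign conventions as in Equation~\ref{eqn:adjoint.flow}. The $s$-derivative is the key input: $\partial_s(\Psi\eta) = [\Psi \eta, \beta] + \Psi(\partial_s\eta)$. In other words, the complement generates the $s$-variation of the adjoint flow. This should follow from the explicit formula for $\beta$, by differentiating $(\Phi^{t,s}_\alpha)^{-1}\partial_s\Phi^{t,s}_\alpha$ in $t$ and comparing with the integrand; it is essentially the content behind Lemma~\ref{lemma:complement.of.two.parameter.family}. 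I will prove this identity first, as a lemma, using that $\Psi$ is a bracket automorphism.

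With these in hand, expanding $\partial_t\beta_\gamma - \partial_s\gamma$ gives
\[ \partial_t\beta - \partial_s\alpha + [\Psi\beta',\alpha] + \Psi(\partial_t\beta' - \partial_s\alpha') - [\Psi\alpha',\beta]. \]
Substituting $[\beta,\alpha]$ and $\Psi[\beta',\alpha'] = [\Psi\beta',\Psi\alpha']$, this becomes
\[ [\beta,\alpha] + [\Psi\beta',\alpha] + [\Psi\beta',\Psi\alpha'] + [\beta,\Psi\alpha'], \]
which is exactly $[\beta+\Psi\beta', \alpha+\Psi\alpha']$. I expect this bookkeeping of signs and conventions, especially the $s$-derivative lemma, to be the main obstacle.

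Finally, I check the homotopy condition. By Equation~\ref{eqn:flow.product}, the base flow satisfies $\phi^{t,s}_\gamma = \phi^{t,s}_\alpha\circ\phi^{t,s}_{\alpha'}$. At the point $x = \phi^{t,s}_\gamma(q_0)$, set $y = \phi^{t,s}_{\alpha'}(q_0)$, so that $x = \phi^{t,s}_\alpha(y)$, and
\[ \beta_\gamma(t,s)_x = \beta(t,s)_x + \Phi^{t,s}_\alpha\big(\beta'(t,s)_{y}\big). \]
The second term vanishes because $\alpha'$ is a homotopy at $q_0$. For the first term, note that $\alpha$ is a homotopy at $p_0 = \phi^{1,s}_{\alpha'}(q_0)$, whereas here it is evaluated along the curve of $y$. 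So I also need that $\phi^{1,s}_{\alpha'}(q_0)$ is constant in $s$, which follows from Lemma~\ref{lemma:flows.of.complements.for.integral.curves}. That matches the interpretation "homotopy at $p_0$" as vanishing along $t\mapsto\phi^{t,s}_\alpha(p_0)$.

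However, $\phi^{t,s}_\alpha(p_0)$ is not $\phi^{t,s}_\alpha(\phi^{t,s}_{\alpha'}(q_0))$ in general, so the vanishing of $\beta(t,s)$ at $x$ is not directly implied. This mismatch is the real subtlety. I would handle it by first reparameterizing, using Example~\ref{example:reparameterization.as.homotopy}, so that $\alpha'$ is supported in $t\in[0,1/2]$ and $\alpha$ in $t\in[1/2,1]$. Then $\Phi_\alpha$ is the identity while $\alpha'$ moves, and $\phi_{\alpha'}$ is frozen at its endpoint while $\alpha$ moves, so the evaluation points agree. Along the way I must check that the reparameterized homotopies remain homotopies, and that flow products of reparameterized families are homotopic to the original ones.
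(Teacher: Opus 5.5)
Your computation of the complement of $\alpha\bullet\alpha'$ as $Z=\beta+(\Phi^{t,s}_\alpha)_*\beta'$ is exactly the paper's argument (the appendix lemma on complements of flow products). That includes the identity $\partial_s\Phi^{t,s}_\alpha=[\Phi^{t,s}_\alpha,\beta]$ and the bracket bookkeeping you wrote out. The gap is in the final step, and it comes from misreading the definition of a homotopy at $p$. The condition is imposed only at the endpoint, $\beta(1,s)_{\phi^{1,s}_\alpha(p)}=0$ for all $s$, not along the whole curve. That is how it is used throughout the paper:
Example~\ref{example:reparameterization.as.homotopy} only checks $\partial_s\rho(1,s)=0$; the comparison with Crainic--Fernandes matches it with $b(1,s)=0$; and the paper's proof evaluates only at $t=1$. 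With this reading your ``mismatch'' disappears, because at $t=1$ composability gives $\phi^{1,s}_{\alpha'}(q_0)=p_0$. Hence
\[ Z(1,s)_{\phi^{1,s}_\alpha(p_0)}=\beta(1,s)_{\phi^{1,s}_\alpha(p_0)}+\Phi^{1,s}_\alpha\bigl(\beta'(1,s)_{p_0}\bigr)=0, \]
and the proof is finished.

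The reparameterization detour you substitute does not prove the statement. It would show that a different family, $\bar\alpha\bullet\bar\alpha'$, is a homotopy, whereas the proposition asserts this of $\alpha\bullet\alpha'$ itself. Relating $\bar\alpha\bullet\bar\alpha'$ back to $\alpha\bullet\alpha'$ would require flow products of the reparameterization homotopies to be homotopies, which is the proposition being proved; the paper uses it in exactly this way when comparing concatenation with the flow product.

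The detour is also inconsistent with your own reading. If vanishing were required for all $t$, reparameterizations would not in general be homotopies, since their complement $\partial_s\rho(t,s)\,\alpha_0(\rho(t,s))$ vanishes only at $t=0,1$. In fact, under the all-$t$ reading the proposition is false. On $A=T\R$ take $\alpha(t,s)=sf$ with $f$ compactly supported and $f(0)=0\neq f(1)$; then $\beta=tf$ vanishes along the constant orbit of $p_0=0$. Take $\alpha'$ independent of $s$ (so $\beta'=0$), with orbit from $q_0=0$ reaching $1$ at $t=1/2$ and returning to $0$ at $t=1$. Then $Z=tf$ is nonzero at the point $\phi^{s/2}_f(1)$ of the composite path at $t=1/2$. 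So drop the detour and use the $t=1$ condition.
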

\begin{proof}
    According to Lemma~\ref{lemma:complement.of.flow.product.is.flow.product.of.complements} the complement of \( \alpha \bullet \alpha' \) is 
    \[ Z := \beta + \Phi^t_{\alpha}  \beta' \] 
    where \( \beta \) and \( \beta' \) are the complements of \( \alpha \) and \( \alpha' \) respectively. 
    
    Let \( \phi_\alpha^t \colon M \to M \) and \( \phi^t_{\alpha'} \colon M \to M \) be the flows of \( \rho(\alpha) \) and \( \rho(\alpha') \), respectively. The assumption that \( (\alpha, p_0) \) and \( (\alpha', q_0) \) are composable amounts to \( \phi^1_{\alpha'} (q_0) = p_0 \). That these two families constitute homotopies means that:
\[ \beta(1,s)_{\phi^1_{\alpha}(p_0)} = 0, \qquad \beta'(1,s)_{p_0} = 0. \]
    In order for \( \alpha \bullet \alpha' \) to be a homotopy we must have that:
\[ Z(1,s)_{\phi^1_{\alpha}(p_0)} = 0. \]
Computing this from the definition gives:
\begin{align*}
    Z(1,s)_{\phi^1_{\alpha}(p_0)} & = \beta(1,s)_{\phi^1_{\alpha}(p_0)} + (\Phi^1_\alpha)_* \beta'(1,s)_{\phi^1_{\alpha}(p_0)} \\
    & = 0 + \Phi^1_\alpha( \beta'(1,s)_{p_0} ) \\
    &= 0.
\end{align*}
Hence \( \alpha \bullet \alpha' \) is a homotopy.
\end{proof}
\subsection{Properties of the algebroid holonomy}

We now establish some basic properties of the algebroid holonomy groupoid. The first result shows that the holonomy relation is insensitive to homotopies of $\Gamma(A)$-paths. As a consequence, the holonomy map factors through the Weinstein groupoid.

\begin{proposition}\label{Proposition:A-homotopic-holonomic}
    If two $\Gamma(A)$-paths are homotopic, then they are holonomic. 
\end{proposition}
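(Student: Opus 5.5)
Let $\alpha(t,s)$ be a homotopy at $p$ between $\alpha_0=\alpha(\cdot,0)$ and $\alpha_1=\alpha(\cdot,1)$, with complement $\beta(t,s)$. The plan is to study how the time-1 adjoint flow $\Phi^{1,s}_\alpha$ varies with $s$, and to show that $(\Phi^{1,s}_\alpha)\circ(\Phi^{1,0}_\alpha)^{-1}$ is itself the time-1 flow of a time-dependent section vanishing at the moving target point. Concretely, I first differentiate $\Phi^{t,s}_\alpha$ in $s$. Using the defining equation of the complement and the adjoint flow equation, I expect the identity
\[ \frac{\partial}{\partial s}\Phi^{1,s}_\alpha = [\Phi^{1,s}_\alpha, \beta(1,s)], \]
that is, in the $s$-direction $\Phi^{1,s}_\alpha$ is the adjoint flow of $-\beta(1,\cdot)$, up to the sign convention of Equation~\ref{eqn:adjoint.flow}. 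This is the algebroid analogue of the commuting of mixed partials, $\partial_s(\partial_t g)$ versus $\partial_t(\partial_s g)$, which the complement is designed to encode. To verify it, I set $D(t,s)=\partial_s\Phi^{t,s}_\alpha-[\Phi^{t,s}_\alpha,\beta(t,s)]$ and check from $\partial_t\beta-\partial_s\alpha=[\beta,\alpha]$ and the Jacobi identity that $D$ satisfies a linear homogeneous ODE in $t$ with $D(0,s)=0$ (since $\beta(0,s)=0$ and $\Phi^{0,s}=\Id$). Uniqueness for such derivation-valued ODEs (Appendix~\ref{appendix:derivations}) then forces $D\equiv 0$. The formula~\eqref{eqn:compliment.of.two.parameter.family.formula} for $\beta$ could serve as a cross-check.

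Next, I define $\gamma(s):=\beta(1,s)$, viewed as a time-dependent section with parameter $s\in[0,1]$. Let $\Psi^s$ be its adjoint flow in the appropriate sign convention. By the first step, $\Phi^{1,s}_\alpha=\Psi^s\circ\Phi^{1,0}_\alpha$, so $\Phi^{1}_{\alpha_1}=\Psi^1\circ\Phi^1_{\alpha_0}$. The homotopy condition says $\beta(1,s)$ vanishes at $\phi^{1,s}_\alpha(p)$. This point moves with $s$, but it is exactly the integral curve of $\rho(\gamma(s))$ starting at $q=\phi^1_{\alpha_0}(p)$, by Lemma~\ref{lemma:flows.of.complements.for.integral.curves}. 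Since $\gamma$ vanishes along that curve, $\rho(\gamma)$ vanishes there, so the curve is constant, the target is fixed at $q$, and $\gamma(s)_q=0$ for all $s$. Hence $\gamma\in I_q\Gamma(A)$, and after cutting off to compact support near $q$ (which does not change germs), $\Psi^1$ restricted to a slice through $q$ is a trivial holonomy transformation.

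Finally, I restrict to a slice $S_p$: the germ of $\Phi^1_{\alpha_1}|_{S_p}$ equals the trivial transformation $\Psi^1|_{S_q}$ composed with $\Phi^1_{\alpha_0}|_{S_p}$, with $S_q=\Phi^1_{\alpha_0}(S_p)$. Together with the slice-independence lemma, these define the same class in $\HT(A)$, so $(\alpha_0,p)$ and $(\alpha_1,p)$ are holonomic. The main obstacle I expect is the first step: deriving the $s$-derivative formula rigorously, with the signs and the push-forward convention handled correctly, in the infinite-dimensional setting of derivations. Justifying differentiation in $s$ (smooth dependence of flows on parameters) is also part of this step; the rest is bookkeeping.
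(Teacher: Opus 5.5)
Your argument is correct, and it takes a genuinely different route from the paper's.

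\textbf{The paper's route.} The paper asserts that, by definition of a $\Gamma(A)$-homotopy, $\beta(1,s)=0$ identically in $s$. It then applies Lemma~\ref{Lemma:adjoint-flows-are-equal}, whose proof is your first step followed by $\Psi(1,s)=\Id$, to get the literal equality $\Phi^1_{\alpha_0}=\Phi^1_{\alpha_1}$. But the definition only requires $\beta(1,s)$ to vanish at the single point $\phi^{1,s}_\alpha(p)$. Global vanishing holds in special cases such as the reparameterization homotopies of Example~\ref{example:reparameterization.as.homotopy}, not in general. So the paper's argument really covers only that special case.

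\textbf{Your route.} You use only the pointwise condition:
\begin{enumerate}
\item You keep the identity $\Phi^{1,s}_\alpha=\Psi^s\circ\Phi^{1,0}_\alpha$ (item~\ref{lemma:flows.of.complements} of the complement proposition in the appendix) without asking that $\Psi^s$ be trivial.
\item You use item~\ref{lemma:flows.of.complements.for.integral.curves} to see that $s\mapsto\phi^{1,s}_\alpha(p)$ has derivative $\rho(\beta(1,s)_{\phi^{1,s}_\alpha(p)})=0$. Hence the target $q$ is fixed and $\beta(1,\cdot)\in I_q\Gamma(A)$.
\item You conclude that the germs of $\Phi^1_{\alpha_1}$ and $\Phi^1_{\alpha_0}$ on a slice differ by an element of $\THT(A)$, hence agree in $\HT(A)$.
\end{enumerate}

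\textbf{What each buys.} The paper's route gives a stronger conclusion, equal time-$1$ flows, but only under the stronger hypothesis of global vanishing. Yours proves the statement for the definition as written. It also shows that homotopic $\Gamma(A)$-paths have the same target, and it makes clear why passing to the quotient by trivial holonomy transformations is essential here.

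\textbf{Two minor points.} Your sign hedge is harmless: with the paper's convention, $\partial_s\Phi=[\Phi,\beta]$ makes $\Psi$ the adjoint flow of $+\beta(1,\cdot)$, and either sign gives a section vanishing at $q$. The cutoff to compact support is unnecessary, since $\beta$ inherits the uniform compact support of $\alpha$ through formula~\eqref{eqn:compliment.of.two.parameter.family.formula}.
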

\begin{proof}
    Suppose $(\alpha_0, p_0)$ and $(\alpha_1, p_0)$ are homotopic $\Gamma(A)$-paths. Let $\alpha(t,s)$ be a homotopy between them and let $\beta(t,s)$ be its complement. 

    By definition of a $\Gamma(A)$-homotopy,
\[ \beta(1,s) = 0 \quad \quad \forall s \in [0,1].\]
Therefore, Lemma \ref{Lemma:adjoint-flows-are-equal} implies that the corresponding time -1 adjoint flows agree \[ \Phi^1_{\alpha_0} = \Phi^1_{\alpha_1}.\]
Hence $(\alpha_0, p_0)$ and $(\alpha_1, p_0)$ define the same element of the holonomy groupoid, so they are holonomic.
    
\end{proof}

The following theorem shows how algebroid holonomy relates to the existing groupoids associated to a Lie algebroid.
\begin{theorem}\label{theorem:holonomy.groupoid.sits.in.between.weinstein.and.foliation.holonomy}
There is a sequence of smooth groupoid homomorphisms:
\[ \GammaPath (A) \to \Pi_1(A) \to \Hol(A) \to \Hol(\mathcal{F} ). \]
In particular, the algebroid holonomy class of a \( \Gamma(A) \)-path depends only on the homotopy class of the underlying \( A \)-path.

\end{theorem}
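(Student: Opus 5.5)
We construct the three maps in turn and check that each is a well-defined groupoid homomorphism; the composite $\GammaPath(A) \to \Hol(A)$ will be the quotient map $\Hol$.

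\textbf{The map $\GammaPath(A)\to\Pi_1(A)$.} Send $(\alpha,p_0)$ to the $A$-homotopy class of its underlying $A$-path $a(t)=\alpha(t)_{p(t)}$, where $p(t)=\phi^t_\alpha(p_0)$. To see that this is a homomorphism, we compare the flow product with concatenation. The underlying $A$-path of $(\alpha\bullet\beta,q)$ is
\[ \alpha(t)_{\phi^t_\alpha\phi^t_\beta(q)} + \Phi^t_\alpha\big(\beta(t)_{\phi^t_\beta(q)}\big). \]
We would exhibit an explicit homotopy in $\GammaPath(A)$ to a concatenation. Using Example~\ref{example:reparameterization.as.homotopy}, reparametrize $\beta$ to be supported on $[0,\tfrac12]$ and $\alpha$ on $[\tfrac12,1]$. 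Once the supports in time are disjoint, $\Phi^t_\alpha=\Id$ wherever $\beta(t)\neq 0$, so the flow product is literally the concatenation $\alpha\odot\beta$. Proposition~\ref{proposition:flow.product.compatible.with.homotopy} lets us do these reparametrizations factorwise. Lemma~\ref{lemma:A:homotopypaths:Gamma(A)paths} then transports the resulting $\Gamma(A)$-homotopy to an $A$-homotopy. Units and inverses are handled the same way, inverse corresponding to time reversal up to reparametrization.

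\textbf{The map $\Pi_1(A)\to\Hol(A)$.} Given an $A$-path $a$, choose any extension $\alpha$ of $a$ (compactly supported and time-dependent, built with a partition of unity and a cutoff) and set $[a]\mapsto\Hol(\alpha,a(0))$. Well-definedness follows from Lemma~\ref{lemma:A:homotopypaths:Gamma(A)paths}: any two extensions of $A$-homotopic $A$-paths are homotopic $\Gamma(A)$-paths. Proposition~\ref{Proposition:A-homotopic-holonomic} then says they are holonomic. The homomorphism property follows from the first step together with Proposition~\ref{Proposition:Flow-product-compatible-holonomy}. The composite with the first map is exactly $\Hol$, since $\alpha$ is itself an extension of its own underlying $A$-path. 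This gives the ``in particular'' clause.

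\textbf{The map $\Hol(A)\to\Hol(\F)$.} A holonomy transformation is a germ of an algebroid isomorphism $A_{S_x}\to A_{S_y}$. Its base map is a germ of a diffeomorphism $S_x\to S_y$ mapping $\rho(A_{S_x})$ to $\rho(A_{S_y})$, hence of the induced singular foliations. A trivial transformation $\Phi^1_\alpha$ with $\alpha\in I_x\Gamma(A)$ has base map the flow of $\rho(\alpha)\in I_x\F$. By Androulidakis--Zambon this is a trivial foliation holonomy transformation. So we get a homomorphism $\HT(A)\to\HT(\F)$, and holonomic $\Gamma(A)$-paths map to foliation-holonomic paths in the sense of~\cite{SingJoelAlf}. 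Composition is preserved because base maps compose.

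\textbf{Smoothness.} Smoothness is interpreted diffeologically or via the quotient structures. Each map is induced by a map out of $\GammaPath(A)$ that sends smooth plots $(\alpha_\lambda,p_\lambda)$ to smooth families of $A$-paths, respectively to foliation paths. Smoothness therefore descends to the quotients.

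The main obstacle is the first step: the flow product is not concatenation. The key identity needed is that a time-disjointly supported flow product equals concatenation. A delicate point is that this identity requires reparametrizations whose derivatives vanish at the endpoints, so that the result is smooth and not merely piecewise smooth.
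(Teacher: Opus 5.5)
Your proposal is correct and follows essentially the paper's route: the key step, turning the flow product into a concatenation by reparametrizing $\beta$ and $\alpha$ to have disjoint time supports, is exactly the argument of Lemma~\ref{lemma:Gamma(A)homotopy-Adjoint-homotopy}, and the middle map is made well-defined by the same combination of Proposition~\ref{Proposition:A-homotopic-holonomic} and Lemma~\ref{lemma:A:homotopypaths:Gamma(A)paths}. You go beyond the paper only in the last map: you send $(\alpha,p)$ to the $\F$-path $(\rho(\alpha),p)$ and check well-definedness and multiplicativity through the base-map homomorphism $\HT(A)\to\HT(\F)$, under which trivial transformations go to flows of elements of $I_x\F$, a verification the paper leaves implicit.
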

\begin{proof}
To go from an element $(\alpha(t),p_0) \in \GammaPath(A)$ to an element of $\Pi_1(A)$ we simply take the $A$-homotopy class of the associated $A$-path $a(t) = \alpha(t)_{p(t)}$, where $p(t)$ is the integral curve of $\rho(\alpha(t))$ with initial condition $p(0) = p_0$. 

To go from an element of $[a(t)] \in \Pi_1(A)$ to an element of $\Hol(A)$ we take the underlying $A$-path and extend it to a $\Gamma(A)$-path $(\alpha(t),p_0)$ and then take the holonomy class.
Combining Proposition \ref{Proposition:A-homotopic-holonomic} and Lemma \ref{lemma:A:homotopypaths:Gamma(A)paths}, it follows that the holonomy class of $(\alpha(t), p_0)$ depends only on the $A$-homotopy class $[a(t)]$,  so the map $\Pi_1(A) \rightarrow \Hol(A)$ is well-defined. 

Finally, to go from an element of $\Hol(A)$ to an element of $\Hol(\mathcal{F})$, we take the underlying $A$-path $a(t)$ and apply the anchor map to get a path in the foliation $\rho(a(t))$.
Then we take the foliation holonomy class as per Garmendia and Villatoro~\cite{SingJoelAlf}.

All of these maps are surjective and are the identity on the units. Showing that they are homomorphisms only requires checking that the groupoid structure in $\GammaPath(A)$ is compatible with the groupoid structures in $\Pi_1(A)$, $\Hol(A)$, and $\Hol(\mathcal{F})$.
This holds almost by definition, as the groupoid structure on $\Pi_1(A)$ can be defined as the one induced by the flow product.
However, classically the groupoid structure on $\Pi_1(A)$ is defined by concatenation of $A$-paths (See Definition~\ref{definition:concatenation} for the precise definition of concatenation).

In Proposition~\ref{proposition:flow.product.compatible.with.homotopy}, we see that the flow product and concatenation of two elements of $\GammaPath(A)$ result in homotopic elements.
From this it follows that the two binary operations induce the same groupoid structure on $\Pi_1(A)$ and the map $\GammaPath(A) \to \Pi_1(A)$ is a groupoid homomorphism.
Concatenation is also used in Garmendia and Villatoro~\cite{SingJoelAlf} to define the groupoid structure on $\Hol(\mathcal{F})$ and a similar argument shows that this is compatible with the flow product. 
\end{proof}

\begin{example}
    The sequence in the above theorem recovers the expected objects in the basic examples. If $A= \mathfrak{g} \rightarrow \{\ast\},$ then \[\Hol(A) = \Inn(\mathfrak{g})\]
    by Example \ref{example:Lie-algebra}, while the holonomy groupoid of the characteristic foliation is trivial. If $A = M \times \mathfrak{g}$ has zero anchor, then the foliation has point leaves, but $\Hol(A)$ still records the inner automorphism data of $\mathfrak{g}$, as in Example \ref{free-transitive-lie algebroid-holonomy}. If $A= TM$, then by Example \ref{example:holonomy-algebroid-tangent-bundle} the holonomy groupoid is the pair groupoid on connected components. Finally, if $A= \F \subset TM$ is a regular foliation, then by Example \ref{example:algebroid-holonomy-foliations} the algebroid holonomy groupoid is the usual holonomy groupoid of $\F.$
\end{example}

The construction of $\Hol(A)$ was carried out entirely at the level of $\Gamma(A)$-paths and adjoint flows. When $A$ is integrable, however, one expects the holonomy to admit a description directly in terms of an integrating groupoid. The purpose of the next section is to develop such a description and to compare algebroid holonomy with the holonomy induced by local and global Lie groupoids.

\section{Holonomy of groupoids}\label{section:holonomy-of-groupoids}

Lie groupoids also have a natural notion of holonomy that is compatible with the one we have just defined for Lie algebroids.

In this section we will talk about these issues in terms of \emph{local} Lie groupoids. This adds some technical complications, unfortunately, but it will be necessary in the proof of longitudinal smoothness of the holonomy.

\subsection{Local Lie groupoids}

A local Lie groupoid is defined in essentially the same way as a Lie groupoid except for the fact that the multiplication and inverse operations may only be defined for elements ``close'' to the units. The primary differences are as follows:

\begin{itemize}
\item The multiplication function $m \colon \calG \times \calG \dashrightarrow \calG$ is a partial function and is well-defined only on an open neighborhood $\calM$ of the "axes": 
$$ (\calG \times_{s,t} 1_M) \cup  (1_M \times_{s,t} \calG) \subset \calM \subset \calG \times \calG. $$ 
Here $\mathcal{G}\times_{s,t}1_M:=\{(g,1_x): s(g) = x\}$ and $1_M \times_{s,t}\mathcal{G}=\{(1_x,g):t(g) = x\}$.
\item The inverse function $i \colon \calG \dashrightarrow \calG$ is a partial function and is well-defined only on an open neighborhood $\calI$ of the unit section $1_M$:
$$ 1_M \subset \calI \subset \calG. $$
\item The axioms of a groupoid are the same but are only assumed to hold on the domains where they are well-defined. This is most relevant for associativity, which is only assumed to hold on the intersection of the domains where the two different ways of multiplying three elements are well-defined.
\end{itemize}

Following are some basic facts about local Lie groupoids.

\begin{itemize}
\item Every local Lie groupoid defines a Lie algebroid. The definition is the same as with a traditional Lie groupoid:
 $$ A = \Lie(\calG) := T^s \calG|_{1_M}. $$
\item The source distribution $T^s \calG$ formed by the fibers of the source map is isomorphic to the pullback of the Lie algebroid $A$ along the target map:
$$  t^* A  \cong T^s \calG, \qquad (g,a) \mapsto dR_g (a). $$
\item This identification provides a bijection between sections of $A$ and right-invariant vector fields on $\calG$. In particular, the Lie bracket on $A$ is determined by the Lie bracket of the corresponding right-invariant vector fields.
$$ \Gamma(A) \to \mathfrak{X}^R(\calG), \qquad \alpha \mapsto \hat \alpha. $$
\item Every Lie algebroid is isomorphic to the Lie algebroid of a local groupoid.
\end{itemize}
\begin{terminology}
Given a Lie algebroid $A$ a \emph{local integration} of $A$ is a local Lie groupoid $\calG$ together with an algebroid isomorphism between $A$ and $\Lie(\calG)$. In general, we will omit the algebroid isomorphism and assume that $A = \Lie(\calG)$.

Given a local Lie groupoid $\widetilde{\calG}$,  we say $\calG \subset \widetilde{\calG}$ is an \emph{open restriction} if it is an open neighborhood of the units. We always think of open restrictions as inheriting a local groupoid structure from their ambient space.
\end{terminology}

In general, there are two main problems that arise when working with local groupoids:  First,  identities, which hold for general groupoids,  do not always hold for arbitrary local groupoids, and the second, the topology of the source-fibers of an arbitrary local groupoid can cause technical barriers.

\subsubsection{Local algebra principle}
In order to  deal with the first issue, we will appeal to what we call the local algebra principle. The motivation behind this discussion is the following observation: Equations that hold for arbitrary elements in arbitrary groupoids do not hold for arbitrary local groupoids. However, they will always hold \emph{within some open restriction}. 

For example, an equation such as:
\[ ((a \cdot b) \cdot c )\cdot a^{-1}  = (a \cdot ( b \cdot a^{-1})) \cdot ( a \cdot (c  \cdot a^{-1} ))    \]
may not hold for all elements due to issues with the domains of the groupoid identities needed to establish their equality. However, there will always exist an open restriction in which such an identity holds.

\begin{lemma}[Local algebra Principal]\label{lemma:local-algebra-principal}
    Given a local Lie groupoid $\widetilde{\calG}$, and any finite set of true algebraic identities in the language of groupoids, there will exist an open restriction $\calG$ in which this finite set of identities holds.
\end{lemma}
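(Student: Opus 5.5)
The plan is to argue by induction on the structure of the identities, using only two ingredients: the domains of multiplication and inversion are open and contain the axes and the units, and all structure maps are continuous. Fix the finite set of identities. Each side of each identity is a term $w(x_1,\dots,x_n)$ built from variables, the constants $1$, the partial operations $\m$ and $\inverse$. Every identity is true in all groupoids, so it can be derived from the groupoid axioms (associativity, unit laws, inverse laws) by finitely many rewriting steps. Each intermediate step involves finitely many subterms and finitely many applications of an axiom. So it suffices to find one open restriction $\calG$ with two properties. First, when all variables range over $\calG$, every subterm appearing anywhere in the finitely many derivations is defined in $\widetilde{\calG}$. Second, every axiom instance used is applied to arguments at which that axiom holds in $\widetilde{\calG}$.

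The key step is to show that for a single term $w$, the set of tuples $(x_1,\dots,x_n)$, composable along the required source/target conditions, on which $w$ is defined is an open neighbourhood of the diagonal unit tuples $(1_p,\dots,1_p)$. I would prove this by induction on the term. Variables and constants are trivial. If $w=\inverse(u)$, its domain is the preimage of $\calI$ under the continuous map $u$ on the domain of $u$. This is open, and it contains the unit tuples because $u$ evaluates to $1_p$ there, by the unit axioms, which hold on the axes. If $w=\m(u,v)$, the domain is the preimage of $\calM$ under $(u,v)$. It is open for the same reason, and it contains the unit tuples since $(1_p,1_p)$ lies on the axes. The same preimage argument applies to the locus where an axiom instance is applicable. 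That locus is the intersection of the domains of both sides of that instance, hence again an open neighbourhood of the unit tuples, and on it the axiom holds by the definition of a local groupoid. Intersecting these finitely many open sets gives an open set $W_k\subset \widetilde{\calG}^{(n)}$ containing all unit tuples, where the superscript denotes the relevant fibred product.

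It remains to shrink to a product neighbourhood. We need an open $\calG\supset 1_M$ such that every composable $n$-tuple from $\calG$ lies in $W$. This is the main obstacle, because $M$ need not be compact, so we cannot take a uniform neighbourhood. I would handle it pointwise. For each $p\in M$, continuity and openness give a neighbourhood $U_p$ of $1_p$ in $\widetilde{\calG}$ with $(U_p)^n\cap \widetilde{\calG}^{(n)}\subset W$. The union of these $U_p$ does not obviously work, since a tuple could mix elements from different $U_p$. To avoid this, I would instead use a metric or a local-chart argument. Choose a continuous function $\epsilon\colon M\to(0,\infty)$ and set
\[
\calG=\{g : d(g,1_{\src(g)})<\epsilon(\src(g))\}.
\]
Composable tuples of such elements have all base points within a controlled distance of one another. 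Then a Lebesgue-number style argument on locally compact $M$ lets one pick $\epsilon$ small enough that every such tuple lies in some $(U_p)^n$. Since manifolds are paracompact, this step should work, but it is where care is needed, including in the non-Hausdorff case, where one works in a Hausdorff neighbourhood of $1_M$. Finally, $\calG$ is an open neighbourhood of the units, hence an open restriction, and by construction all the finitely many identities hold in it.
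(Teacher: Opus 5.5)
The paper gives no proof of this principle. It is stated and then used as a black box, with only the remark that such identities ``will always hold within some open restriction.'' Your proposal therefore supplies an argument the paper omits, and its architecture is correct. The ingredients are the right ones. You record every use of an axiom in a finite derivation. You observe that the locus of composable tuples on which all intermediate terms are defined is an open neighbourhood of the unit tuples, since it is a preimage of $\calM$ and $\calI$ under continuous partial maps. For $\inverse$ you also need $1_p^{-1}=1_p$, which follows from $1_p\cdot 1_p^{-1}=1_p$ and the left unit law. Finally, a local groupoid satisfies each axiom wherever both sides are defined. The argument also shows why only finitely many identities can be handled at once, in line with the paper's remark on Fernandes--Michiels. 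Your conclusion is stronger than the ``when well-defined'' form in which the paper uses the principle: both sides are defined in $\widetilde{\calG}$ and equal for every composable tuple from $\calG$.

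Two steps need more than you wrote.

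\emph{Finite derivations.} The claim ``true in all groupoids, hence derivable by finitely many rewriting steps'' is not Birkhoff's theorem, because $\m$ is partial. For groupoids you can prove it directly using the free groupoid on the graph whose edges are the variables and whose vertices are the endpoint symbols modulo the composability constraints. Every well-typed term rewrites to a unit or a reduced word, using associativity, the unit and inverse laws, and the derived rules $(uv)^{-1}=v^{-1}u^{-1}$ and $(u^{-1})^{-1}=u$, each of which has its own finite derivation. Distinct reduced words are distinct elements of the free groupoid. Hence both sides of a true identity reduce to the same word, all intermediate terms are well typed under the original constraints, and no new variables are introduced.

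\emph{Uniformization.} This is the delicate point and should be made precise. One clean version runs as follows.
\begin{enumerate}
\item Work on a Hausdorff, metrizable open neighbourhood of $1_M$, e.g.\ an open restriction identified with an open subset of $A$ as in the source-linear lemma.
\item Take a metric $d$ for which $\src$ and $\trg$ are Lipschitz with respect to $d_M(p,q):=d(1_p,1_q)$. For instance, add $d_0(1_{\src g},1_{\src h})+d_0(1_{\trg g},1_{\trg h})$ to any compatible metric $d_0$.
\item Let $Y=\widetilde{\calG}^{(n)}$ be the space of composable tuples, let $f(p)>0$ be the supremum of those $r\le 1$ with $B(1_p,r)^n\cap Y\subset W$, and set $\calG:=\bigcup_p B(1_p,cf(p))$.
\item Suppose $g_i\in B(1_{p_i},cf(p_i))$ form a composable tuple, and let $q$ be the $p_i$ with the largest value of $f$. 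Each constraint such as $\src(g_i)=\trg(g_j)$ gives $d_M(p_i,p_j)\lesssim c f(q)$.
\item The variables occurring in a true identity are linked by chains of at most $n-1$ such constraints. Hence $d(g_i,1_q)\lesssim nc\,f(q)<f(q)$ for $c$ small depending only on $n$, and the tuple lies in $W$.
\end{enumerate}
No continuity of the radii is needed. Intersecting the resulting sets $\calG$ over the finitely many identities finishes the proof.
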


\subsubsection{Source-linear integrations}

We need the control over the topology of the local groupoids we are working with so that we can derive some geometric facts. To that end we will use the concept of what we call a source-linear integration.
\begin{lemma}
    Suppose $A$ is a Lie algebroid. There exists an open neighborhood of the zero section $\calG \subset A$ that can be equipped with a local groupoid structure with the following properties:
    \begin{enumerate}
    \item The source fibration is the vector bundle projection:
    \[ s = \pi_A|_{\calG}.\]
    \item The unit embedding is the zero section:
    \[ u  \colon M \to \calG, \qquad x \mapsto 0_x.\]
    \item The Lie algebroid $\Lie(\calG)$ is isomorphic to $A$ under the vertical lift map:
    \[ \ell \colon A \to \Lie(\calG) = T^s\calG|_{1_M},  \qquad  \ell(a) = \left. \frac{d}{dt}\right\vert_{t=0} t a. \]
    \item The exponential map for the isotropy Lie algebras is the identity map:
    \[ \exp_x = \Id \colon \mathfrak{g}_x \dashrightarrow \calG_x.\]
    \end{enumerate}
    Furthermore, every local integration of $A$ admits an open restriction isomorphic to a source-linear integration.
\end{lemma}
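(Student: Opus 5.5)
The plan is to build the source-linear integration by transporting an arbitrary local integration onto a neighborhood of the zero section of $A$ using an exponential map adapted to the source fibers. I will take as given that every Lie algebroid admits some local integration $\widetilde{\calG}$, which is the last of the basic facts listed above. Then I will construct a diffeomorphism $\Psi$ from an open neighborhood $U$ of the zero section in $A$ onto an open restriction of $\widetilde{\calG}$. This $\Psi$ will intertwine $\pi_A$ with $s$, send the zero section to the units, and have derivative along the fibers at $0_x$ equal to the identification $A_x \cong T^s_{1_x}\widetilde{\calG}$. Pulling the local groupoid structure back along $\Psi$ then gives properties (1), (2) and (3) automatically.

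To get property (4) as well, I will not use an arbitrary tubular map. Instead I will use a fiberwise exponential built from right-invariant vector fields. First choose a connection $\nabla$ on $A$. For $a\in A_x$, let $\tilde a$ be the section obtained by $\nabla$-parallel transport of $a$ along radial lines in some chart; alternatively, use any smooth extension operator $a\mapsto \tilde a$ that is linear on each fiber, depends smoothly on $a$, and satisfies $\tilde a_x=a$. Then set
\[
\Psi(a) := \phi^1_{\widehat{\tilde a}}(1_x),
\]
the time-one flow of the right-invariant vector field $\widehat{\tilde a}$ starting at the unit. This flow exists for $a$ small by ODE theory, stays inside the $s$-fiber of $x$ because right-invariant fields are tangent to $s$-fibers, and its differential at $0_x$ along the fiber is $a\mapsto \hat a_{1_x}$, which is the canonical identification. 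By the inverse function theorem, applied fiberwise with smooth dependence on parameters, $\Psi$ is then a local diffeomorphism near the zero section. It is injective on a neighborhood of the zero section because it restricts to the identity there and is a fiberwise local diffeomorphism, via the standard tubular-neighborhood shrinking argument. This shrinking argument is where care with non-Hausdorffness is needed, but $M$ is Hausdorff and we only shrink near the units, so it goes through.

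Property (4) should come from choosing the extension compatibly with isotropy. Suppose $a\in\mathfrak g_x$, so $\rho(a)=0$. The isotropy exponential $\exp_x(a)$ is the time-one flow of the right-invariant field of any section extending $a$ restricted to $\calG_x$, and I expect this restriction to be independent of the chosen extension. Making this precise is the step I expect to be the main obstacle. Two different extensions differ by a section vanishing at $x$, and the right-invariant field of such a section vanishes along $t^{-1}(x)$. However, the flow curve starting at $1_x$ need not stay in $t^{-1}(x)$ unless $\rho(\tilde a)$ vanishes along the base curve. Since $\rho(\tilde a)_x=\rho(a)=0$, the base curve $t(\phi^\tau(1_x))$, which is the integral curve of $\rho(\tilde a)$ through $x$, is constant. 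So the curve does stay in $\calG_x=s^{-1}(x)\cap t^{-1}(x)$, and along it the field equals the left/right-invariant field of $a$ on the isotropy group. Hence $\Psi|_{\mathfrak g_x}=\exp_x$ for any extension.

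After pulling back, the isotropy exponential of the new local groupoid is therefore the identity on a neighborhood of $0$ in $\mathfrak g_x$, which gives (4). The final claim, that every local integration admits an open restriction isomorphic to a source-linear one, follows by running the same construction with that given integration in place of $\widetilde{\calG}$. Its image $\Psi(U)$ is an open restriction, and $\Psi$ is an isomorphism of local groupoids onto it by construction. One may also need to shrink $U$ so that the pulled-back multiplication domain still contains the axes, and this is guaranteed by openness of $\calM$.
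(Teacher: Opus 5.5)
Your argument is correct, and it takes a different route from the paper. The paper in fact gives no proof of its own. It only cites the Cabrera--M\u{a}rcu\c{t}--Salazar construction, which builds a local groupoid structure on a neighborhood of the zero section of $A$ from a Lie algebroid spray. It does not verify (1)--(4) for that construction, and it does not argue the final sentence (presumably that rests on uniqueness of germs of local integrations, which the paper quotes later in the proof of Proposition~\ref{proposition:open-groupoid-morphism-proposition}).

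You instead take an existing local integration $\widetilde{\calG}$, which is one of the listed basic facts, and transport it onto $A$ via $a\mapsto\phi^1_{\widehat{\tilde a}}(1_{\pi(a)})$. This gives existence and the ``furthermore'' clause at once, with no uniqueness theorem. It also makes (4) automatic: $\widehat{\tilde a}$ is $\trg$-related to $\rho(\tilde a)$, which vanishes at $x$ when $a\in\mathfrak{g}_x$. So the flow from $1_x$ stays in $\calG_x$ and is the isotropy one-parameter subgroup, whatever extension you choose. In the spray picture, by contrast, (4) would still have to be checked for the particular spray. What the spray route buys is independence from any prior integration, since it is itself a proof of local integrability.

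Two places to tighten.
\begin{enumerate}
\item The radial parallel-transport extension is only chart-local. Use the global fiberwise-linear extension operator with $(a,y)\mapsto\tilde a(y)$ jointly smooth, for instance built from local trivializations and a partition of unity; this joint smoothness is what makes $\Psi$ smooth.
\item Injectivity near the zero section needs no Hausdorffness of $\widetilde{\calG}$. Because $\src\circ\Psi=\pi_A$, any collision $\Psi(a)=\Psi(b)$ lies in a single fiber $A_x$. So local injectivity of $\Psi$ near each $0_x$, together with a continuous radius function on $M$, already suffices.
\end{enumerate}
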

Although the terminology is novel, the concept itself is not. Indeed,  one of the main results of Cabrera-M\v{a}rcu\c{t}-Salazar \cite{Cabrera-Ioan-Maria-local-integration-Lie-brackets} is the construction of a local integration of a Lie algebroid on a neighborhood of the zero section using a Lie algebroid spray. Such constructions traditionally depend on a auxiliary choice, such as a spray or a connection. Since the specific choice plays no role in our arguments, we only use the existence of source-linear integrations. 

\subsubsection{Bisections}

To study holonomy from the perspective of local groupoids, it is necessary to work with bisections, which play the role of global counterparts of sections of Lie algebroid. 

The following definition is standard in the theory of  Lie groupoids. 
\begin{definition}
    Suppose $\calG$ is a local Lie groupoid. A \emph{bisection} of $\calG$ is a smooth map \[\sigma \colon M \to \calG,\] such that
    \begin{enumerate}
        \item $s \circ \sigma = \Id_M,$ and 
        \item $t \circ \sigma: M \rightarrow M$ is a diffeomorphism.
    \end{enumerate}
    The \emph{support} of a bisection is the subset
    $$supp(\sigma):=\{ x \in M : \sigma(x) \notin  1_M \} \subset M.$$

    A bisection $\sigma$ is said to be compactly supported if 
 $\overline{supp(\sigma)}$
is compact. We denote by   \[\Bis_c(\calG)\]  the set of all compactly supported bisections of $\calG$.
\end{definition}
Bisections admit a natural multiplication. Given $\sigma, \eta \in   Bis(\mathcal{G})$, their product is defined by: 
$$   (\sigma \cdot \eta)(x) := \sigma(t(\eta(x)) \cdot \eta(x).$$

Bisections may be viewed as a global counterpart of sections of the Lie algebroid. Indeed, there is a natural integration procedure that associates a one-parameter family of bisections to a time-dependent section of the algebroid. This perspective motivates thinking of $Bis_c(\mathcal{G})$ as a local group integrating the Lie algebra $\Gamma_c(A).$

This following terminology is nonstandard, but it will be  linguistically useful for our purposes.

\begin{definition}\label{definition:G-integrable-section}
    Suppose $\alpha \in \Gamma^t_c(A)$ is a time-dependent section of $A$. 
    Let $\hat{\alpha} \in \mathfrak{X}^R(\calG)$ be the associated right-invariant vector field on $\calG$.
    Define a one-parameter family of bisections by
    \[ \sigma(t)(x) := \Phi^t_{\hat{\alpha}} (1_x). \]
    If $\sigma(t)$ is defined for all $t \in [0,1]$,  then we say that $\alpha$ is \emph{$\calG$-integrable}. In this case, the family   $\sigma(t)$ is  called the  \emph{$\calG$-integration} of $\alpha$.
\end{definition}
For a Lie groupoid (not local), the above construction yields a bijective  correspondence between time-dependent sections of the Lie algebroid and one-parameter families of bisections starting at the identity. 

For local groupoids, however, this correspondence is more subtle. In particular, existence and uniqueness may depend on the precise class of local groupoids under consideration and on the domains on which the relevant structure maps are defined.


\begin{definition}
    A local integration $\calG$ of $A$ is said to have \emph{differentiable bisections} if, for  every smooth one-parameter family of   of bisections \[ \sigma(t) \in Bis(\mathcal{G}), \quad \quad \sigma(0) = 1_M,\] there exists a  unique time-dependent section $\alpha \in \Gamma^t_c(A)$ whose $\mathcal{G}$-integration is $\sigma(t)$.
\end{definition}

\begin{lemma}
   Let $\widetilde{\calG}$ be a local integration of $A$. Then there exists an open restriction $\calG \subset \widetilde{\calG}$ such that $\mathcal{G}$ has differentiable bisections.
\end{lemma}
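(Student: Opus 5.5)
We need to show that every local integration $\widetilde{\calG}$ of $A$ has an open restriction $\calG$ in which each smooth family of bisections $\sigma(t)$ with $\sigma(0)=1_M$ is the $\calG$-integration of a unique $\alpha\in\Gamma^t_c(A)$. The plan is to differentiate the family, translate the derivative back to the units using the right-invariance identification $t^*A\cong T^s\calG$, and check that this produces an $\alpha$ whose flow reproduces $\sigma$.

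First, I would pass to an open restriction $\calG$ on which a finite list of identities holds, by the local algebra principle (Lemma~\ref{lemma:local-algebra-principal}). The identities needed are associativity of triples, inverses in a neighbourhood, and right translation $R_g$ being defined and invertible on the relevant source fibres. By the Furthermore clause of the source-linear lemma I would also take $\calG$ to be isomorphic to a source-linear integration. Then the source fibres are star-shaped open pieces of the fibres $A_x$, which gives control of their topology. This in particular ensures that the inverse bisection $\sigma(t)^{-1}$ and the products $\sigma(t+h)\cdot\sigma(t)^{-1}$ are defined for $\sigma(t)$ in $\calG$.

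Existence is the main step. Given $\sigma(t)$, I define
\[ \alpha(t)_y := \left.\frac{d}{dh}\right|_{h=0} \sigma(t+h)\big(x\big)\cdot \sigma(t)(x)^{-1}, \qquad y=t(\sigma(t)(x)). \]
Equivalently, $\alpha(t)_y = dR_{\sigma(t)(x)^{-1}}\,\dot\sigma(t)(x)$. Since $s\circ\sigma=\Id$, the vector $\dot\sigma(t)(x)$ is $s$-vertical, so $\alpha(t)_y\in T^s\calG|_{1_y}=A_y$. Because $t\circ\sigma(t)$ is a diffeomorphism, $\alpha(t)$ is a well-defined smooth section. It is compactly supported because $\sigma(t)$ is constant equal to $1$ outside a compact set, which I take to be uniform in $t$ as the paper assumes throughout. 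Right-invariance then gives $\frac{d}{dt}\sigma(t)(x)=\hat\alpha(t)_{\sigma(t)(x)}$. So $t\mapsto \sigma(t)(x)$ is an integral curve of $\hat\alpha$ through $1_x$, and by uniqueness of ODE solutions it equals $\Phi^t_{\hat\alpha}(1_x)$, which is defined for all $t\in[0,1]$. Hence $\alpha$ is $\calG$-integrable with integration $\sigma$.

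Uniqueness follows by running the same argument backwards. If $\alpha$ integrates to $\sigma$, then differentiating $\sigma(t)(x)=\Phi^t_{\hat\alpha}(1_x)$ gives $\hat\alpha(t)_{\sigma(t)(x)}=\dot\sigma(t)(x)$. Since $t\circ\sigma(t)$ is onto, this determines $\hat\alpha(t)$ along the image, which is all of $M$ after right translation to the units. So $\alpha$ coincides with the section constructed above.

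I expect the main obstacle to be justifying the right translations in the local setting. Right-invariance of $\hat\alpha$ is only meaningful where $R_g$ is defined, and $dR_{g^{-1}}$ must map $T^s_g\calG$ to $A_{t(g)}$ for every $g$ in the image of $\sigma$. The local groupoid is only defined near the axes, so arbitrary bisections of $\widetilde{\calG}$ could leave the domain. This is exactly why we restrict. I would first try to choose $\calG$ so that $\calG\times_{s,t}\calG^{-1}\subset\calM$ and the inverse is defined on all of $\calG$; the local algebra principle together with source-linearity should provide such a neighbourhood of the units. The smoothness of $\alpha$ in $t$ then follows from smooth dependence of $dR$ on $g$.
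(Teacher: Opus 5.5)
Your proposal is correct and follows the same route as the paper. Both arguments shrink to a neighbourhood of the units where inverses and the relevant translation identities hold, set $\alpha(t)=dR_{\sigma(t)^{-1}}(\dot\sigma(t))$, check that $\hat\alpha(t)$ agrees with $\dot\sigma(t)$ along $\sigma(t)$, and conclude by uniqueness of integral curves. Your version is more careful than the paper's in three places: the explicit uniqueness of $\alpha$ via injectivity of $dR_g$ and surjectivity of $\trg\circ\sigma(t)$, the compact-support caveat, and naming the associativity identity actually needed. The passage to a source-linear integration is unnecessary here, and star-shaped fibres would come from the separate convexity condition, not from source-linearity.
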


\begin{proof}
Let $\tilde{\G}$ be a local integration of $A$, and let $\sigma(t) \in Bis(\tilde{\G})$ be a smooth one-parameter family of bisections satisfying $\sigma(0) = 1_M.$

Choose an open neighborhood $U \subset \tilde{\G}$ of the unit section such that $g^{-1}\cdot g$ is defined and equal to $1_{s(g)}$ for every $g \in U.$ If $\sigma(t)$ takes values in $U$, then the associated time-dependent section of $A$ is given by \[ \alpha(t):=dR_{\sigma(t)^{-1}}(\sigma^\prime(t)).\]
Indeed, the above expression is well-defined because the right-translation by $\sigma(t)^{-1}$ identifies $T^s_{\sigma(t)} \tilde{\G}$ with $A.$

Let $\mathcal{G} \subset \tilde{\G}$ be an open restriction contained in $U.$ Although the expression defining $\alpha(t)$ may involve elements outside $\mathcal{G}$, it remains well-defined using the ambient groupoid structure of $\mathcal{G}$.

For every $g \in U$, the identity \[ g \cdot (g^{-1} \cdot g) = g\]

implies that \[ dR_{\sigma(t)} \circ dR_{\sigma(t)^{-1}}  = Id_{T^s_{\sigma(t)}\tilde{\G}}.\]

Hence, 

 \[\hat{\alpha}(t)|_{\sigma(t)} =  dR_{\sigma(t)} \circ dR_{\sigma(t)^{-1}} (\sigma'(t)) = \sigma'(t). \]
 It follows that $\sigma(t)$ is the $\mathcal{G}$-integration of $\alpha(t).$ Therefore every smooth one-parameter family of bisections starting at the identity arises from a unique time-dependent section of $A,$ and $\mathcal{G}$ has differentiable bisections.

\end{proof}

\subsubsection{Convenient integrations}
Motivated by the preceding discussion, we introduce a class of local integrations that satisfy the technical properties required in the arguments below.

\begin{definition}
 Suppose $A$ is a Lie algebroid. A local integration $\mathcal{G}$ of $A$ is said to be  \emph{convenient} if the following conditions hold: 
    \begin{enumerate}
        \item $\mathcal{G}$ is source-linear;
        \item the source fibers of $\G$ are convex;
        \item $\G$ has differentiable bisections;
        \item every universally valid groupoid identity involving fewer than one hundred elements holds in $\G$  when well-defined. 
    \end{enumerate}
\end{definition}

\begin{remark}
\begin{enumerate}
    \item  The particular bound of one hundred is arbitrary. None of the arguments below require identities involving anywhere near this many elements. The purpose of the definition is simply to ensure that, throughout our constructions, all groupoid identities that arise are automatically valid in the chosen local integration.

   \item  The finite bound in the definition of a convenient integration is essential. Indeed, Fernandes-Michiels \cite{Fernandes-Daan-Associativity-Integrability} proved that the failure of integrability of a Lie algebroid is reflected in the failure of associativity of any local integration. In particular, a local Lie groupoid is globally associative if and only if its Lie algebroid is integrable. Consequently, for a nonintegrable Lie algebroid one cannot require arbitrary groupoid identities to hold in a local integration.
    \end{enumerate}
\end{remark}

Convenient integrations are general in the sense that all integrations admit an open restriction which is isomorphic to a convenient integration.

\subsection{Holonomy of a groupoid}

We shall show that holonomy can be described entirely in terms of an integrating groupoid. The resulting characterization is the groupoid analogue of the classical Lie group fact that the adjoint flow of a time-dependent Lie algebra element is induced by conjugation by the corresponding integrated one-parameter family of group elements obtained through the Lie group exponential map. 

The key observation is that every bisection of a local Lie groupoid $\G$ determines a conjugation action on $\G$. This action will serve as the basic mechanism for describing holonomy.

\begin{definition}
Given a bisection $\sigma$ of a local Lie groupoid $\calG$, we say that $\sigma$ is \emph{invertible} if there exists a bisection $\sigma^{-1}$ satisfying \[ \sigma \cdot \sigma^{-1} = \sigma^{-1} \cdot \sigma = id_M,\]

wherever these products are defined.

Given an invertible bisection $\sigma,$ the \emph{conjugation action} is the local groupoid morphism
$$ C_\sigma \colon \calG \dashrightarrow \calG, $$
defined by the formula:
$$ C_\sigma(g) := (\sigma(t(g)) \cdot g ) \cdot \sigma(s(g))^{-1}. $$
\end{definition}
A standard argument shows that the conjugation action is a groupoid homomorphism for any Lie groupoid and for any two bisections $\sigma$ and $\eta$ of $\mathcal{G}$, one obtains
\[ C_{\sigma} \circ C_\eta = C_{\sigma \cdot \eta}. \]

For a local Lie groupoid, the local algebra principle guarantees the existence of an  open neighborhood of the units in $\calG$ on which the preceding properties are valid. In particular, the last identity will hold for any convenient local integration.

The fact that the conjugation action is a local groupoid morphism implies that its infinitesimalization defines a Lie algebroid automorphism of the associated Lie algebroid $A.$

\begin{definition}
    Given a bisection $\sigma$ of a local integration $\calG$ of $A$, then the associated \emph{adjoint action} of $\sigma$ on $A$ is bundle map
$$ \Ad_\sigma \colon A \to A $$
defined by the formula:
$$ \Ad_\sigma(a) := dC_\sigma (a). $$
\end{definition}
Similar to the previous comments about the conjugation action, the adjoint action will be a Lie algebra morphism for any Lie groupoid. For a local Lie groupoid one may, in principal, need to restrict ones view to bisections which take values in a neighborhood of the identity.

Using the adjoint action, we can now define the holonomy of a (local) Lie groupoid.

\begin{definition}
    Suppose $\calG$ is a local groupoid. Given $g \in \calG$ the \emph{holonomy} $\hol_\calG(g)$ (reduced) holonomy class of the full holonomy transformation:
    $$ \Ad_\sigma  |_{A_{S_x}}  \colon A_{S_x} \to A_{f(S_x)} $$
    where $S_x$ is a slice through $x = s(g)$ and $f := t \circ \sigma$ is the diffeomorphism associated to $\sigma$.
\\

    We say that $\calG$ \emph{has holonomy} if \begin{enumerate}
        \item $\Ad_\sigma$ is a Lie algebroid morphism for any bisection $\sigma$, and 
   \item  for all $g \in \calG$, the holonomy class $\hol_\calG(g)$ does not depend on the choice of bisection.
    \end{enumerate}
    
    In such a case, we will write $\Hol(\calG) \subset \HT$ to denote the image of $\calG$ under $\hol_\calG$.
\end{definition}

The last definition is one that is only relevant for local Lie groupoids. Every classical Lie groupoid has holonomy(see Lemma \ref{lemma:Lie-grpd-has-holonomy}). For Local Lie groupoids, this is still not such a restrictive condition since one can always ``shrink'' a local Lie groupoid to one that has holonomy.

\begin{lemma}\label{Lemma:open-restriction-has-holonomy}
    Suppose $\widetilde{\calG}$ is a local Lie groupoid with associated Lie algebroid $A$. Then there is an open restriction $\calG \subset \widetilde{\calG}$ of that has holonomy.
\end{lemma}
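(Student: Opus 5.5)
Choose $\calG \subset \widetilde{\calG}$ to be an open restriction that is a convenient integration. Such a restriction exists because every local integration admits an open restriction isomorphic to a convenient one, and further open restrictions of a convenient integration remain convenient after shrinking.

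\textbf{Step 1: $\Ad_\sigma$ is a Lie algebroid morphism.} By the local algebra principle (Lemma~\ref{lemma:local-algebra-principal}), the finitely many identities asserting that $C_\sigma$ is a local groupoid morphism hold on a neighborhood of the units. These identities are $C_\sigma(gh)=C_\sigma(g)C_\sigma(h)$, $C_\sigma(1_x)=1_{f(x)}$, and the correct behavior on $\src$ and $\trg$. Hence $C_\sigma$ is a local Lie groupoid morphism, defined near $1_M$, covering $f=\trg\circ\sigma$. Differentiating at the units gives $\Ad_\sigma=dC_\sigma|_A$, which respects anchors and sends right-invariant vector fields to right-invariant vector fields. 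It therefore preserves brackets, so $\Ad_\sigma$ is a Lie algebroid isomorphism with inverse $\Ad_{\sigma^{-1}}$.

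\textbf{Step 2: independence of the bisection.} Let $\sigma,\sigma'$ be two bisections through $g$, so $\sigma(x)=\sigma'(x)=g$ with $x=\src(g)$. Then $\eta:=\sigma^{-1}\cdot\sigma'$ is a bisection with $\eta(x)=1_x$. Using $C_\sigma\circ C_\eta=C_{\sigma\eta}$, which holds in a convenient integration, we get $\Ad_{\sigma'}=\Ad_\sigma\circ\Ad_\eta$. It therefore suffices to show that $\Ad_\eta$, restricted to a slice, defines a trivial holonomy transformation at $x$. I then appeal to the lemma that a germ of a Lie algebroid isomorphism gives a well-defined class in $\HT(A)$, independent of the slice.

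To show that $\Ad_\eta$ is trivial, I connect $\eta$ to the identity through bisections fixing $x$. Source-linearity and convexity of the source fibers let me use the straight-line family $\eta_s(y)=s\,\eta(y)$, viewed in $A$. Near $x$ this family stays in bisections, and $\eta_s(x)=0_x$. If needed, I first cut $\eta$ off by a bump function so that everything is compactly supported, which does not change the germ at $x$. By differentiable bisections there is a unique time-dependent section $\alpha(s)$ whose $\calG$-integration is $\eta_s$, namely $\alpha(s)=dR_{\eta_s^{-1}}(\partial_s\eta_s)$. Since $\eta_s(x)=1_x$ for all $s$, we have $\alpha(s)_x=0$, so $\alpha\in I_x\Gamma(A)$.

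\textbf{Step 3: conjugation by the integrated family is the adjoint flow.} I verify that $\Ad_{\eta_s}=\Phi^s_\alpha$. Differentiating $C_{\eta_s}$ in $s$ shows that $\frac{d}{ds}\Ad_{\eta_s}(\beta)=[\Ad_{\eta_s}\beta,\alpha(s)]$, up to the paper's sign conventions; this uses the Lie groupoid fact that conjugation by the flow of a right-invariant field differentiates to the bracket. Uniqueness for the adjoint flow equation~\eqref{eqn:adjoint.flow} then gives $\Ad_{\eta_s}=\Phi^s_\alpha$. Hence $\Ad_\eta=\Phi^1_\alpha$ with $\alpha$ vanishing at $x$, which is a trivial holonomy transformation, and so $\hol_\calG(g)$ is independent of the bisection.

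The main obstacle is Step 3 in the local setting. The derivative computation mixes right translations, inverses and conjugation, and one must check that each identity used lies within the finitely many identities guaranteed by convenience. One must also check that the straight-line family $\eta_s$ stays inside $\calG$ and inside the domain $\calI$ where inverses are defined. I would handle this by shrinking $\calG$ once more, using the local algebra principle, so that all bisections considered take values in a neighborhood $U$ whose convex source fibers are closed under the relevant products and inverses.
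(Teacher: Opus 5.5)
Your architecture is the paper's. You pass to a convenient integration, set $\eta=\sigma^{-1}\cdot\sigma'$, and reduce via $\Ad_{\sigma'}=\Ad_\sigma\circ\Ad_\eta$ to showing $\Ad_\eta$ is trivial. You then join $\eta$ to the identity by the straight-line family in the source-linear model and use differentiable bisections to get $\alpha\in I_x\Gamma^t_c(A)$. Your Step~3 only makes explicit what the paper takes from Lemma~\ref{Lemma:Local-Integration-Adjoing-flow-adjoint-representation}.

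The gap is the sentence ``near $x$ this family stays in bisections,'' which is false in general. In a source-linear integration $d\trg_{0_x}(v+a)=v+\rho(a)$ for $v\in T_xM$, $a\in A_x$. So when $\eta(x)=0_x$,
\[ d(\trg\circ s\eta)_x=\Id+s\,\rho_x\circ D\eta_x, \]
where $D\eta_x\colon T_xM\to A_x$ is the intrinsic derivative of $\eta$ at its zero. Invertibility at $s=1$ says nothing about $s\in(0,1)$. For example, in $T\R^2$ with $\trg(y,v)=y+v$, a rotation by $\pi$ about $x$ tapered off to the identity is a compactly supported bisection $\eta$ through $1_x$ with $d(\trg\circ\eta)_x=-\Id$; hence $d(\trg\circ\tfrac12\eta)_x=0$. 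For local bisections it is worse. In $T\R$, $\eta(y)=-2(y-x)$ has base map $y\mapsto 2x-y$, whose derivative at $x$ is $-1$, so \emph{no} path of bisections fixing $x$ joins $1_M$ to $\eta$. A cleverer path therefore cannot rescue the argument. Without a family of bisections, $dR_{\eta_s^{-1}}(\partial_s\eta_s)$ does not define a section of $A$, and the differentiable-bisections property cannot be applied.

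The missing idea is the reduction the paper makes before drawing the straight line. The holonomy class only depends on $\Ad_\eta|_{A_{S_x}}$, and on the slice the orbit of $x$ is zero-dimensional: at $x$ the anchor of $A_{S_x}$ takes values in $T_xS_x\cap\rho(A_x)=0$. With $\rho_x=0$ you get $d(\trg\circ s\eta)_x=\Id$ for every $s$. Compactness of $[0,1]$ then yields a single neighborhood of $x$ on which all $s\eta$ are bisections. This is exactly the paper's remark that ``since the orbit of $x$ is zero dimensional, there will be a neighborhood of $x$ where for all scalars $t\in[0,1]$ we have that $t\cdot\epsilon$ is a bisection.''

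One way to carry out the reduction is as follows. First multiply $\eta$ by the $\calG$-integration of an element of $I_x\Gamma^t_c(A)$ whose base flow carries $(\trg\circ\eta)(S_x)$ back to $S_x$; such an element exists by Lemma~\ref{lemma:slice.algebroids.connected.by.trivial.holonomy}. By your Step~3 this changes $\Ad_\eta|_{A_{S_x}}$ only by a trivial holonomy transformation. The new bisection restricts to the groupoid $\src^{-1}(S_x)\cap\trg^{-1}(S_x)$ integrating $A_{S_x}$. There your straight-line argument, run in a source-linear model, goes through.
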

\begin{proof}
Without loss of generality, we assume that we are working with a convenient integration of $\widetilde{\calG} \subset A$.

Suppose $\sigma_1$ and $\sigma_2$ are two bisections through $g \in \calG$. We must show that these two bisections define the same holonomy transformation. Since holonomy transformations only depend on the algebroid morphism induced on a slice, by restricting the groupoid to the said slices, we can assume without loss of generality that the orbit associated to $g$ is zero dimensional.

Consider the bisection $\epsilon := \sigma_1^{-1} \cdot \sigma_2$, when defined,  satisfies \[ \epsilon(x) = 1_x,\] that is, it is a bisection through the identity defined on a neighborhood of $x$.  $\mathcal{G}$ is a source-linear integration, so locally $\mathcal{G}$ can be identified with an open neighborhood of the algebroid $A$.  In this correspondence, $1_x$ is just the zero vector $0_x$ and thus $\epsilon$ vanishes at $x$. 
Because $\mathcal{G}$ is convex,  for each $\epsilon(y)$ the ray connecting $0_y$ and $\epsilon(y)$ is in $\mathcal{G}$. Furthermore, since the orbit of $x$ is zero dimensional, there will be a neighborhood of $x$ where for all scalars $t \in [0,1]$ we have that $t \cdot \epsilon$ is a bisection.

We consider $t \cdot \epsilon$ to be a time-dependent bisection. Since $\calG$ has differentiable bisections, there exists a unique time-dependent bisection $\alpha \in \Gamma^t_c(A)$ such that $t \cdot \epsilon$ is the $\calG$-integration of $\alpha$. Furthermore, such an $\alpha$ must vanish at $x$ since $t \cdot \epsilon(x)$ is constant. 

From this, we conclude that the holonomy transformation defined by $\Ad_{\epsilon}|_{S_x}$ is trivial since it is the time-1 flow of a time-dependent section in $I_x \Gamma^t_c(A)$.

Since, $\calG$ admits many algebraic identities, the following computation holds whenever it is well-defined:
$$ (C_{\sigma_1})^{-1} C_{\sigma_2} =C_{\sigma_1^{-1}} C_{\sigma_2} = C_{\epsilon}. $$
From this,  we conclude
$$ \Ad_{\sigma_1}^{-1} \Ad_{\sigma_2} = \Ad_{\epsilon}.$$ In other words,
$$ \Ad_{\sigma_2} = \Ad_{\sigma_1} \circ \Ad_{\epsilon}.  $$
Since $\Ad_{\epsilon}$ defines a trivial holonomy transformation, we conclude that $\sigma_1$ and $\sigma_2$ define the same holonomy transformation. This argument does rely on the well-definedness of the above expressions, however.

To that end, choose $\calG$ to be an open restriction of $\widetilde{\calG}$ such that all of the above expressions are well defined (at least inside of $\widetilde{\calG}$). Then $\calG$ has holonomy.
\end{proof}

\subsection{Relation to Algebroid holonomy}

The notion of groupoid holonomy is compatible with algebroid holonomy and they are, in fact, two different ways of looking at the same phenomena.

\begin{lemma}\label{Lemma:Local-Integration-Adjoing-flow-adjoint-representation}
    Suppose $\widetilde{\calG}$ is a local integration of a Lie algebroid $A$. There is an open restriction $\calG \subset \widetilde{\calG}$ with the following property: For all time-dependent sections $\alpha \in \Gamma^t_c(A)$ with $\calG$-integration $\sigma(t)$ we have that:
    \[ \Phi^t_{\alpha} = \Ad_{\sigma(t)}. \]
    In other words, the adjoint flow and adjoint representation coincide.
\end{lemma}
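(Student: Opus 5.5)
Throughout, I take $\calG$ to be a convenient open restriction of $\widetilde{\calG}$ that also has holonomy (Lemma~\ref{Lemma:open-restriction-has-holonomy}). I may shrink it further using the local algebra principle (Lemma~\ref{lemma:local-algebra-principal}) so that the finitely many identities used below hold. The strategy is a uniqueness argument for ODEs. Both $t \mapsto \Phi^t_\alpha$ and $t\mapsto \Ad_{\sigma(t)}$ are one-parameter families of vector bundle automorphisms of $A$ that start at $\Id_A$. The first satisfies Equation~\ref{eqn:adjoint.flow} by definition. If I show the second satisfies the same linear equation
\[ \frac{d}{dt}\Ad_{\sigma(t)}(\beta) = [\Ad_{\sigma(t)}\beta, \alpha(t)], \]
then uniqueness of solutions of the derivation ODE (Appendix~\ref{appendix:derivations}) gives $\Phi^t_\alpha=\Ad_{\sigma(t)}$.

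\textbf{Step 1: the derivative at $t=0$.} First I treat the time-independent germ. If $\tau(h)$ is a smooth family of bisections with $\tau(0)=1_M$ and generating section $\gamma=\frac{d}{dh}|_{h=0}\tau(h)$, then I claim
\[ \frac{d}{dh}\Big|_{h=0}\Ad_{\tau(h)}\beta = -[\gamma,\beta]. \]
With the paper's sign conventions this should read $[\beta,\gamma]$, to be matched with Equation~\ref{eqn:adjoint.flow}. To prove it, note that conjugation $C_{\tau(h)}$ is left translation by $\tau(h)$ composed with right translation by $\tau(h)^{-1}$. Right-invariant vector fields are invariant under right translations. The flow of $\hat\gamma$ is left multiplication by the integrated bisection, so $\Ad_{\tau(h)}\hat\beta$ is the pushforward of $\hat\beta$ along the flow of $\hat\gamma$. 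Differentiating then gives a Lie derivative $\mathcal{L}_{\hat\gamma}\hat\beta=[\hat\gamma,\hat\beta]$. Since $\alpha\mapsto\hat\alpha$ is a Lie algebra morphism, this is the bracket in $A$. This is the standard Lie group computation $\frac{d}{dh}\Ad_{\exp(h\gamma)}=\ad_\gamma$, carried out with right-invariant vector fields.

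\textbf{Step 2: reduction to $t=0$ via the cocycle property.} Write $\sigma(t+h)=\tau_t(h)\cdot\sigma(t)$, where $\tau_t(h):=\sigma(t+h)\cdot\sigma(t)^{-1}$. This is a family of bisections with $\tau_t(0)=1_M$. Because $\sigma$ is the $\calG$-integration of $\alpha$ (so $\sigma'(t)=\hat\alpha(t)|_{\sigma(t)}$), differentiating at $h=0$ shows that $\tau_t$ is generated by $\alpha(t)$. Differentiable bisections guarantee that this generator is a genuine section and is unique. From $C_{\sigma\cdot\eta}=C_\sigma\circ C_\eta$, which holds in a convenient integration, I get
\[ \Ad_{\sigma(t+h)}=\Ad_{\tau_t(h)}\circ\Ad_{\sigma(t)}. \]
Differentiating in $h$ at $h=0$ and applying Step 1 with $\gamma=\alpha(t)$ yields the desired ODE for $\Ad_{\sigma(t)}$.

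\textbf{Main obstacle: domains of definition.} The step I expect to be delicate is making sure everything is defined. The products $\sigma(t+h)\cdot\sigma(t)^{-1}$, the inverse bisections, and the conjugations $C_{\sigma(t)}$ all need to make sense near the units, for all $t\in[0,1]$, inside the local groupoid. I would handle this as follows:

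1. Evaluate all expressions in the ambient groupoid $\widetilde{\calG}$, as in the proof of differentiable bisections.
2. Choose $\calG$ small enough that every product of at most a bounded number of elements of $\calG$ lies in the domain where the relevant identities hold.

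This is legitimate because $\sigma(t)$ takes values in $\calG$ by hypothesis. Since the argument is pointwise in $t$ and only germs near the unit section matter, a single restriction should suffice, independent of $\alpha$.
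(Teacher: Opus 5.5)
Your proposal is correct and follows the paper's route. The paper's proof only invokes the identity $\left.\frac{d}{dt}\right|_{t=0}\Ad_{\sigma(t)}\beta=[\beta,\alpha]$ (your Step 1) and leaves implicit both the propagation to all $t$ and the appeal to uniqueness for the adjoint flow equation; your Step 2 supplies these via $\Ad_{\sigma(t+h)}=\Ad_{\tau_t(h)}\circ\Ad_{\sigma(t)}$. Two small fixes are needed. Differentiating a pushforward along a flow gives $-\mathcal{L}_{\hat\gamma}\hat\beta$, not $+\mathcal{L}_{\hat\gamma}\hat\beta$, although your stated formula $-[\gamma,\beta]$ is the right one. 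Also, $\tau_t(h)$ is generated by the time-dependent section $\alpha(t+\cdot)$, not the constant section $\alpha(t)$, so you should either note that the $h$-derivative at $h=0$ depends only on $\partial_h\tau_t|_{h=0}=\alpha(t)$, or observe that $L_{\sigma(t)}$ is the time-$t$ flow of $\hat\alpha$ and differentiate $(L_{\sigma(t)})_*\hat\beta=\widehat{\Ad_{\sigma(t)}\beta}$ to get the ODE for all $t$ at once.
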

\begin{proof}
This follows from the fact that the Lie bracket on sections of $A$ can be characterized as the derivative of the adjoint action. More specifically, given a time-dependent section $\sigma(t)$ with $\sigma(0) = 1_M$ and $\sigma'(0) = \alpha$ we have that:
\[ \left. \frac{d}{dt}\right|_{t=0} \Ad_{\sigma(t)} \beta = \left[ \beta, \alpha \right].\]

\end{proof}
An immediate corollary of this fact is that, in a suitably small neighborhood of the units, algebroid holonomy and local groupoid holonomy coincide.

\begin{corollary}\label{corollary:open-neighborhood-identity-holonomy}
    Suppose $\calG$ is a local integration of a Lie algebroid $A$ and $\calG$ has holonomy. Then there is an open neighborhood $U \subset \calG$ of the identity with the following property: For all time-dependent sections $\alpha \in \Gamma^t_c(A)$ with $\calG$-integration $\sigma(t)$ contained in $U$, we have that: 
    $$ \forall x \in M, \qquad \hol^\calG(\sigma(1)(x)) = \hol(\alpha,x).$$
\end{corollary}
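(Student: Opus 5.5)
The plan is to combine Lemma~\ref{Lemma:Local-Integration-Adjoing-flow-adjoint-representation} with the definition of groupoid holonomy. First I would apply Lemma~\ref{Lemma:Local-Integration-Adjoing-flow-adjoint-representation} to $\calG$ itself. This produces an open restriction $U \subset \calG$ such that, for every $\alpha \in \Gamma^t_c(A)$ whose $U$-integration $\sigma(t)$ exists, we have $\Phi^t_\alpha = \Ad_{\sigma(t)}$. This $U$ is the neighborhood of the identity in the statement. The $\calG$-integration of $\alpha$ is obtained by flowing the right-invariant vector field $\hat\alpha$ from the units. Hence if $\sigma(t)$ lies in $U$ for all $t\in[0,1]$, it coincides with the $U$-integration. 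So the hypothesis that $\sigma(t)$ is contained in $U$ is exactly what lets us invoke the lemma at $t=1$.

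Next, fix $x \in M$ and set $g := \sigma(1)(x)$. Then $s(g) = x$, and $\sigma(1)$ is a bisection through $g$. The base diffeomorphism $f = t\circ \sigma(1)$ agrees with the time-one flow $\phi^1_\alpha$ of $\rho(\alpha)$, since $t$ intertwines $\hat\alpha$ with $\rho(\alpha)$. By definition, $\hol^\calG(g)$ is the class in $\HT(A)$ of $\Ad_{\sigma(1)}|_{A_{S_x}} \colon A_{S_x} \to A_{f(S_x)}$ for a slice $S_x$ through $x$. Because $\calG$ has holonomy, this class does not depend on which bisection through $g$ is used, so we may use $\sigma(1)$. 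On the other hand, $\hol(\alpha,x)$ is the class of the germ at $x$ of $\Phi^1_\alpha$ restricted to $S_x$. By the lemma preceding the definition of holonomic paths, this class is independent of the slice. Since $\Phi^1_\alpha = \Ad_{\sigma(1)}$ on the nose, the two germs agree, and hence so do their classes.

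The main obstacle is domain bookkeeping for local groupoids. $\Ad_{\sigma(t)}$ is only guaranteed to be a Lie algebroid morphism, and to agree with $\Phi^t_\alpha$, when the relevant products and inverses are defined and the needed identities hold. I would handle this by first passing to a convenient open restriction, invoking Lemma~\ref{lemma:local-algebra-principal}. I would then intersect with the restriction from Lemma~\ref{Lemma:Local-Integration-Adjoing-flow-adjoint-representation}, and possibly shrink further so that $\sigma(t)^{-1}$ and $C_{\sigma(t)}$ are defined near the units for all $t$. With this restriction, differentiating $\Ad_{\sigma(t)}$ in $t$ yields the adjoint flow equation~\eqref{eqn:adjoint.flow}, and uniqueness of its solution gives the identification along the whole path, not only at $t=0$.
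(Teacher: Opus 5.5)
Your proposal is correct and matches the paper, which presents this corollary as an immediate consequence of Lemma~\ref{Lemma:Local-Integration-Adjoing-flow-adjoint-representation}: you take $U$ to be the open restriction that lemma provides, and you read off $\hol^\calG(\sigma(1)(x))=\hol(\alpha,x)$ from $\Phi^1_\alpha=\Ad_{\sigma(1)}$, using the bisection-independence that comes from $\calG$ having holonomy. Your extra step of differentiating $\Ad_{\sigma(t)}$ at every $t$ (via $C_\sigma\circ C_\eta=C_{\sigma\cdot\eta}$ in a small enough restriction) and then invoking uniqueness of adjoint flows is a useful filling-in, since the paper's proof of that lemma only records the derivative at $t=0$.
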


When $\mathcal{G}$ is a global Lie groupoid, the locality issues that arise for local integrations disappear. In particular, one does not need to shrink to sufficiently small neighborhoods of the unit section to ensure that products, inverses, and bisections are defined. Therefore, for an integrable Lie algebroid $A,$ the holonomy transformation  can be computed using any source-connected integration of $A.$

\begin{corollary}\label{Cor-Groupoid-Hol-Equal-Algeb-Hol}
    Suppose $A$ is an integrable Lie algebroid and $\calG$ is a source connected Lie groupoid (not local)  that integrates $A$. Then $\Hol(\calG) = \Hol(A)$.
\end{corollary}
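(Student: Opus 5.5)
We have to show that $\hol_\calG(g) = \Hol(\alpha,x)$ whenever $g \in \calG$ and $(\alpha,x)$ is a $\Gamma(A)$-path whose integration ends at $g$. Both $\Hol(\calG)$ and $\Hol(A)$ are subsets of $\HT(A)$, so this gives the equality of the two subsets in both directions.

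The plan is to use that $\calG$ is a global Lie groupoid, so none of the locality restrictions from the local theory apply. Then Lemma~\ref{Lemma:Local-Integration-Adjoing-flow-adjoint-representation} holds with no shrinking: for every $\alpha \in \Gamma^t_c(A)$ the right-invariant vector field $\hat\alpha$ is complete, so $\alpha$ is $\calG$-integrable with integration $\sigma(t)$. We also have $\Phi^t_\alpha = \Ad_{\sigma(t)}$ for all $t \in [0,1]$. To see this, differentiate $\Ad_{\sigma(t)}$ in $t$ using $\sigma(t+h) = \sigma_h' \cdot \sigma(t)$, where $\sigma'_h$ is a family of bisections through the identity with derivative $\alpha(t)$ at $h=0$. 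Together with $C_{\sigma\cdot\eta} = C_\sigma C_\eta$, which holds globally, this shows that $\Ad_{\sigma(t)}$ satisfies Equation~\ref{eqn:adjoint.flow}. By uniqueness of adjoint flows the two agree. We also invoke Lemma~\ref{lemma:Lie-grpd-has-holonomy}, so $\hol_\calG$ is well defined and independent of the bisection.

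For the inclusion $\Hol(A) \subset \Hol(\calG)$, take a $\Gamma(A)$-path $(\alpha,x)$ and put $g := \sigma(1)(x)$. The holonomy of $(\alpha,x)$ is the class of $\Phi^1_\alpha|_{S_x} = \Ad_{\sigma(1)}|_{S_x}$, and by definition this is $\hol_\calG(g)$.

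For the inclusion $\Hol(\calG) \subset \Hol(A)$, take any $g \in \calG$ with $s(g)=x$. Because $\calG$ is source connected, there is a smooth path $g(t)$ in the source fiber $s^{-1}(x)$ from $1_x$ to $g$. Its right-translated derivative $a(t) = dR_{g(t)^{-1}} g'(t)$ is an $A$-path. Extend $a(t)$ to a compactly supported time-dependent section $\alpha(t)$, cutting off away from the compact image of $t(g(t))$. The $\calG$-integration $\sigma(t)$ of $\alpha$ then satisfies $\sigma(t)(x) = g(t)$, because $t \mapsto \sigma(t)(x)$ and $g(t)$ solve the same ODE $\dot h = \hat\alpha(h)$ with the same initial value $1_x$. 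So $g = \sigma(1)(x)$, and the first part gives $\hol_\calG(g) = \Hol(\alpha,x)$. This uses that holonomy of $g$ does not depend on the bisection through $g$.

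The main obstacle is the global identity $\Phi^t_\alpha = \Ad_{\sigma(t)}$. The proof of Lemma~\ref{Lemma:Local-Integration-Adjoing-flow-adjoint-representation} only gives the infinitesimal statement at $t=0$, and we must propagate it along the whole path through the cocycle property of $\Ad$. The real difficulty is the sign and order conventions: $\Ad_{\sigma(t+h)} = \Ad_{\sigma(t)}\Ad_{\sigma(t)^{-1}\sigma(t+h)}$ versus left composition must match $\frac{d}{dt}\Phi = [\Phi,\alpha]$ with right-invariant vector fields. I would settle this by checking the case $A = TM$, $\calG = M\times M$, where both sides should be $d\phi^t_X$. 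Note also that taking the quotient of $\calG$ by holonomy produces a groupoid structure matching that of $\Hol(A)$, by Proposition~\ref{Proposition:Flow-product-compatible-holonomy} and $\sigma_{\alpha\bullet\beta}(t)=\sigma_\alpha(t)\cdot\sigma_\beta(t)$. So the equality also holds as groupoids.
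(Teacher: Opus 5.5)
Your proposal is correct and follows the route the paper indicates. The paper treats the corollary as Lemma~\ref{Lemma:Local-Integration-Adjoing-flow-adjoint-representation} plus Corollary~\ref{corollary:open-neighborhood-identity-holonomy}, applied with no shrinking because $\calG$ is global, with source-connectedness ensuring that every arrow is $\sigma(1)(x)$ for the $\calG$-integration $\sigma$ of some $\Gamma(A)$-path. Your argument fills in what the paper leaves implicit: the global identity $\Phi^t_\alpha=\Ad_{\sigma(t)}$ via $\sigma(t+h)=\tau_h\cdot\sigma(t)$ (left multiplication is the correct convention for right-invariant flows), and the ODE-uniqueness step identifying $g(t)$ with $\sigma(t)(x)$.
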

For local integrations, an analogous statement holds. As usual, one must first restrict to a sufficiently small neighborhood of the unit section in order to ensure that all relevant structure maps are defined. 
\begin{proposition}\label{proposition:open-groupoid-morphism-proposition} 
    Suppose $A$ is a Lie algebroid and let $\Pi_1(A)$ be the Weinstein groupoid of $\calG$ and $A$ be an arbitrary local integration. There exists an open restriction $\calG \subset \widetilde{\calG}$ with holonomy, together with an open groupoid homomorphism $\calG \to \Pi_1(A)$ which makes the following diagram commute:
    \[
    \begin{tikzcd}
      \calG \arrow[r, "\hol_\calG"] \arrow[d]& \Hol(\calG) \arrow[d, hook] \\
  \Pi_1(A) \arrow[r] & \Hol(A) 
    \end{tikzcd}
    \]
    where the right vertical arrow is the natural inclusion of $\Hol(\calG)$ into $\Hol(A)$ as subsets of $\HT(A)$.
\end{proposition}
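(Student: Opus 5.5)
We construct the homomorphism $\calG \to \Pi_1(A)$ by integrating along source fibers, and then reduce the commutativity of the square to Corollary~\ref{corollary:open-neighborhood-identity-holonomy} together with Theorem~\ref{theorem:holonomy.groupoid.sits.in.between.weinstein.and.foliation.holonomy}.

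\textbf{Choosing the open restriction.} First we shrink $\widetilde{\calG}$. By Lemma~\ref{Lemma:open-restriction-has-holonomy}, Lemma~\ref{Lemma:Local-Integration-Adjoing-flow-adjoint-representation} and the existence of convenient restrictions, we choose an open restriction $\calG$ that is convenient: source-linear, with convex source fibers and differentiable bisections. We also require that $\calG$ has holonomy, that $\Phi^t_\alpha = \Ad_{\sigma(t)}$ holds for $\calG$-integrable $\alpha$, and that $\calG$ is contained in the neighborhood $U$ of Corollary~\ref{corollary:open-neighborhood-identity-holonomy}.

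\textbf{Defining the map $\calG \to \Pi_1(A)$.} For $g \in \calG$ with $s(g)=x$, convexity and source-linearity give the straight path $g(t) := t g$ in $\calG_x$. Right-translating its velocity, $a(t) := dR_{g(t)^{-1}}(g'(t)) \in A_{t(g(t))}$, produces an $A$-path. We set $\Psi(g) := [a]$. This is the standard path-lifting map from a source-simply-connected neighborhood into $\Pi_1(A)$. It is smooth, and it is a local diffeomorphism near the units because its derivative at $1_M$ is the identity on $A$. Since it is a groupoid morphism near the units, it is open after translation.

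\textbf{Homomorphism property.} Given a composable pair $(g,h)$, the concatenation of the path $t\mapsto th$ with the path $t \mapsto (tg)\cdot h$ is a path in $\calG_{s(h)}$ from $1$ to $gh$. Within a convex source fiber it is homotopic, rel endpoints, to the straight path to $gh$. Right-translating a source-fiber homotopy gives an $A$-homotopy, so $\Psi(gh)=\Psi(g)\Psi(h)$. The needed identities involve only a few elements and hold in a convenient integration.

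\textbf{Commutativity of the square.} Extend the straight path to a family of bisections $\sigma(t)$ with $\sigma(t)(x) = tg$. This uses source-linearity: extend $g$ to a local section of $A$ near $x$, cut it off, and scale by $t$. Differentiable bisections then give $\alpha \in \Gamma^t_c(A)$ whose $\calG$-integration is $\sigma(t)$, and $\alpha$ extends the $A$-path $a(t)$. By Corollary~\ref{corollary:open-neighborhood-identity-holonomy}, $\hol_\calG(g) = \hol_\calG(\sigma(1)(x)) = \hol(\alpha,x)$. By Theorem~\ref{theorem:holonomy.groupoid.sits.in.between.weinstein.and.foliation.holonomy}, $\hol(\alpha,x)$ is the image of $[a]=\Psi(g)$ under $\Pi_1(A)\to\Hol(A)$. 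Since both $\Hol(\calG)$ and $\Hol(A)$ sit inside $\HT(A)$, the square commutes.

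\textbf{Main obstacle.} The step I expect to be hardest is ensuring that $t\cdot\sigma$ is actually a bisection for all $t\in[0,1]$, meaning $t\circ(t\sigma)$ is a diffeomorphism, uniformly enough that one open restriction works for all $g$. I would handle it by shrinking $\calG$ so that $t\circ$ restricted to the segments $\{t g\}$ stays close to the identity in $C^1$ on compact sets. Where that fails globally, I would cut off the extension of $g$ to a small neighborhood of $x$, so that the resulting base map is a compactly supported small perturbation of the identity, hence a diffeomorphism.
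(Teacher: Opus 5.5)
Your commutativity argument is the paper's: find $\alpha\in\Gamma^t_c(A)$ extending the $A$-path of $g$ whose $\calG$-integration reaches $g$ at time $1$, apply Corollary~\ref{corollary:open-neighborhood-identity-holonomy}, and use that $\Pi_1(A)\to\Hol(A)$ is computed by any extension.

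The difference is how you get the map $\calG\to\Pi_1(A)$. The paper imports it from Crainic--Fernandes. Their exponential $\exp^\nabla\colon A\dashrightarrow\mathcal P(A)$ is transverse to the $A$-homotopy foliation, which gives an open map $\pi$ on a neighborhood of the zero section carrying a compatible local groupoid structure. The paper then uses the Cabrera--M\u{a}rcu\c{t}--Salazar uniqueness of the germ to assume $\widetilde\calG$ has this form. You instead define $\Psi$ directly on any convex source-linear restriction from the straight paths $t\mapsto tg$, and get multiplicativity from a homotopy inside a convex source fiber, so no identification of germs is needed.

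Your last step is also more careful than the paper's. You start from bisections $t\tilde g$ with values in $\calG\subset U$ and use differentiable bisections, which guarantees that $\alpha$ is $\calG$-integrable with integration in $U$. The corollary needs exactly this, and the paper's ``take any extension $\alpha$ of $a$'' takes it for granted.

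\textbf{Openness is not justified as written.} This is the substantive part of the statement. $\Pi_1(A)$ is in general not a manifold, and $\Psi$ is in general not a local diffeomorphism. When the monodromy groups of $A$ accumulate at $0$ (e.g.\ $TM\oplus\R$ twisted by a closed $2$-form with dense spherical periods), a small central $v\in\mathcal N_x(A)$ satisfies $\Psi(v)=[v]=1_x=\Psi(0_x)$. So $\Psi$ is not injective on any neighborhood of the units.

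Openness has to come from the Crainic--Fernandes structure instead. The $A$-homotopy classes are the leaves of a finite-codimension foliation on the Banach manifold $\mathcal P(A)$, and $\Pi_1(A)$ is its leaf space. At the constant path $0_x$, the normal space to this foliation is $T_xM\oplus A_x$, given by the base point of a variation and $\int_0^1 u\,dt$ of its fiber component $u$. The smooth map $g\mapsto a_g$ induces the identity onto this space, because $a_{\epsilon v}=\epsilon v+O(\epsilon^2)$. Hence $g\mapsto a_g$ is transverse to the foliation on a neighborhood of the zero section, into which you shrink $\calG$. Images of open subsets of a transversal are open in the leaf space, which gives openness. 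So your route relocates the Crainic--Fernandes input rather than avoiding it.

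\textbf{The bisection fallback has its quantifiers reversed.} Cutting the extension $\tilde g=\chi c$ off to a \emph{smaller} neighborhood moves the base map $y\mapsto\trg(\tilde g(y))$ \emph{farther} from the identity in $C^1$, since the error is of order $\|g\|\,\|d\chi\|$. Once $\trg(g)\neq x$ lies outside $\mathrm{supp}\,\chi$, the base map is not even injective: it fixes $\trg(g)$ and also sends $x$ there.

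The correct version is your first suggestion, made precise. Fix a cutoff scale $r(x)$ depending continuously on $x$, then shrink $\calG$ fiberwise so that $\|g\|/r(\src(g))$ is small and $\tilde g$ takes values in $\calG$. Then for every $t\in[0,1]$, $t\tilde g$ takes values in $\calG$ by convexity, and its base map is a $C^1$-small, compactly supported perturbation of the identity. Hence $t\tilde g$ is a bisection.
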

\begin{proof}
    To prove this proposition, we will need to refer to the construction of the Weinstein groupoid from Crainic and Fernandes~\cite{IntegrabilityofLiebrackets}. It will be useful to summarize it briefly:

    Given a Lie algebroid $A$ with a connection $\nabla$ they construct a natural exponential map $\exp^\nabla \colon A \dashrightarrow \mathcal{P}(A)$ which is defined in a neighborhood of the zero section.
    Crucially, they show that the $A$-homotopy equivalence relation induces an infinite-dimensional foliation on $\mathcal{P}(A)$\footnote{Technically they put some finite differentiability conditions on the space of $A$-paths to ensure it is a Banach manifold and have a proper foliation theory with a Frobenius theorem.}.
    Furthermore, they show that the image of the exponential $\exp^\nabla$ is transverse to the foliation.
    The Weinstein groupoid is the leaf space of the foliation on $\mathcal{P}(A)$ and composing $\exp^\nabla$ with the natural quotient map yields an open map:
    $$ \pi \colon A \dashrightarrow \Pi_1(A) $$
    Crainic and Fernandes also showed that there is an open neighborhood of the zero section of $A$ which inherits a unique local groupoid structure compatible with the exponential map and the quotient map to $\Pi_1(A)$.
    Cabrera, M\v{a}rcu\c{t}, and Salazar~\cite{Cabrera-Ioan-Maria-local-integration-Lie-brackets} showed that the germ of this local groupoid structure is unique and independent of choice of connection.

    It suffices to show that the relevant diagram exists for any convenient integration of $A$. Since the map $\Hol(\calG) \to \Hol(A)$ is supposed to be the inclusion of a subset, what we must actually show is that the following triangle commutes:
    $$
    \begin{tikzcd}
        \calG \arrow[dr, "\hol_\calG"] \arrow[dd, "\pi"]& \\
        & \HT(A) \\
        \Pi_1(A) \arrow[ur, "\hol"'] &
    \end{tikzcd}    
    $$
    Furthermore, following from the preceding discussion, we can assume that $\calG \subset A$ is an integration of $A$ arising from a connection $\nabla$ on $A$ and we have the associated open local groupoid homomorphism $\pi \colon \calG \to \Pi_1(A)$. This forms the left vertical arrow of our diagram.

    To see why this diagram commutes: Suppose we are given $g \in \calG$ arbitrary and let $x$ be the source of $g$. There is an associated $A$-path $a(t) = \exp^\nabla(g)$. Furthermore, given any time-dependent section $\alpha(t)$ extending $a(t)$ the associated $\calG$-integration $\sigma(t)$ will satisfy $\sigma(1)(x) = g$. 
    By Corollary~\ref{corollary:open-neighborhood-identity-holonomy}, we have that $\hol_\calG(g) = \hol(\alpha, x)$ for $x = s(g)$. This shows that the holonomy transformation defined by $g$ in $\calG$ coincides with the holonomy transformation defined by the corresponding $A$-path, and hence $\hol_\calG(g) = \hol(\pi(g))$.
\end{proof}

\section{Examples of Algebroid Holonomy}\label{section:examples}

\begin{enumerate}

\item \label{example:Lie-algebra} (Lie algebras) Let $A=\mathfrak{g} \rightarrow \{\ast\}$ be a finite-dimensional Lie algebra, and  let $G$ be a simply-connected integration of $\mathfrak{g}$. 

Since the basis consists of a single  point, every slice is the point itself and the the full holonomy transformation groupoid and reduced holonomy transformation groupoids coincide: 
\[ \FHT = \HT  = \mathrm{Aut}(\mathfrak{g},\mathfrak{g}). \]

By Corollary \ref{Cor-Groupoid-Hol-Equal-Algeb-Hol}, the algebroid holonomy may be computed using any source-connected integration. Thus the holonomy of a $\Gamma(A)$-path is determined by the adjoint action of its endpoint in $G$. More precisely, if $v(t) \in \mathfrak{g}$ and $g_v(t) \in G$ denotes the integrated path satisfying \[ g^\prime_v(t) = v^R(t)_{g_v(t)}, \quad \quad g_v(0) = e,\]
then, by Lemma \ref{Lemma:Local-Integration-Adjoing-flow-adjoint-representation}, \[ \Phi^t_v = Ad_{g_v(t)}.\]
Consequently, two $\Gamma(A)$-paths are holonomically equivalent precisely when they determine the same inner automorphism of $\mathfrak{g}$. Therefore, \[ \Hol(\mathfrak{g}) \cong \Inn(\mathfrak{g}) = \Ad(G).\]
Thus the algebroid holonomy groupoid recovers the classical adjoint representation of a Lie algebra.

\item \label{example:integrable-Lie-algebroid-holon}(Integrable Lie algebroids) Let $A$ be an  integrable Lie algebroid and let $\G \rightrightarrows M$ be a source-connected integration.

By Corollary \ref{Cor-Groupoid-Hol-Equal-Algeb-Hol}, one can use groupoid holonomy to compute the holonomy of $A$.

Recall that given a local bisection $\sigma \colon U \to \calG$ of $\calG$,  one can define a conjugation map $\Ad_\sigma \colon A|_U \dashrightarrow A|_U$ defined on the domain of $\sigma$. Groupoid holonomy provides us with a homomorphism
\[ \hol_{\G}: \G \rightarrow \Hol(A).\]
For $g \in \calG$, $\hol_{\calG}(g)$ is defined to be the holonomy transformation class of $\Ad_\sigma$, where $\sigma$ is any local bisection extending $g$.

Indeed, it turns out that if the holonomy transformation class of $g$ is trivial then there exists a local bisection $\sigma \colon U \to \calG$ extending $g$ where $\Ad_g = \Id_{A|_U}$. Hence, computing the holonomy of an integrable algebroid reduces to determining which elements of the integration can be extended to a bisection which acts ``trivially'' on the underlying algebroid.

\item  (The trivial Lie algebroid) Let \[ A = M \times \{0\} \rightarrow M\]
  be the trivial Lie algebroid. 
  Since the characteristic foliation consists of points, every slice through $x$ is a neighborhood of $x$. The associated slice algebroid is again the trivial rank-zero algebroid. Consequently, \[ \FHT(A) = \bigsqcup_{x \in M} \bigsqcup_{S_x,  S^\prime_x} Diff({S_x}, S_{x}^\prime) .\]

  Every time-dependent section of $A$ is identically zero and therefore every adjoint flow is trivial. 
 Therefore, 
\[ \THT(A) = \bigsqcup_{x \in M} \bigsqcup_{x \in S_x} {Id_{S_x}}. \]
 Consequently, \[\HT(A) = \FHT(A).\]
 
 On the other hand, every $\Gamma(A)$-path is necessarily constant. Therefore, there is exactly one holonomy class over each point and \[ \Hol(A) = 1_M \rightrightarrows M.\]

\item \label{free-transitive-lie algebroid-holonomy} 
(A trivial bundle of Lie algebras) 
Let \[A=M \times \mathfrak{g}\rightarrow M\] be the trivial Lie algebroid with  anchor $\rho\equiv 0$. Since the anchor vanishes,  the characteristic foliation consists of  points. Consequently, for a slice $S_x$ through $x \in M$, we have \[ A_{S_x} = S_x \times \mathfrak{g}.\]
Hence a holonomy transformation from $S_x$ to $S_y$ is simply a germ of a Lie algebroid isomorphism \[ S_x \times \mathfrak{g} \rightarrow S_y \times \mathfrak{g}.\]
Therefore,
  \[\FHT(A)= \bigsqcup_{x \in M} \bigsqcup_{S_x}Aut(S_x \times \mathfrak{g}) = \bigsqcup_{x \in M} \bigsqcup_{S_x} \mathrm{Diff}_x(S_x) \times C^\infty(S_x, Aut(\mathfrak{g})). \]
If $\alpha(t)$ is a time-dependent section of $A$, then its adjoint flow acts pointwise on the $\mathfrak{g}$-factor. By the Lie algebra computation of Example \ref{example:Lie-algebra}, the resulting automorphisms are inner automorphisms of $\mathfrak{g}$. Hence, 

\[ \THT_x = \{ (\Id, x \mapsto L_x) : \forall y \in S_x , L_y \in \Inn(\mathfrak{g})  \text{ and } L_x = \Id \}. \]

Moreover, two elements  $\theta = (f, L)$ and $\theta^\prime = (f^\prime, L^\prime) \in \HT(A)$ are in the same reduced holonomy class if and only if  $f|_{S_x} = f^\prime|_{S_x}$, $L_x = L^\prime_{x}$,  and $L^{-1}_y \circ L^\prime_y \in \Inn(\mathfrak{g})$, for all  $y \in S_x$. 
Consequently, every $\Gamma(A)$-path remains at its base point.

To compute the holonomy groupoid, we observe that $A$ will be an integrable algebroid. Its source simply connected integration will be given by the trivial bundle of Lie groups $ M \times G \grpd M$,  where $G$ is the simply connected integration of $\mathfrak{g}$.

Since $\Hol(A) = \Hol(M \times G)$,  it suffices to compute which elements of $ M \times G$ are associated to trivial holonomy transformations.

Given $(x, g) \in M \times G$ if $\Ad(g) \neq \Id_{\mathfrak{g}}$ then any bisection $\sigma$ extending $g$ will not induce a trivial holonomy transformation.
On the other hand, if $\Ad(g) = \Id_{\mathfrak{g}}$ then we can extend $(x, g) $ to a constant bisection $\sigma$ and observe that $\Ad_\sigma = \Id_A$. Therefore, $(x, g) $ has trivial holonomy if and only if $\Ad_g = \Id_{\mathfrak{g}}$. From this we conclude that:
$$ \Hol(M \times \mathfrak{g}) = \Hol(M \times G) = M \times \Inn(\mathfrak{g}).$$

\item \label{example:holonomy-algebroid-tangent-bundle} (The tangent algebroid) Let $A = TM$ be the tangent bundle of $M$, with $\rho \equiv Id_{TM}$.  The characteristic foliation consists of the connected components of $M$. For every $x \in M$, a slice through $x$ is singleton. Hence the slice algebroid is the trivial Lie algebroid over a point.  Restricting the algebroid to the slice results in a zero algebroid over a point. Between any two points there is a single algebroid morphism relating trivial algebroids over them. Therefore,  
\[\FHT(A)= \HT(A) =  \bigsqcup_{x, y \in M} \Alg(\{ 0_x \} , \{ 0_y \} ) \cong M \times M.\]

Two $\Gamma(A)$-paths $(\alpha(t), p_0)$ and $(\beta(t), p_0)$ between the same two points are always holonomic because their time-1 flows are trivially equal when restricted to $\{0_{p_0} \}$. Therefore, the holonomy groupoid is \[\Hol(A) = \{ (x,y) \in M \times M \ : \ x \text{ and } y \text{ are in the same connected component}\}. \]

Thus the holonomy construction recovers the pair groupoid.

For an arrow $g:x\to y$ of $G$, conjugation induces a Lie algebra
isomorphism
\[
\Ad(g):\mathfrak g_x\longrightarrow \mathfrak g_y.
\]

Two arrows of $G$ determine the same element of $\Hol(A)$ precisely
when they induce the same holonomy class in
\[
\HT(A).
\]

Consequently, $\Hol(A)$ is the groupoid whose arrows from $x$ to $y$
are the holonomy classes of Lie algebra isomorphisms
\[
\mathfrak g_x\longrightarrow \mathfrak g_y
\]
induced by arrows of a source-connected integration of $A$.
\item \label{example:algebroid-holonomy-transitive} (Transitive Algebroid) Let $A \rightarrow M$ be a transitive algebroid. Since the anchor is surjective, the characteristic foliation consists of a single leaf, namely $M$ itself. Consequently, a slice through $x \in M$ is the point $\{x\}$ and the corresponding slice algebroid is the isotropy Lie algebra \[ \mathfrak{g}_x:=ker(\rho_x).\]
Therefore, \[\FHT(A) = \bigsqcup_{x, y \in M} Alg(\mathfrak{g}_x, \mathfrak{g}_y).\]  
If $\alpha(t) \in I_s \Gamma(A)$, then its time-1 flow restricts to a Lie algebra automorphism of $\mathfrak{g}_x.$ Hence

\[\THT(A)=\bigsqcup_{x \in M} \{Id_{\mathfrak{g}_x}\}. \]

Consequently,  \[\HT(A) = \bigsqcup_{x, y \in M} \Alg(\mathfrak{g}_x, \mathfrak{g}_y).\]

Thus the holonomy transformation groupoid records isotropy Lie algebra isomorphisms modulo

Now, any two $\Gamma(A)$-path $(\alpha(t), p_0)$ and $(\beta(t), p_0)$ has the same holonomy class if they have the same automorphism from $\mathfrak{g}_x$ to $\mathfrak{g}_y$.

Suppose now that $A$ is integrable and let \[ \G \rightrightarrows M\] be the source-connected integration. By example \ref{example:integrable-Lie-algebroid-holon} and \ref{Cor-Groupoid-Hol-Equal-Algeb-Hol},  the holonomy of a $\Gamma(A)$-path depends only on the corresponding arrow of $\G.$ Furthermore, the discussion above shows that the resulting holonomy is completely determined by the induced isomorphisms between isotropy Lie algebras.

Consequently, \[ \Hol(A) \cong Im(Ad(\mathcal{G})) \subset \bigsqcup_{x,y \in M} \Alg(\mathfrak{g}_x, \mathfrak{g}_y),\]
where \[ \Ad(g): \mathfrak{g}_x \rightarrow\mathfrak{g}_y\]

is the isotropy adjoint representation.

\item \label{example:algebroid-holonomy-foliations} Let $\mathcal{F}$ be a foliation on $M$ and regard $A=  T\mathcal{F} \rightarrow M$ as a Lie algebroid with anchor map $i:T\mathcal{F} \hookrightarrow TM $. For any $x \in M$, let $S_x$ be a slice transverse to the foliation through $x$.  The corresponding slice algebroid is $A_{S_x} = S_x\times \{0\}$. Therefore, \[\FHT(A)=\bigsqcup_{x, y \in M} \bigsqcup_{S_x, S_y} Diff(S_x, S_y).\]

For any $\alpha(t) \in I_x \Gamma(A)$, the time-1 flow restriction to $S_x$ is
 restriction of the flow of $\rho(\alpha(t)) = \alpha(t) \in I_x \mathcal{F}$ to $S_x$. Thus, \[\THT(A)= \bigsqcup_{x \in M} \bigsqcup_{S_x}\{ \Theta \in \FHT(A): \Theta = \Phi^1_{\alpha(t)}, \alpha(t) \in I_x\mathcal{F}\}. \]
 and hence, \[\HT(A) = \bigsqcup_{x, y \in M} \bigsqcup_{S_x, S_y} Diff(S_x, S_y)\Big/\bigsqcup_{x \in M} \bigsqcup_{S_x}\{ \Theta \in \FHT(A): \Theta = \Phi^1_{\alpha(t)}, \alpha(t) \in I_x\mathcal{F}\}.\]

Note that there is an existing construction \cite{SingJoelAlf} for defining the holonomy groupoid of a foliation, this $\Hol(A)$ is exactly the holonomy of the foliation $\mathcal{F}$.

\item Let $$M = ([0,2\pi] \times \mathbb{R})/\sim \to [0,2\pi]$$ be a line bundle over a circle with one twist and consider the action of $\mathbb{R}$ on this by rotating around the circle. The corresponding action algebroid is  \[ A= M \times \mathbb{R} \rightarrow M\] with anchor  \[\rho: M \times \mathbb{R} \rightarrow TM, \quad \quad \rho(m,c):= cX(e),\]

where $X$ is the vector field in the $[0,1]$-direction(generating the circle action).

For $x = (\theta_0, y_0) \in M$, a slice at $x$ is 
 is just a small neighborhood  $S_x=\{(\theta_0, y):|y-y_0|<\epsilon\}$ and therefore, the slice algebroid is  $A_{S_x} = S_x \times \{0\}$.

The full holonomy transformation groupoid is \[\FHT(A)= \Big(\bigsqcup_{x \neq y\in M} \bigsqcup_{x \in S_x, y \in S_y} \{Id\} \Big) \hspace{0.5cm} \bigsqcup \hspace{0.5cm}\Big( \bigsqcup_{x \in M} \bigsqcup_{x \in S_x} \{Id, -Id\}  \Big).\]
Let $\alpha(t) \in I_x \Gamma(A)$,  and let $\phi^t_\alpha$ represent the flow of $\rho(\alpha(t))$. Then,   $\phi^t_{\rho(\alpha)}(x) =x$ for all $t$. Therefore  \[\THT(A)= \bigsqcup_{x \in M} \bigsqcup_{S_x}\{Id\}. \] Hence\[\hspace{1cm} \HT(A)=\FHT(A).\]


\textbf{Holonomy Groupoid: }
Two $\Gamma(A)$-paths $(\alpha(t), p_0)$ and $(\beta(t), p_0)$  have the same algebroid holonomy if $\rho(\alpha) $ and $\rho(\beta)$ have the same foliation holonomy. 
The holonomy groupoid is \[\Hol(A)=  \Hol(\mathcal{F}).\]

\item (Constant-rank Lie algebra actions) Let $\rho: \mathfrak{g} \rightarrow \mathfrak{X}(M)$ be a Lie algebra action of constant rank and let $A = \mathfrak{g} \times M$ be the associated action Lie algebroid. The anchor is given by $\rho_x(v) = \rho(v)(x).$ Let $\mathcal{F}:=\rho(\mathfrak{g}) \subseteq TM$ be the corresponding regular foliation. Since the rank of $\rho$ is constant, the isotropy Lie algebras $\mathfrak{g}_x:=ker(\rho_x)$ have locally constant dimension.  

For every $x \in M$, let $S_x$ be a slice transverse to the foliation $\mathcal{F}$. Since $A_{S_x} = \rho^{-1}(TS_x),$ and $TS_x \cap \rho(\mathfrak{g})  = 0,$ we obtain \[ A_{S_x} = \bigsqcup_{ y \in S_x} \mathfrak{g}_y.\]

Thus the slice algebroid is a bundle of isotropy Lie algebras over $S_x.$

Consequently, \[ \FHT(A) = \bigsqcup_{x, y \in M} \bigsqcup_{S_x, S_y} Alg(A_{S_x}, A_{S_y}).\]

Let $G$ be the simply connected Lie group integrating $\mathfrak{g}$. The action algebroid $A$ is integrated by the action groupoid \[ G \ltimes M \rightrightarrows M.\]

Since $G$ is connected, this groupoid is source-connected. Therefore, by Corollary \ref{Cor-Groupoid-Hol-Equal-Algeb-Hol} and Example \ref{example:integrable-Lie-algebroid-holon}, the holonomy of a $\Gamma(A)$-path may be computed entirely from the action groupoid.

An arrow $(g,x): x \rightarrow gx$ induces both \begin{enumerate}
    \item the holonomy transformation of the foliation $\mathcal{F}$, and 
    \item an isomorphism of isotropy Lie algebras $\Ad_g:\mathfrak{g}_x \rightarrow \mathfrak{g}_{gx}.$
\end{enumerate}

Hence the holonomy groupoid is the image of the map \[ G \ltimes M \rightarrow \Hol(\mathcal{F}) \times \bigsqcup_{x, y \in M} Alg(\mathfrak{g}_x, \mathfrak{g}_y)\]

which sends $(g,x)$ to the pair consisting of its foliation holonomy and the induced isotropy Lie algebra isomorphism.

In particular, this example interpolates between the case for $\rho \equiv 0$ (Example  \ref{free-transitive-lie algebroid-holonomy}) and for  $\mathfrak{g}_x = \{0\}$ (Example \ref{example:algebroid-holonomy-foliations}). Thus the holonomy groupoid of a constant-rank Lie algebra action simultaneously records the transverse holonomy of the foliation $\mathcal{F}$ and the adjoint representation of the isotropy Lie algebras.

\item \label{example:algebroid-holonomy-regular-poisson}(Regular Poisson Manifolds) Let $(M, \pi)$ be a regular Poisson manifold, and let \[ A = T^\ast_\pi M\]
denote its cotangent Lie algebroid. The anchor map is \[ \rho=\pi^\sharp: T^\ast M \rightarrow TM.\]
Since $\pi $ has constant rank, the image $\pi^\sharp $ defines a regular distribution \[ \mathcal{F}:=\pi^\sharp(T^\ast M) \subset TM,\]
which is precisely the symplectic foliation of $(M, \pi).$
Fix a point $x \in M,$  and let $S_x \subset M$ be a slice through $x, $ transverse to the symplectic leaf $L_x$ through $x.$ After shrinking $S_x$ if necessary, we may assume that \[ T_yM = T_yS_x \oplus T_y\mathcal{F} \quad \text{ for all } y \in S_x.\]
The slice algebroid is, by definition, \[A_{S_x}:= \rho^{-1}(TS_x) \subset T^\ast M|_{S_x}.\]

Since $\rho(T^\ast M) = T\mathcal{F}$ and $TS_x$ is transverse to $T\mathcal{F}$, we have \[ \rho^{-1}(TS_x) =ker(\rho)|_{S_x}.\]

For the cotangent Lie algebroid, this kernel is the conormal bundle of the symplectic foliation: 

\[ ker(\pi^\sharp)|_{S_x} = (T\mathcal{F})^0|_{S_x}.\]
Therefore, \[A_{S_x} = (T\mathcal{F})^0|_{S_x}.\]
Moreover, $A_{S_x}$ has zero anchor. Since the isotropy Lie algebras of the cotangent Lie algebroid of a regular Poisson manifold are abelian, the slice algebroid $A_{S_x}$ is a bundle of abelian Lie algebras over $S_x.$ 

Hence \[ \FHT(A) = \bigsqcup_{x, y \in M} \bigsqcup_{S_x, S_y} Alg((T\mathcal{F})^0|_{S_x}, (T\mathcal{F})^0|_{S_y})).\]

Since the slice algebroids have zero anchor and abelian bracket, an element of \[ Alg((T\mathcal{F})^0|_{S_x}, (T\mathcal{F})^0|_{S_y})\] 

is equivalently a germ of a vector bundle isomorphism \[ (T\mathcal{F})^0|_{S_x} \rightarrow (T\mathcal{F})^0|_{S_y}\]

covering a germ of a diffeomorphism \[ S_x \rightarrow S_y.\]

Now, let $\alpha(t) \in \Gamma(T^\ast M)$ be a compactly supported time-dependent section, and set \[ X_t:= \rho(\alpha(t))= \pi^\sharp(\alpha(t)).\]

Then, $X(t)$ is tangent to the symplectic foliation $\mathcal{F}$.  Let \[ \phi_t\]
denote the flow of $X_t.$ Since $X_t$ is tangent to $\mathcal{F}$, the flow $\phi_t$ preserves the foliation.Equivalently, \[ d\phi_t(T\mathcal{F}) \subset T\mathcal{F.}\]
Dualizing $\phi_t$ induces a map on the conormal bundle: \[ (d\phi_t^{-1})^\ast: (T\mathcal{F})^0|_{S_x} \rightarrow (T\mathcal{F})^0|_{\phi_t(S_x)}.\]
The adjoint flow $\Phi^t_\alpha$ of the cotangent Lie algebroid restricts on the slice algebroid to this induced conormal map:\[ \Phi^t_\alpha|_{A_{S_x}}=(d\phi_t^{-1})^\ast: (T\mathcal{F})^0|_{S_x} \rightarrow (T\mathcal{F})^0|_{\phi_t(S_x)}.\] 

Thus the algebroid holonomy of $\Gamma(A)$-path $(\alpha, p)$ records exactly the transverse holonomy of the leafwise vector field  $X_t = \pi^\sharp(\alpha(t)), $ together with its induced action  on the conormal bundle.

Thus \[ \THT(A)= \bigsqcup_{x \in M} \bigsqcup_{S_x} \{ \Phi^1_\alpha|_{A_{S_x}}:\alpha(t)  \in I_x \Gamma(T^\ast M)\}, \]
Equivalently, using the conormal description of the adjoint flow 
\[ \THT(A)= \bigsqcup_{x \in M} \bigsqcup_{S_x} \{ (d\phi_1^{-1})^\ast|_{(T\mathcal{F})^0|_{S_x}}:\alpha(t)  \in I_x \Gamma(T^\ast M)\}, \phi_t \text{ is the flow of } \pi^\sharp(\alpha(t))\}. \]
Indeed, if $\alpha(t)  \in I_x(\Gamma(T^\ast M)),$ then $ X_t(x) = \pi^\sharp(\alpha(t)_x) = 0,$
so the flow $\phi_t$ fixes $x.$ Hence, $\phi_1(S_x) $ is again a slice through $x$, and the corresponding induced map \[ (d \phi_1^{-1})^\ast: (T\mathcal{F})^0|_{S_x} \rightarrow (T\mathcal{F})^0|_{\phi_1(S_x)}   \]
is precisely a trivial holonomy transformation in the sense of the definition of $\THT(A).$

However, in the regular Poisson case this conormal action is already determined by the ordinary transverse holonomy of the foliation $\mathcal{F}.$ Since the slice algebroids are bundles of abelian isotropy Lie algebras, there is no additional non-abelian isotropy contribution to the holonomy. In particular, the reduced holonomy transformation groupoid  of $T^\ast_\pi M$  agrees with the usual holonomy transformation groupoid of the foliation $\mathcal{F}$:

\[\HT(T^\ast_\pi M) \cong \HT(\mathcal{F}).\]

Consequently, two $\Gamma(T^\ast M)$-paths  $(\alpha, p)$ and $(\beta, p)$ have the same algebroid holonomy if and only if the time-dependent leafwise vector fields \[\pi^\sharp(\alpha(t)) \quad \text{ and } \quad \pi^\sharp(\beta(t))\] 
define the same holonomy transformation of the regular foliation $\mathcal{F}$. Therefore, \[ \Hol(T^\ast_\pi M) \cong \Hol(\mathcal{F}).\]

\end{enumerate}

\section{Smoothness of the Holonomy Groupoid}

In the previous sections, we constructed the holonomy groupoid $\Hol(A)$ of a Lie algebroid and computed it in a variety of examples. A priori, however, $\Hol(A)$ is only a groupoid equipped with a natural quotient structure, and it is not clear whether it admits a smooth structure.

The goal of this section is to show that the obstruction to smoothness comes entirely from isotropy. More precisely, we prove that the restriction of $\Hol(A)$ to each leaf of the characteristic foliation is a Lie groupoid. The proof is based on describing the kernel of the local holonomy map and identifying the corresponding quotient. We then compute the Lie algebroid of $\Hol(A)|_L$ and show that it is obtained from $A|_L$ by quotienting out the strongly central directions.

\subsection{Strategy of the proof}

The starting point is Proposition~\ref{proposition:open-groupoid-morphism-proposition}, which provides  a local integration $\calG$ together with a commutative diagram

\[
\begin{tikzcd}
     & \calG \arrow[r, "\hol_\calG"] \arrow[d, "\pi"] & \Hol(\calG) \arrow[d, hook] \\
    \GammaPath(A) \arrow[rr, "\hol", bend right]\arrow[r] & \Pi_1(A) \arrow[r] & \Hol(A) 
\end{tikzcd}
\] where $hol_{\G}$ is an open local homomorphism. 

Moreover, the  local groupoid homomorphism
\[ 
\hol^\calG \colon \calG \to \Hol(A) 
\] is an open map as 
 it is the composition of two open maps
\[ \calG \to \Pi_1(A) \to \Hol(A). \]

Thus, the smoothness of $\Hol(A)$ can be studied through the kernel \[ \mathcal{K} := \{ g \in \calG \ : \ \hol^\calG(g) \in 1_M \}. \]

If $\mathcal{K}$ were a closed normal local Lie subgroupoid, then one would expect $\Hol(A)$ to be locally obtained as a quotient of $\G$ by $\mathcal{K}$. The main technical ingredient needed to make this precise is the following quotient theorem for local Lie groupoids.

\begin{proposition}\label{prop:quotient-Lie-algebroid-local-integration}
    Let $\widetilde{\calG}$ be a local integration of a Lie algebroid $A$, and let  $\mathcal{K} \grpd M$ be a closed, wide, normal local Lie subgroupoid of $\calG$. Then, after passing to a sufficiently small open restriction  \[\calG \subset \widetilde{\calG},\]  the quotient $\calG/\calK$ is naturally a  local Lie groupoid. Furthermore, its Lie algebroid is canonically isomorphic to the quotient Lie algebroid 
\[A/\Lie(\mathcal{K}).\]
\end{proposition}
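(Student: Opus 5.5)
The proof mirrors the classical argument that the quotient of a Lie groupoid by a closed, wide, normal Lie subgroupoid is again a Lie groupoid. The new work is to carry out each step inside a sufficiently small open restriction, using the local algebra principle (Lemma~\ref{lemma:local-algebra-principal}) and convenient integrations. Since $\calK$ is wide, its Lie algebroid $\Lie(\calK)=T^s\calK|_{1_M}$ is a subbundle $K\subset A$ with $\rho(K)=0$. Normality of $\calK$ implies that $\Gamma(K)$ is an ideal of $\Gamma(A)$: conjugating a right-invariant vector field tangent to $\calK$ by the flow of $\hat\alpha$ preserves $\calK$. Differentiating this gives $[\Gamma(A),\Gamma(K)]\subset\Gamma(K)$. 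Hence $A/K$ inherits a bracket and anchor, and this is the quotient Lie algebroid $A/\Lie(\calK)$ of the statement.

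\textbf{Step 1: smooth structure on the quotient.} Consider the equivalence relation $g\sim g k$ for $k\in\calK$ with $s(g)=t(k)$. Shrinking to an open restriction $\calG$, I will show that this relation is given by the orbits of the right action of the local groupoid $\calK$ on $\calG$. This action is free and locally proper, because $\calK$ is closed and embedded. Its orbits are the leaves of the involutive distribution spanned by the left-invariant vector fields generated by $\Gamma(K)$, which is regular of rank $\operatorname{rk} K$. I will then choose, around each unit $1_x$, a submanifold $\Sigma$ of $\calG$ transverse to these orbits, of complementary dimension. A local-slice argument (the quotient manifold theorem applied chart-by-chart near the units) shows that $\calG/\calK$ is a manifold, possibly non-Hausdorff, with the quotient map $q$ a submersion. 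Freeness and closedness of $\calK$ give injectivity of $\Sigma\to\calG/\calK$ after shrinking.

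\textbf{Step 2: local groupoid structure.} The source and target of $\calG$ descend to $\calG/\calK$, because $\calK$ consists of isotropy elements: $\rho(K)=0$ and $\calK$ is wide with arrows in the isotropy, which I would verify from normality together with $s$- and $t$-fibre arguments. Multiplication descends because of normality: $(gk)(hk')=gh\,(h^{-1}kh)k'$. This identity involves a bounded number of elements, so it holds in a convenient integration, and the product is well defined on a neighbourhood of the axes. Smoothness of the induced structure maps follows since $q\times q$ is a surjective submersion. The remaining axioms are inherited from $\calG$ wherever they are defined.

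\textbf{Step 3: the algebroid.} The map $q$ is a local groupoid morphism and a submersion, so $\Lie(q)=dq|_A\colon A\to\Lie(\calG/\calK)$ is a surjective Lie algebroid morphism. Its kernel is $T^s\calK|_{1_M}=K$, because the fibres of $q$ through units are exactly the $\calK$-orbits. Therefore $\Lie(q)$ induces an isomorphism $A/K\cong\Lie(\calG/\calK)$, and this isomorphism respects brackets and anchors because $\Lie(q)$ does.

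The main obstacle is Step 1. The delicate points are making the quotient of a local groupoid by a local subgroupoid action well behaved: orbits must not wrap back through the restriction, and a single transversal must give a chart. I would first choose the restriction $\calG$ so that each $\calK$-orbit in $\calG$ meets a fixed tubular neighbourhood of $1_M$ in a connected set. This uses closedness of $\calK$ together with the convexity of the source fibres of a convenient, source-linear integration.
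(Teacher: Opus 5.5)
Your proposal follows essentially the paper's argument. The paper also quotients by $g\sim g\cdot k$ and realizes $\calG/\calK$ on a submanifold transverse to the $\calK$-orbits through the units. It uses normality, together with the bounded identities of a convenient integration, to push the structure maps down. It then reads off $\Lie(\calG/\calK)\cong A/\Lie(\calK)$ from the kernel of the surjection $A\to\Lie(\calG/\calK)$.

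The difference is in Step 1. The paper does not use local slices $\Sigma$ and a chart-by-chart quotient manifold theorem. Instead it picks one embedded $V\supset 1_M$ for which $p\colon V\times_{s,t}\calK\to\widetilde{\calG}$, $(v,k)\mapsto v\cdot k$, is a diffeomorphism onto its image, and then \emph{defines} the restriction as $\calG:=\mathrm{Im}(p)$. Uniqueness of $g=v\cdot k$ then gives $\calG/\calK\cong V$ directly. This is how the paper disposes of the ``wrap-back'' issue you identify as your main obstacle, and it needs no gluing and raises no non-Hausdorff concerns.

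One correction. That $\calK$ lies in the isotropy, so that $\rho(\Lie(\calK))=0$, does not follow from wideness, nor from conjugation invariance. Take $\calK=\calG$: it is wide, closed and conjugation-invariant, yet $s$ does not descend under $g\sim gk$. So the ``verification from normality'' you plan in Step 2 cannot succeed. Isotropy has to be read into ``normal'' (a normal subgroup bundle). The paper assumes this tacitly when it says the structure maps descend, and it holds in the intended application, where $\calK\subset Z(\calG)$.
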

\begin{proof}
Let $B = \Lie(\mathcal{K})$ be the Lie subalgebroid of $A$ corresponding to $\calK$.  We define $\mathcal{G}/\mathcal{K}$ as  the quotient of $\G$ by the equivalence relation generated by: 
\[  g_1 \sim g_2 \quad \Longleftrightarrow  \quad \exists  \hspace{0.2cm}k \in \calK\ \text{ s.t. } g_1 \cdot k = g_2.\]
Since elements of $\calK$ need not be invertible, the above relation is not necessarily reflexive. For this reason, we consider the equivalence relation generated by it.
 We do not require that $\calK$ be contained in $\calG$ in this definition since the definition of the quotient is applied to any open restriction $ \calG \subset \widetilde{\calG}$.

Without loss of generality, we may assume that  $M$ is connected and that $\widetilde{\calG} \subset A$ and $\calK \subset B \subset A$ are convenient local groupoids. Consider the  restricted partial multiplication:
$$ \widetilde{\calG} \times_{s,t} \calK \dashrightarrow \widetilde{\calG}  \qquad (g,k) \mapsto g \cdot k.$$

Since $\calK$ is a local Lie subgroupoid, this map is a submersion in a neighborhood of the unit section. Moreover, 
 $1_M \times_{s,t} \calK$ is mapped diffeomorphically onto $\calK$. It follows that  there exists an embedded submanifold  $V \subset \widetilde{\calG}$ containing  the units such that the restricted multiplication map
$$ p \colon V \times_{s,t} \calK \to \widetilde{\calG}$$ is
a diffeomorphism onto its image.

Choose an embedded submanifold $V \subset \widetilde{\calG}$ and an open restriction  $\calG \subset \widetilde{\calG}$ with the following properties:

\begin{enumerate}
    \item whenever $k \in \calK$ and $g \in \calG$ are such that  $g \cdot k $ is well-defined and lies in  $\calG$, the inverse $k^{-1} $ is well-defined in $\mathcal{K}$;
    \item the map $p \colon V \times_{s,t} \calK \to \widetilde{\calG}$ is a diffeomorphism onto its image;
    \item $\calG = \mathrm{Im}(p)$.
\end{enumerate}

The first condition implies that the relation $\sim$ is an equivalence relation on $\G$. The second and the third conditions imply that every element of $\G$ can be written uniquely as \[ g = v \cdot k, \quad \quad  \text{ for    } v \in V, \quad k\in K. \]
The third condition says that $V \subset \mathcal{G}$.

We claim that the natural map
$$ V \to \calG/\calK $$
is a bijection. Injectivity follows from the uniqueness of the above decomposition.  Indeed, if $v_1, v_2 \in V$  represent the same equivalence class, then $v_1 \cdot k = v_2$ for some $k \in \mathcal{K}$. Since $p$ is injective, we must get $k=1$ and therefore $v_1 = v_2$. 
Surjectivity follows from the fact that every element $g \in \G$ admits a decomposition $g = v\cdot k,$ so that $g \sim v.$

Since $ V \cap \widetilde{\calG}$ is an embedded submanifold of $\calG$, we may therefore identify $\G/\calK$ with a smooth manifold $V$. Since $\calK$ is normal, all structure maps descend to the quotient. The necessary groupoid identities  follow from the assumption that $\widetilde{\calG}$ is convenient. Hence $\G/\calK$ inherits a local Lie groupoid structure.

Finally, applying the Lie functor to the sequence 
\[ \calK \cap \calG \hookrightarrow \calG \to \calG/\calK\]
gives 
\[ B \hookrightarrow A \to \Lie(\calG/\calK).\]
Since the second map $A \to \Lie(\calG/\calK)$ is a vector bundle quotient whose kernel is $B$, it follows that \[ \Lie(\G/\calK) \cong A/B = A/\Lie(\calK).\]

 \end{proof}

\begin{remark}(Local quotient by central isotropy)\label{remark:local-quotient-by-central-isotropy} Proposition \ref{prop:quotient-Lie-algebroid-local-integration} shows that, after passing to a sufficiently small open restriction, quotients of local integrations  affects only the isotropy directions. Indeed, the characteristic foliation of $A$ is unchanged by the passage from $A$ to $A/\Lie(\mathcal{K})$, while the isotropy is reduced by quotienting out the Lie subalgebroid $\Lie(\calK)$. Thus the quotient construction preserves the leafwise geometry and removes only infinitesimal isotropy data.

This is precisely the mechanism that will appear in the case of the holonomy groupoid: the kernel of the local holonomy map will correspond to isotropy directions which are invisible to holonomy. Furthermore, once a local model for $\Hol(A)$ is realized as a quotient, its Lie algebroid is determined immediately from the kernel of the quotient map.

\end{remark}

\subsection{The adjoint kernel}

We now turn to the kernel \[ \calK =ker(hol_{\mathcal{G}}).\]

By remark \ref{remark:local-quotient-by-central-isotropy}, understanding this subgroupoid is the key step in applying Proposition \ref{prop:quotient-Lie-algebroid-local-integration}. The main result of this subsection is that $\calK$ admits a purely isotropic description: after passing to a sufficiently small local integration, $\calK$ coincides with the adjoint kernel. In particular, the directions removed by the holonomy quotient are precisely those isotropy directions which act trivially on holonomy.

To formulate this more precisely, we first introduce some notation for the center of a local groupoid.

Given a point $x \in M$, write
\[ Z(\calG)_x := \{ g \in \calG_x \ :  \forall h \in \calG_x \ : gh = hg  \}\]
to denote the center of $\calG$,  and 
$$Z(\calG) := \bigcup_{x \in M} Z(\calG)_x  \subset \calG $$
to denote the union of the centers.

\begin{definition}
    Suppose $\calG$ is a local Lie groupoid. An element $g \in \calG$ is said to  lie in the \emph{adjoint kernel} if there exists a bisection $\sigma$ through $g$ such that $\Ad_\sigma = \Id$ on an open neighborhood of $s(g)$.

    The set of all such element will be called the \emph{adjoint kernel} of $\G.$
\end{definition}

The main goal of this subsection is to relate the adjoint kernel to the kernel of the local holonomy map. We begin by describing the adjoint kernel in terms of the center of a local groupoid. 

\begin{lemma} \label{Lemma:adjoint-identity-center-bisection}
    Suppose $\widetilde{\calG}$ is a local integration of $A$. Then there exists an open neighborhood of the units $\calG \subset \widetilde{\calG}$ with the following property:
    For all bisections $\sigma$ and open sets $U \subset M$, we have that $\Ad_\sigma|_{U} = \Id$ if and only if $\sigma|_U$ takes values only in the center of $\calG$.
\end{lemma}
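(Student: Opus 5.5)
The plan is to pass from the infinitesimal statement $\Ad_\sigma|_U=\Id$ to the local-groupoid statement $C_\sigma=\Id$ near the units over $U$, and then read off centrality from the formula for $C_\sigma$. First I would shrink $\widetilde{\calG}$ to a convenient integration $\calG$, which is possible by the remarks preceding the definition of convenient integrations. This makes all groupoid identities used below valid; for instance $C_\sigma\circ C_\eta=C_{\sigma\cdot\eta}$ and $C_\sigma(g)^{-1}=C_\sigma(g^{-1})$. It also makes the source fibres convex, and it gives $\calG$ differentiable bisections. Since $\Ad_\sigma$ covers the diffeomorphism $t\circ\sigma$, the condition $\Ad_\sigma|_U=\Id$ already forces $t\circ\sigma|_U=\Id_U$. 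Thus in both directions $\sigma|_U$ takes values in the isotropy groups $\calG_x$, $x\in U$.

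\textbf{Step 1: $\Ad_\sigma|_U=\Id$ forces $C_\sigma=\Id$ near the units over $U$.} Take $g\in\calG$ with $s(g)=x\in U$. By convexity and source-linearity, $g$ is the time-$1$ point of the $\calG$-integration $\eta(t)$ of some (shrunk, compactly supported) time-dependent section $\beta$, so that $g=\eta(1)(x)$. The identity $C_\sigma\circ C_{\eta(t)}=C_{\sigma\cdot\eta(t)}$ and its companion with $\sigma^{-1}$ show that $t\mapsto C_\sigma(\eta(t)(x))$ is the integral curve from $1_x$ of the right-invariant field of $\Ad_\sigma\beta(t)$. If $\Ad_\sigma\beta=\beta$ on $U$, uniqueness of integral curves gives $C_\sigma(g)=g$. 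For this I must ensure that the curve stays over $U$. I would arrange that by choosing $\calG$ so small that its source fibres over $U$ are swept by sections supported in a slightly smaller neighbourhood. Restricting $C_\sigma(g)=g$ to $g\in\calG_x$ gives $\sigma(x)\,g\,\sigma(x)^{-1}=g$, so $\sigma(x)\in Z(\calG)_x$. This proves the forward implication.

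\textbf{Step 2: the converse, which I expect to be the main obstacle.} If $\sigma(x)\in Z(\calG)_x$ for every $x\in U$, then on isotropy $\Ad_\sigma$ restricts to $\Ad_{\sigma(x)}=\Id$ on $\mathfrak g_x=\ker\rho_x$, because $\exp_x=\Id$ in a source-linear integration and $\sigma(x)$ commutes with a neighbourhood of $1_x$ in $\calG_x$. For a general $a\in A_x$ one computes
\[ \Ad_\sigma(a)=a+dR_{\sigma(x)^{-1}}\,d\sigma(\rho(a)). \]
The correction term lies in $\mathfrak g_x$, and it vanishes only when $\sigma$ is constant along the orbit directions in the right-trivialised sense. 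Pointwise centrality does not obviously give this. For example, for $A=TM\oplus\R$ with the trivial central extension, a non-constant $\R$-valued $\sigma$ is central but has $\Ad_\sigma\neq\Id$. So the first thing I would try is to interpret "takes values in the center" in the groupoid sense, namely $\sigma(t(g))\,g=g\,\sigma(s(g))$ for all $g$ near the units over $U$. Under that reading the converse is immediate: $C_\sigma=\Id$, and differentiating at the units gives $\Ad_\sigma=\Id$. Failing that, I would add the hypothesis $d\sigma(\rho(A))\subset$ right translates of $\mathfrak g$ with vanishing correction term. I would then check against the later use of the lemma, which identifies $\calK$ with the adjoint kernel, which version is actually needed there.
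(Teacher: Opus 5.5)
You prove the forward implication the way the paper does. You also correctly detect that the converse is false as stated, and the paper's own proof of the converse contains exactly the gap you found.

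\textbf{Forward direction.} In both arguments the steps are the same. $\Ad_\sigma$ covers $t\circ\sigma$, so $\sigma|_U$ is isotropy-valued. $\Ad_\sigma$ restricted to $\mathfrak g_x$ is the isotropy adjoint action of $\sigma(x)$. In a small enough integration the kernel of that action lies in the centre. The paper gets the last point from an open local homomorphism $\calG_x\to G(\mathfrak g_x)$; you get it from uniqueness of integral curves of right-invariant fields.

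One repair is needed in your Step 1. $\calG$ is chosen before $U$, so you cannot shrink it to make the source fibres over $U$ swept by sections supported near $U$. Moreover, $C_\sigma(g)=g$ genuinely fails for arrows whose target leaves $U$. You do not need this. Apply the integral-curve argument only to $g\in\calG_x$: there the curve is the one-parameter subgroup $\epsilon\mapsto\epsilon g$ inside $\calG_x$ (using $\exp_x=\Id$), and its target stays at $x\in U$.

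\textbf{Converse.} The paper deduces $\Ad_\sigma|_U=\Id$ from two facts: $\Ad_\sigma$ covers $\Id_U$, and it is the identity on each $\mathfrak g_x$. Neither says anything about directions of $A_x$ transverse to $\mathfrak g_x$. Your example is valid. Take $A=TM\oplus\R$ with $[(X,a),(Y,b)]=([X,Y],X(b)-Y(a))$, integrated by $M\times M\times\R$, and the bisection $\sigma(x)=(x,x,f(x))$. It is central-valued, yet $C_\sigma(y,x,r)=(y,x,r+f(y)-f(x))$ and $\Ad_\sigma(v,c)=(v,c+df(v))$. Choosing $f$ compactly supported with small values and $df\neq0$ on $U$ shows that no open restriction rescues the ``if'' direction.

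Your correction formula is not literally meaningful, because $d\sigma(\rho(a))$ is not tangent to $s^{-1}(x)$. Here is the precise version. Let $\sigma$ be isotropy-valued on $U$, and let $g_\epsilon\subset s^{-1}(x)$ be any curve with $g_0=1_x$ and velocity $a$. Then $\Ad_\sigma(a)=a+\frac{d}{d\epsilon}\big|_{\epsilon=0}\,g_\epsilon^{-1}\sigma(t(g_\epsilon))g_\epsilon\,\sigma(x)^{-1}$. The extra term is the derivative of $\sigma$ along $\rho(a)$, measured relative to transport of central elements by conjugation.

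\textbf{Your proposed fix.} The groupoid-sense condition must be imposed on a neighbourhood of $1_U$ that depends on $U$. To see why, let $U$ be two disjoint balls and let $f$ be locally constant on $U$ with different values on the two balls. Then $\Ad_\sigma|_U=\Id$, but $C_\sigma\neq\Id$ on arrows between the balls. A correct version reads: $\Ad_\sigma|_U=\Id$ if and only if $C_\sigma$ is the identity on some neighbourhood of $1_U$, and in that case $\sigma|_U$ is central-valued.

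\textbf{The later use.} In Proposition~\ref{proposition:local-integration-adjoint-kernel}, the section $\bar\sigma$ is built from $\tilde\sigma|_{S_x}$ by conjugation. Using the formula above, one can check that the correction term of $\bar\sigma$ at $a\in A_p$ is a conjugate of the correction term of $\tilde\sigma$ at some element of $A_{S_x}$. That term vanishes because $\Ad_{\tilde\sigma}|_{A_{S_x}}=\Id$. So that step can be salvaged, but not by citing this lemma.
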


\begin{proof}
    Choose an open neighborhood $\calG$ of $\widetilde{\calG}$,  with the property that,  for all $x \in M$,  the isotropy group $\calG_x$ admits an open local group homomorphism to the simply connected integration of the isotropy Lie algebra $\mathfrak{g}_x$:
    $$\calG_x \into \calG(\mathfrak{g}_x). $$
    For such a neighborhood,  for all $x \in M$,  the kernel of the adjoint representation of the isotropy group coincides with the center:
    \[ g \in \calG_x, \qquad g \in Z(\calG_x) \Longleftrightarrow \Ad_{g} = \Id_{\mathfrak{g}_x}.\]

    Suppose first that 
    $\sigma$ is a bisection of $\calG$ and  $ \Ad_\sigma|_{U} = \Id \colon A|_U \to A|_U. $
Therefore, $\sigma(x) \in Z(\calG)$ for all $ x \in U$.

Conversely, suppose that $\sigma$ is a local bisection whose image over some open set, say $U$, is contained in $Z(\G).$ Then, for each $x \in U$, the element $\sigma(x) \in Z(\G_x)$ acts trivially by conjugation on the isotropy group $\G_x$. Hence, \[ Ad_{\sigma(x)} = Id_{\mathfrak{g}_x}.\]
On the other hand, the base map of $Ad_{\sigma}$ is the diffeomorphism $t \circ \sigma$. Since $\sigma(x)$ belongs to the isotropy group at $x$, we have \[ (t \circ \sigma)(x) = x,\] so the base map is the identity on $U.$ Therefore, $Ad_{\sigma}$ is  identity on the isotropy Lie algebras. It follows that \[ Ad_{\sigma}|_U = Id.\] 
    
\end{proof}

We now combine to identify the kernel of the local holonomy map with the adjoint kernel. 
 The next lemma records a basic geometric property of central isotropy elements that will be used in the identification of the adjoint kernel.
 
\begin{lemma}\label{lemma:conjugacy-class-embedded}
    Suppose $\widetilde{\calG}$ is a local groupoid. Then there exists an open neighborhood of the units $\calG \subset \widetilde{\calG}$ with the following property: For all $g \in Z(\calG)_x$ the conjugacy class
    $$ \mathfrak{C}(g) := \{ C_h(g)  \ : \ h \in \calG , \ C_{h}(g) \text{ well-defined} \}$$
    is an embedded submanifold and for all $y \in M$ the set $\calG_y \cap \mathfrak{C}(g)$ contains at most one element.
\end{lemma}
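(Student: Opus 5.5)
Since the statement is local, I would work in a convenient source-linear integration $\widetilde{\calG} \subset A$ and shrink it as needed. Fix $g \in Z(\calG)_x$. If $h \in \calG$ with $s(h) = x$ and $t(h) = y$, then $C_h(g) = h g h^{-1}$ lies in the isotropy group $\calG_y$. So $\mathfrak{C}(g)$ is the image of the map
\[ c_g \colon \src^{-1}(x) \dashrightarrow \calG, \qquad h \mapsto h g h^{-1}, \]
defined on the open subset of the source fiber where the products make sense. The plan is to show two things:
\begin{enumerate}
\item $c_g(h)$ depends only on $t(h)$, so $c_g$ factors through the orbit $L_x$, or rather through an open piece $\trg(\src^{-1}(x)) \subset L_x$;
\item the induced map $\bar c_g$ from this piece of $L_x$ to $\calG$ is an injective immersion that is a homeomorphism onto its image.
\end{enumerate}

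For the first point, suppose $h, h' \in \src^{-1}(x)$ have the same target $y$. Then $k := h^{-1} h'$ lies in $\calG_x$, provided we shrink so that this product is defined and $h' = h k$ holds. This is where the local algebra principle, Lemma~\ref{lemma:local-algebra-principal}, is used, or simply the fact that the integration is convenient. Since $g$ is central in $\calG_x$, we get $k g k^{-1} = g$. Hence
\[ h' g h'^{-1} = h (k g k^{-1}) h^{-1} = h g h^{-1}. \]
This gives the ``at most one element in $\calG_y$'' claim at once: every element of $\mathfrak{C}(g) \cap \calG_y$ has the form $C_h(g)$ with $t(h) = y$, and all of these coincide.

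For the second point, $\bar c_g$ is defined on an open subset of $L_x$, which carries its leaf smooth structure because $\trg \colon \src^{-1}(x) \to L_x$ is a submersion. It satisfies $\trg \circ \bar c_g = \Id$, since $C_h(g) \in \calG_{\trg(h)}$. So $\bar c_g$ is a section of $\trg$ over its domain. It is therefore an injective immersion, and it is a homeomorphism onto its image, because $\trg$ restricted to the image is a continuous inverse. Consequently $\mathfrak{C}(g)$ is an embedded submanifold of $\calG$ diffeomorphic to an open subset of $L_x$. In the source-linear picture this is the graph-type submanifold $\{\bar c_g(y)\} \subset A|_{L_x}$.

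The main obstacle is managing the domains, not the algebra. The choice of neighborhood $\calG$ must make the identity $h' = h(h^{-1}h')$ and the conjugation identity hold for all $h, h'$ in the relevant source fiber, uniformly in $x$. It must also keep the domain of $c_g$ inside a region where the first point applies. I would handle this by picking a single open restriction on which the finitely many identities above hold, using Lemma~\ref{lemma:local-algebra-principal}; the convenient-integration axiom with its bound of one hundred elements covers this. Convexity of the source fibers keeps the domain of $c_g$ star-shaped, hence connected, so that $\bar c_g$ is defined on a connected open piece of the leaf.

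A further subtlety is what ``embedded'' means when $L_x$ is only an initial submanifold of $M$. Since $\bar c_g$ is a section of $\trg$, its image is embedded in $\calG$ exactly as far as its domain is embedded in $M$. So I would either interpret the statement in the leafwise sense, which is what the later quotient argument needs, or shrink $\calG$ so that $\trg(\src^{-1}(x))$ is a plaque of $L_x$, which is embedded.
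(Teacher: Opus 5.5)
Your first step, that $C_h(g)$ depends only on $\trg(h)$, is the paper's argument in different notation. The paper writes $C_h = C_{h'}\circ C_{(h')^{-1}h}$ and uses that $g$ is fixed by the isotropy element $(h')^{-1}h$.

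The embeddedness step is genuinely different. The paper chooses sections $\alpha_1,\dots,\alpha_k$ whose values span a complement of $\mathfrak{g}_x$ in $A_x$ and integrates them to bisections $\sigma_i$. It then asserts that $(t_1,\dots,t_k)\mapsto C_{\sigma_1(t_1)}\circ\cdots\circ C_{\sigma_k(t_k)}(g)$ is a chart of $\mathfrak{C}(g)$ near $g$. The same parametrization, with an extra transverse parameter, is reused in Lemma~\ref{conjugacy-smooth-local-section-family}, which is its main advantage. However, it only describes $\mathfrak{C}(g)$ near $g$, and it leaves implicit why its image is a neighborhood of $g$ in $\mathfrak{C}(g)$.

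Your route descends $c_g$ to the leaf and uses $\trg\circ\bar c_g=\Id$. This gives the immersion property for free and describes the whole class at once as a section of $\trg$ over an open piece of $L_x$. It also isolates exactly the topological input needed, namely that this piece be embedded in $M$.

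You can remove the either/or at the end. Shrink so that every source fiber of $\calG$ has compact closure in the corresponding source fiber of $\widetilde{\calG}$; this is easy with a fiber metric in the source-linear picture. Then $\trg(\src^{-1}(x)\cap\calG)$ lies in a subset of $L_x$ that is compact in the leaf topology. A continuous injection of a compact space into the Hausdorff space $M$ is a homeomorphism onto its image, so the leaf and subspace topologies agree there. Hence $\bar c_g$ is a homeomorphism onto its image, and $\mathfrak{C}(g)$ is embedded in $\calG$ itself, with no uniform-plaque argument needed.

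There is also one domain point that both you and the paper pass over. Shrinking does not make $k=h^{-1}h'$ lie in $\calG_x$; it can be roughly twice as large as $h$ and $h'$. So centrality of $g$ in $\calG_x$ does not literally apply to $k$. This is repaired by differentiating $g\exp(sv)g^{-1}=\exp(sv)$ at $s=0$ to get $\Ad_g=\Id_{\mathfrak{g}_x}$. Uniqueness of local one-parameter subgroups then shows that $g$ commutes with every element of a larger neighborhood of $1_x$ in $\widetilde{\calG}_x$ containing such $k$.
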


\begin{proof}

First we show that, after passing to a sufficiently small open restriction, each isotropy fiber intersects $\mathfrak{C}(g)$ in at most one point.

Let $h$ and $h^\prime$ be arrows with the same target. By the local algebra principle, we may assure that $\G$ is sufficiently small so that \[ C_h = C_{h^\prime} \circ C_{(h^\prime)^{-1}h}\] whenever the expressions are defined. Since $g \in Z(\mathcal{G}_x)$, conjugation by $(h^\prime)^{-1}h$ fixes $g$. Therefore, \[ C_h(g) = C_{h'} \circ C_{(h')^{-1} h} (g) = C_{h'}(g).\]

It follows that the value of $C_h(g)$ depends only on the target of $h.$ Consequently, for every $y \in M$, the set \[ \G_y \cap \mathfrak{C}(g)\]
contains at most one element.

We now show that $\mathfrak{C}(g)$
is an embedded submanifold. Let $\alpha_1,...,\alpha_k \in \Gamma(A)$ be local sections whose values at $x$ form a (local) complement to the isotropy Lie algebra $\mathfrak{g}_x \subset A_x.$ For each $i,$ let $\sigma_i(t)$ denote the local bisection obtained by integrating $\alpha_i.$
Consider the map \[ (t_1,...,t_k) \mapsto C_{\sigma_1(t_1)} \circ \cdot \cdot \cdot \circ C_{\sigma_k(t_k)}(g).\]
Its image lies in $C(g).$ Moreover, the tangent vectors obtained by differentiating with respect to the parameters $t_i$ span the directions transverse to the isotropy. Hence, after restricting to a sufficiently small neighborhood of the origin, the above map defines a local chart on $C(g)$ around $g.$
Therefore, $C(g)$ is an embedded submanifold.
\end{proof}

We now combine Lemmas  \ref{Lemma:adjoint-identity-center-bisection} and \ref{lemma:conjugacy-class-embedded}   to identify the kernel of the local holonomy map with the adjoint kernel.

\begin{proposition}\label{proposition:local-integration-adjoint-kernel}
Suppose $A$ is a Lie algebroid. There exists a local integration $\calG$ with holonomy such that $\mathcal{K} := \ker (\hol^\calG)$ is equal to the adjoint kernel.
\end{proposition}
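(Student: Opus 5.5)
We will prove the two inclusions separately, after choosing $\calG$ small enough that all previous shrinking lemmas apply simultaneously. Concretely, start from a convenient integration and pass to an open restriction which has holonomy (Lemma~\ref{Lemma:open-restriction-has-holonomy}). It should also satisfy the conclusions of Lemma~\ref{Lemma:adjoint-identity-center-bisection} and Lemma~\ref{lemma:conjugacy-class-embedded}, as well as Corollary~\ref{corollary:open-neighborhood-identity-holonomy}. Since each of these only requires shrinking to an open neighborhood of the units, and intersections of finitely many such neighborhoods are again such neighborhoods, this is possible.

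\textbf{Adjoint kernel $\subset \calK$.} If $g$ lies in the adjoint kernel, pick a bisection $\sigma$ through $g$ with $\Ad_\sigma = \Id$ near $x = s(g)$. Then the base map $t\circ\sigma$ is the identity near $x$. Since $\calG$ has holonomy, $\hol^\calG(g)$ may be computed using this $\sigma$, and it is the class of $\Id|_{A_{S_x}}$, which is a unit of $\HT(A)$. Hence $g \in \calK$. This direction is immediate.

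\textbf{$\calK \subset$ adjoint kernel.} This is the main obstacle. Suppose $\hol^\calG(g)$ is trivial. Then $t(g)=s(g)=x$, so $g$ is isotropic, and $\Ad_\sigma|_{A_{S_x}}$ is a trivial holonomy transformation for any bisection $\sigma$ through $g$. That is, $\Ad_\sigma|_{A_{S_x}} = \Phi^1_\alpha|_{A_{S_x}}$ for some $\alpha \in I_x\Gamma^t_c(A)$.

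The plan is to correct $\sigma$ by the $\calG$-integration $\tau(t)$ of $\alpha$. Since $\alpha$ vanishes at $x$, we have $\tau(1)(x) = 1_x$. By Lemma~\ref{Lemma:Local-Integration-Adjoing-flow-adjoint-representation}, $\Ad_{\tau(1)} = \Phi^1_\alpha$. So $\sigma' := \tau(1)^{-1}\cdot\sigma$ is still a bisection through $g$, and $\Ad_{\sigma'}$ restricts to the identity on $A_{S_x}$.

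We then need to upgrade ``identity on a slice'' to ``identity on a neighborhood.'' For this, extend $\sigma'|_{S_x}$ to a neighborhood of $x$ by conjugating along the leaf directions. Choose sections $\alpha_1,\dots,\alpha_k$ transverse to the isotropy at $x$, as in the proof of Lemma~\ref{lemma:conjugacy-class-embedded}, and set
\[ \sigma''\big(\phi_{\alpha_1}^{t_1}\cdots\phi_{\alpha_k}^{t_k}(y)\big) := C_{\sigma_1(t_1)}\cdots C_{\sigma_k(t_k)}(\sigma'(y)), \qquad y\in S_x, \]
which is a local bisection since $S_x$ together with the leafwise flows parametrizes a neighborhood of $x$.

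Because $\Ad_{\sigma'}|_{A_{S_x}}=\Id$, we have $\Ad_{\sigma'(y)}=\Id$ on each isotropy algebra $\mathfrak{g}_y$, $y\in S_x$. Lemma~\ref{Lemma:adjoint-identity-center-bisection}, applied fiberwise, then gives $\sigma'(y)\in Z(\calG)_y$. Conjugation preserves centrality, so $\sigma''$ takes values in $Z(\calG)$. Lemma~\ref{lemma:conjugacy-class-embedded} guarantees that $\sigma''$ is well defined and smooth: the conjugates of a central element meet each isotropy group in at most one point and form an embedded submanifold. Applying Lemma~\ref{Lemma:adjoint-identity-center-bisection} once more, now on an open set, gives $\Ad_{\sigma''}=\Id$ near $x$. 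Since $\sigma''(x)=g$, this shows $g$ lies in the adjoint kernel.

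The delicate points are two. First, the intermediate step, that triviality on $A_{S_x}$ forces centrality on isotropy, needs the fiberwise version of Lemma~\ref{Lemma:adjoint-identity-center-bisection}. Second, all the products involved must be defined, which is ensured by the convenience of $\calG$.
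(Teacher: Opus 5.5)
Your proof is correct and follows essentially the same route as the paper. You correct a bisection through $g$ by the $\calG$-integration of the $I_x$-section witnessing triviality, deduce centrality along $S_x$ from Lemma~\ref{Lemma:adjoint-identity-center-bisection}, and extend to a central local section by conjugating along leafwise flows (the paper packages this step as Lemma~\ref{conjugacy-smooth-local-section-family}). The only cosmetic difference is that you start from an arbitrary bisection through $g$ and use the has-holonomy property, whereas the paper writes $g=\sigma(1)(x)$ for a $\calG$-integration and passes through Corollary~\ref{corollary:open-neighborhood-identity-holonomy}.
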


\begin{proof}
One direction is clear, elements of the adjoint kernel must have trivial holonomy from the definition of $\hol^\calG$.

For the other direction, choose a local integration $\G$ of $A$ satisfying Lemmas \ref{Lemma:adjoint-identity-center-bisection} and \ref{lemma:conjugacy-class-embedded}. We may further assume $\G \subset A$ is source-linear and convex.

Let $g \in \G$ be such that $hol_\mathcal{G}(g) = 1_x.$ Since $\calG$ is convex,  there exists a time-dependent section $\alpha \in \Gamma^t_c(A)$ with $\calG$-integration $\sigma$ such that $\sigma(1)(x) = g$. Let $S_x$ be a slice through $x = s(g)$. By Corollary \ref{corollary:open-neighborhood-identity-holonomy},   \[ hol(\alpha,x) = hol_{\G}(\sigma(1)(x)) = hol_{\G}(g) = 1_x.\] Therefore, the holonomy transformation represented by $\Phi^1_\alpha|_{A_{S_x}}$ is trivial.   Thus,  there exists \[ \epsilon(t) \in I_x \Gamma_c^t(A)\] such that \[ \Phi^1_\alpha|_{A_{S_x}} = \Phi^1_\epsilon|_{A_{S_x}}.\]

Let $\eta$ denote the $\G$-integration of $\epsilon$, and define

\[  \widetilde{\sigma} := \eta(1)^{-1} \sigma(1). \]
After shrinking $\calG$ if necessary, by  local algebra principle, we get
\[ \Ad_{\widetilde{\sigma}}|_{S_x} = \Id. \]

Lemma \ref{Lemma:adjoint-identity-center-bisection} now implies that 
 $\widetilde{\sigma}$ takes values in $Z(\calG)$ along $S_x$. By Lemma \ref{conjugacy-smooth-local-section-family}, applied to the smooth section $\tilde{\sigma}$ after shrinking around $x$ if necessary, there exists a smooth local section $\bar{\sigma}:U \rightarrow Z(\G)$ defined on an open neighborhood $U \subset M$ of $x$, such that \[ \bar{\sigma}|_{U \cap S_x} = \tilde{\sigma}|_{U \cap S_x}.\] Since $\bar{\sigma}$ takes values in $Z(\G)$, Lemma \ref{Lemma:adjoint-identity-center-bisection} implies  that $\Ad_{\bar{\sigma}}= Id$ on $U$.  Thus, every element in the image of $\bar{\sigma}$ belongs to the adjoint kernel. In particular, \[ g  = \sigma(1) (x) = \tilde{\sigma}(x)  = \bar{\sigma}(x)\] lies in the adjoint kernel.  
 
 We conclude that \[ ker(hol_\calG) = \text{ adjoint kernel }.\]
 
\end{proof}

\begin{example}(Lie groups) Let $A= \mathfrak{g} \rightarrow \{\ast\}$ be a Lie algebra and let $G$ be a local Lie group integrating $\mathfrak{g}$. Since the base consists of a single point, the holonomy groupoid is the image of the adjoint representation.  In this case, the adjoint kernel in Proposition \ref{proposition:local-integration-adjoint-kernel} is the local kernel of the adjoint representation. Thus Proposition \ref{proposition:local-integration-adjoint-kernel} reduces to the familiar fact that elements with trivial adjoint action are precisely the elements which act trivially on infinitesimal holonomy.
    
\end{example}

\begin{theorem}\label{theorem:hol(A)-is-long-smooth}
    Suppose $A$ is a Lie algebroid and let $\calO$ be an orbit of $A$. There exists a local integration $\calG$ for which $\ker (\hol^\calG)|_{\mathcal{O}} $ is a closed, normal Lie subgroupoid. Therefore, $\Hol(A)|_{\calO}$ is a Lie groupoid.
\end{theorem}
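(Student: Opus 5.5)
The plan is to take $\calG$ as in Proposition~\ref{proposition:local-integration-adjoint-kernel}, shrunk so that Lemmas~\ref{Lemma:adjoint-identity-center-bisection} and~\ref{lemma:conjugacy-class-embedded} also hold. Then $\calK := \ker(\hol^\calG)$ equals the adjoint kernel. By Lemma~\ref{Lemma:adjoint-identity-center-bisection}, the adjoint kernel consists of those $g$ that extend to a local bisection $\bar\sigma$ with values in $Z(\calG)$ on a neighborhood of $s(g)$. The whole argument is then organized around restricting to the orbit $\calO$. We first replace $A$ by $A|_\calO$, which is transitive over $\calO$. Over $\calO$ we work with the restricted local groupoid $\calG|_\calO = \src^{-1}(\calO)\cap\trg^{-1}(\calO)$, an honest local Lie groupoid integrating $A|_\calO$. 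Its isotropy groups $\calG_y$ form a locally trivial bundle of local Lie groups over $\calO$, because conjugation by local bisections identifies nearby fibers.

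\textbf{Step 1: $\calK|_\calO$ is a smooth subbundle.} Elements of $\calK$ lie in the isotropy, so $\calK_x \subset Z(\calG)_x$ for each $x\in\calO$. We identify $\calK_x$ with the set of central elements that extend to a central local bisection on an $M$-neighborhood of $x$. In a source-linear convex integration, $\exp_x=\Id$, so $Z(\calG)_x$ is locally the center of the isotropy Lie algebra. The extension condition is then the strongly central condition: $\calK_x$ corresponds, near the units, to the linear subspace $\SC(A)_x\cap\calG_x$. Normality of $\calK$ follows from $\Ad_{\sigma\bar\sigma\sigma^{-1}}=\Ad_\sigma\Ad_{\bar\sigma}\Ad_\sigma^{-1}=\Id$, combined with the local algebra principle. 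As a consequence, $\SC(A)_y$ is transported to $\SC(A)_{y'}$ by the adjoint action of arrows in the orbit.

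\textbf{Step 2: rank and embeddedness along $\calO$.} By Lemma~\ref{lemma:conjugacy-class-embedded}, the conjugacy class $\mathfrak{C}(g)$ of each $g\in\calK_x$ is an embedded submanifold meeting each isotropy fiber over $\calO$ exactly once. Together, these classes sweep out $\calK|_\calO$ as the image of $\calK_x \times (\text{transversal chart of } \calO)$. This exhibits $\calK|_\calO$ as a locally trivial bundle of closed subgroups with constant fiber $\calK_x$, giving an embedded, wide, closed Lie subgroupoid. Closedness inside $\calG|_\calO$ comes from the linear description in Step 1: a limit of strongly central elements in a fixed fiber remains in the closed subspace $\SC(A)_x$. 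Across fibers, the conjugation trivialization carries a convergent sequence back into one fiber.

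\textbf{Step 3: quotient.} We apply Proposition~\ref{prop:quotient-Lie-algebroid-local-integration} to $\calG|_\calO$ and $\calK|_\calO$. This makes $\calG|_\calO/\calK|_\calO$ a local Lie groupoid, and $\hol^\calG$ identifies it with an open neighborhood of the units in $\Hol(A)|_\calO$. Openness follows from the factorization $\calG\to\Pi_1(A)\to\Hol(A)$ of Proposition~\ref{proposition:open-groupoid-morphism-proposition}. Every arrow of $\Hol(A)|_\calO$ is a finite product of arrows in this neighborhood: $\Hol(A)$ is a quotient of $\GammaPath(A)$, and any $\Gamma(A)$-path splits by the flow product into pieces whose $\calG$-integrations stay in this neighborhood. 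Left translation by these products then gives charts, so the standard ``generate a groupoid from a local groupoid neighborhood'' argument produces a Lie groupoid structure on $\Hol(A)|_\calO$. Transitions are smooth because translations are smooth on the local model.

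\textbf{Expected obstacle.} The hard step is Step 1/Step 2's claim that $\calK$ has constant rank along $\calO$ and is closed. Over all of $M$ the rank of $\SC(A)$ jumps, which is exactly why smoothness is only longitudinal. The point that needs care is that strong centrality at $y\in\calO$ is preserved by the adjoint action of arrows within $\calO$. I would first try conjugating a central extending section $\bar\sigma$ by a local bisection through an arrow $x\to y$, which transports the whole germ of the section, not just its value.
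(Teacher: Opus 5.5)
Your proposal follows the paper's proof. Both identify $\ker(\hol^\calG)$ with the adjoint kernel and use source-linearity ($\exp_x=\Id$, so $Z(\calG)=Z(A)\cap\calG$) to write it as $\SC(A)\cap\calG$. Both then get a closed smooth subbundle over $\calO$ from invariance under adjoint flows (your conjugation-by-bisections trivialization is the groupoid-level version of this), and finish with Proposition~\ref{prop:quotient-Lie-algebroid-local-integration} and openness of $\hol^\calG$. Your flow-product splitting and translation argument fills in the passage from the local quotient to all of $\Hol(A)|_\calO$, which the paper leaves implicit. The extension condition defining $\calK$ must always be taken over $M$-neighborhoods, as you state, and not over $\calO$, because $\SC(A|_\calO)$ can be strictly larger than $\SC(A)|_\calO$; so do not let ``replacing $A$ by $A|_\calO$'' change that condition.
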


\begin{proof}
 Let  $\mathcal{K} := \ker (\hol^\calG)$. By  proposition \ref{proposition:local-integration-adjoint-kernel}, after passing to a sufficiently small local integration $\G$, an element $g \in \mathcal{G}$  lies in $\calK$ if and only if $g$ can be extended to a local bisection $\sigma$ taking values in $Z(\calG)$ in a neighborhood of $x$. In particular, $\mathcal{K} \subset Z(\calG)$. We may also assume that that $\calG \subset A$ is source-linear and convex local integration. 

Let  $Z(A) \subset A$  denote the union of the centers of the isotropy Lie algebras of $A$,  and let 
    $K \subset Z(A)$ be the subset consisting of those elements which admit a local extension to  sections of $Z(A)$. Since $Z(A)$ is a linear subset of $A$ and  is preserved under adjoint flows, it follows that $K|_{\calO}$ is a smooth subbundle of $A|_\calO$.

  Next, choose a linear integration $\calG$ so that $Z(\calG)$ is connected. This is possible since $Z(\calG)$ is a closed subset of $\calG$. Since $\G$ is source-linear, the exponential map on the isotropy directions, $Z(A) \dashrightarrow Z(\calG)$,  is the identity. Hence
    \[  Z(\calG) = (Z(A) \cap \calG) \subset A\]

   Therefore, an element of $Z(\calG)$ can be extended to a section taking values in $Z(\calG)$ if and only it can be extended to take values in $Z(A)$. Consequently,
    \[ \calK = (K \cap \calG) \subset A \]
    Since $K|_{\calO} \subset A|_\mathcal{O}$ is a vector bundle, is is closed. Because $\G$ is an open neighborhood of the zero section,  it follows that $\calK|_{\calO} \subset \calG|_{\calO}$ is closed. By construction $\calK|_\calO$ is wide and normal.

    Proposition \ref{prop:quotient-Lie-algebroid-local-integration} therefore applies and shows that the quotient \[ \G|_\calO/\calK_\calO \]
is a local Lie groupoid.  Since $hol_\calG$ is an  open local groupoid homomorphism with kernel $\calK$, this quotient agrees locally with $\Hol(A)|_\calO.$ Hence $\Hol(A)|_\calO$ is a Lie groupoid. \end{proof}
\begin{example}
    For $A=TM$, Theorem \ref{theorem:hol(A)-is-long-smooth} recovers the fact that the pair groupoid on each connected component is a Lie groupoid. For $A=\F \subset TM$ a regular foliation, it recovers the longitudinal smoothness of the usual holonomy groupoid. For $A=\mathfrak{g} \rightarrow \{\ast\},$ it reduces to the smoothness of $Inn(\mathfrak{g}).$
\end{example}

\subsection{The holonomy Algebroid}

In this section, we will compute the  algebroid of $\Hol(A)$. By Proposition \ref{prop:quotient-Lie-algebroid-local-integration}, it suffices to identify the infinitesimal counterpart of the adjoint kernel. This leads to the definition of holonomy algebroid. 

\begin{definition}\label{definition:strongly-central}
    Suppose $A$ is a Lie algebroid. We say an element $z \in A$ is \emph{strongly central} if there exists a section $\alpha \in \Gamma(A)$, extending $z$, such that $\alpha$ takes values in $Z(A)$.\\
    We denote by  $\SC(A)_x$ the subset of strongly central elements in $A_x$,  and write $\SC(A)$ to denote the set of all strongly central elements.
    
\end{definition}

For each $x \in M$, the set 
$\SC_x(A)$ is a linear subspace of $A_x$. Moreover, $\SC(A)$ is preserved by adjoint flows.  Proposition \ref{proposition:local-integration-adjoint-kernel} suggests that $\SC(A)$ should be viewed as the infinitesimal counterpart of the adjoint kernel.

\begin{remark}
    In general, $\SC(A)_x$ should not be confused with the center $Z(\mathfrak{g}_x)$ of the isotropy Lie algebra. An element may be central in the single fiber $\mathfrak{g}_x$ without extending locally to a section which commutes with all sections of $A.$ Thus in general \[ \SC(A)_x \subset Z(\mathfrak{g}_x)\] and the inclusion may be strict.
\end{remark}

\begin{definition} \label{definition:holonomy-algebroid}
    Suppose $A$ is a Lie algebroid. The \emph{holonomy algebroid} of $A$ is the quotient 
    $$\HAlg(A) := A/\SC(A).$$
 
\end{definition}

Although $\HAlg(A)$ need not be smooth vector bundle globally, as the rank of $\SC(A)$ can vary. It is the natural infinitesimal candidate associated to holonomy. The next theorem shows that, along each leaf, it is precisely the Lie algebroid of the holonomy groupoid.

\begin{theorem}\label{theorem:holonomy-algebroid-isomorphism}
The Lie algebroid of \( \Hol(A)|_L \) is is canonically isomorphic to the holonomy algebroid \( \HAlg(A)_L \).
\end{theorem}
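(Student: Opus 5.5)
The plan is to combine the local quotient description from the proof of Theorem~\ref{theorem:hol(A)-is-long-smooth} with the Lie algebroid computation in Proposition~\ref{prop:quotient-Lie-algebroid-local-integration}. Fix a leaf $L$ and choose a convenient, source-linear, convex local integration $\calG \subset A$ that has holonomy and satisfies Proposition~\ref{proposition:local-integration-adjoint-kernel}. Then $\calK = \ker(\hol^\calG)$ is the adjoint kernel, and in the source-linear picture it is $K \cap \calG$. Here $K \subset Z(A)$ is the set of elements that extend locally to sections with values in $Z(A)$. By Definition~\ref{definition:strongly-central} this set is exactly $\SC(A)$, possibly up to the local-versus-global extension issue, which a bump function removes. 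Hence
\[ \calK|_L = \SC(A)|_L \cap \calG|_L. \]

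The first step is to identify $\Lie(\calK|_L)$ with $\SC(A)|_L$. Since $\calG$ is source-linear, its source fibres are the fibres of $A$, and the vertical lift $\ell$ identifies $\Lie(\calG)$ with $A$. The set $\calK|_L$ is an open neighbourhood of the zero section inside the vector subbundle $\SC(A)|_L$, so its tangent space along the units, intersected with $T^s\calG$, is $\ell(\SC(A)|_L)$. This gives $\Lie(\calK|_L) = \SC(A)|_L$ as a subbundle of $A|_L$.

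We also need $\SC(A)|_L$ to be a Lie subalgebroid ideal. Its rank is constant along $L$ because $\SC(A)$ is preserved by adjoint flows, and these act transitively along the leaf. It is contained in the isotropy, since $Z(A) \subset \ker\rho$, so the anchor descends. It is an ideal: if $\zeta$ is a section with values in $Z(A)$, then $[\alpha,\zeta]$ again takes values in $Z(A)$, because $\frac{d}{dt}(\Phi^t_\alpha)_*\zeta$ stays central. So $A|_L/\SC(A)|_L = \HAlg(A)|_L$ carries a quotient Lie algebroid structure.

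The second step is to apply Proposition~\ref{prop:quotient-Lie-algebroid-local-integration} to $\calG|_L$ and $\calK|_L$; its hypotheses were verified in the proof of Theorem~\ref{theorem:hol(A)-is-long-smooth}. This gives
\[ \Lie(\calG|_L/\calK|_L) \cong A|_L/\SC(A)|_L = \HAlg(A)|_L. \]
There is a subtlety: $\calG|_L$ is the restriction of the algebroid to a leaf, i.e.\ an integration of $A|_L$, so we must check that bisection and adjoint arguments made on $M$ restrict correctly. I would handle this by noting that $\hol^\calG$ is defined on all of $\calG$ and then simply restricting.

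The third step is to identify $\calG|_L/\calK|_L$ with a neighbourhood of the units in $\Hol(A)|_L$. Write $\hol^\calG \colon \calG \to \Pi_1(A) \to \Hol(A)$; this map is open, and its fibres are exactly the $\calK$-cosets. The coset claim holds because $\hol^\calG$ is a local homomorphism and, by the local algebra principle, $\hol^\calG(g)=\hol^\calG(g')$ iff $g^{-1}g' \in \calK$. Therefore the induced map $\calG|_L/\calK|_L \to \Hol(A)|_L$ is an open injective local groupoid morphism, hence a diffeomorphism onto an open neighbourhood of the units. That neighbourhood is where the smooth structure of Theorem~\ref{theorem:hol(A)-is-long-smooth} is defined. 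Since the Lie algebroid only depends on a neighbourhood of the units, $\Lie(\Hol(A)|_L) \cong \HAlg(A)|_L$.

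Canonicity would follow by checking that the composite $A|_L \to \Lie(\Hol(A)|_L)$ equals $\Lie(\hol)$, which is intrinsic, so no choice of $\calG$ enters.

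I expect the main obstacle to be the first step, namely showing $\calK|_L$ is precisely $\SC(A)\cap\calG$ over $L$, a smooth bundle. One must match central sections of the local groupoid with $Z(A)$-valued sections through the identity exponential of a source-linear integration. I would argue fibrewise using Lemma~\ref{Lemma:adjoint-identity-center-bisection}, and check compatibility of this with the extension property.
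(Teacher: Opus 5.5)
Your proposal is correct and follows essentially the paper's route. Like the paper, you realize a neighbourhood of the units in $\Hol(A)|_L$ as $\calG|_L/\calK|_L$ via the open holonomy map, apply Proposition~\ref{prop:quotient-Lie-algebroid-local-integration}, and identify $\Lie(\calK|_L)$ with $\SC(A)|_L$. The only difference is that you justify this last identification explicitly, through the source-linear description $\calK=\SC(A)\cap\calG$ from the proof of Theorem~\ref{theorem:hol(A)-is-long-smooth}, and you also check the ideal property and canonicity; the paper simply asserts these points.
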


\begin{proof}
    By Theorem \ref{theorem:hol(A)-is-long-smooth}, $\Hol(A)|_L$ is a Lie groupoid. Choose a local integration $\G$ of $A$ and let \[hol_{\G}:\G \rightarrow \Hol(A)\]

    be the local holonomy homomorphism. Let \[K:= ker(hol_{\G}).\]
    By Theorem \ref{theorem:hol(A)-is-long-smooth}, after replacing $\G$ by a sufficiently small open restriction around the unit section if necessary,  $K|_L$ is a closed normal Lie subgroupoid of $\G|_L$. Moreover, $K|_L$ is wide, since $K$ is the kernel of the groupoid morphism $hol_{\G},$ and hence contain every unit $1_x$ for $x \in L.$ Therefore, $K|_L$ is a closed, wide, normal local Lie subgroupoid of $\mathcal{G}|_L.$

Hence Proposition \ref{prop:quotient-Lie-algebroid-local-integration} applies and gives a local Lie groupoid quotient \[ \mathcal{G}|_L/K|_L.\]
    The holonomy map \[ hol_{\G}:\G|_L \rightarrow \Hol(A)|_L\]
    has kernel $K|_L,$ and therefore descends to a local groupoid morphism \[ \overline{hol}_{\G}:\G|_L/K|_L \rightarrow \Hol(A)|_L.\]
    By the construction of the smooth structure on $\Hol(A)|_L$ in the proof of Theorem \ref{theorem:hol(A)-is-long-smooth}, this descended map identifies a neighborhood of the unit section in $\G|_L/K|_L$ with a neighborhood of the unit section in $\Hol(A)|_L.$ Therefore, for the purpose of computing the Lie algebroid,
    \[ \Lie(\Hol(A)|_L) \cong \Lie(\G|_L/K|_L).\]

By Proposition \ref{prop:quotient-Lie-algebroid-local-integration}, the Lie algebroid of this quotient is \begin{equation}\label{equation:Quotient-Algbroid-Iso}
\Lie(\Hol(A)|_L) \cong \Lie(\G|_L/K|_L) \cong A|_L / \Lie(K|_L).    
\end{equation}
By Proposition \ref{proposition:local-integration-adjoint-kernel}, for the chosen local integration \[K=ker(hol_{\G}),\]
is the adjoint kernel.

 By the characterization of strongly central elements, the strongly central elements are precisely the infinitesimal kernel of the holonomy map.  Hence, 
 \[ \Lie(K|_L) =\SC(A)|_L.\]

 Substituting this in the equation \ref{equation:Quotient-Algbroid-Iso}   gives 
\[ \Lie(\Hol(A)|_L) \cong A|_L /\SC(A)|_L.\]

This is precisely the holonomy algebroid of $A$ along $L.$

\end{proof}

\begin{example}(Lie algebras)
    Let $A= \mathfrak{g} \rightarrow \{\ast \}$. By Example \ref{example:Lie-algebra}, the algebroid holonomy groupoid is \[\Hol(\mathfrak{g}) = Inn(\mathfrak{g}),\]
    the image of the adjoint representation. Hence its Lie algebra is \[ \Lie(\Hol(\mathfrak{g})) = \ad(\mathfrak{g}) \cong \mathfrak{g}/Z(\mathfrak{g}).\]

    On the other hand, since the base is a point, a section of $A$ is just a point of $A.$ The set of strongly central elements are precisely the elements of the center  \[ \SC(A) = Z(\mathfrak{g}).\]
    Thus Definition \ref{definition:holonomy-algebroid} gives \[ \HAlg(A) = \mathfrak{g}/Z(\mathfrak{g}).\]
    Therefore Theorem \ref{theorem:holonomy-algebroid-isomorphism} gives \[ \Lie(\Hol(\mathfrak{g})) \cong \HAlg(\mathfrak{g}).\]

\end{example}

\begin{example}
    (Trivial bundles of Lie algebras) Let \[ A = M \times \mathfrak{g} \rightarrow M\]
be the Lie algebroid with zero anchor and constant isotropy Lie algebra $\mathfrak{g}$. By example \ref{free-transitive-lie algebroid-holonomy}, the holonomy groupoid is governed by the inner automorphism group of $\mathfrak{g}$. Hence, over each point $x \in M,$ the Lie algebra of the holonomy group is \[ \mathfrak{g}/Z(\mathfrak{g}).\]
On the other hand, the strongly central elements are \[ \SC(A) = M \times Z(\mathfrak{g}).\]
    Therefore Definition \ref{definition:holonomy-algebroid} gives \[ \HAlg(A) = (M \times \mathfrak{g}) /(M \times Z(\mathfrak{g})) \cong M \times (\mathfrak{g}/Z(\mathfrak{g}).\]
    This agrees with Theorem \ref{theorem:holonomy-algebroid-isomorphism}.
\end{example}

\begin{example}(Tangent algebroids) Let \[ A= TM \rightarrow M\] with anchor $\rho = id_{TM}$. By Example \ref{example:holonomy-algebroid-tangent-bundle}, \[ \Hol(TM) = \{(x,y) \in M \times M: x, y \text{ lie in the same connected component}\}.\] 
    Thus, on each leaf $L,$ we have \[ \Hol(TM)|_L = L \times L,\]
    and hence \[ \Lie(\Hol(TM)|_L) = TL.\]

    Since the leaf is a connected component of $M,$ this is simply 
\[TL= TM|_L.\]
Moreover, $TM$ has zero isotropy, so \[\SC(TM) = 0.\]
Therefore, Definition \ref{definition:holonomy-algebroid} gives \[ \HAlg(TM)_L = TM|_L.\]
Hence Theorem \ref{theorem:holonomy-algebroid-isomorphism} gives \[ \Lie(\Hol(TM)|_L) \cong \HAlg(TM)_L.\]

\end{example}

\begin{example}(Transitive Lie algebroids) Let $A \rightarrow M$ be a transitive Lie algebroid. Then $M$ itself is the only leaf. Write\[ \mathfrak{g}_M:= ker(\rho)\] for the isotropy Lie algebra bundle. By example \ref{example:algebroid-holonomy-transitive}, the slices are points, so the holonomy transformations are given by the induced isomorphisms between the isotropy Lie algebras \[ \mathfrak{g}_x \rightarrow \mathfrak{g}_y.\]
Hence, the infinitesimal kernel of the holonomy consists of those isotropy directions where local adjoint action is trivial. By definition \ref{definition:strongly-central}, these are precisely the strongly central elements $\SC(A) \subset \mathfrak{g}_M.$ 

Hence, using Definition \ref{definition:holonomy-algebroid}, \[\HAlg(A) = A/\SC(A), \quad \SC(A) \subset \mathfrak{g}_M.\]
    Therefore Theorem \ref{theorem:holonomy-algebroid-isomorphism} identifies \[ \Lie(\Hol(A)) \cong A/\SC(A).\]
    Thus the holonomy algebroid is obtained from $A$ by quotienting out the ineffective strongly central isotropy directions.
\end{example}
\begin{example}(Regular foliations)
 Let $\F \subset TM$ be a regular foliation, viewed as a Lie algebroid with anchor the inclusion \[ \F \xhookrightarrow{} TM.\]
 By Example \ref{example:algebroid-holonomy-foliations},  the algebroid holonomy groupoid $\Hol(\F)$ is the usual holonomy groupoid of the foliation. If $L$ is a leaf, then \[ \Lie(\Hol(\F)_L) = \F|_L.\]

Since the anchor $\F \hookrightarrow TM$ is injective, the isotropy is zero, and therefore \[ \SC(\F) = 0.\]
Thus Definition \ref{definition:holonomy-algebroid} gives \[ \HAlg(\F)_L= \F|_L.\]
Hence Theorem \ref{theorem:holonomy-algebroid-isomorphism} gives \[ \Lie(\Hol(\F)|_L) \cong \F|_L = \HAlg(\F)_L.\]
 
\end{example}

\begin{example}
    (Regular Poisson manifolds) Let $(M, \pi)$ be a regular Poisson manifold, and let \[ A= T^\ast_\pi M\] be its cotangent Lie algebroid. The anchor is \[ \pi^\sharp: T^\ast M \rightarrow TM,\] and its image is the symplectic foliation $\F.$ Thus, for a symplectic leaf $L$, \[ A|_L = T^\ast M|_L.\]
By Example \ref{example:algebroid-holonomy-regular-poisson}, the holonomy of $T^\ast_\pi M$ agrees with the holonomy of the regular foliation $\F.$ Hence the Lie algebroid of \[ \Hol(T^\ast_\pi M)|_L\]
is \[ T\F|_L.\]
 The isotropy bundle of $T^\ast_\pi M$ is \[ ker(\pi^\sharp) = (T\F)^0.\]

 We claim that 
   \[\SC(T^\ast_\pi M)|_L=(T\F)^0|_L.\]

Indeed, a strongly central element must lie in the isotropy, so \[ \SC(T^\ast_\pi M)|_L \subset (T\F)^0|_L.\]

Conversely, because $\F$ is regular, every point admits local coordinates adapted to the foliation. In such coordinates the conormal bundle is locally spanned by differentials of transverse coordinates. These conormal sections commute with all sections of $T^\ast_\pi M$, and therefore every conormal element locally extends to a strongly central section. Hence \[ (T\F)^0|_L \subset \SC(T^\ast_\pi M)|_L.\]
Thus \[\SC(T^\ast_\pi M)|_L = (T\F)^0|_L.\]
   
Therefore, by    Definition \ref{definition:holonomy-algebroid},  \[ \HAlg(T^\ast_\pi M)_L = T^\ast M|_L/(T\F)^0|_L \cong T^\ast \F|_L.\]
Using the leafwise symplectic form, $T^\ast \F|_L$ is canonically identified with $T\F|_L$. Therefore Theorem \ref{theorem:holonomy-algebroid-isomorphism} is consistent with Example \ref{example:algebroid-holonomy-regular-poisson}: \[\Lie(\Hol(T^\ast_\pi M)|_L) \cong \HAlg(T^\ast_\pi M)_L.\]

\end{example}

\subsection{Holonomy as an adjoint integration}\label{section:universal-property-holonomy-groupoid}

In Theorem~\ref{theorem:holonomy-algebroid-isomorphism} we saw that the holonomy algebroid is the Lie algebroid of the holonomy groupoid when restricted to a single leaf.
The restriction to the leaf caveat exists because the set of strongly central elements of $A$ does not necessarily form a subbundle.
However, the set of strongly central elements does form vector subspace within each fiber of $A$ and the only obstruction to smoothness is the rank.

In other words, an immediate corollary of Theorem~\ref{theorem:holonomy-algebroid-isomorphism} is:
\begin{corollary}\label{corollary:holonomy-constant-rank}
Suppose $A$ is a Lie algebroid and the set of strongly central elements of $A$ has constant rank. Then $\HAlg(A)$ is a Lie algebroid, $\Hol(A) \grpd M$ is a Lie groupoid, and $\Lie(\Hol(A)) \cong \HAlg(A)$.
\end{corollary}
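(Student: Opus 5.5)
The plan is to show first that $\SC(A)$ is a smooth subbundle and an ideal of $A$, so that $\HAlg(A)$ is a genuine Lie algebroid. Then I would repeat the argument of Theorem~\ref{theorem:hol(A)-is-long-smooth} globally on $M$ rather than on a single orbit, and read off the algebroid with Proposition~\ref{prop:quotient-Lie-algebroid-local-integration}.

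\textbf{Step 1: $\SC(A)$ is a Lie ideal subbundle.} Each $\SC(A)_x$ is a linear subspace. If $\SC(A)$ has constant rank $k$, pick $z_1,\dots,z_k$ spanning $\SC(A)_x$ and extensions $\zeta_i$ with values in $Z(A)$. By the definition of strongly central elements, $\zeta_i(y)\in \SC(A)_y$ for all $y$ near $x$. Near $x$ the $\zeta_i(y)$ remain linearly independent, and since the rank is constant they span $\SC(A)_y$. So $\SC(A)$ is locally framed by smooth sections, hence a smooth subbundle.

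Next, $\rho$ vanishes on it, since it lies in the isotropy. For the ideal property, take $\alpha\in\Gamma(A)$ and $\zeta$ with values in $Z(A)$. Since $\SC(A)$ is preserved by adjoint flows, the section $(\Phi^t_\alpha)_*\zeta$ takes values in $\SC(A)$ for all $t$. Differentiating at $t=0$ gives $[\zeta,\alpha]\in\Gamma(\SC(A))$. Since $\SC(A)$ is locally spanned by such $\zeta$, the Leibniz rule together with $\rho(\zeta)=0$ shows that all of $\Gamma(\SC(A))$ is an ideal. Hence $A/\SC(A)$ inherits a bracket and an anchor, and $\HAlg(A)$ is a Lie algebroid.

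\textbf{Step 2: smoothness of $\Hol(A)$.} Take a convenient, source-linear, convex local integration $\calG\subset A$ as in Proposition~\ref{proposition:local-integration-adjoint-kernel}. There $\calK=\ker(\hol^\calG)$ is the adjoint kernel, and, as in the proof of Theorem~\ref{theorem:hol(A)-is-long-smooth}, $\calK=\SC(A)\cap\calG$. Now $\SC(A)$ is a closed subbundle on all of $M$, not just along $\calO$. So $\calK$ is a closed, wide, normal local Lie subgroupoid of $\calG$, with Lie algebroid $\SC(A)$, because it is linear in the source-linear chart.

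Proposition~\ref{prop:quotient-Lie-algebroid-local-integration} then makes $\calG/\calK$ a local Lie groupoid with algebroid $A/\SC(A)=\HAlg(A)$. The map $\hol^\calG$ is open (Proposition~\ref{proposition:open-groupoid-morphism-proposition}) and has kernel $\calK$. It therefore induces a homeomorphism, which I take as a chart, from $\calG/\calK$ onto an open neighborhood $W$ of $1_M$ in $\Hol(A)$.

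\textbf{Step 3: globalize.} The charts near arbitrary arrows are obtained by right translation: for $h\in\Hol(A)$, use $W\cdot h$, given by $w\mapsto w h$ on the composable part. This is meaningful because $\Hol(A)$ is generated by $W$; it is a quotient of $\Pi_1(A)$, which is generated by any neighborhood of the units. Transition maps between such charts are built from multiplication in the local groupoid $\calG/\calK$, hence smooth. Multiplication and inversion are smooth in these charts. This gives a (possibly non-Hausdorff) Lie groupoid structure whose restriction to each orbit agrees with Theorem~\ref{theorem:hol(A)-is-long-smooth}. Finally, $\Lie(\Hol(A))=\Lie(\calG/\calK)\cong\HAlg(A)$, consistent with Theorem~\ref{theorem:holonomy-algebroid-isomorphism} on each leaf.

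\textbf{Main obstacle.} I expect the hardest part to be Step 3, and specifically checking that the right-translated charts have smooth transition maps away from the units. The difficulty is that two elements of $W$ representing the same arrow may a priori differ by something not controlled by $\calK$. I would handle this by shrinking $W$ so that the local algebra principle (Lemma~\ref{lemma:local-algebra-principal}) applies to all products involved. I would also use injectivity of $\calG/\calK\to\Hol(A)$ near the units, which holds because $\calK$ is exactly the kernel, to reduce transition maps to left multiplication by a fixed element of $\calG/\calK$.
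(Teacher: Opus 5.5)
Your Steps 1 and 2 are what the paper has in mind. The paper states this corollary as immediate from Theorems~\ref{theorem:hol(A)-is-long-smooth} and~\ref{theorem:holonomy-algebroid-isomorphism}, because the restriction to a leaf there was needed only when $\SC(A)$ fails to be a subbundle. With constant rank, $\calK=\SC(A)\cap\calG$ is a closed, wide, normal Lie subgroupoid over all of $M$, so Proposition~\ref{prop:quotient-Lie-algebroid-local-integration} applies without restricting to an orbit. Your Step 1 is correct: local frames by $Z(A)$-valued sections, and the ideal property from differentiating $(\Phi^t_\alpha)_*\zeta$ plus Leibniz. It also supplies something the paper leaves implicit, namely that $A/\SC(A)$ is an honest Lie algebroid.

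The gap is in Step 3, which goes beyond what the paper writes; the paper passes from the local quotient near $1_M$ directly to ``Lie groupoid''. Take an arrow $h$ from $x$ to $y$. The map $w\mapsto wh$ is defined only on $W\cap\src^{-1}(y)$ and lands in the source fibre $\src^{-1}(x)$. So $W\cdot h$ is not a neighbourhood of $h$ in $\Hol(A)$ unless $\dim M=0$. The group-style claim that transition maps are left multiplications by a fixed element of $\calG/\calK$ therefore does not produce an atlas.

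You have to translate $W$ by a bisection through $h$. The natural choice is $\sigma_\alpha(z)=\Hol(\alpha,z)$ for a $\Gamma(A)$-path with $\Hol(\alpha,x)=h$. The transition between the charts of $\alpha$ and $\beta$ is then right multiplication by values of $\sigma_{\alpha\bullet\beta^{-1}}$ at a point depending on $w$. For this to have open domain and be smooth, you need a statement that neither the local algebra principle nor injectivity near the units provides: if $\Hol(\gamma,y_0)\in W$, then $\Hol(\gamma,y)\in W$ for $y$ near $y_0$, and $y\mapsto\Hol(\gamma,y)$ is smooth into $\calG/\calK$.

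This can be proved along the following lines.
\begin{enumerate}
\item Split off a small path and a trivial transformation. This reduces to a loop $\mu$ with $\Phi^1_\mu|_{A_S}=\Id$ near $y_0$, for a slice $S$ through $y_0$.
\item Sub-slices of $S$ give $\Hol(\mu,y)=1_y$ for $y\in S$ near $y_0$.
\item For $p=\phi^1_\tau(y)$ off $S$, take $\tau$ small and depending smoothly on $p$, as in Lemma~\ref{conjugacy-smooth-local-section-family}. One has $\Hol(\tau\bullet\mu\bullet\tau^{-1},p)=1_p$. Since holonomy depends only on time-one flows, this gives $\Hol(\mu,p)=\Hol(\lambda,p)$ for the small section $\lambda=((\Phi^1_\mu)_*\tau)\bullet\tau^{-1}$.
\end{enumerate}
This argument is the actual content of the globalization, and it is missing from your proposal.
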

If the set of strongly central elements forms a trivial subbundle, then $\HAlg(A) \cong A$ and so $\Hol(A)$ is a Lie groupoid integrating $A$ itself.
Indeed, as an integration of $A$, it is special in that it satisfies a universal property which is ``opposite'' to the one satisfied by the source simply connected integration.

\begin{theorem}\label{theorem:holonomy-adjoint-integration}
Suppose the set of strongly central elements of $A$ is trivial. 
If $\calG$ is a source-connected integration of $A$, then there exists a unique groupoid homomorphism $F \colon \calG \to \Hol(A)$ such that $\Lie(F) = \Id_A$.
\end{theorem}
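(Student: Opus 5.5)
Existence will come from the groupoid holonomy map of Section~\ref{section:holonomy-of-groupoids}, and uniqueness from source-connectedness.

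\textbf{Existence.} First we record the smoothness of the target. Since $\SC(A)=0$ has constant rank zero, Corollary~\ref{corollary:holonomy-constant-rank} shows that $\Hol(A)\grpd M$ is a Lie groupoid with $\Lie(\Hol(A))\cong\HAlg(A)=A/\SC(A)=A$.

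We take $F:=\hol_\calG\colon\calG\to\Hol(\calG)$. By Corollary~\ref{Cor-Groupoid-Hol-Equal-Algeb-Hol} we have $\Hol(\calG)=\Hol(A)$ as subsets of $\HT(A)$, and $\hol_\calG$ is a groupoid homomorphism because $C_\sigma\circ C_\eta=C_{\sigma\cdot\eta}$ holds in a global Lie groupoid.

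\textbf{Smoothness and $\Lie(F)=\Id_A$.} These properties are local near the units and can be propagated by right translations, since $F$ is a homomorphism. Near the units, we use the commutative diagram of Proposition~\ref{proposition:open-groupoid-morphism-proposition}. On a small open restriction $\calG_0\subset\calG$, the map $\hol_{\calG_0}$ agrees with $F$. In the proof of Theorem~\ref{theorem:hol(A)-is-long-smooth}, the smooth structure on $\Hol(A)$ near the units is defined by identifying it with the local quotient $\calG_0/\calK$, where $\calK$ is the adjoint kernel. Here $\SC(A)=0$ gives $\Lie(\calK)=0$, so $\calK$ is discrete near the units. After shrinking $\calG_0$, we get $\calK\cap\calG_0=1_M$. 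Then $F|_{\calG_0}$ is a local diffeomorphism onto a neighbourhood of the units. Under the identification in Proposition~\ref{prop:quotient-Lie-algebroid-local-integration}, its differential on $T^s\calG|_{1_M}=A$ is the quotient map $A\to A/\Lie(\calK)=A$, that is, the identity.

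Smoothness of $F$ everywhere then follows. Any $g\in\calG$ is a product of finitely many elements of $\calG_0$, because $\calG$ is source-connected and $\calG_0$ generates it. One can also see this directly: near $g$ we write $F(h)=F(hg^{-1})F(g)$, which shows $F$ is smooth on a neighbourhood of $g$ whenever $F$ is smooth near the units.

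\textbf{Uniqueness.} Suppose $F_1,F_2\colon\calG\to\Hol(A)$ are smooth homomorphisms with $\Lie(F_1)=\Lie(F_2)=\Id_A$. For a time-dependent section $\alpha$ with $\calG$-integration $\sigma(t)$, each $F_i$ carries the right-invariant vector field $\hat\alpha$ on $\calG$ to the right-invariant vector field of $\alpha$ on $\Hol(A)$. Hence $F_i(\sigma(t)(x))$ is the $\Hol(A)$-integration of $\alpha$ evaluated at $x$, which is the same for $i=1,2$. Every element of a source-connected groupoid is of the form $\sigma(1)(x)$ for some such $\alpha$: connect $g$ to $1_{s(g)}$ by a path in the source fibre and extend its right-translated derivative to a time-dependent section. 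Therefore $F_1=F_2$.

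\textbf{Main obstacle.} The delicate step is the identification in the existence part. We must check that the smooth structure on $\Hol(A)$ from Theorem~\ref{theorem:hol(A)-is-long-smooth} really makes $F$ smooth near the units with differential exactly $\Id_A$, and not merely an isomorphism $A\cong\HAlg(A)$. For this I would verify directly that the composite $A\to\Lie(\calG_0/\calK)\to\Lie(\Hol(A))$ is the canonical isomorphism of Theorem~\ref{theorem:holonomy-algebroid-isomorphism}, by tracing the vertical lift through the source-linear model.
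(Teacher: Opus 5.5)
Your proof is correct. Its existence half is the paper's argument: both take $F=\hol_{\calG}$, land in $\Hol(A)$ via Corollary~\ref{Cor-Groupoid-Hol-Equal-Algeb-Hol}, and read off $\Lie(F)=\Id_A$ from the holonomy-algebroid identification. You also make explicit what the paper leaves implicit: $\SC(A)=0$ forces the local kernel to be $1_M$, so $\hol_{\calG}$ is itself a chart of $\Hol(A)$ near the units.

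Your uniqueness half takes a genuinely different route. The paper does not use source-connectedness there. For a local bisection $\sigma$ through $g$, $F\circ\sigma$ is a local bisection through $F(g)$ with $\Ad_{F(\sigma)}=\Lie(F)\circ\Ad_\sigma\circ\Lie(F)^{-1}=\Ad_\sigma$. Hence $F(g)$ has the same holonomy as $g$, and since an arrow of $\Hol(A)$ is its own holonomy class, $F(g)=\hol_{\calG}(g)$. That argument identifies $F$ directly as the holonomy map and gives uniqueness for any integration; connectedness is only needed for existence. It also shows that terminality comes from $\Hol(A)$ being holonomy-effective. Its cost is reliance on the self-holonomy property of $\Hol(A)$, which the paper asserts without proof. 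Your argument is the generic rigidity of Lie theory: $F$-relatedness of right-invariant vector fields plus source-connectedness of $\calG$. It needs nothing specific to $\Hol(A)$ and avoids the self-holonomy fact, at the price of the connectedness hypothesis.

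Two small repairs are needed.

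First, the formula $F(h)=F(hg^{-1})F(g)$ only makes sense for $h$ in the source fibre of $g$. For a full neighbourhood of $g$, take a local bisection $\sigma$ through $g$ that is a product of bisections with values in $\calG_0$; this is possible by source-connectedness. Then write $F(h)=F\bigl(h\,\sigma(s(h))^{-1}\bigr)\,F\bigl(\sigma(s(h))\bigr)$.

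Second, the paper allows non-Hausdorff groupoids, and holonomy groupoids often are non-Hausdorff. So integral curves through $1_x$ need not be globally unique, and "the" $\Hol(A)$-integration of $\alpha$ is not automatically well defined. Instead, note that the curves $h_i(t):=F_i(g(t))$ have the same target path. They satisfy $h_i(t+\epsilon)=b_\epsilon\,h_i(t)$ for the same short-time flow $b_\epsilon$ of the right-invariant field starting at the unit. Therefore $h_1(t)^{-1}h_2(t)$ is locally constant, and hence identically $1_x$.
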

\begin{proof}
In Corollary~\ref{Cor-Groupoid-Hol-Equal-Algeb-Hol}, we saw that the groupoid holonomy of a source-connected integration of $A$ coincides with the algebroid holonomy. 
This, combined with Theorem~\ref{theorem:holonomy-algebroid-isomorphism} establishes existence.

For uniqueness, suppose $F \colon \calG \to \Hol(A)$ is a groupoid homomorphism such that $\Lie(F) = \Id_A$. 
Let us fix $g \in \calG$. 

Recall that the holonomy of $g$ is computed by choosing a local bisection $\sigma \colon U \to \calG$ through $g$ and then considering the class of $\Ad_\sigma \colon A|_U \to A$.
Since $F$ is a groupoid homomorphism covering the identity we also obtain a local bisection $F(\sigma)$ through $F(g)$ in $\Hol(A)$.
Furthermore, a standard calculation shows that:
\[ \Ad_{F(\sigma)} = \Lie(F) \circ \Ad_\sigma \circ \Lie(F)^{-1} \]
But since $\Lie(F) = \Id_A$, we have that $\Ad_{F(\sigma)} = \Ad_\sigma$.

Hence, $F(g)$ and $g$ have the same holonomy. Since $F(g)$ is an element of $\Hol(A)$ itself, its holonomy is itself and so $F(g) = \hol(g)$.
\end{proof}

\appendix
\section{Flows of derivations}\label{appendix:derivations}
In this appendix we will detail some of the basic theory of the flow of a derivation. 
We will see that derivations of vector bundles are the infinitesimal version of the flow of a one parameter family of vector bundle automorphisms.
\subsection{Derivations}
\begin{definition}\label{definition:derivation}
A \emph{derivation} on a vector bundle \( E \to M \) consists of a pair \( (D,X) \) where \( D \colon \Gamma(E) \to \Gamma(E) \) is a linear map and \( X \in \mathfrak{X}(M) \) is a vector field such that the following Leibniz identity holds:
    \begin{equation}\label{eqn:derivation.leibnitz} 
        D(u\eta) = X(u)\eta + u D(\eta)
    \end{equation}
    for all \( u \in C^\infty(M) \) and \( \eta \in \Gamma(E) \).
    The vector field \( X \) is called the \emph{symbol} of the derivation \( D \) and we may sometimes abuse notation and represent the derivation \( (D,X) \) simply by \( D \).
\end{definition}
Let us look at three key examples of derivations.
\begin{example}[Vector fields]\label{example:derivation.vector.fields}
    A vector field \( X \in \mathfrak{X}(M) \) can be seen as a derivation \((X,X)\) on the trivial line bundle \( \underline{\R}_M \to M \). 
    Sections of this bundle are smooth functions on \( M \) and Equation~\ref{eqn:derivation.leibnitz} holds reduces to the usual Leibniz rule for vector fields.
\end{example}
\begin{example}[Connections]\label{example:derivation.connections}
    A connection \( \nabla \) on a vector bundle \( E \to M \) can be seen as a derivation \( (\nabla_X, X) \) for each vector field \( X \in \mathfrak{X}(M) \). 
    The Leibniz rule for connections is just the usual Leibniz rule for connections.
\end{example}

The next example is the most important for our purposes. It shows that every one-parameter family of vector bundle automorphisms determines a derivation. In the following subsection we shall prove the converse.

\begin{lemma}\label{lemma:derivation.from.automorphisms}
    Suppose \( \Phi^t \colon E \to E \) is a one parameter family of vector bundle automorphisms covering a one parameter family of diffeomorphisms \( \phi^t \colon M \to M \) with \( \Phi^0 = \Id_E \).
    Then the following formula defines a derivation on \( E \):
\[ D \colon \Gamma(E) \to \Gamma(E) \qquad D(\eta) := -\left. \frac{d}{dt} \right\rvert_{t=0} \Phi^t_* \eta.  \]
    The symbol of this derivation is the vector field:
\[ X \in \mathfrak{X}(M) \qquad X_p = \left. \frac{d}{dt} \right\rvert_{t=0} \phi^t(p).  \]
\end{lemma}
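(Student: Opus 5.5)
We need to show that $D(\eta) := -\frac{d}{dt}\big|_{t=0}\Phi^t_*\eta$ is a well-defined linear map $\Gamma(E)\to\Gamma(E)$ satisfying the Leibniz rule with symbol $X$.

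\emph{Well-definedness and linearity.} First we check that $D$ produces a smooth section. By definition $(\Phi^t_*\eta)_p = \Phi^t(\eta_{(\phi^t)^{-1}(p)})$. For fixed $p$, the map $t\mapsto(\Phi^t_*\eta)_p$ is a smooth curve in the single fiber $E_p$, since $\Phi^t$ carries $E_{(\phi^t)^{-1}(p)}$ to $E_p$. Its $t$-derivative at $0$ is therefore an element of $E_p$. Smoothness in $p$ follows from joint smoothness of $(t,p)\mapsto \Phi^t(\eta_{(\phi^t)^{-1}(p)})$, viewed in a local trivialization. Linearity of $D$ is immediate, because each $\Phi^t_*$ is linear on sections (it is fiberwise linear) and differentiation is linear.

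\emph{Leibniz rule.} Let $u\in C^\infty(M)$. Since $\Phi^t$ is fiberwise linear,
\[ (\Phi^t_*(u\eta))_p = u\big((\phi^t)^{-1}(p)\big)\,(\Phi^t_*\eta)_p, \]
that is, $\Phi^t_*(u\eta) = \big(u\circ(\phi^t)^{-1}\big)\,\Phi^t_*\eta$. Differentiating at $t=0$ with the product rule, and using $\Phi^0_*\eta=\eta$ and $(\phi^0)^{-1}=\Id$, gives
\[ \left.\frac{d}{dt}\right|_{t=0}\Phi^t_*(u\eta) = \left.\frac{d}{dt}\right|_{t=0}\big(u\circ(\phi^t)^{-1}\big)\,\eta + u\,\left.\frac{d}{dt}\right|_{t=0}\Phi^t_*\eta . \]
It remains to compute $\frac{d}{dt}\big|_{0}(\phi^t)^{-1}(p)$. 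Differentiating the identity $\phi^t\circ(\phi^t)^{-1}=\Id$ at $t=0$ shows that this derivative equals $-X_p$. Hence $\frac{d}{dt}\big|_{0}\,u\circ(\phi^t)^{-1} = -X(u)$.

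Multiplying through by the overall minus sign in the definition of $D$ yields
\[ D(u\eta) = X(u)\,\eta + u\,D(\eta), \]
which is exactly Equation~\ref{eqn:derivation.leibnitz} with symbol $X$.

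\emph{Where care is needed.} The only delicate point is the sign bookkeeping between $(\phi^t)^{-1}$ and the leading minus sign in $D$. Joint smoothness in $(t,p)$ is routine given that the family $\Phi^t$ is assumed smooth.
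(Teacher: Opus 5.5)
Your proof is correct and follows the same route as the paper: write $\Phi^t_*(u\eta) = \bigl(u\circ(\phi^t)^{-1}\bigr)\,\Phi^t_*\eta$, differentiate at $t=0$ using the product rule, and use $\left.\frac{d}{dt}\right|_{t=0} u\circ(\phi^t)^{-1} = -X(u)$. Your version is slightly more careful than the paper's. You use $(\phi^t)^{-1}$ rather than $\phi^{-t}$, which matters because $\phi^t$ is not assumed to be a flow, and you also check that $D$ produces smooth sections.
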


\begin{proof}
    We first recall that the push-forward on sections is compatible with multiplication by functions. Namely, for every $u \in C^\infty(M)$ and $\eta \in \Gamma(E)$ one has \[ (\Phi^t)_\ast(u\eta) = (\phi^t)_\ast  u \cdot (\Phi^t)_\ast \eta.\] Let \[ X := \left.\frac{d}{dt}\right|_{t=0}\phi^t\] denote the infinitesimal generator of the base flow. Differentiating the identity \[ (\phi^t)_\ast u(p) = u(\phi^{-t}(p))\]

at $t = 0$ gives \[ \left.\frac{d}{dt}\right|_{t=0}(\phi^t)_\ast u = -X(u).\]

We now compute: \begin{align*}
D(u \eta) & = -\left. \frac{d}{dt} \right\rvert_{t=0} \Phi^t_* (u \eta)  \\
& = -\left. \frac{d}{dt} \right\rvert_{t=0} \left( (\phi^t)_* u \cdot (\Phi^t)_* \eta \right)  \\
& = - u \cdot \left. \frac{d}{dt} \right\rvert_{t=0} (\Phi^t)_* \eta - \left. \frac{d}{dt} \right\rvert_{t=0} (\phi^t)_* u \cdot \eta  \\
& = u D(\eta) + X(u) \eta.
\end{align*}
Hence $(D, X)$ satisfies the Leibniz rule and therefore defines a derivation of $E$ with symbol $X.$
    
\end{proof}

This derivation may be regarded as infinitesimal generators of one-parameter families of vector bundle automorphisms.
\subsection{Flow of a derivation}\label{subsection:flow.of.derivation}
Earlier we saw that the derivative of a one parameter family of vector bundle automorphisms gives rise to a derivation. 
Modulo some assumptions about completeness, it turns out that every derivation arises this way.
\begin{definition}\label{definition:complete.derivation}
    Suppose \( (D,X) \) is a derivation on \( E \to M \). We say that \( (D,X) \) is \emph{complete} if the vector field \( X \) is complete.
\end{definition}
\begin{definition}[Flow of a derivation]\label{definition:flow.of.derivation}
    Suppose \( (D,X) \) is a derivation on \( E \) and \( X \) is a complete vector field. Given any section \( \eta_0 \in \Gamma(E) \) we say that a one parameter family of sections \( \eta(t) \) is the \emph{flow} of \( \eta_0 \) along \( D \) if it satisfies the following initial value problem:
    \begin{equation}\label{eqn:flow.of.derivation.IVP} \eta'(t) = -D\eta(t) \qquad \eta(0) = \eta_0. \end{equation}
\end{definition}
\begin{proposition}\label{prop:derivation.flow.exists.and.unique}
    Suppose \( (D,X) \) is a complete derivation and \( \eta_0  \in \Gamma(E) \) is a section. Then the flow of \( \eta_0 \) along \( D \) exists and is unique.
\end{proposition}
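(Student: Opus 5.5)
The plan is to reduce the initial value problem $\eta'(t) = -D\eta(t)$ to a family of ordinary differential equations along the integral curves of the complete vector field $X$. The tool is a connection. Choose a connection $\nabla$ on $E$. By Example~\ref{example:derivation.connections}, $\nabla_X$ is a derivation with symbol $X$. The difference $D - \nabla_X$ then satisfies the Leibniz rule with zero symbol, so it is $C^\infty(M)$-linear, i.e.\ a bundle endomorphism $T \in \Gamma(\End(E))$. The equation becomes
\[ \eta'(t) = -\nabla_X \eta(t) - T\eta(t). \]

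Next I transport this along the flow. Let $\phi^t$ be the flow of $X$, which exists for all time by completeness. Let $P^t_p \colon E_p \to E_{\phi^t(p)}$ denote $\nabla$-parallel transport along the curve $s \mapsto \phi^s(p)$. Set $\theta(t)_p := (P^t_p)^{-1}\,\eta(t)_{\phi^t(p)} \in E_p$, which is the candidate "pulled-back" section.

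A direct computation should show that the transport term cancels the $\nabla_X$ part. For each fixed $p$ one obtains a linear ODE in the fixed finite-dimensional vector space $E_p$:
\[ \tfrac{d}{dt}\theta(t)_p = -(P^t_p)^{-1}\,T_{\phi^t(p)}\,P^t_p\,\theta(t)_p, \qquad \theta(0)_p = (\eta_0)_p. \]
The sign convention must be matched to Lemma~\ref{lemma:derivation.from.automorphisms}. There $D$ is minus the derivative of the push-forward, so a solution is naturally of the form $\eta(t) = \Phi^t_*\eta_0$, with evaluation at $\phi^{\pm t}(p)$ depending on that convention. I would fix the convention by checking the computation on the trivial line bundle of Example~\ref{example:derivation.vector.fields}. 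There the equation is the transport equation $u' = -X(u)$, with solution $u(t) = u_0\circ\phi^{-t} = (\phi^t)_*u_0$.

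Linear ODEs with smooth coefficients have unique global solutions on $[0,1]$, or indeed on all of $\mathbb{R}$. They also depend smoothly on the parameter $p$. This gives existence of a smooth $\theta$, and hence of $\eta$, and it defines vector bundle automorphisms $\Phi^t$ of $E$ covering $\phi^t$.

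For uniqueness, run the argument backwards. Any solution $\eta(t)$ yields a $\theta$ satisfying the same pointwise linear ODE with the same initial value, so $\theta$, and therefore $\eta$, is determined. The only delicate step is the differentiation identity relating $\frac{d}{dt}$ of $(P^t_p)^{-1}\eta(t)_{\phi^t(p)}$ to $(\eta' + \nabla_X\eta)$ evaluated along the curve. This is the standard fact that $\nabla_{\dot\gamma}$ is the derivative of the parallel-transport-trivialized section, combined with the chain rule for the time-dependent section. I expect this identity, and the bookkeeping of the sign and direction of the flow, to be the main obstacle. Everything else is finite-dimensional linear ODE theory.
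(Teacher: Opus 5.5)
Your argument is correct, but it obtains existence differently from the paper.

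\textbf{The paper's route.} It works in a local trivialization $E=\R^k\times M$ with the trivial flat connection and writes $D=\Omega+\nabla_X$. Existence is proved in two steps. First it produces flows of constant sections by solving $A'(t)=-\Omega A(t)$. It then extends to arbitrary sections by $C^\infty(M)$-linear combinations, using Lemma~\ref{lemma:derivation.IVP.compatible.with.multiplication}. Only uniqueness is proved by your mechanism, namely the linear ODE for $v(t)=\eta(t)(\gamma(t))$ along integral curves of $X$.

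\textbf{Your route.} You choose a global connection and absorb the $\nabla_X$ part into parallel transport. Running the characteristic computation in both directions then gives existence, uniqueness and smoothness from a single family of fiberwise linear ODEs. Your evaluation at $\phi^t(p)$ is the right one, because the combination that appears is $\eta'+\nabla_X\eta$.

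\textbf{What each buys.} The paper's route uses the semilinearity of flows with respect to $(\phi^t)_*$ and avoids parallel transport. Your route has three advantages.
\begin{enumerate}
\item It is global. You never need to justify the reduction ``it suffices to construct flows locally,'' which is delicate for a transport equation whose integral curves can leave a trivializing chart.
\item It directly produces the pointwise automorphisms of Corollary~\ref{cor:derivation.flow.bundle.automorphisms}: parallel transport composed with the fiberwise fundamental solution.
\item It avoids an error in the paper's constant-section step. Solving $A'=-\Omega A$ pointwise in $p$ does not make $\eta(t)=A(t)\eta_0$ spatially constant unless $\Omega$ is constant, so $\nabla_X\eta(t)$ does not drop out. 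For instance, with $D=\partial_x+x$ on the trivial line bundle over $\R$, the function $e^{-tx}$ is not a solution. That step has to be solved along integral curves, exactly as you do.
\end{enumerate}
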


For the proof of the above proposition, we need the following lemma.

\begin{lemma}\label{lemma:derivation.IVP.compatible.with.multiplication}
    Suppose \( (D,X) \) is a derivation, \( X \) is a complete vector field, and \( \eta(t) \) is a one parameter family of sections satisfying:
    \[ 
    \eta'(t) = -D \eta(t).
    \]
    Given a smooth function \(  u_0 \in C^\infty(M) \), define a one parameter family of smooth functions by taking \( u(t):= (\phi^t_X)_*(u_0) \). Then we have that:
    \[ \frac{d}{dt} (u(t) \eta(t)) = -D( u(t) \eta(t)).\]
\end{lemma}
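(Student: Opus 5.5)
The proof is a direct computation with the product rule. The base flow $\phi^t_X$ is the flow of the symbol $X$, so I will work with the time-dependent case in which $\phi^t_X$ is generated by $X$. I read $u(t) := (\phi^t_X)_* u_0$ as the push-forward $u(t) = u_0 \circ \phi^{-t}_X$. This is the convention used in the proof of Lemma~\ref{lemma:derivation.from.automorphisms}, where differentiating $(\phi^t)_*u$ at $t=0$ gave $-X(u)$.

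The first step is to compute $u'(t)$. Since $u(t+h) = (\phi^h_X)_* u(t)$ when $X$ is autonomous, the computation in the proof of Lemma~\ref{lemma:derivation.from.automorphisms} gives
\[ u'(t) = -X(u(t)). \]
If $X$ is time-dependent, I would instead differentiate $u_0(\phi^{-t}_X(p))$ directly. The same conclusion then holds with $X(t)$ in place of $X$, consistent with using $D(t)$ with symbol $X(t)$.

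The second step combines this with the Leibniz rule for products and for derivations. We have
\[ \frac{d}{dt}\bigl(u(t)\eta(t)\bigr) = u'(t)\eta(t) + u(t)\eta'(t) = -X(u(t))\eta(t) - u(t)D\eta(t). \]
By Equation~\ref{eqn:derivation.leibnitz}, the right-hand side equals $-D(u(t)\eta(t))$, which is the claim. Completeness of $X$ is used only to ensure that $u(t)$ is defined for all $t$.

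The one real subtlety, and the likely main obstacle, is the sign and direction convention for $\phi^t_X$. Concretely, I need $u'(t) = -X(u)$ at all times $t$, not just at $t=0$. This requires the group or flow property $\phi^{t+h}_X = \phi^h_X\circ\phi^t_X$ in the autonomous case. In the time-dependent case, I would compute $\frac{d}{dt}\phi^{-t}_X$ via the inverse-flow identity $\frac{d}{dt}(\phi^t)^{-1} = -d(\phi^t)^{-1}\circ X_t\circ\phi^t$. I would also justify differentiating the product pointwise in $p$ using smoothness in $(t,p)$; this part is routine.
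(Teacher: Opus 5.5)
Your proposal is correct and is essentially the paper's proof. The paper likewise takes $u'(t) = -X(u(t))$ from the push-forward convention $u(t) = u_0\circ\phi^{-t}_X$, then applies the product rule and the Leibniz rule for $(D,X)$ to obtain $-D(u(t)\eta(t))$; you additionally justify $u'(t)=-X(u(t))$ at every $t$ via $u(t+h) = (\phi^h_X)_*u(t)$, which the paper leaves implicit. Your time-dependent aside is not needed for the lemma as stated, and its inverse-flow formula should read $\frac{d}{dt}(\phi^t)^{-1} = -d(\phi^t)^{-1}\circ X_t$, without the trailing $\circ\,\phi^t$.
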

\begin{proof}

Recall from the definition of $u(t)$ we have  \( \frac{d}{dt} u(t) = - X(u(t)) \).


Therefore,
\begin{align*}
\frac{d}{dt} (u(t) \eta(t)) & = u'(t) \eta(t) + u(t) \eta'(t)  \\
& = -X(u(t)) \eta(t) - u(t) D(\eta(t)) \\
& = -D(u(t) \eta(t)).
\end{align*}
\end{proof}
\begin{proof}[Proof of Proposition~\ref{prop:derivation.flow.exists.and.unique}]

Since it suffices to construct flows locally in \( M \), we can assume without loss of generality that \( E = \R^k \times M\) and \( \Gamma(E) = C^\infty(M, \R^k)\),  for some natural number \( k \). 
Let $\nabla$  denote the canonical flat connection on \( E \). Observe that  there must exist a smooth function \( \Omega \in C^\infty(M, \End(\R^k)) \) such that 
\[  D(\eta) = \Omega \eta + \nabla_X \eta.\]
We will begin by showing that the flow of a section along a derivation exists when \( \eta_0 \colon M \to \R^k \) is a constant function. 
Notice that if \( \eta \in C^\infty(M,\R^k ) \) is any constant function then
\[ D(\eta) = \Omega \eta.\]
By the uniqueness and existence theorem for linear ODEs, there must exist a unique one-parameter family of functions \( A(t) \colon M \to \GL(\R^k) \) satisfying
\[ A'(t)  = -\Omega A(t), \qquad \quad  A_0 = \mathbb{1}_k.  \]
Given any constant function \( \eta_0 \in C^\infty(M,\R^k) \),  we can define:
\[ \eta(t) := A(t)\eta_0. \]

Note that \( \eta(t) \) is a one parameter family of sections of \( E \).  Furthermore, for each \( t \in \R \) each section   \( \eta(t) \colon M \to \R^k \) is a constant function. Doing a  
direct calculation we get
\[ \eta'(t) = - D\eta(t), \qquad \eta(0)  = \eta_0.  \]
Hence
 \( \eta(t) \) is the flow of \( \eta \) along \( (D,X) \). 

Since every function in \( C^\infty(M,\R^k \)) is  \( C^\infty(M) \)-linear combination of constant functions and in light of Lemma~\ref{lemma:derivation.IVP.compatible.with.multiplication} it follows that flows exist for arbitrary sections.

We now prove uniqueness. Suppose \( \eta(t) \) satisfies the property:
\[ \eta'(t) = -D \eta(t),   \qquad \eta(0)= \eta_0.\]
Let \( p_0 \in M \) be an arbitrary point and let \( \gamma(t) \) be the unique integral curve of \ \(X \) with \( \gamma(0)= p_0\). Consider the function
\[ v(t) \colon \R \to \R^k \qquad v(t) := \eta(t)(\gamma(t)). \]
Differentiate it to get
\begin{align*} v'(t) &=  \eta'(t)(\gamma(t)) + \nabla_X \eta(t)(\gamma(t)) \\ & = \left( - \Omega \eta(t) - \nabla_X \eta(t) \right)(\gamma(t)) + (\nabla_X \eta(t))(\gamma(t))\\
&= -\Omega(\gamma(t)) v(t). \end{align*}
In other words, \( v(t)\) satisfies the following initial value problem
\begin{equation}\label{eqn:derivation.transport.equation} v'(t) = -\Omega(\gamma(t))v(t), \qquad v(0) = \eta_0(p_0).   \end{equation}
This is a (time-dependent) linear ODE.   By the previously mentioned theorems, such a linear ODE admits a unique solution. 
In other words, \( \eta(t)(\gamma(t)) \) is uniquely determined by \( \eta_0(p_0)\). Since \( p_0 \) is arbitrary, it follows that \( \eta(t) \) is uniquely determined by \( \eta_0  \).
\end{proof}
\begin{corollary}\label{cor:derivation.flow.bundle.automorphisms}
    Given a complete derivation \( (D,X) \) on \( E \to M \), 
    there exists a unique one-parameter family of vector bundle automorphisms, denoted by,
    \[ \Phi^t_D \colon E \to E \]
    covering the flow of \( X \), \( \phi^t_X \colon M \to M \),  satisfying the property that for all \( \eta_0 \in \Gamma(E) \) the push-forwards

\[ \eta(t):= (\Phi^t_D)_*(\eta_0) \]
is the flow of \( \hat \eta \) along \( (D,X)\). We call \( \Phi^t_D \) the (point-wise) flow of the derivation \( (D,X) \). 
\end{corollary}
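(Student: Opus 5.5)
The statement to prove is Corollary~\ref{cor:derivation.flow.bundle.automorphisms}: a complete derivation $(D,X)$ admits a unique pointwise flow $\Phi^t_D$ by vector bundle automorphisms. The plan is to build $\Phi^t_D$ directly from Proposition~\ref{prop:derivation.flow.exists.and.unique}, applied fiberwise along integral curves of $X$, and then check that it pushes sections forward to their flows.

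\textbf{Construction.} Fix $p_0\in M$ and $e\in E_{p_0}$. Choose any section $\eta_0$ with $(\eta_0)_{p_0}=e$, and let $\eta(t)$ be its flow along $D$, which exists by Proposition~\ref{prop:derivation.flow.exists.and.unique}. Define
\[ \Phi^t_D(e) := \eta(t)_{\phi^t_X(p_0)}. \]

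\textbf{Well-definedness.} The value must not depend on the choice of $\eta_0$. By linearity of the flow equation (the flow of a difference is the difference of flows, by uniqueness), it suffices to show the following: if $\eta_0$ vanishes at $p_0$, then its flow vanishes along $\gamma(t)=\phi^t_X(p_0)$. This is exactly the transport argument in the uniqueness part of the proof of Proposition~\ref{prop:derivation.flow.exists.and.unique}. In a local trivialization, $v(t)=\eta(t)(\gamma(t))$ satisfies the linear ODE \eqref{eqn:derivation.transport.equation} with $v(0)=0$, so $v\equiv 0$.

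\textbf{Main obstacle.} That argument was carried out in a single trivialization, but $\gamma$ may leave a trivializing chart. I would cover $\gamma([0,t])$ by finitely many charts and concatenate. The ODE \eqref{eqn:derivation.transport.equation} is intrinsic: it is the restriction of $D$ to sections along $\gamma$, a linear connection-type operator. So vanishing at one time propagates across chart overlaps.

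\textbf{Automorphism property.} The same transport ODE shows that $e\mapsto \Phi^t_D(e)$ is the solution map of a linear ODE along $\gamma$. Hence it is a linear isomorphism $E_{p_0}\to E_{\phi^t(p_0)}$, with inverse given by solving backwards in time. Smoothness in $(t,e)$ follows from smooth dependence of ODE solutions on parameters, using a local frame $\eta_1,\dots,\eta_k$ and their flows. So $\Phi^t_D$ is a vector bundle automorphism covering $\phi^t_X$, and $\Phi^0_D=\Id_E$.

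\textbf{Push-forward property.} By construction,
\[ (\Phi^t_D)_*\eta_0\,(\phi^t(p)) = \Phi^t_D\big((\eta_0)_p\big) = \eta(t)_{\phi^t(p)}, \]
so $(\Phi^t_D)_*\eta_0=\eta(t)$, which is the flow of $\eta_0$.

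\textbf{Uniqueness.} Any other family $\Psi^t$ with this property satisfies $(\Psi^t)_*\eta_0=\eta(t)=(\Phi^t_D)_*\eta_0$ for every section $\eta_0$. Since sections realize every vector in $E$, this forces $\Psi^t=\Phi^t_D$ pointwise.
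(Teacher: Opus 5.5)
Your proof is correct and uses the same key ingredient as the paper: the transport equation \eqref{eqn:derivation.transport.equation}, which shows that along an integral curve of $X$ the flow of a section obeys a linear ODE, so its value there is determined by its initial value at the starting point. The paper runs this argument in the opposite direction: it defines $\Phi^t_D$ directly as the fundamental solution $P(t)$ of that ODE in a trivialization and then identifies push-forwards with flows, while you define $\Phi^t_D$ by evaluating the section flows from Proposition~\ref{prop:derivation.flow.exists.and.unique} and use the ODE for well-definedness, linearity and invertibility; your chart-by-chart treatment of $\gamma$ also fills in a point that the paper's assumption of a global trivialization glosses over.
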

\begin{proof}
In order to  to utilize the setup of the previous proof we assume 
   \( E = \R^k \times M \) and sections of \( E \) are \( \R^k \)-valued functions. Let\( p \) be an arbitrary point  of \( M \). Consider the following initial value problem for one-parameter functions \( P(t) \colon \R \to \GL(\R^k) \):
\[ P'(t) = -\Omega(\phi^t_X(p)) \cdot P(t), \qquad  \qquad P(0) = 1. \]
The standard theorems for existence and uniqueness of linear ODEs ensures a solution for every point $p \in M$ for the above initial value problem.

We define \( \Phi^t_D \) uniquely by requiring that it satisfy:
\[ \Phi_D^t \colon \R^k \times M \to \R^k \times M, \qquad  \Phi^t_D (e,p) = (P(t) e, p).\] 
From this definition, it is immediate that \( \pi \circ \Phi^t_D = \phi^t_X \circ \pi \), where \ \( \pi \colon E \to M \) is the base projection. Moreover,  \(\Phi^t_D \) is a linear isomorphism on each fiber since each  \( P(t) \) is an  invertible matrix.   We conclude that we have successfully defined a one-parameter family of vector bundle automorphisms.

When evaluating \(  \eta(t): = (\Phi^t_X)_* \eta_0 \) along an integral curve, it satisfies Equation~\ref{eqn:derivation.transport.equation}.  Therefore, for all 
 \( \eta_0 \colon M \to \R^k \), we have that \( (\Phi^t_X)_* \eta_0 \) is the unique flow of \( \eta_0 \) along \( D \). Moreover, we also observed in that proof, the Equation~\ref{eqn:derivation.transport.equation}  uniquely determines the values of the flow of a section.

\end{proof}
\subsection{Transport vector field and time-dependent derivations}
The flow of a derivation, is a one-parameter family of diffeomorphism. Hence it defines a vector field on the total space of the vector bundle. We will call this vector field the transport vector field of the derivation.

Derivations can also be time-dependent. So long as the flow of the underlying vector field exists, the flow of a time-dependent derivation exists as well.
\begin{definition}
    A \emph{time-dependent derivation} on a vector bundle \( E \to M \) consists of a one-parameter family of derivations \( (D(t), X(t)) \),  where \( D(t) \colon \Gamma(E) \to \Gamma(E) \). We say a time-dependent derivation is \emph{complete} if the underlying time-dependent vector field \( X(t) \) is complete.

    Given \( \eta_0 \in \Gamma(E) \),  we define the flow of \( \eta_0 \) along \( (D(t),X(t)) \) to be a one-parameter family of sections \( \eta(t) \) satisfying the following initial value problem:
    \begin{equation}\label{eq:time.dependent.derivation.flow} \eta'(t) = -D(t) \eta(t), \qquad \eta(0) = \eta_0. \end{equation}
\end{definition}
\begin{proposition}
    The flow of a complete time-dependent derivation exists and is unique.
\end{proposition}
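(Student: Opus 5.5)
The plan is to imitate the proof of Proposition~\ref{prop:derivation.flow.exists.and.unique} and Corollary~\ref{cor:derivation.flow.bundle.automorphisms}, with all ODEs now time-dependent. Completeness of $X(t)$ gives a two-parameter flow $\phi^{t,s}_X \colon M \to M$, and I will write $\phi^t_X := \phi^{t,0}_X$. First I reduce to a local trivialization $E|_U = \R^k \times U$ with its canonical flat connection $\nabla$. The Leibniz rule \eqref{eqn:derivation.leibnitz} applied to each $D(t)$ then gives a smooth family $\Omega(t) \in C^\infty(U,\End(\R^k))$ with
\[ D(t)\eta = \Omega(t)\eta + \nabla_{X(t)}\eta. \]
Smoothness in $t$ of $\Omega$ follows because $\Omega(t)$ is obtained by applying $D(t)$ to the constant frame sections.

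\textbf{Uniqueness.} Suppose $\eta(t)$ solves \eqref{eq:time.dependent.derivation.flow}, and fix $p_0$. Along the integral curve $\gamma(t) = \phi^t_X(p_0)$, set $v(t) := \eta(t)(\gamma(t))$. The same computation as in Equation~\eqref{eqn:derivation.transport.equation} gives
\[ v'(t) = -\Omega(t)(\gamma(t))\, v(t), \qquad v(0) = \eta_0(p_0). \]
This is a linear ODE with continuous coefficients, so $v$ is uniquely determined. Since $p_0$ is arbitrary, $\eta(t)$ is unique. The only subtlety is that $\gamma(t)$ may leave the trivializing chart. I handle this by covering the compact curve $\gamma([0,1])$ with finitely many charts and concatenating: on overlaps the transition functions relate the local transport equations, so uniqueness propagates chart by chart.

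\textbf{Existence.} I construct the pointwise flow directly, following the corollary. For each $p$, solve the matrix ODE
\[ P_p'(t) = -\Omega(t)(\phi^t_X(p))\,P_p(t), \qquad P_p(0) = 1, \]
again gluing across charts along the curve. The solution is invertible and depends smoothly on $(t,p)$ by smooth dependence of ODE solutions on parameters. Define $\Phi^t(e_p) := P_p(t)e_p \in E_{\phi^t_X(p)}$. This is a family of vector bundle automorphisms covering $\phi^t_X$, and I set $\eta(t) := (\Phi^t)_*\eta_0$.

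Verifying that this $\eta(t)$ solves \eqref{eq:time.dependent.derivation.flow} needs one more step. Along each integral curve, $v(t) = \eta(t)(\phi^t_X(p)) = P_p(t)\eta_0(p)$ satisfies the transport equation by construction. To convert this curve-wise statement into the PDE statement $\eta' = -D(t)\eta$, expand
\[ \frac{d}{dt}\big[\eta(t)(\gamma(t))\big] = \eta'(t)(\gamma(t)) + (\nabla_{X(t)}\eta(t))(\gamma(t)) \]
and compare with $-\Omega(t)v(t)$. Every point of $M$ at time $t$ lies on such a curve, because $\phi^t_X$ is a diffeomorphism, so the identity holds everywhere.

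The main obstacle is smoothness of the constructed $\eta(t)$ jointly in $(t,p)$, together with the chart-gluing along integral curves. The algebra itself is routine. For smoothness I would appeal to smooth parameter dependence for the matrix ODE on the time-dependent flow-out of each chart. An alternative is to sidestep the gluing entirely by realizing the derivation as a time-dependent linear vector field on the total space $E$ (the transport vector field), whose flow exists because it is linear in the fibers over a complete base flow, and reading $\Phi^t$ off as that flow.
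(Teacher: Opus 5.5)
Your argument is correct, but it takes a different route from the paper.

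The paper does no new ODE analysis. It suspends $(D(t),X(t))$ to an autonomous derivation $(\overline{D},\overline{X})$ on $E\times\R\to M\times\R$ with symbol $\overline{X}=(X_t,\partial_t)$. It then invokes the autonomous Proposition~\ref{prop:derivation.flow.exists.and.unique} and reads off $\eta(t)$ from the suspended flow, leaving the verification to the reader. You instead rerun the autonomous proof, and the construction of Corollary~\ref{cor:derivation.flow.bundle.automorphisms}, with $t$-dependent coefficients $\Omega(t)$ along the curves $\phi^t_X(p)$.

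The suspension buys brevity and reuse, but it has its own bookkeeping:
\begin{itemize}
\item For $\overline{D}$ to satisfy the Leibniz rule with symbol $(X_t,\partial_t)$, it must be $\overline{D}\xi=D(t)(\xi|_{M\times\{t\}})+\partial_t\xi$. The formula displayed in the paper omits the $\partial_t\xi$ term.
\item $X(t)$ must be defined for all $t\in\R$ for $\overline{X}$ to be complete.
\item The time-dependent flow is recovered on the diagonal, $\eta(t)_p=\overline{\eta}(t)_{(p,t)}$, where the two $\partial_t$ contributions cancel.
\end{itemize}

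Your route costs the chart-gluing and smooth-parameter-dependence arguments you flag. It also implicitly uses the standard locality of each $D(t)$, which is what lets you restrict it to a trivializing chart. In return it is self-contained. It actually handles integral curves leaving a chart, which the autonomous proof dismisses with ``without loss of generality $E=\R^k\times M$''. It also yields directly the pointwise flow $\Phi^t$ covering $\phi^t_X$, which is the object used later for adjoint flows.

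Your closing alternative, via the time-dependent linear transport vector field on $E$, is the cleanest version of your approach. Completeness follows from fibrewise linearity over the complete base flow, and joint smoothness in $(t,p)$ is automatic.
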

\begin{proof}
Any time-dependent derivation \( (D(t),X(t)) \) can be made into a time-independent derivation \( (\overline{D},\overline{X} ) \) on the vector bundle \( E \times \R \to M \times \R \) by taking:
\[ \overline{D}(\eta)_{(p,t)} = D(t)(\eta|_{\{ t \} \times M})_p, \qquad \overline{X}(p,t) = (X_t(p), \partial_t). \]\
If \( X(t) \) is complete then \( \overline{X} \) is complete. Hence, the flow of \( (\overline{D},\overline{X}) \) exists and is unique. 

Given a section \( \eta_0 \in \Gamma(E) \),  we can define a section \( \overline{\eta}_0 \in \Gamma(E \times \R) \) as follows:
\[  \overline{\eta}_0(p,t) = (\eta_0(p),t). \]
This section will have an adjoint flow \( \overline{\eta}(t) \in \Gamma(E \times \R) \).

From this we can define the flow of \( \eta_0 \) by taking
\[ \eta(t) := \pi_E \circ \overline{\eta}(t) \]
where \( \pi_E \colon E \times \R \to E\) is the projection onto the first factor. 

We leave it to the reader to verify that \( \eta(t) \) indeed satisfies \eqref{eq:time.dependent.derivation.flow} and is unique.
\end{proof}

\begin{lemma}
    The flow of a time-dependent section of a Lie algebroid $A$ is a Lie algebroid automorphism.\label{lemma:algebroidflow:morphism}
\end{lemma}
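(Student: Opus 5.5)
We have to show that the adjoint flow $\Phi^t_\alpha$ of a time-dependent section $\alpha(t)$ (with complete $\rho(\alpha(t))$) preserves the bracket and intertwines the anchor. Concretely, we want
\[ (\Phi^t_\alpha)_*[\beta,\gamma] = [(\Phi^t_\alpha)_*\beta,(\Phi^t_\alpha)_*\gamma], \qquad \rho\circ\Phi^t_\alpha = d\phi^t_\alpha\circ\rho. \]
The plan is a uniqueness argument based on Proposition~\ref{prop:derivation.flow.exists.and.unique} and its time-dependent version. Both sides of each identity will be shown to solve the same initial value problem, and then they must agree.

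\emph{Bracket.} Fix $\beta_0,\gamma_0\in\Gamma(A)$ and set $\beta(t)=(\Phi^t_\alpha)_*\beta_0$ and $\gamma(t)=(\Phi^t_\alpha)_*\gamma_0$. By Corollary~\ref{cor:derivation.flow.bundle.automorphisms} (time-dependent version) these solve $\eta'=-D(t)\eta$ with $D(t)=[\alpha(t),\cdot\,]$, that is, $\eta'=[\eta,\alpha(t)]$. Put $\xi(t)=[\beta(t),\gamma(t)]$. The bracket is bilinear and local, so it commutes with differentiation in $t$; this can be checked in a local trivialization, where the bracket is a first-order bidifferential operator with smooth coefficients. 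We then get
\[ \xi' = [[\beta,\alpha],\gamma]+[\beta,[\gamma,\alpha]] = [[\beta,\gamma],\alpha] = [\xi,\alpha(t)], \]
where the middle equality is the Jacobi identity. Moreover $\xi(0)=[\beta_0,\gamma_0]$. Uniqueness of flows then gives $\xi(t)=(\Phi^t_\alpha)_*[\beta_0,\gamma_0]$, which is bracket preservation on sections. Here $(\Phi^t_\alpha)_*$ is defined using the base diffeomorphism $\phi^t$, so this is exactly the statement that the bundle automorphism is a bracket morphism.

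\emph{Anchor.} Consider $Y(t)=\rho(\beta(t))$. Since $\rho$ is a bracket morphism, $Y'=\rho([\beta,\alpha])=[\rho\beta,\rho\alpha]=[Y,X(t)]$ with $X(t)=\rho(\alpha(t))$. The push-forward $(\phi^t_X)_*\rho(\beta_0)$ satisfies the same equation, because it is the adjoint flow of $X$ on $TM$; this is the tangent-bundle example, where $\Phi^t_X=d\phi^t_X$. Both have initial value $\rho(\beta_0)$. Applying uniqueness of flows to the derivation $[\cdot\,,X(t)]$ on $TM$ yields $\rho\circ(\Phi^t_\alpha)_*=(\phi^t_X)_*\circ\rho$ on sections, and hence pointwise, since every vector in $A$ extends to a section.

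\emph{Main obstacle.} The delicate step is justifying $\frac{d}{dt}[\beta(t),\gamma(t)]=[\beta',\gamma]+[\beta,\gamma']$ for smooth families, and making sure the uniqueness theorem applies to the resulting equation. If the sections are not compactly supported, I would localize using locality of the bracket, or reduce to compactly supported $\beta_0,\gamma_0$ with cutoffs and then patch the pieces together.
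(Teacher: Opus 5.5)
Your proposal is correct and follows essentially the same route as the paper. The paper shows that the defects $F(t)=\Phi^t_\alpha[\beta_1,\beta_2]-[\Phi^t_\alpha\beta_1,\Phi^t_\alpha\beta_2]$ and $G(t)=\Phi^t_{\rho(\alpha)}\rho(\beta)-\rho(\Phi^t_\alpha\beta)$ solve the linear adjoint flow equation with zero initial value, using the Jacobi identity and bracket-compatibility of $\rho$, and then concludes by uniqueness of flows of derivations; this is exactly your ``two solutions of the same initial value problem'' argument, and the paper also uses implicitly the differentiation of $[\beta(t),\gamma(t)]$ in $t$ that you flag.
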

\begin{proof}
    Let $\alpha \in \Gamma(A)$ be a time-dependent vector field with adjoint flow $\Phi^t_\alpha: A \rightarrow A$. Let $\beta_1$ and  $\beta_2$ be two smooth sections of $A$.  In order to prove our result, we need to first show that the time-dependent section    \[F(t) := \Phi^t_\alpha[\beta_1, \beta_2]-[\Phi^t_\alpha\beta_1,\Phi^t\beta_2] \in \Gamma(A),\] is the zero section.

     Consider the differentiation of $F(t):$
\begin{align*}\frac{d}{dt}F(t) 
&= [\alpha,-\Phi^t_\alpha[\beta_1,\beta_2]]-[[\alpha,-\Phi^t_\alpha(\beta_1)], \Phi^t_\alpha(\beta_2)] -[\Phi^t_\alpha(\beta_1), [\alpha,-\Phi^t_\alpha(\beta_2)]]  \\
&= -[\alpha,\Phi^t_\alpha[\beta_1,\beta_2]]+[\alpha(t),[\Phi^t_\alpha\beta_1,\Phi^t_\alpha\beta_2]] \hspace{2cm} \text{(using Jacobi identity of algebroids)} \\
&= [\alpha, -\Phi^t_\alpha[\beta_1,\beta_2]+[\Phi^t_\alpha\beta_1, \Phi^t_\alpha\beta_1]]\\
&=-[ \alpha, F].
\end{align*}

Therefore \begin{equation*}
    \frac{d}{dt}F(t) = -[\alpha(t), F(t)] \qquad  \text{ and } \qquad F(0)= 0.\end{equation*}
From the uniqueness of solutions to adjoint flow equations, we conclude that $F(t) = 0$. 
 Hence, $\Phi^t_\alpha$ is a Lie algebra automorphism.

 Next, we want to show that $\Phi^t_\alpha$ is compatible with the anchor map, that is, $\Phi^t_{\rho{(\alpha)}}(\rho(\beta)) = \rho(\Phi^t_\alpha\beta)$, for any $\beta \in \Gamma(A)$. For this reason, we define a time-dependent smooth section of $A$,
 \[G(t):= \Phi^t_{\rho(\alpha)}(\rho(\beta))-\rho(\Phi^t_\alpha(\beta)).\]
    
 We differentiate it to get 
   \begin{align*}
\frac{d}{dt}G(t) 
&= -[\rho(\alpha(t)),\rho(\beta)]-\rho\Big(\frac{d}{dt}\Phi^t_\alpha\beta\Big)\\
&= -[\rho(\alpha(t)), \Phi^t_\alpha \rho(\beta)]-\rho([\alpha(t),\Phi^t_\alpha\beta]) \\
&= -[\alpha(t), \Phi^t_\alpha \rho(\beta)]- [\rho(\alpha(t)),\rho(\Phi^t_\alpha\beta)]\\
&= -[\rho(\alpha(t)), G(t)]
\end{align*}

Therefore, \[\frac{d}{dt}G(t) = -[\rho(\alpha(t)), G(t)], \hspace{1cm} G(0) = 0.\]

Using the uniqueness of the solution, we get $G(t) \equiv 0$.  Therefore, $\Phi^t_\alpha(\rho(\beta)) = \rho(\Phi^t_\alpha(\beta))$, for any $\beta \in \Gamma(A)$. Hence, $\Phi^t_\alpha$ is a Lie algebroid automorphism. 
\end{proof}
\section{\texorpdfstring{\( \Gamma(A) \)-path lemmas}{Gamma(A)-path lemmas}}

This section contains the technical lemmas used in section \ref{section:gamma(A)-homotopy} and section \ref{section:holonomy-of-groupoids}.They  concern complements of two-parameter families of sections, properties of slice algebroids, the flow product, and the adjoint flows. Together these results provide the foundational identities needed for the construction of the holonomy transformation groupoid and the algebroid holonomy groupoid.

\subsection{Lemmas about complements}

Throughout this subsection, if \[ \alpha(t,s) \in C^\infty([0,1]^2, \Gamma(A))\]
is a compactly supported two-parameter family of sections, we write \[ \beta(t,s)\]
for its complement and \[ \Phi({t,s}): \Gamma(A) \rightarrow \Gamma(A)\]
for its adjoint flow in the $t$-direction,  and  by \[\phi({t,s}):M \rightarrow M\]for the flow of $\rho(\alpha(t,s))$ in the $t$-direction.

The following proposition collects the basic properties of the complement construction used throughout Sections \ref{section:gamma(A)-homotopy} and \ref{section:holonomy-of-groupoids}. 
\begin{proposition}(Properties of complements) There exists a unique two-parameter family of sections $\beta(t,s) \in C^\infty([0,1]^2, \Gamma(A))$ satisfying \[ \frac{\partial\beta}{\partial t}- \frac{\partial \alpha}{\partial s} = [\beta,\alpha], \quad \quad \beta(0,s)  = 0.\] Moreover, 
\begin{enumerate}
    \item \label{lemma:complement.of.two.parameter.family} The complement is given by   \[ \beta(t,s) :=  \Phi({t,s})  \circ \int_0^t \Phi({u,s})^{-1} \frac{\partial \alpha}{\partial s} (u,s) du \in \Gamma(A).\]
    \item \label{lemma:flows.of.complements} If $\Psi_\beta^{(t,s)}$ denotes  the flow of $\beta(t,s)$ in the $s$-direction.Then 
    \begin{equation}
        \Psi{(t,s)}  \circ \Phi{(t,0)} = \Phi{(t,s)} \label{equation: psiphi equation}.
    \end{equation} 
    \item  \label{lemma:flows.of.complements.for.integral.curves} For all \( p_0 \in M \), let \( \gamma(t,s) \) denotes the integral curve of \( \rho(\alpha(t,s) \) with initial condition \( \gamma(0,s) = p_0 \). Then 
    \[ \frac{\partial \gamma}{\partial s} (t,s) = \rho(\beta(t,s))_{\gamma(t,s)}. \]
    \item \label{lemma:complement.of.flow.product.is.flow.product.of.complements}  If $ \alpha^\prime(t,s) $  is another two-parameter family of sections with complement $\beta^\prime$, then the complement of the flow product $\alpha \bullet \alpha^\prime$ is \[\beta(t,s) + \Phi^t_{\alpha} ( \beta'(t,s)).\] 
    
\end{enumerate}
\end{proposition}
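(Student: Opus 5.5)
We would treat the four items as consequences of two facts: variation of constants for the adjoint flow, and uniqueness of flows of time-dependent derivations (Proposition~\ref{prop:derivation.flow.exists.and.unique} and its time-dependent version). Throughout we use the convention of Equation~\eqref{eqn:adjoint.flow}. In this convention, for any section $\eta$ the push-forward $\Phi(t,s)_*\eta$ satisfies $\partial_t(\Phi(t,s)_*\eta)=[\Phi(t,s)_*\eta,\alpha(t,s)]$. Each $\Phi(t,s)$ is a Lie algebroid automorphism by Lemma~\ref{lemma:algebroidflow:morphism}.

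\textbf{Existence, uniqueness and item (1).} Let $\beta$ solve the complement equation and set $\gamma(t,s):=\Phi(t,s)^{-1}_*\beta(t,s)$. Differentiating $\beta=\Phi_*\gamma$ in $t$ gives $\partial_t\beta=[\beta,\alpha]+\Phi_*\partial_t\gamma$. The complement equation then reduces to $\partial_t\gamma=\Phi(t,s)^{-1}_*\partial_s\alpha$ with $\gamma(0,s)=0$. This forces $\gamma=\int_0^t\Phi(u,s)^{-1}_*\partial_s\alpha(u,s)\,du$, which is exactly the formula in item (1). Conversely, this formula defines a smooth, compactly supported family that satisfies the equation, which proves existence. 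Uniqueness comes from the same computation: if $\beta$ is any solution, then $\gamma$ is forced, and hence so is $\beta$.

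\textbf{Item (2).} This is the main obstacle, because it is a statement about the $s$-derivative of the flow rather than its $t$-derivative. We would prove it by showing that the $s$-generator of $\Phi(t,s)$ is the derivation $[\,\cdot\,,\beta(t,s)]$ (up to the paper's sign convention). Concretely, define the derivation $D(t,s):=\partial_s\Phi(t,s)\circ\Phi(t,s)^{-1}$ acting on sections, with symbol $\partial_s\phi\circ\phi^{-1}$. The zero-curvature identity $\partial_t(\partial_s\Phi\,\Phi^{-1})-\partial_s(\partial_t\Phi\,\Phi^{-1})=[\partial_t\Phi\,\Phi^{-1},\partial_s\Phi\,\Phi^{-1}]$, with $\partial_t\Phi\,\Phi^{-1}=\mathrm{ad}_\alpha$, shows that $D$ satisfies $\partial_tD=\mathrm{ad}_{\partial_s\alpha}+[D,\mathrm{ad}_\alpha]$ and $D(0,s)=0$. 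By the complement equation and the Jacobi identity, $\mathrm{ad}_{\beta}$ satisfies the same linear initial value problem. Evaluating along integral curves, as in the uniqueness half of Proposition~\ref{prop:derivation.flow.exists.and.unique}, reduces this to a pointwise linear ODE for the symbols and the zeroth-order parts, so the two derivations coincide. Hence $s\mapsto\Phi(t,s)\Phi(t,0)^{-1}$ is a flow of the time-dependent derivation $\mathrm{ad}_{\beta(t,\cdot)}$ in $s$ that starts at the identity. By uniqueness of flows of derivations (Corollary~\ref{cor:derivation.flow.bundle.automorphisms}), it equals $\Psi(t,s)$, which gives \eqref{equation: psiphi equation}. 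The delicate points are the sign conventions and justifying the derivative of an operator family; we would do all of this on local trivializations as in the appendix.

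\textbf{Item (3).} Passing to base maps in \eqref{equation: psiphi equation} gives $\psi(t,s)\circ\phi(t,0)=\phi(t,s)$, where $\psi$ is the $s$-flow of $\rho(\beta)$. The base map of $\Psi$ is this $\psi$ because Lie algebroid morphisms intertwine anchors. Applying both sides to $p_0$ and differentiating in $s$ yields $\partial_s\gamma(t,s)=\rho(\beta(t,s))_{\gamma(t,s)}$.

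\textbf{Item (4).} By the uniqueness already proved, it suffices to check that $Z:=\beta+\Phi_{\alpha*}\beta'$ satisfies the complement equation for $\alpha\bullet\alpha'=\alpha+\Phi_{\alpha*}\alpha'$, with $Z(0,s)=0$; the initial condition is immediate. For the $t$-derivative we use $\partial_t(\Phi_{\alpha*}\beta')=[\Phi_{\alpha*}\beta',\alpha]+\Phi_{\alpha*}\partial_t\beta'$. For the $s$-derivative we use $\partial_s(\Phi_{\alpha*}\alpha')=[\Phi_{\alpha*}\alpha',\beta]+\Phi_{\alpha*}\partial_s\alpha'$, where the first term comes from the generator identified in item (2). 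Substituting the complement equations for $\beta$ and $\beta'$, and using that $\Phi_\alpha$ preserves brackets, the remaining terms should cancel to give $\partial_tZ-\partial_s(\alpha\bullet\alpha')=[Z,\alpha\bullet\alpha']$. We expect this to be routine bookkeeping, with the only substantive input being item (2).
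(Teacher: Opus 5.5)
Your proposal is correct and takes essentially the same route as the paper. Existence and item (1) come from the explicit formula. Item (2) comes from the identity $\partial_s\Phi=[\Phi,\beta]$ followed by uniqueness of flows in $s$. Item (3) is obtained by passing to base maps, and item (4) by directly checking the complement equation.

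The only difference is cosmetic and lies in item (2). You obtain $\partial_s\Phi\,\Phi^{-1}=[\,\cdot\,,\beta]$ from the zero-curvature identity. Note that, by your own displayed identity, the commutator term should read $[\mathrm{ad}_\alpha,D]$ with $\mathrm{ad}_\alpha=[\,\cdot\,,\alpha]$, not $[D,\mathrm{ad}_\alpha]$; with the correct sign, $\mathrm{ad}_\beta$ does satisfy the same problem by Jacobi. The paper instead differentiates the defect $Z=[\Phi,\beta]-\partial_s\Phi$ in $t$ and applies it to a fixed section. That way, plain uniqueness of adjoint flows of sections already forces $Z=0$, with no separate uniqueness argument for derivation-valued equations.
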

\begin{proof}
    \begin{enumerate}
        \item We define the complement section of $\alpha$ as follows,
    \[ \beta(t,s) :=  \Phi({t,s})  \circ \int_0^t \Phi({u,s})^{-1} \frac{\partial \alpha}{\partial s} (u,s) du \in \Gamma(A).\]
    
A direct calculation shows that \( \beta(t,s) \) satisfies the desired initial value problem. Uniqueness follows from the uniqueness of the solution of the adjoint flow equation.

 \item    We denote the partial derivatives in $t$ and $s$-directions by  \( \partial_t \) and \( \partial_s\), respectively. The defining equations for each of \( \Phi \),   \( \alpha \),  and \( \beta \) are given as follows,
    \[ 
    \partial_t \Phi = [\Phi,\alpha ] \qquad \Phi(0,s) = \Id,
   \]
    and
    \[ \partial_t \beta -\partial_s \alpha = [\beta,\alpha]. \]
First we show that \( \partial_s \Phi = [\Phi,\beta]\). Consider the two-parameter family of endomorphisms of the Lie algebra
\[ Z(t,s) :=  [\Phi(t,s), \beta(t,s)] -  \partial_s \Phi (t,s). \]
We claim that \( Z(t,s) = 0 \). To see why, consider the time-derivative of \( Z \) while applying our defining equations, and the Jacobi identity:
\begin{align*}
    \partial_t Z &=  [\partial_t \Phi, \beta] + [\Phi, \partial_t \beta] -\partial_t \partial_s \Phi  \\
    &=   [[\Phi,\alpha],\beta] + [\Phi , \partial_t \beta] -\partial_s\partial_t \Phi\\
    &=  [[\Phi,\alpha],\beta]+[\Phi,\partial_t \beta] - \partial_s[\Phi,\alpha] \\
    &= [[\Phi,\alpha],\beta]+[\Phi,\partial_t \beta] - [\partial_s\Phi,\alpha] - [\Phi,\partial_s \alpha] \\
    &=[[\Phi,\alpha],\beta]-[\partial_s\Phi,\alpha] +[\Phi, \partial_t \beta - \partial_s \alpha] \\
    &=[[\Phi,\alpha],\beta] -[\partial_s\Phi , \alpha] +[\Phi,[\beta,\alpha]] \\
    &= [[\Phi,\beta],\alpha] - [\partial_s \Phi,\alpha] \\
    &=[[\Phi,\beta] - \partial_s \Phi,\alpha] \\
    &= [Z,\alpha]
\end{align*}
It is fairly straightforward to check that \( Z(0,s) = 0 \), and therefore \( Z \) satisfies the adjoint flow equation: 
\[ \partial_t Z = [Z,\alpha] \qquad Z(0,s) = 0   \]
By uniqueness of adjoint flows, we conclude that \( Z(t,s)  = 0 \) for all \( t\) and \(s \). Hence \( \partial_s \Phi = [\Phi,\beta]\).

To conclude the proof, we want to show   that the   smooth family of endomorphisms,
\[ Q(t,s) := \Psi(t,s) \Phi(t,0) - \Phi(t,s) \]
is $0$.  

Taking the derivative of $Q$ with respect to $s$, we obtain 
\[  \partial_s Q = [\Psi(t,s) \Phi(t,0) ,\beta] - [\Phi(t,s),\beta] = [Q,\beta].\]
Furthermore, observe that \( Q(t,0) = \Psi(t,0) \Phi(t,0) - \Phi(t,0) = 0\) since \( \Psi(t,0) = \Id \) so \( Q \) solves the adjoint flow equation:
\[  \partial_s Q = [Q,\beta] \qquad Q(t,0) = 0 \] 
Uniqueness of adjoint flows implies that \( Q = 0 \).

\item 
Let $\Psi(t,s)$ and $\psi(t,s)$ denote the adjoint flow of $\beta(t,s)$ and $\rho(\beta(t,s))$, respectively, in the $s$-direction.

    Applying the anchor map to the equation from  lemma \ref{lemma:flows.of.complements} gives
\[ \psi(t,s) \phi(t,0) = \phi(t,s). \]
Since \( \gamma(t,s) \) is the integral curve of \( X(t,s) \) with initial condition \( \gamma(0,s) = p_0 \), we have that \( \gamma(t,s) = \phi(t,s)(p_0) \). Therefore,
\[ \gamma(t,s) = \phi(t,s)(p_0) = \psi(t,s) \phi(t,0)(p_0) = \psi(t,s) (\gamma(t,0)) \]
Differentiating this equation with respect to \( s \) we obtain
\[ \frac{\partial \gamma}{\partial s} (t,s) = \rho(\beta(t,s))_{\psi(t,s)\left(\gamma(t,0)\right)} = \rho(\beta(t,s))_{\gamma(t,s)}.\]
This proves the Lemma.
\item 
For brevity, let us write \( F(t,s) \) to denote the pushforward along the adjoint flow of \( \alpha(t,s) \) in the \( t \)-direction. Hence,
\[ \alpha \bullet \alpha' = \alpha + F \alpha' \quad \text{ and } \quad Z = \beta + F \beta'. \]
From the definition of the complement, we must show that:
\[ \partial_t (\beta \bullet \beta') - \partial_s (\alpha \bullet \alpha') = [\beta \bullet \beta', \alpha \bullet \alpha'].\]
Expanding the left hand side (and suppressing the \( (t,s) \) dependence for readability) gives
\begin{align*}
    \partial_t Z - \partial_s (\alpha \bullet \alpha') 
    & = \partial_t (\beta + F \beta') - \partial_s (\alpha + F \alpha') \\ 
    & = \partial_t \beta + \partial_t (F \beta') - \partial_s \alpha - \partial_s (F \alpha') \\
    & = \partial_t \beta + (\partial_t F) \beta' + F (\partial_t \beta') - \partial_s \alpha - (\partial_s F) \alpha' - F (\partial_s \alpha').
\end{align*}
Recall that the equation \[ \partial_t F = [F,\alpha] \] is the  defining property of \( F \). Furthermore, it is straightforward to check that  Lemma~\ref{lemma:flows.of.complements} gives
\[ \partial_s F =  [F,\beta]. \]
Substituting the above two equations into the right-hand side of $\partial_tZ - \partial_s(\alpha\bullet \alpha^\prime)$  gives:
\begin{align*}
    & = \partial_t \beta + [F \beta' ,\alpha] + F (\partial_t \beta') - \partial_s \alpha - [F \alpha',\beta] - F (\partial_s \alpha') \\
    & = \partial_t \beta - \partial_s \alpha + F(\partial_t \beta' - \partial_s \alpha') + [F \beta',\alpha] - [F \alpha',\beta] \\
    & = [\beta,\alpha] + F [\beta',\alpha'] + [F \beta',\alpha] - [F \alpha',\beta] \\
    & = [\beta,\alpha] + [F\beta',F\alpha'] + [F \beta',\alpha] + [\beta, F \alpha'] \\
    & = [\beta + F \beta', \alpha + F \alpha'] \\
    & = [Z, \alpha \bullet \alpha']
\end{align*}

\end{enumerate}
\end{proof}

\subsection{Lemmas about slices}
\begin{lemma}\label{lemma:slice.algebroid.well.defined}
    Let \( S \subset M \) be a slice through \( x \in M \). Then \( A_S:=  \rho^{-1}(TS)\) is a vector bundle over \( S \) and inherits a Lie algebroid structure from \( A \).
\end{lemma}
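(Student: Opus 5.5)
The plan is to show first that $A_S$ is a smooth vector subbundle of $A|_S$, and then that sections of $A_S$ are closed under a bracket induced from $A$.

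\textbf{Subbundle.} I would present $A_S$ as the kernel of a constant-rank bundle map. Let $\nu_S := TM|_S / TS$ be the normal bundle of $S$ and $q \colon TM|_S \to \nu_S$ the quotient map. Then
\[ A_S = \rho^{-1}(TS) = \ker\big(q \circ \rho|_S \colon A|_S \to \nu_S\big). \]
The first slice condition, $T_yM = T_yS + \rho(A_y)$ for all $y \in S$, says exactly that $q\circ\rho|_S$ is fiberwise surjective. Hence it has constant rank $\dim M - \dim S$, and its kernel is a smooth subbundle of $A|_S$ of rank $\operatorname{rk}A - \operatorname{codim} S$. By construction the anchor $\rho$ maps $A_S$ into $TS$.

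\textbf{Bracket.} I would use the standard pullback-algebroid construction along the inclusion $\iota \colon S \into M$, which applies when $\rho$ is transverse to $\iota$. Concretely, the construction goes as follows.
\begin{enumerate}
\item Any section $a \in \Gamma(A_S)$ can be written locally as $a = \sum_i f_i\, \alpha_i|_S$ with $\alpha_i \in \Gamma(A)$ and $f_i \in C^\infty(S)$.
\item For $a = \sum_i f_i\,\alpha_i|_S$ and $b = \sum_j g_j\,\beta_j|_S$, define
\[ [a,b] := \sum_{i,j} f_i g_j [\alpha_i,\beta_j]|_S + \sum_j \rho(a)(g_j)\,\beta_j|_S - \sum_i \rho(b)(f_i)\,\alpha_i|_S. \]
This is the bracket forced by the Leibniz rule. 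It makes sense because $\rho(a)$ and $\rho(b)$ are tangent to $S$, so they act on functions on $S$.
\item Equivalently, one can realize $A_S$ as the fiber product $\iota^{!}A = TS \times_{TM} A$, and apply the pullback construction of Higgins–Mackenzie.
\end{enumerate}

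\textbf{Main obstacle.} The main obstacle is showing this bracket is independent of the chosen local presentations, and that it lands in $\Gamma(A_S)$. For the second point I would use that $\rho$ is a bracket morphism:
\[ \rho([a,b]) = [\rho(a),\rho(b)], \]
and the right-hand side is tangent to $S$ because $\rho(a)$ and $\rho(b)$ both are.

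For independence, by bilinearity it suffices to show that if $\sum_i f_i\,\alpha_i|_S = 0$ then the defining expression vanishes. I would handle this by extending locally. Transversality gives local sections of $A$ over a neighborhood of $S$ whose restrictions span $A_S$; alternatively, one extends the $f_i$ to functions on $M$. This reduces the question to the Leibniz rule and locality of the bracket on $A$, in the manner of the classical pullback argument.

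\textbf{Conclusion.} Once the bracket is well defined, the Jacobi identity and the Leibniz rule are inherited from $A$ by restriction, since they hold on the spanning sections $\alpha_i|_S$ and extend by the Leibniz-type definition. This completes the construction of the Lie algebroid structure on $A_S$. Note that the clean-transversality condition at $x$ is not needed for this lemma.
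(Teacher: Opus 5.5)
Your proposal is correct and takes essentially the paper's route. The paper realizes $A_S$ as the fiber product $TS\times_{TM}A$ via transversality, and your kernel-of-the-surjective-map $q\circ\rho|_S$ argument is just an explicit proof of that step. It then defines the bracket by restricting brackets of extensions in $\Gamma(A)$, using $\rho([\alpha,\beta])=[\rho(\alpha),\rho(\beta)]$ for tangency to $S$, and your explicit independence-of-presentation check supplies the well-definedness that the paper leaves implicit.
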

\begin{proof}
    We will first show that \( A_S \) is a vector bundle over \( S \). To see this, observe that it can be thought of as a fiber product in the category of vector bundles:
   \[ \begin{tikzcd}
A_S \arrow[r] \arrow[d] & A \arrow[d, "\rho"] \\
TS \arrow[r, hook] & TM
    \end{tikzcd} \]
    The transversality condition says that the maps \( TS \to TM \) and \( \rho \colon A \to TM \) are transverse vector bundle maps, which is a well-known sufficient condition for the existence of such fiber products.

    To see why \( A_S \) inherits a Lie algebroid structure, we note that since \( S \) is an embedded submanifold, and section of \( A|_S \) can be extended to a section of \( A \) in a neighborhood of \( S \). Furthermore, if \( \alpha \) and \( \beta \) are sections of \( A \) with the property that \( \alpha|_S \in \Gamma(A_S) \) and \( \beta|_S \in \Gamma(A_S) \) then we have that \( \rho(\alpha) \) and \( \rho(\beta)\) are vector fields on \( M \) tangent to \( S \). Hence \( \rho([\alpha,\beta]) = [\rho(\alpha),\rho(\beta)] \) is tangent to \( S \), and therefore \( [\alpha,\beta]|_S \in \Gamma(A_S) \).
\end{proof}

\begin{lemma}\label{lemma:slice.algebroids.connected.by.trivial.holonomy}
    Let \( S \) and \( S' \) be two slices through the same point \( x\in M \) and write \( A_S \) and \( A_{S'} \) to denote the corresponding slice algebroids. There exists a trivial holonomy transformation \( \Theta \in \Alg_x (A_S, A_{S'}) \).
\end{lemma}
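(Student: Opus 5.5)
We need a trivial holonomy transformation from $A_S$ to $A_{S'}$, i.e.\ the germ at $x$ of $\Phi^1_\alpha|_{A_S}$ for some compactly supported time-dependent section $\alpha(t)\in I_x\Gamma(A)$, with $\phi^1_\alpha$ mapping a neighborhood of $x$ in $S$ into $S'$. The plan is to reduce to a statement about vector fields: first find a time-dependent vector field $X(t)$ tangent to the characteristic foliation, vanishing at $x$, whose time-1 flow carries $S$ onto $S'$ near $x$. Then lift it to $\alpha(t)$ with $\rho(\alpha(t))=X(t)$ and $\alpha(t)_x=0$. Finally, observe that any Lie algebroid automorphism $\Phi$ covering a diffeomorphism $f$ with $f(S)\subset S'$ maps $\rho^{-1}(TS)$ to $\rho^{-1}(TS')$. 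This holds because $\Phi$ commutes with the anchor, by Lemma~\ref{lemma:algebroidflow:morphism}. It also restricts to a Lie algebroid isomorphism of slice algebroids, since the bracket on $A_S$ is induced from $A$ (Lemma~\ref{lemma:slice.algebroid.well.defined}).

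For the geometric step I would choose local coordinates near $x$. Pick sections $e_1,\dots,e_k$ of $A$ near $x$ whose anchors $X_i=\rho(e_i)$ are linearly independent at $x$ and span $\rho(A_x)$, where $k=\dim\rho(A_x)$. Because $T_xS\cap\rho(A_x)=0$ and $T_xS+\rho(A_x)=T_xM$, we have $\dim S=\dim S'=n-k$. The map $(u,y)\mapsto \phi^{u_1}_{X_1}\circ\cdots\circ\phi^{u_k}_{X_k}(y)$, defined for $y\in S$ and $u$ near $0$, is then a local diffeomorphism onto a neighborhood of $x$. Write $S'$ near $x$ as a graph $u=h(y)$ over $S$ in these coordinates; this is possible since $T_xS'$ is also complementary to $\rho(A_x)$. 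We have $h(x)=0$, but in general $h(y)\neq 0$ for $y\ne x$.

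To get the isotopy, I would use the family $S_\tau=\{u=\tau h(y)\}$ for $\tau\in[0,1]$. Each $S_\tau$ is a slice through $x$ after shrinking. The point moving along $y\mapsto(\tau h(y),y)$ has velocity $\sum_i h_i(y)\,\partial_{u_i}$, and each $\partial_{u_i}$ is a combination of pushed-forward flows of the $X_j$'s. Since those pushed-forward fields are anchors of adjoint-flowed sections $(\Phi_{e_j})_* e_i$, the velocity has the form $\rho(\alpha(\tau))$ for a smooth time-dependent section $\alpha(\tau)=\sum_i \tilde h_i\,\tilde e_i(\tau)$. Here the $\tilde h_i$ are extensions of $h_i$ off $S_\tau$, taken constant along the $u$-fibers; they vanish at $x$ because $h(x)=0$. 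So $\alpha(\tau)_x=0$. Multiplying by a bump function supported near $x$ makes $\alpha$ compactly supported without changing the germ.

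The main obstacle is the middle step. We must check that the constructed vector field is genuinely of the form $\rho(\alpha)$ with $\alpha$ smooth in $\tau$ and vanishing at $x$, not merely that it is tangent to the foliation. We must also check that its time-1 flow maps $S$ onto $S'$ near $x$, not just into the leaves. I expect this to follow by choosing $\alpha(\tau)$ so that $\rho(\alpha(\tau))$ is exactly the velocity field of the graphs, which is possible because the $\partial_{u_i}$ lie in the image of $\rho$ on the whole chart. With that, $\phi^1_\alpha$ maps $S=S_0$ to $S_1=S'$ and fixes $x$, and the germ of $\Phi^1_\alpha|_{A_S}$ is the desired $\Theta\in\Alg_x(A_S,A_{S'})$.
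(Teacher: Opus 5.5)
Your proposal is correct, but it takes a different route from the paper. The paper's proof cites Garmendia--Villatoro (going back to Androulidakis--Zambon) for the singular-foliation fact that, for $\mathcal{F}=\rho(\Gamma(A))$, there is $X\in I_x\mathcal{F}$ whose flow carries a neighbourhood of $x$ in $S$ onto one in $S'$. It then lifts $X=\sum_i f_i\rho(a_i)$ with $f_i(x)=0$ to $\alpha=\sum_i f_i a_i\in I_x\Gamma(A)$ and finishes, as you do, with anchor-compatibility of the adjoint flow.

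You prove the geometric input directly instead. In the flow-box chart $\Psi(u,y)$, each coordinate field $\partial_{u_i}$ is a pushforward of $X_i=\rho(e_i)$ along flows of the $X_j$. It therefore equals $\rho(\tilde e_i)$, where $\tilde e_i$ is the smooth section obtained by pushing $e_i$ along the adjoint flows of the $e_j$. Hence the autonomous field $\sum_i h_i(y)\,\partial_{u_i}$, whose flow is $(u,y)\mapsto(u+\tau h(y),y)$, equals $\rho$ of $\alpha=\sum_i \tilde h_i\tilde e_i$. This section vanishes at $x$ because $h(x)=0$.

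Your route is self-contained, gives a slightly sharper conclusion ($\alpha$ can be taken time-independent), and produces the section with $\alpha_x=0$ directly. The paper's route is shorter and shows the statement is the algebroid shadow of the known lemma for the singular foliation $\rho(\Gamma(A))$.

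One caution on your opening plan. ``Find $X$ vanishing at $x$, then lift it to $\alpha$ with $\alpha_x=0$'' does not work in general. For the action of $\mathbb{R}$ on $\mathbb{R}$ by $x\partial_x$, the field $x\partial_x$ vanishes at $0$ but its only lift is the constant section $1$. This is exactly why the paper needs $X\in I_x\mathcal{F}$ rather than merely $X(x)=0$. Your actual construction never uses such a lift, so the argument stands.

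Finally, the claim that each intermediate $S_\tau$ is a slice is true but unnecessary; only $S_0=S$ and $S_1=S'$ matter. The bump-function cutoff is harmless because integral curves starting near the fixed point $x$ stay near $x$ for $\tau\in[0,1]$.
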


\begin{proof}
    From \cite{SingJoelAlf} we know that for a singular foliation \( \mathcal{F} \) on \( M \) and a pair of slices \( S \) and \( S' \) through the same point \( x \in M \), there exists an element \( X \in I_x \mathcal{F} \) such that the flow of \( X \) maps an open neighborhood of \( x \in S \) to an open neighborhood of \( x \in S' \).

    Letting \( \mathcal{F} = \rho(\Gamma(A)) \), we let \( \alpha \in I_x \Gamma(A) \) be a time-dependent section of \( A \) with the property that \( \rho(\alpha) = X \). The adjoint flow of a section is a Lie algebra isomorphism. Since it must preserve the anchor map, the adjoint flow of \( \alpha \) will have the property that it maps an open neighborhood of \( 0_x \in A_S \) to an open neighborhood of \( 0_x \in A_{S'} \).
\end{proof}

\subsection{Flow product lemmas}

\begin{lemma}\label{lemma:flow.product.compatible.with.flows}
    The flow of a flow-product is the flow-product of the flows, i.e.,
    \[ \Phi^t_{\alpha \bullet \beta} = \Phi^t_\alpha \Phi^t_\beta, \quad \text{ for all } t.\]
\end{lemma}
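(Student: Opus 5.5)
We want to prove $\Phi^t_{\alpha\bullet\beta} = \Phi^t_\alpha \Phi^t_\beta$ using the uniqueness of adjoint flows: we show that the composite $\Psi^t := \Phi^t_\alpha \circ \Phi^t_\beta$ solves the defining initial value problem of Equation~\ref{eqn:adjoint.flow} for the time-dependent section $\alpha\bullet\beta(t) = \alpha(t) + (\Phi^t_\alpha)_*\beta(t)$. Both $\Psi^t$ and $\Phi^t_{\alpha\bullet\beta}$ are one-parameter families of vector bundle automorphisms starting at $\Id_A$. Each covers a complete flow, since $\alpha$, $\beta$ and hence $\alpha\bullet\beta$ are compactly supported. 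Uniqueness of flows of complete time-dependent derivations (the appendix results following Proposition~\ref{prop:derivation.flow.exists.and.unique}) then forces them to agree.

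The computation is carried out at the level of pushforwards of sections, where the adjoint flow equation reads $\frac{d}{dt}(\Phi^t_\alpha)_*\eta = -[\alpha(t),(\Phi^t_\alpha)_*\eta]$; this is the form used in the proof that adjoint flows exist. Fix $\eta\in\Gamma(A)$ and apply the product rule:
\[ \frac{d}{dt}(\Phi^t_\alpha)_*(\Phi^t_\beta)_*\eta = -[\alpha(t),(\Phi^t_\alpha)_*(\Phi^t_\beta)_*\eta] + (\Phi^t_\alpha)_*\bigl(-[\beta(t),(\Phi^t_\beta)_*\eta]\bigr). \]
The first term comes from differentiating the outer flow with the inner argument frozen, and the second from differentiating the inner flow. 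Lemma~\ref{lemma:algebroidflow:morphism} says $\Phi^t_\alpha$ is a Lie algebroid automorphism, so its pushforward preserves brackets. The second term therefore equals $-[(\Phi^t_\alpha)_*\beta(t), \Psi^t_*\eta]$. Adding the two terms gives
\[ \frac{d}{dt}\Psi^t_*\eta = -[\alpha\bullet\beta(t),\Psi^t_*\eta], \qquad \Psi^0 = \Id_A, \]
which is exactly the adjoint flow equation for $\alpha\bullet\beta$.

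On the base, the same computation with $\rho$ applied shows that $\phi^t_\alpha\circ\phi^t_\beta$ is the flow of $\rho(\alpha(t)) + (\phi^t_\alpha)_*\rho(\beta(t))$. By anchor compatibility (again Lemma~\ref{lemma:algebroidflow:morphism}), this vector field is $\rho(\alpha\bullet\beta(t))$. So the symbols match, and uniqueness of the pointwise flow of the time-dependent derivation $D(t)=[\alpha\bullet\beta(t),\cdot\,]$ (Corollary~\ref{cor:derivation.flow.bundle.automorphisms}, in its time-dependent version) yields $\Psi^t = \Phi^t_{\alpha\bullet\beta}$.

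The main obstacle I expect is justifying the product rule for a composite of two time-dependent pushforwards in the first display. The inner section $(\Phi^t_\beta)_*\eta$ itself depends on $t$, so one needs the joint smoothness in $t$ of $(t,\xi)\mapsto(\Phi^t_\alpha)_*\xi(t)$ for a smooth family $\xi(t)$. I would handle this in a local trivialization, as in the proof of Proposition~\ref{prop:derivation.flow.exists.and.unique}. There $\Phi^t_\alpha$ is given by a smooth $\GL(\R^k)$-valued solution $P(t)$ of a linear ODE covering the smooth flow $\phi^t_\alpha$, so the chain rule applies directly. The remaining bookkeeping is the sign convention: $\frac{d}{dt}\Phi = [\Phi,\alpha]$ corresponds to $-D(t)$ with $D(t) = [\alpha(t),\cdot\,]$, and this convention must be applied consistently in both terms.
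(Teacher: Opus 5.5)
Your proposal is correct and follows essentially the same route as the paper. The paper forms $Z(t) = (\Phi^t_{\alpha\bullet\beta})_* - (\Phi^t_\alpha)_*(\Phi^t_\beta)_*$ and, using the adjoint flow equations together with bracket-preservation of $\Phi^t_\alpha$ exactly as you do, shows $Z' = [Z,\alpha\bullet\beta]$ with $Z(0)=0$ before invoking uniqueness; your additional checks (matching base flows, and the product rule justified in a local trivialization) are details the paper leaves implicit.
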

\begin{proof} Let $\eta = \alpha \bullet \beta.$
    Consider the time-dependent family of endomorphisms
    \[ Z(t) =  (\Phi^t_\eta)_* - (\Phi^t_\alpha)_* (\Phi^t_\beta)_*. \]
    Taking the derivative of \( Z \) with respect to \( t \) and apply the defining equations for the flows of \( \alpha \), \( \beta \), and \( \eta \), we get:
\begin{align*}
Z'(t) & = [\Phi^t_\eta, \eta] -  [\Phi^t_\alpha \Phi^t_\beta, \alpha] - \Phi^t_\alpha [\Phi^t_\beta, \beta]  \\
& = [\Phi^t_\eta,\eta] - [\Phi^t_\alpha \Phi^t_\beta, \alpha - \Phi^t_\alpha \beta]  \\
& = [\Phi^t_\eta, \eta] - [\Phi^t_\alpha \Phi^t_\beta, \alpha \bullet \beta ]  \\
& = [\Phi^t_\eta, \eta] - [\Phi^t_\alpha \Phi^t_\beta, \eta ]  \\
& = [Z(t), \eta].
\end{align*}
Since \( Z(0) = 0 \), it follows from uniqueness of solutions to the adjoint flow equation that \( Z(t) = 0 \), for all \( t \). Therefore, \( \Phi^t_\eta = \Phi^t_\alpha \Phi^t_\beta \), for all \( t \).
\end{proof}

\subsection{Lemmas about adjoint flows}
\begin{lemma}\label{lemma:flows.conjugate.by.automorphisms}
Let \( \alpha(t) \in \Gamma(A) \) be a time-dependent section of \( A \) and let \( F \colon A \to A \) be a Lie algebroid automorphism. Then
\[ F \circ \Phi^t_\alpha \circ F^{-1} = \Phi^t_{F_* \alpha}. \]
In other words, the flow of \( F_* \alpha \) is the conjugation of the flow of \( \alpha \) by \( F \).
\end{lemma}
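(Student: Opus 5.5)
We will prove the identity by the same uniqueness argument used for Lemma~\ref{lemma:flow.product.compatible.with.flows} and Lemma~\ref{lemma:algebroidflow:morphism}. We show that the family $\Psi^t := F \circ \Phi^t_\alpha \circ F^{-1}$ solves the adjoint flow equation for the time-dependent section $F_*\alpha$ with the same initial condition. By the uniqueness statement for flows of (time-dependent) derivations, it then coincides with $\Phi^t_{F_*\alpha}$.

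\textbf{Step 1: setup.} Since $F$ is a Lie algebroid automorphism covering a diffeomorphism $f$, the vector field $\rho(F_*\alpha(t))$ equals $f_*\rho(\alpha(t))$. This field is complete whenever $\rho(\alpha(t))$ is, so $\Phi^t_{F_*\alpha}$ exists. Moreover, $\Psi^t$ is a one-parameter family of vector bundle automorphisms covering $f\circ\phi^t_{\alpha}\circ f^{-1}$, which is exactly the flow of $f_*\rho(\alpha(t))$. Clearly $\Psi^0=\Id_A$.

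\textbf{Step 2: the differential equation.} We work at the level of push-forwards on sections, as in the paper's convention $\frac{d}{dt}\Phi^t_\alpha=[\Phi^t_\alpha,\alpha(t)]$, which means $\frac{d}{dt}(\Phi^t_\alpha)_*\beta=-[\alpha(t),(\Phi^t_\alpha)_*\beta]$. For a fixed section $\beta$ we compute
\[
\frac{d}{dt}(\Psi^t)_*\beta=F_*\Big(\frac{d}{dt}(\Phi^t_\alpha)_*(F^{-1}_*\beta)\Big)=-F_*[\alpha(t),(\Phi^t_\alpha)_*F^{-1}_*\beta].
\]
Since $F$ is a Lie algebroid automorphism, $F_*$ preserves brackets of sections, i.e.\ $F_*[\xi,\eta]=[F_*\xi,F_*\eta]$. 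This is the one point that needs a short justification: bracket-compatibility for a base-changing isomorphism is the definition once sections are pushed forward along $f$. It follows that
\[
\frac{d}{dt}(\Psi^t)_*\beta=-[F_*\alpha(t),(\Psi^t)_*\beta].
\]
Hence $(\Psi^t)_*\beta$ is the flow of $\beta$ along the complete time-dependent derivation $D(t)=[F_*\alpha(t),\cdot\,]$.

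\textbf{Step 3: conclusion.} By the proposition on existence and uniqueness of flows of complete time-dependent derivations, together with Corollary~\ref{cor:derivation.flow.bundle.automorphisms}, which says the pointwise flow is the unique family of bundle automorphisms whose push-forwards give these section flows, we get $\Psi^t=\Phi^t_{F_*\alpha}$ for all $t$.

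Alternatively, one can set $Z(t)=\Psi^t-\Phi^t_{F_*\alpha}$ and check that $\partial_t Z=[Z,F_*\alpha]$ with $Z(0)=0$, mirroring the proof of Lemma~\ref{lemma:flow.product.compatible.with.flows}.

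The only real obstacle is bookkeeping. We must be careful with the sign and left/right convention in $\frac{d}{dt}\Phi=[\Phi,\alpha]$, and we must ensure that $F_*$ commutes with $\frac{d}{dt}$. It does, since $F$ is time-independent and push-forward is fiberwise linear composed with a fixed diffeomorphism. Compact support is preserved by $F_*$, so no completeness issues arise.
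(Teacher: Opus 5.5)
Your proposal is correct and follows essentially the same route as the paper. Both arguments push the flow equation for $\alpha$ through $F_*$, use that $F_*$ preserves brackets to see that $F_*\circ(\Phi^t_\alpha)_*\circ(F^{-1})_*$ satisfies the adjoint flow equation for $F_*\alpha$ with identity initial condition, and conclude by uniqueness of adjoint flows. Your extra checks — that the base map $f\circ\phi^t_\alpha\circ f^{-1}$ is the flow of $\rho(F_*\alpha)=f_*\rho(\alpha)$, completeness, and the sign convention — are details the paper leaves implicit.
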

\begin{proof}
    We will prove this by considering the equation at the level of pushforwards of sections. Computing the left hand side gives:
    \begin{align*}
\frac{d}{dt} F_* \circ (\Phi^t_\alpha)_* \circ (F^{-1})_* & = F_* \circ [(\Phi^t_\alpha)_*, \alpha] \circ (F^{-1})_*  \\
& = [F_* \circ (\Phi^t_\alpha)_* \circ (F^{-1})_*, F_* \alpha].
    \end{align*}
    This means that \( F_* \circ (\Phi^t_\alpha)_* \circ (F^{-1})_* \) satisfies the adjoint flow equation for \( F_* \alpha \). Since both of these flows are the identity at time \( t = 0 \), the lemma follows from the uniqueness of solutions to the adjoint flow equation.
\end{proof}

\begin{lemma}\label{lemma:holonomy.transformations.extend.to.automorphisms}
    Suppose \( F \in \Alg_x(A_S, A_{S'}) \) is a holonomy transformation. Then there exists a (locally defined) Lie algebroid automorphism \( \tilde F \colon A \to A \) with the property that \( \tilde F|_{A_S} = F \).
\end{lemma}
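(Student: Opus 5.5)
The plan is to build $\tilde F$ from a local splitting of $A$ near $x$ as a product of the slice algebroid $A_S$ and a tangent algebroid in the leaf directions, and then take $\tilde F$ to be the product of $F$ with the identity. Let $y$ be the image of $x$ under the base map $f$ of $F$. Let $k = \dim \rho(A_x)$. The leaf through $x$ and the leaf through $y$ have the same dimension, because $F$ is defined on slice algebroids. Indeed, the ranks of $A_S$ at $x$ and of $A_{S'}$ at $y$ agree, both equal $\operatorname{rank}A - k$, and the slices have dimension $\dim M - k$. First I will choose sections $\alpha_1,\dots,\alpha_k \in \Gamma(A)$, compactly supported near $x$, whose anchors $\rho(\alpha_i)_x$ form a basis of $\rho(A_x)$. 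I will also choose the analogous sections $\alpha'_1,\dots,\alpha'_k$ near $y$. The map
\[ \psi \colon \R^k \times S \dashrightarrow M, \qquad (t_1,\dots,t_k,p) \mapsto \phi^{1}_{t_1\alpha_1}\circ\cdots\circ\phi^{1}_{t_k\alpha_k}(p) \]
is a local diffeomorphism near $(0,x)$. This follows from the clean transversality $T_xS \oplus \rho(A_x) = T_xM$. The analogous map $\psi'$ near $(0,y)$ is built from $S'$ and the $\alpha'_i$.

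Next I will lift $\psi$ to the algebroid. Given $a \in (A_S)_p$, I set
\[ \Psi(t,a) := \Phi^1_{t_1\alpha_1}\circ\cdots\circ\Phi^1_{t_k\alpha_k}(a), \]
and define $\Psi'$ in the same way from the $\alpha'_i$. The key claim is that $\Psi$ identifies $A$ near $x$ with the product algebroid $T\R^k \times A_S$ near $(0,x)$. Each adjoint flow is a Lie algebroid automorphism by Lemma~\ref{lemma:algebroidflow:morphism}, so the $A_S$-directions are carried correctly and the brackets among them are preserved. Under this identification, the $T\R^k$-directions correspond to the sections generated by the $\alpha_i$ transported by the partial flows. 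Concretely, $\partial_{t_j}$ corresponds to $\Phi^1_{t_1\alpha_1}\cdots\Phi^1_{t_{j-1}\alpha_{j-1}}\alpha_j$, as in the complement formula. Given this splitting, I define
\[ \tilde F := \Psi' \circ (\Id_{T\R^k} \times F) \circ \Psi^{-1}. \]
This is a locally defined Lie algebroid isomorphism from $A$ near $x$ to $A$ near $y$, and at $t=0$ it restricts to $F$ on $A_S$.

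The main obstacle is the splitting claim. The difficulty is that the $t$-direction sections do not commute, and the naive map $T\R^k\times A_S \to A$ is not obviously a bracket-preserving isomorphism. It is therefore cleaner to argue by induction on $k$ and treat one direction at a time. For $k=1$, take a slice $\tilde S$ of codimension one in the leaf directions that contains $S$, and flow it by the single section $\alpha_1$. Then $A|_{\text{near }x}\cong T\R\times A_{\tilde S}$, with $\partial_t \mapsto \alpha_1$. The adjoint flow of $\alpha_1$ gives $A \cong \R$-parameterized copies of $A_{\tilde S}$. Bracket compatibility then reduces to the adjoint flow equation $\tfrac{d}{dt}\Phi^t_{\alpha_1}\beta = [\Phi^t_{\alpha_1}\beta,\alpha_1]$, since this says the constant-in-$t$ sections are $\alpha_1$-invariant. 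Rank counting shows $A = \R\alpha_1 \oplus \Phi(A_{\tilde S})$, which gives the isomorphism of vector bundles. Iterating this step $k$ times, and applying the same construction at $y$, yields $\Psi$ and $\Psi'$ as algebroid isomorphisms, and the result follows.
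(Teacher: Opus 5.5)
Your proof is correct and follows the paper's route: split $A$ near $x$ as $T\R^k\times A_S$ and near $y$ as $T\R^k\times A_{S'}$, then take $\tilde F$ to be $\Id_{T\R^k}\times F$ conjugated by these splittings. The difference is in how the splitting is obtained: the paper cites Fernandes' local splitting theorem, and it leaves the second splitting at $y$ and the equality of leaf dimensions implicit. You instead reprove the splitting by the standard induction that handles one anchor direction at a time via adjoint flows, and you make both splittings and the equality $k_x=k_y$ explicit.
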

\begin{proof}
    We utilize a local splitting theorem around slices. From \cite{Rui+holonomy+Charactclasses+2005},  we know that for any slice \( S \) through a point \( x \in M \),  there exists an open neighborhood \( U \subset M \) of \( x \) and a Lie algebroid isomorphism
\[  A|_U \to A_S \times T\R^k, \]
where \( k  \) is the dimension of the leaf through \( x \) of the characteristic foliation of \( A \).

Therefore, in an open neighborhood of \( x \), given a holonomy transformation \( F \in \Alg_x(A_S, A_{S'}) \) we can define a Lie algebroid automorphism \( \tilde F \colon A \to A \) by taking:
\[ \tilde F(a,v) = (F(a), v). \]
\end{proof}

\begin{lemma}\label{lemma:trivial.holonomy.transformations.are.normal.subgroupoid}
The subgroupoid \( \THT(A) \subset  \FHT(A) \) of trivial holonomy transformations is a normal subgroupoid. The orbits of \( \THT(A) \) are precisely the sets of all slice algebroids through a given point.
\end{lemma}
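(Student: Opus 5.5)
The statement has three parts: \( \THT(A) \) is a subgroupoid, it is normal, and its orbits are exactly the sets of slice algebroids through a fixed point. I will work throughout with germs of time-one adjoint flows \( \Phi^1_\epsilon \) with \( \epsilon \in I_x\Gamma^t_c(A) \), restricted to slices. Such a flow fixes \( x \), since \( \rho(\epsilon)_x = 0 \), and preserves the anchor, so it maps a slice through \( x \) to another slice through \( x \).

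\textbf{Subgroupoid.} The identity is \( \Phi^1_0 \). For products I use Lemma~\ref{lemma:flow.product.compatible.with.flows}: \( \Phi^1_{\epsilon\bullet\epsilon'} = \Phi^1_\epsilon\Phi^1_{\epsilon'} \). For inverses I use \( \epsilon^{-1} = -(\Phi^t_\epsilon)^{-1}_*\epsilon \). Both \( \epsilon \bullet \epsilon' = \epsilon + (\Phi^t_\epsilon)_*\epsilon' \) and \( \epsilon^{-1} \) still vanish at \( x \). The reason is that \( \Phi^t_\epsilon \) covers a flow fixing \( x \), so pushing forward a section vanishing at \( x \) gives a section vanishing at \( x \). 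Restricting to slices is compatible with composition, because \( \Phi^1_{\epsilon'} \) carries \( A_S \) onto \( A_{\phi^1_{\epsilon'}(S)} \).

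\textbf{Normality.} Take \( F \in \Alg_x(A_S,A_{S''}) \) with \( F(x)=y \), and a trivial transformation \( \Theta = \Phi^1_\epsilon|_{A_S} \). The target of \( \Theta \) may be a different slice \( S' \) through \( x \); I conjugate by the appropriate restriction of \( F \) there. By Lemma~\ref{lemma:holonomy.transformations.extend.to.automorphisms}, \( F \) extends to a local Lie algebroid automorphism \( \tilde F \) of \( A \) near \( x \), and \( \tilde F \) restricts to the relevant slices. Lemma~\ref{lemma:flows.conjugate.by.automorphisms} then gives
\[ \tilde F \circ \Phi^1_\epsilon \circ \tilde F^{-1} = \Phi^1_{\tilde F_*\epsilon}. \]
Since \( \tilde F \) maps \( x \) to \( y \), the section \( \tilde F_*\epsilon \) vanishes at \( y \). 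Restricting to slices shows \( F\Theta F^{-1} \in \THT(A) \).

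The main obstacle is that \( \tilde F \) and \( \tilde F_*\epsilon \) are only defined near \( x \) and \( y \), whereas trivial transformations require compactly supported sections. I will handle this in two steps. First, multiply \( \tilde F_*\epsilon \) by a bump function equal to \( 1 \) near \( y \). Second, note that for \( t\in[0,1] \) the flow of \( \epsilon \) fixes \( x \), so by compactness of \( [0,1] \) it keeps a small neighbourhood of \( x \) inside the domain where \( \tilde F \) is defined. Hence the germ of the time-one flow at \( y \) is unchanged by the cutoff. Lemma~\ref{lemma:flows.conjugate.by.automorphisms} also needs to hold for local automorphisms; I expect its uniqueness argument to work on these local domains because the flows stay near the fixed point.

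\textbf{Orbits.} Lemma~\ref{lemma:slice.algebroids.connected.by.trivial.holonomy} shows that any two slices through \( x \) are related by a trivial holonomy transformation. Conversely, a trivial transformation starts and ends at slices through the same point \( x \), because its underlying flow fixes \( x \). Together these show that the orbits of \( \THT(A) \) are exactly the sets of slice algebroids through a given point.
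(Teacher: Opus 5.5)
Your proposal is correct and follows essentially the same route as the paper. You extend $F$ to a local algebroid isomorphism $\tilde F$ via Lemma~\ref{lemma:holonomy.transformations.extend.to.automorphisms}, rewrite the conjugate as $\Phi^1_{\tilde F_*\epsilon}$ with $\tilde F_*\epsilon$ vanishing at $y$ using Lemma~\ref{lemma:flows.conjugate.by.automorphisms}, and obtain the orbit statement from Lemma~\ref{lemma:slice.algebroids.connected.by.trivial.holonomy}. Your additional checks are details the paper leaves implicit, and your treatment of them is sound: closure of $\THT(A)$ under flow products and inverses, the converse inclusion for orbits, and the cutoff argument for compact support and the locality of $\tilde F$.
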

\begin{proof}
The latter claim follows immediately from Lemma~\ref{lemma:slice.algebroids.connected.by.trivial.holonomy}. 

To prove  \( \THT(A) \) is a normal subgroupoid, we need to show that  \[ F^{-1} \circ \Theta  \circ F \in \THT(A),   \quad  \text{ for all } \Theta \in \THT(A)  \text{ and }  F \in \FHT(A)  .\] 

Fix a $ \Theta \in \THT(A)$ and $ F \in \FHT(A)$. Since $ \Theta \in \THT(A)$, there exists a time-dependent section \( \alpha(t) \in I_x \Gamma(A) \) with the property that \( (\Phi^1_\alpha)|_{A_S} = \Theta\). Moreover, Lemma \ref{lemma:holonomy.transformations.extend.to.automorphisms}  gives us a Lie algebroid automorphism \( \tilde F \colon A \to A \) with the property that \( \tilde F|_{A_S} = F \). Hence
\[ F \circ \Theta|_{A_S} \circ F^{-1} = \til F \circ \Theta \circ \til F^{-1}|_{A_{S'}} \]

From Lemma~\ref{lemma:flows.conjugate.by.automorphisms}, we obtain
\[ \til{F} \circ \Theta|_{A_S} \circ \til{F}^{-1} = \til{F} \circ (\Phi^1_\alpha)|_{A_S} \circ \til{F}^{-1} = (\Phi^1_{\til{F}_* \alpha}).\]
This establishes that \(F  \circ \Theta|_{A_S} \circ F^{-1} \) is a trivial holonomy transformation.

\end{proof}

\section{Homotopy Lemmas}
The purpose of this section is to compare the notion of $\Gamma(A)$-homotopy  introduced in section \ref{section:gamma(A)-homotopy} with the classical notion of $A$-homotopy of Crainic-Fernandes. We briefly recall the latter and then show that the two notions coincide.

A variation of $A$-paths is a map \[a_s(t)=a(t,s): I \times I \rightarrow A\] such that $a_s$ is a family of $A$-paths of class $C^2$ on $s$, with the property that the base paths \[\gamma_s(t) = \gamma(t,s): I \times I \rightarrow M\] have fixed end points.  Given a connection   $\nabla$  on $A$,  and a variation of $A$-paths $a(t,s)$ with base path $\gamma(t,s)$, define  \[ \nabla_t(a);= \left(\nabla_{X} \alpha + \frac{\partial\alpha}{ \partial t}\right)_{\gamma(t,s)},  \]
where $\alpha(t,s)$ is a two-parameter family of sections extending $a(t,s)$   and $X(t,s)$ is a time-dependent vector field extending $\partial_t\gamma$. Similarly one defines $\nabla_s(a)$.

The  $A$-torsion of $\nabla$ is \[ T_\nabla(\alpha, \beta) = \nabla_{\rho(\alpha)} \beta - \nabla_{\rho(\beta)} \alpha + [\beta,\alpha].\]

The complement of $a(t,s)$ is the unique $A$-valued function $b(t,s)$ along $\gamma(t,s)$ satisfying \[ \nabla_t b -\nabla_s a = T_\nabla(a,b), \quad \quad b(0,s) = 0.\]

The variation $a(t,s)$ is called an $A$-homotopy if \[ b(1,s) = 0,  \quad \forall  s \in [0,1].\]

Our next lemma will points out that, by working at the level of sections, one can immediately see that complement of an $A$-path always exists and does not depend on the choice of connection.
\begin{lemma}\label{lemma:complement-of-extension}
    Suppose $a(t,s)$ is a variation of $A$-paths and $\alpha(t,s) \in \Gamma(A)$ is two-parameter section extending $a(t,s)$. Suppose that $\beta(t,s) \in \Gamma(A)$ is the complement of $\alpha(t,s)$. Then:
    \[ b(t,s) := \beta(t,s)|_{\gamma(t,s)}\]
    is the complement of $a(t,s)$.
\end{lemma}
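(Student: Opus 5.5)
The plan is to verify directly that $b(t,s) := \beta(t,s)|_{\gamma(t,s)}$ satisfies the defining initial value problem
\[ \nabla_t b - \nabla_s a = T_\nabla(a,b), \qquad b(0,s)=0. \]
Uniqueness of the Crainic--Fernandes complement then forces $b$ to be the complement of $a$. The initial condition is immediate, since $\beta(0,s)=0$.

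For the equation, I first compute $\nabla_t b$ and $\nabla_s a$ using the extensions provided by the data. The section $\beta$ extends $b$, and $\partial_t\gamma = \rho(\alpha)_\gamma$ because $\gamma(\cdot,s)$ is the integral curve of $\rho(\alpha(\cdot,s))$. Hence
\[ \nabla_t b = \left(\nabla_{\rho(\alpha)}\beta + \partial_t\beta\right)_{\gamma(t,s)}. \]
For $\nabla_s a$, the extension of $a$ is $\alpha$, and we need a vector field extending $\partial_s\gamma$. By Lemma~\ref{lemma:flows.of.complements.for.integral.curves}, $\partial_s\gamma = \rho(\beta)_{\gamma}$, so
\[ \nabla_s a = \left(\nabla_{\rho(\beta)}\alpha + \partial_s\alpha\right)_{\gamma(t,s)}. \]

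Subtracting the two expressions gives, along $\gamma$,
\[ \nabla_t b - \nabla_s a = \nabla_{\rho(\alpha)}\beta - \nabla_{\rho(\beta)}\alpha + \left(\partial_t\beta - \partial_s\alpha\right). \]
The complement equation $\partial_t\beta - \partial_s\alpha = [\beta,\alpha]$ turns the right-hand side into $\nabla_{\rho(\alpha)}\beta - \nabla_{\rho(\beta)}\alpha + [\beta,\alpha] = T_\nabla(\alpha,\beta)$ evaluated at $\gamma$. Since the torsion is tensorial, this equals $T_\nabla(a,b)$.

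The main point needing care is that $\nabla_t$ and $\nabla_s$ are well defined, meaning independent of the chosen extensions. This is standard: the Leibniz rule shows that the expression $\nabla_X\alpha + \partial_t\alpha$ along a curve depends only on $\alpha$ along the curve when $X$ is tangent to it. With that in hand, the computation above is legitimate.

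Independence of the connection then follows as a corollary: $b$ was built from $\beta$ alone, so it does not involve $\nabla$. Existence likewise follows, since $\beta$ always exists by the explicit formula in Lemma~\ref{lemma:complement.of.two.parameter.family}, given compactly supported extensions.
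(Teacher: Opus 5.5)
Your proof is correct and matches the paper's argument. You use $\beta$ and $\alpha$ as the extensions of $b$ and $a$, and take $\rho(\alpha)$ and $\rho(\beta)$ as the vector fields extending $\partial_t\gamma$ and $\partial_s\gamma$, the latter via the integral-curve property of complements. The complement equation $\partial_t\beta-\partial_s\alpha=[\beta,\alpha]$ then becomes $\nabla_t b-\nabla_s a=T_\nabla(a,b)$. Your additional checks (the initial condition $b(0,s)=0$, uniqueness of the complement, tensoriality of $T_\nabla$, and independence of the chosen extensions) fill in details the paper leaves implicit.
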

\begin{proof}
    By definition, since $\beta$ is the complement of $\alpha$ we have that:
    \[ \frac{\partial \beta}{\partial t} - \frac{\partial \alpha}{\partial s} = [\beta,\alpha].\]
    Adding $\nabla_{\rho(\alpha)} \beta - \nabla_{\rho(\beta)} \alpha$ to both sides yields:
    \[ \nabla_{\rho(\alpha)} \beta + \partial_t \beta - \nabla_{\rho(\beta)}  \alpha -\partial_s \alpha = \nabla_{\rho(\alpha)} \beta - \nabla_{\rho(\beta)} \alpha +  [\beta,\alpha].\]
    Now, since $a(t,s)$ is a variation,  it follows that $\rho(\alpha)$ extends $\partial\gamma/\partial t$. By lemma \ref{lemma:flows.of.complements.for.integral.curves}, we also know that $\rho(\beta) = \frac{\partial \gamma}{\partial s}$. 
    Therefore, it follows that:
    \[ \nabla_t b - \nabla_s a = T_\nabla(a,b).\]
\end{proof}
An important consequence of this fact is an equivalence between the notion of $A$-homotopy and $\Gamma(A)$-homotopy.
\begin{lemma} \label{lemma:A:homotopypaths:Gamma(A)paths}
    Suppose \( a_0(t) \) and \( a_1(t) \) be two  \( A \)-paths with the same initial point $x_0$.  Let $\alpha_0(t)$ and $\alpha_1(t)$ be arbitrary extensions of extensions of $a_0$ and $a_1$, respectively. Then \[(\alpha_0,x_0) \text{ and } (\alpha_1,x_0) \text{ are } \Gamma(A) \text{ - homotopic }\] if and only if \[(a_0,x_0) \text{ and } (a_1,x_0) \text{ are } A \text{ - homotopic. }\]
    Consequently, the $\Gamma(A)$-homotopy relation coincides with the classical Crainic-Fernandes $A$-homotopy relation.
\end{lemma}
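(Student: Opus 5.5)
The plan is to reduce both directions to Lemma~\ref{lemma:complement-of-extension}, which says that restricting the complement of a two-parameter section to the base variation yields the Crainic--Fernandes complement. The point to keep in mind is that the definition of a $\Gamma(A)$-homotopy only asks that $\beta(t,s)$ vanish at $t=1$ along the moving point $\phi^{t,s}_\alpha(x_0)$. By Lemma~\ref{lemma:flows.of.complements.for.integral.curves}, this is exactly the curve $\gamma(t,s)$ along which $b(t,s)=\beta(t,s)|_{\gamma(t,s)}$ lives. So the $\Gamma(A)$-homotopy condition $\beta(1,s)_{\gamma(1,s)}=0$ and the $A$-homotopy condition $b(1,s)=0$ are literally the same equation, once the two variations correspond.

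For ($\Rightarrow$), start from a $\Gamma(A)$-homotopy $\alpha(t,s)$ with $\alpha(t,0)=\alpha_0$ and $\alpha(t,1)=\alpha_1$, and set $a(t,s):=\alpha(t,s)_{\gamma(t,s)}$, where $\gamma(t,s)=\phi^{t,s}_\alpha(x_0)$. Each $a(\cdot,s)$ is an $A$-path, because $\gamma$ is an integral curve of $\rho(\alpha)$. The endpoints are $a_0$ and $a_1$, since $\alpha_i$ extends $a_i$. The end point $\gamma(1,s)$ is fixed in $s$, because $\partial_s\gamma(1,s)=\rho(\beta(1,s))_{\gamma(1,s)}=0$. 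So $a$ is a variation, and by Lemma~\ref{lemma:complement-of-extension} its complement vanishes at $t=1$; hence $a$ is an $A$-homotopy.

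For ($\Leftarrow$), start from an $A$-homotopy $a(t,s)$ with base $\gamma(t,s)$ and choose a compactly supported smooth two-parameter extension $\alpha(t,s)$ with $\alpha(t,s)_{\gamma(t,s)}=a(t,s)$. I expect this extension step to be the main obstacle. There are two difficulties. First, $\gamma(\cdot,s)$ may fail to be injective (it can have self-intersections), so a naive extension along its image is not well defined. I would handle this by extending along the graph of $\gamma$ in $[0,1]^2\times M$, that is, as a section depending on the parameters $(t,s)$, using a tubular neighbourhood of the graph and cutoffs; because sections are allowed to depend on $(t,s)$, self-intersections are harmless. The second difficulty is making the extension agree with the prescribed $\alpha_0,\alpha_1$ at $s=0,1$. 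Both already extend $a_0,a_1$, so I would interpolate: take any extension $\tilde\alpha$, and set $\alpha=\tilde\alpha+\chi_0(s)(\alpha_0-\tilde\alpha(\cdot,0))+\chi_1(s)(\alpha_1-\tilde\alpha(\cdot,1))$ with bump functions $\chi_0,\chi_1$. The correction terms vanish along $\gamma(t,0)$ and $\gamma(t,1)$ respectively. They do not vanish along $\gamma(t,s)$ for intermediate $s$, however, and in that case I would multiply each correction by a further cutoff in $s$ supported near $s=0,1$. Alternatively, I would first reparametrize the homotopy to be constant in $s$ near $s=0,1$ (compare Example~\ref{example:reparameterization.as.homotopy}), so that $\gamma(t,s)=\gamma(t,0)$ there and the correction vanishes on the relevant curve.

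With the extension in hand, one checks that $\gamma(t,s)$ is indeed the integral curve $\phi^{t,s}_\alpha(x_0)$, which holds because $\rho(\alpha)$ extends $\partial_t\gamma$. Then Lemma~\ref{lemma:complement-of-extension} gives $\beta(1,s)_{\gamma(1,s)}=b(1,s)=0$, so $(\alpha_0,x_0)$ and $(\alpha_1,x_0)$ are homotopic.

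The final sentence of the statement follows from the two directions. Every $\Gamma(A)$-path determines an $A$-path that it extends, and every $A$-path admits an extension. Taking the extensions to be the given $\alpha_i$ themselves shows that the two relations agree on corresponding objects.
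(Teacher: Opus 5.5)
Your proposal is correct and follows the paper's proof: both directions reduce to Lemma~\ref{lemma:complement-of-extension}, which identifies the $\Gamma(A)$-homotopy condition $\beta(1,s)_{\gamma(1,s)}=0$ with $b(1,s)=0$, and you usefully add two things the paper only asserts: the check that $\gamma(1,s)$ is fixed (via $\partial_s\gamma=\rho(\beta)_{\gamma}$), and a discussion of the extension step. One caveat on the extension step: of your two fixes for matching $\alpha_0,\alpha_1$ at $s=0,1$, only reparametrizing the $A$-homotopy to be constant in $s$ near the ends works, because an extra cutoff in $s$ that equals $1$ at $s=0$ still leaves the correction term nonzero along $\gamma(t,s)$ for small $s>0$.
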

\begin{proof}

Assume first that 
 $a_0(t)$ and $a_1(t)$ are two  $A$-paths with $\Gamma(A)$-path extensions $\alpha_0(t)$ and $\alpha_1(t)$, respectively. Let $a(t,s)$ be a homotopy between them with underlying family of paths $\gamma(t,s)$.

Choose a compactly supported, two-parameter family of sections $\alpha(t,s)$ extending $a(t,s)$  satisfying 

\[ \alpha(t,0) = \alpha_0(t) \qquad \alpha(t,1) = \alpha_1(t). \]
This family defines a path in the space of $\Gamma(A)$-paths:
\[ s \mapsto (\alpha(\bullet,s),\gamma(0,s) ).\]
Furthermore, $\alpha(t,s)$ has a complement $\beta(t,s)$,  and by Lemma~\ref{lemma:complement-of-extension}, we know that the $A$-path complement of $a(t,s)$ is given by:
\[ b(t,s) = \beta(t,s)|_{\gamma(t,s)}. \]
Since $a(t,s)$ is a homotopy we know $b(1,s) = 0 $ and hence:
\[ \beta(1,s)|_{\gamma(1,s)} = 0. \]
But this is precisely the condition that $(\alpha(t,s),\gamma(0,s))$ is a homotopy.

Conversely, suppose $(\alpha_0,x_0)$ and $(\alpha_1, x_0)$ are $\Gamma(A)$-homotopic. Let $\alpha(t,s)$ be a homotopy between them and let $\beta(t,s)$ be its complement.  Define:
\[ \gamma(t,s) := \phi_{\rho(\alpha(-,s))}^t(x_0) \]
and  \[a(t,s) := \alpha(t,s)_{\gamma(t,s)}.\] Then $a(t,s)$ is a variation of $A$-paths whose boundary paths are $a(t,0) = a_0(t)$ and  $a(t,1) = a_1(t)$.  

By lemma~\ref{lemma:complement-of-extension},
$$ b(t,s) := \beta(t,s)|_{\gamma(t,s)} $$
is the complement of $a(t,s)$. Furthermore, since $\alpha(t,s)$ is assumed to be a homotopy, it follows that
\[ b(1,s) = \beta(1,s)|_{\gamma(1,s)} = 0. \]
Hence, $a(t,s)$ is an $A$-homotopy. Therefore, the two notions of homotopy coincide.
\end{proof}
There is another way of defining ``multiplication'' for $\Gamma(A)$-paths which is closer to the classical one used in the construction of the Weinstein groupoid. 
\begin{definition}\label{definition:concatenation}
Suppose $\alpha$ and $\beta$ are time-dependent elements of $\Gamma(A)$. Let $\rho \colon [0,1] \to [0,1]$ be a smooth function with $\rho(0) = 0$, $\rho(1) = 1$ and such that all derivatives vanish near the endpoints. 
The \emph{$\rho$-concatenation} of $(\alpha, p)$ and $(\beta, q)$ is the $\Gamma(A)$-path defined by
\footnote{The idea behind this definition is that we first reparameterize the paths to fit into each half of the interval and then concatenate. The constant multiples appear due to the chain rule and the fact that our paths are already in the space of ``derivatives.''.}:
\[
\alpha \ast \beta (t) = \begin{cases}
   2 \rho'(2t) \beta(\rho(2t))  & \text{if } t \in [0,1/2],\\
   2 \rho'(2t-1) \alpha(\rho(2t-1)) & \text{if } t \in [1/2,1].
\end{cases}
\]
The $\rho$-concatenation of two composable $\Gamma(A)$-paths $(\alpha, p)$ and $(\beta, q)$ is then defined as the $\Gamma(A)$-path $(\alpha \ast \beta, p)$.
\end{definition}
A standard argument shows that the $\rho$-concatenation of $\Gamma(A)$-paths is independent of the choice of $\rho$ up to $\Gamma(A)$-homotopy.
Furthermore, the groupoid axioms do not hold strictly for the $\rho$-concatenation but do hold up to ``natural'' homotopies. 
However, one advantage of this definition is that it can be directly applied to $A$-paths by applying the definition pointwise. 
The $A$-paths version of this definition is precisely the one that is traditionally used to define the Weinstein groupoid.
\begin{lemma}\label{lemma:Gamma(A)homotopy-Adjoint-homotopy}

Let $A$ be a Lie algebroid. Then, the $\rho$-concatenation of $\Gamma(A)$-paths is homotopic to the flow product of $\Gamma(A)$-paths for any choice of reparameterization $\rho$. 
    
\end{lemma}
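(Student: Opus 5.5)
We will show that, for composable $\Gamma(A)$-paths $(\alpha,p)$ and $(\beta,q)$ with $\phi^1_\beta(q)=p$, the $\rho$-concatenation $(\alpha\ast\beta,q)$ is $\Gamma(A)$-homotopic at $q$ to the flow product $(\alpha\bullet\beta,q)$. The idea is to interpolate via a two-parameter family of ``partial flow products''. We use the fact that reparameterizations are homotopic (Example~\ref{example:reparameterization.as.homotopy}), together with compatibility of the flow product with homotopy (Proposition~\ref{proposition:flow.product.compatible.with.homotopy}). This reduces the statement to a single computation.

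\textbf{Step 1.} Rewrite the concatenation as a flow product of reparameterized sections. Let $\beta_1(t) := 2\rho'(2t)\beta(\rho(2t))$ on $[0,1/2]$ and $\beta_1(t):=0$ on $[1/2,1]$. Let $\alpha_1(t) := 0$ on $[0,1/2]$ and $\alpha_1(t):=2\rho'(2t-1)\alpha(\rho(2t-1))$ on $[1/2,1]$. Both are smooth because all derivatives of $\rho$ vanish near the endpoints. We claim that
\[ \alpha_1\bullet\beta_1' = \alpha\ast\beta , \]
where $\beta_1'(t)$ is the corresponding reparameterization of $(\Phi^{1/2}_{\beta_1})^{-1}_*$-transported data. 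We will instead use the simpler identity
\[ \alpha\ast\beta = \alpha_2 \bullet \beta_1, \qquad \alpha_2(t) := (\Phi^t_{\beta_1})_*\alpha_1(t). \]
On $[1/2,1]$ the flow $\Phi^t_{\beta_1}$ is frozen at $\Phi^{1/2}_{\beta_1}=\Phi^1_\beta$. We therefore check directly from the definition of $\bullet$ which ordering of factors reproduces $\alpha\ast\beta$. The cleanest version we expect is
\[ \alpha\ast\beta=\beta_1\bullet\big((\Phi^{t}_{\beta_1})^{-1}_*\alpha_1\big). \]
However, in the convention $\Phi_{\alpha\bullet\beta}=\Phi_\alpha\Phi_\beta$ the path $\beta$ acts first, so we will match this against $\alpha\bullet\beta=\alpha+(\Phi_\alpha)_*\beta$. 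Sorting out the precise bookkeeping of which factor is transported is routine. The outcome is that $\alpha\ast\beta$ equals a flow product $\tilde\alpha\bullet\tilde\beta$, where $\tilde\alpha$ and $\tilde\beta$ are reparameterizations of $\alpha$ and $\beta$, possibly conjugated by a fixed time-independent automorphism.

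\textbf{Step 2.} Homotope each factor back to the original parameterization. By Example~\ref{example:reparameterization.as.homotopy}, $\tilde\beta$ is homotopic to $\beta$ at every point. Likewise $\tilde\alpha$ is homotopic to $\alpha$, with the homotopy given by $\partial_t\rho(t,s)\,\alpha(\rho(t,s))$ where $\rho(t,s)$ interpolates. The complement is then $\partial_s\rho\,\alpha(\rho)$, which vanishes at $t=1$ everywhere, so the homotopy is valid at all base points. Proposition~\ref{proposition:flow.product.compatible.with.homotopy} then gives a homotopy $\tilde\alpha\bullet\tilde\beta\simeq\alpha\bullet\beta$ at $q$.

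\textbf{Main obstacle.} The hard part is Step 1. The flow product $\alpha\bullet\beta$ runs $\alpha$ and $\beta$ ``simultaneously'', while concatenation runs them sequentially, and the transport term $(\Phi^t_\alpha)_*$ must be reconciled with this. If an exact identity in Step 1 fails, we will fall back on a direct homotopy. We would take $\rho(t,s)$ interpolating between the identity and the two-half reparameterizations, define $\alpha(t,s)=\alpha_s\bullet\beta_s$ with $\alpha_s,\beta_s$ the $s$-reparameterizations, and compute the complement using Lemma~\ref{lemma:complement.of.flow.product.is.flow.product.of.complements}. That complement is $\beta^{\alpha}+\Phi_{\alpha_s}\beta^{\beta}$, and both terms vanish at $t=1$ everywhere by the reparameterization computation. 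The remaining point is to show that the $s=1$ end of this family equals $\alpha\ast\beta$ on the nose, and this is where the supports of the two reparameterizations must be arranged to be disjoint in time.
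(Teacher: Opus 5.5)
Your strategy is the paper's: write $\alpha\ast\beta$ exactly as a flow product of two time-disjoint reparameterizations of $\alpha$ and $\beta$, then apply Example~\ref{example:reparameterization.as.homotopy} and Proposition~\ref{proposition:flow.product.compatible.with.homotopy}. However, Step 1, which is the only non-formal step, is never established, and the identities you propose there are wrong or unusable.

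The ``simpler identity'' $\alpha\ast\beta=\alpha_2\bullet\beta_1$ with $\alpha_2=(\Phi^t_{\beta_1})_*\alpha_1$ is false. On $[1/2,1]$ it equals $(\Phi^1_\beta)_*\alpha_1(t)$ rather than $\alpha_1(t)$. By Lemma~\ref{lemma:flows.conjugate.by.automorphisms}, its time-$1$ flow is $\Phi^1_\beta\Phi^1_\alpha$ instead of $\Phi^1_\alpha\Phi^1_\beta$.

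The identity $\alpha\ast\beta=\beta_1\bullet((\Phi^t_{\beta_1})^{-1}_*\alpha_1)$ is true, but it puts the factors in the opposite order and conjugates $\alpha$. Pushed through Step 2, it gives a homotopy to $\beta\bullet((\Phi^1_\beta)^{-1}_*\alpha)$. That differs from $\alpha\bullet\beta$ as a time-dependent section (only the time-$1$ flows agree), so the argument does not close. Your hedge ``possibly conjugated by a fixed automorphism'' is exactly what breaks Step 2, since the reparameterization homotopy only connects an unconjugated reparameterization to the original.

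The missing observation is one line, and no transport is involved. Since $\alpha_1\equiv 0$ on $[0,1/2]$, the adjoint flow equation forces $\Phi^t_{\alpha_1}=\Id$ for $t\le 1/2$. Since $\beta_1\equiv 0$ on $[1/2,1]$, it follows that
\[ \alpha_1\bullet\beta_1=\alpha_1+(\Phi^t_{\alpha_1})_*\beta_1=\alpha_1+\beta_1=\alpha\ast\beta \]
on the nose. Moreover, $\alpha_1$ and $\beta_1$ are reparameterizations of $\alpha$ and $\beta$ by $t\mapsto\rho(\max(2t-1,0))$ and $t\mapsto\rho(\min(2t,1))$. These are smooth because the derivatives of $\rho$ vanish near the endpoints. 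So your Step 2, or equivalently your fallback family $\alpha_s\bullet\beta_s$, then finishes the proof.

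With this fix, your route is slightly more economical than the paper's. The paper first uses homotopy invariance of $\rho$-concatenation and its independence of $\rho$ (both taken as standard) to reduce to $\alpha,\beta$ supported in a window on which $\rho=\Id$. Only then does it run the same computation with $\overline\alpha,\overline\beta$. Your $\alpha_1,\beta_1$ have $\rho$ built in, so that reduction is unnecessary.
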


\begin{proof}
Suppose $(\alpha, p)$ and $(\beta, q)$ are composable $\Gamma(A)$-paths. Write $(\alpha, p) \ast (\beta,q)$ to denote the $\rho$-concatenation of these $\Gamma(A)$-paths relative to some suitable reparameterization $\rho$.

We need to show that $(\alpha, p) \ast (\beta,q)$ is $\Gamma(A)$-homotopic to $(\alpha, p) \cdot (\beta,q)$, the flow product of these $\Gamma(A)$-paths.
For both the $\rho$-concatenation and the flow product, performing a homotopy of one of the paths results in a homotopy of the resulting path (see Proposition~\ref{proposition:flow.product.compatible.with.homotopy} for the flow product version).
This means, without loss of generality, we can replace each of $(\alpha,p)$ and $(\beta,q)$ with $\Gamma(A)$-homotopic paths. 
Since different choices of $\rho$ result in $\Gamma(A)$-homotopic paths we can also choose $\rho$ however we like for the purposes of the argument.

Select $\rho(t)$ to be one-to-one and such that there exists an open subinterval $U \subset [0,1]$ on which $\rho|_{U} = \Id$.
Choose $\alpha$ and $\beta$ to be such that $\alpha(t) = 0$ and $\beta(t)=0$ for $t$ outside $U$.

Define:
\[   \overline{\alpha}(t) := \begin{cases}
    0 & \text{if } t \in [0,1/2], \\
    2 \alpha(2t-1) & \text{if } t \in [1/2,1],
\end{cases} 
\qquad 
\overline{\beta}(t) := \begin{cases}
    2 \beta(2t) & \text{if } t \in [0,1/2],\\
    0 & \text{if } t \in [1/2,1].
\end{cases} \]
Note that each of $\overline{\alpha}$ and $\overline{\beta}$ can be obtained from $\alpha$ and $\beta$ by reparameterization of the interval $[0,1]$ and hence are $\Gamma(A)$-homotopic to $\alpha$ and $\beta$, respectively 
(see Example~\ref{example:reparameterization.as.homotopy} for why reparameterizations are homotopies).

Since the flow product is compatible with homotopy (Proposition~\ref{proposition:flow.product.compatible.with.homotopy}) we conclude that $\alpha \bullet \beta$ is $\Gamma(A)$ homotopic to the flow product of $\overline{\alpha} \bullet \overline{\beta}$. 
However, from the definition of the flow product and the fact that $\overline{\alpha}(t) = 0 $ for $t \in [0,1]$ and $\overline{\beta}(t) = 0 $ for $t \in [1/2,1]$ we conclude:
$$ \overline{\alpha} \bullet \overline{\beta}(t) = \overline{\alpha}(t) + \overline{\beta}(t).  $$

On the other hand, the $\rho$-concatenation of $\alpha$ with $\beta$ is given by:
\[ \alpha \ast \beta (t) = \begin{cases}
   2 \rho'(2t) \beta(\rho(2t))  & \text{if } t \in [0,1/2],\\
   2 \rho'(2t-1) \alpha(\rho(2t-1)) & \text{if } t \in [1/2,1].
\end{cases}\]
Since $\rho|_U = \Id_U$ and $\alpha$ and $\beta$ are supported in $U$, this expression simplifies to:
\[ \alpha \ast \beta (t) = \begin{cases}
   2\beta(2t)  & \text{if } t \in [0,1/2],\\
   2\alpha(2t-1) & \text{if } t \in [1/2,1].
\end{cases}
\]
Therefore,
\[ \alpha \ast \beta (t) = \overline\alpha(t) + \overline{\beta}(t) =\overline{\alpha} \bullet \overline{\beta}(t). \]
But the right hand side is $\Gamma(A)$-homotopic to the flow product $\alpha \bullet \beta$ as we have shown above. Therefore, we conclude that the $\rho$-concatenation of $\Gamma(A)$-paths is homotopic to the flow product of $\Gamma(A)$-paths for any choice of reparameterization $\rho$.
\end{proof}

\section{Smoothness Lemmas}

\begin{lemma}\label{Lemma:adjoint-flows-are-equal}
    Suppose \( \alpha(t,s) \in \Gamma(A) \) is a two-parameter vector family of sections and \( \beta(t,s) \) is the complement. If \( \beta(1,s) = 0 \) for all \( s \), then the time-1 adjoint flows of \( \alpha(t,0) \) and \( \alpha(t,1) \) are equal.
\end{lemma}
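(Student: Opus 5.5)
The plan is to apply the flows-of-complements identity, item~\ref{lemma:flows.of.complements} of the proposition on properties of complements, at $t=1$. Write $\Phi(t,s)$ for the adjoint flow of $\alpha(t,s)$ in the $t$-direction and $\Psi(t,s)$ for the adjoint flow of $\beta(t,s)$ in the $s$-direction. That result gives, for all $t,s$,
\[ \Psi(t,s) \circ \Phi(t,0) = \Phi(t,s). \]
Fixing $t=1$ and taking $s=1$, the claim reduces to showing $\Psi(1,1)=\Id_A$. The identity above is exactly what compares $\Phi(1,0)$ (the time-1 flow of $\alpha_0$) with $\Phi(1,1)$ (the time-1 flow of $\alpha_1$).

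The key step is then immediate. For fixed $t=1$, $\Psi(1,s)$ is by definition the solution of the adjoint flow equation in the $s$-variable,
\[ \partial_s \Psi(1,s) = [\Psi(1,s), \beta(1,s)], \qquad \Psi(1,0)=\Id. \]
The hypothesis $\beta(1,s)=0$ for all $s$ makes the right-hand side vanish identically. Hence the constant family $\Id_A$ is a solution, and uniqueness of solutions to the adjoint flow equation (Corollary~\ref{cor:derivation.flow.bundle.automorphisms}, in its time-dependent version) forces $\Psi(1,s)=\Id$ for all $s$. Equivalently, the derivation $D(s)=[\beta(1,s),\cdot\,]$ is zero and has zero symbol $\rho(\beta(1,s))=0$, so its flow is trivial. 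Substituting gives $\Phi(1,1)=\Phi(1,0)$, i.e.\ $\Phi^1_{\alpha(\cdot,1)}=\Phi^1_{\alpha(\cdot,0)}$.

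The only point I expect to need care is the justification of item~\ref{lemma:flows.of.complements} itself. It rests on the intermediate identity $\partial_s\Phi=[\Phi,\beta]$, together with the uniqueness of adjoint flows for the compactly supported, hence complete, two-parameter families. Both are already established in the appendix, so I would simply cite them rather than reprove them. If one prefers to avoid $\Psi$ altogether, an alternative is to use $\partial_s\Phi(t,s)=[\Phi(t,s),\beta(t,s)]$ directly. At $t=1$ this gives $\partial_s\Phi(1,s)=0$, so $\Phi(1,s)$ is constant in $s$, which yields the same conclusion in one line.
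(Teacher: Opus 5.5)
Your proposal is correct and follows the paper's own proof essentially step for step. You apply $\Psi(t,s)\circ\Phi(t,0)=\Phi(t,s)$ at $t=1$, note that $\beta(1,s)=0$ forces $\partial_s\Psi(1,s)=0$ and hence $\Psi(1,s)=\Id$, and conclude $\Phi(1,0)=\Phi(1,1)$; your one-line alternative via $\partial_s\Phi=[\Phi,\beta]$ is also valid, since that is exactly the intermediate identity established in the proof of the flows-of-complements property.
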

\begin{proof}

 Let $\Psi^{(t,s)}$ be the flow of $\beta(t,s)$ in the $s$-direction and let $\Phi^{(t,s)}$ be the flow of $\alpha(t,s)$ in the $t$-direction. We already know from lemma \ref{lemma:flows.of.complements} that 
    \begin{equation}\label{aboveequation}
        \Psi(t,s) \Phi(t,0) = \Phi(t,s). 
    \end{equation} 
    Setting $t = 1,$ we get \[    \Psi(1,s) \Phi(1,0) = \Phi(1,s).  \]

    Since $\Psi$ is the $s$-flow of $\beta$, it satisfies \[ \frac{\partial}{\partial s} \Psi(1,s) = [\Psi(1,s), \beta(1,s)].\] By hypothesis, $\beta(1,s) = 0$ for all $s$, hence \[ \frac{\partial}{\partial s} \Psi(1,s) = 0.\]

    Together with the initial condition $\Psi(1,0) = \mathrm{Id}, $ this implies \[ \Psi(1,s) = \mathrm{Id}, \quad \quad \forall s \in [0,1],\] and in particular
 \[\Phi(1,0) = \Phi(1,1).\]

\end{proof}

\begin{lemma}\label{conjugacy-smooth-local-section-family}
    Let $\G$ be a sufficiently small local integration satisfying Lemma \ref{lemma:conjugacy-class-embedded}. Let $S_x$ be a slice through $x$, and let \[ \sigma: S_x \rightarrow Z(\G)\]
be a smooth local section such that $\sigma(y) \in \G_y$ for all $y \in S_x.$ Then, after shrinking around $x$ if necessary, there exists a smooth local section \[ \tilde{\sigma}: U \rightarrow Z(\G)\]
defined on an open neighborhood $U \subset M$ of $x,$ such that \[ \tilde{\sigma}|_{U \cap S_x} = \sigma|_{U \cap S_x}\]
     and \[ im(\tilde{\sigma}) \subset \bigcup_{y \in S_x} C(\sigma(y)).\]
     Moreover, the germ of the union \[ \bigcup_{y \in S_x} C(\sigma(y))\]

near $\sigma(S_x)$ is the image of this section.

\end{lemma}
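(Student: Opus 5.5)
We plan to extend $\sigma$ off the slice by conjugating it along the leaf directions, using the bisections obtained by integrating a frame of sections transverse to $S_x$. By the local splitting of \cite{Rui+holonomy+Charactclasses+2005} (as in Lemma~\ref{lemma:holonomy.transformations.extend.to.automorphisms}), there is a neighborhood $U$ of $x$ and a diffeomorphism $U \cong S_x \times (-\epsilon,\epsilon)^k$, with $k$ the dimension of the leaf through $x$. Choose sections $\alpha_1,\dots,\alpha_k \in \Gamma_c(A)$ whose anchors are the coordinate vector fields $\partial_{\tau_i}$ of the $\R^k$ factor; these commute. Let $\sigma_i(\tau)$ be their $\calG$-integrations, which are defined for small $\tau$ after shrinking $\calG$. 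Write $\Sigma_\tau := \sigma_1(\tau_1)\cdots\sigma_k(\tau_k)$ for $\tau=(\tau_1,\dots,\tau_k)$. Its base diffeomorphism sends $(y,0)$ to $(y,\tau)$.

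Define
\[ \tilde\sigma(y,\tau) := C_{\Sigma_\tau}(\sigma(y)) = \Sigma_\tau(y)\cdot \sigma(y)\cdot \Sigma_\tau(y)^{-1}. \]
This is smooth in $(y,\tau)$, since the multiplication, the inverse and the integrated bisections are all smooth. It lies in the isotropy $\calG_{(y,\tau)}$ because $\sigma(y)\in\calG_y$. At $\tau=0$ it restricts to $\sigma$. Each value lies in the conjugacy class $\mathfrak{C}(\sigma(y))$ by definition. Conjugation is a local groupoid morphism, and by the local algebra principle (Lemma~\ref{lemma:local-algebra-principal}) it restricts to an isomorphism of isotropy local groups $\calG_y \to \calG_{(y,\tau)}$. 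It therefore carries the center to the center, so $\tilde\sigma$ takes values in $Z(\calG)$. We may have to shrink $U$ and $\calG$ so that all the products used are defined and the needed identities hold.

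For the final claim about germs, take a point $w$ of $\bigcup_{y\in S_x}\mathfrak{C}(\sigma(y))$ near $\sigma(S_x)$. Then $w = C_h(\sigma(y))$ for some $y$ and some $h$ near the units. Lemma~\ref{lemma:conjugacy-class-embedded} says $C_h(\sigma(y))$ depends only on $t(h)$, and that $\mathfrak{C}(\sigma(y))$ meets each isotropy fiber in at most one point. Write $t(h)=(y',\tau)$. Because $h$ is an arrow from $y$, the point $(y',\tau)$ lies on the orbit of $y$.

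The main obstacle is to show that $w=\tilde\sigma(t(h))$. This amounts to checking that the class of $\sigma(y)$ transported to $(y',\tau)$ agrees with $C_{\Sigma_\tau}(\sigma(y'))$. When the orbits are singular, $y'\neq y$ can lie in the same orbit, so $\sigma(y)$ and $\sigma(y')$ might in principle have different conjugacy classes. I would handle this by noting that $y$ and $y'$ are joined by an arrow $h'$ near the units inside the slice, and that the corresponding germ of $\sigma$ can be taken to be conjugation-equivariant. Failing that, I would prove the germ statement only as an inclusion $\operatorname{im}(\tilde\sigma)\subset\bigcup_y\mathfrak{C}(\sigma(y))$ plus local equality of dimensions. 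The chart map from the proof of Lemma~\ref{lemma:conjugacy-class-embedded} shows the union is locally parametrized exactly by $(y,\tau)$, and this gives equality of germs.
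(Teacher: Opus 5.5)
Your construction of $\tilde\sigma$ (conjugating $\sigma(y)$ by a product of integrated bisections) is the paper's. So are your checks of smoothness, $\tilde\sigma(p)\in\calG_p$, $\tilde\sigma|_{S_x}=\sigma$, centrality, and $\operatorname{im}(\tilde\sigma)\subset\bigcup_y\mathfrak{C}(\sigma(y))$. The only difference is that you get commuting coordinate fields from the splitting theorem, while the paper takes arbitrary $a_i$ with $\rho(a_i)$ complementary to $TS_x$ and inverts $\Psi(y,t)=\phi^{t_k}_{X_k}\circ\cdots\circ\phi^{t_1}_{X_1}(y)$.

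The gap is the germ claim. You have located exactly the step where the paper's own proof simply asserts, from the local product chart, that an arrow $h\colon y\to p=\Psi(y',t)$ near the units forces $y=y'$. Neither of your fallbacks repairs it. The section $\sigma$ is given data, so you cannot take it to be conjugation-equivariant. The chart in the proof of Lemma~\ref{lemma:conjugacy-class-embedded} parametrizes a single class by $k$ parameters. In the union, every point $y\in S_x$ on the orbit of $p$ contributes its own point of $\calG_p$. So the union need not be parametrized by $(y,\tau)$, nor have dimension $\dim M$. Even with equal dimensions, inclusion would not give equality of germs unless you already knew the union is a manifold.

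In fact the germ claim is false without an extra hypothesis. Let the abelian group $\R^2$ act on $\R^2$ by $(a,b)\cdot p=R_ap$, where $R_a$ is rotation by angle $a$, and let $\calG$ be a small neighborhood of the units in the action groupoid. At $x=0$ one has $k=0$, a small disc $S_x$ is a slice, and necessarily $\tilde\sigma=\sigma$. All isotropy groups are abelian, so $Z(\calG)$ is the whole isotropy. Lemma~\ref{lemma:conjugacy-class-embedded} holds, since the classes are points or short circular arcs.

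Take $\sigma(y)=((0,f(y)),y)$ with $f$ small and not rotation-invariant. Conjugation by $((a,c),y)\colon y\to R_ay$ gives $((0,f(y)),R_ay)$. Hence the union contains $((0,f(R_{-a}p)),p)$ for all small $a$. Over any $p$ at which $f$ varies along the circle through $p$, the union meets $\calG_p$ in a nondegenerate interval accumulating at $\sigma(p)$. So it is not the image of any section.

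The statement, and your argument with $y=y'$, become correct under an extra assumption. One option is that all orbits through $S_x$ near $x$ have dimension $k$, so the orbit of $y$ meets $S_x$ locally only in $y$. Another is that $\sigma(y')=C_g(\sigma(y))$ for arrows $g\colon y\to y'$ near the units with $y,y'\in S_x$. Proposition~\ref{proposition:local-integration-adjoint-kernel} only uses the existence of the central extension, and your first part (like the paper's) does establish that.
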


\begin{proof}
    Shrink $S_x$ if necessary to choose local sections \[ a_1,...,a_k \in \Gamma(A)\]
such that the vector fields \[ X_i:= \rho(a_i)\]
span a complement to $TS_x$ along $S_x.$ By the inverse function theorem, after shrinking $S_x$ and the domains of the flows, the map 
\[\Psi:S_x \times \mathbb{R}^k \rightarrow M\]
    defined by \[ \Psi(y, t_1,...,t_k) = \phi^{t_k}_{X_k} \circ \cdot \cdot \cdot \circ \phi^{t_1}_{X_1}(y)\]

is a diffeomorphism from a neighborhood of $(x,0)$ onto an open neighborhood \[U \subset M\]

of $x.$

    For each $i$, let $\tau_i^{t_i}$ denote the local bisection of $\G$ integrating the section $t_ia_i$, in the sense of definition \ref{definition:G-integrable-section}. By shrinking the local integration if necessary, and using Lemma \ref{lemma:local-algebra-principal}, we may assume that all products and inverses below are defined. Set \[ \tau^t:= \tau_k^{t_k} \cdot \cdot \cdot \tau_1^{t_1},\]
with the product ordered so that \[s(\tau^t(y))=y, \quad \quad t(\tau^t(y))=\Psi(y,t).\]
Define \[ \tilde{\sigma}:U \rightarrow \G\]
as follows. Given $p \in U,$ write uniquely \[ p = \Psi(y,t)\]
with $y \in S_x$ and $t = (t_1,...,t_k)$ small. Then set \[ \tilde{\sigma}(p):= \tau^t(y) \sigma(y) \tau^t(y)^{-1}.\]
This is well-defined by uniqueness of the coordinates $(y,t),$ and it is smooth because $\Psi,$ the bisections $\tau^t$, the section $\sigma,$ and the local groupoid operations are smooth. 
Since $\sigma(y) \in \G_y$ and $\tau^t(y) : y \rightarrow p$, we have \[ s(\tilde{\sigma}(p)) = p = t(\tilde{\sigma}(p)).\]
    Thus \[ \tilde{\sigma}(p) \in \G_p.\]

We next show that $\tilde{\sigma}(p) \in Z(\G_p).$ Let $h \in \G_p$ be an isotropy element for which the relevant products are defined. Since $\tau^t(y): y \rightarrow p, $ the element \[ (\tau^t(y))^{-1} h \tau^t(y)\]

    lies in $\G_y$. Since $\sigma(y) \in Z(\G_y), $ we have 

\[\sigma(y) (\tau^t(y))^{-1} h \tau^t(y) = (\tau^t(y))^{-1} h \tau^t(y) \sigma(y). \]

Conjugating by $\tau^t(y),$ we obtain \[ \tilde{ \sigma}(p)h = h \tilde{ \sigma}(p).\]
 Therefore, $\tilde{\sigma}(p) \in Z(\G_p).$
     Hence \[ \tilde{\sigma}:U \rightarrow Z(\G)\]

is a smooth local section.

     If $p \in U \cap S_x,$ then $p = \Psi(p, 0)$ and $\tau^0(p) = 1_p.$ Hence \[ \tilde{\sigma}(p) = 1_p \sigma(p) 1_p^{-1} = \sigma(p),\]
so \[ \tilde{\sigma}|_{U \cap S_x} = \sigma|_{U \cap S_x}.\]
By construction, every value of $\tilde{\sigma}$ is obtained by conjugating some $\sigma(y).$ Therefore, \[ im(\tilde{\sigma}) \subset \bigcup_{y \in S_x} C(\sigma(y)).\]
It remains to identify the germ of this union near $\sigma(S_x).$ Let 
 \[ q \in \bigcup_{y \in S_x} C(\sigma(y))\]
be sufficiently close to $\sigma(S_x).$ Then, after shrinking if necessary, we may write \[ q = h \sigma(y) h^{-1}\]
for some $y \in S_x$ and some arrow $h: y \rightarrow p,$ with $p \in U.$ Since $p \in U$, write
\[p  = \Psi(y^\prime,t)\]
using the local product coordinates. Shrinking $U$ once more if necessary, the local product chart implies that $h: y \rightarrow p$ is sufficiently close to the unit section and $y \in S_x,$ then $y = y^\prime.$ Thus \[ p = \Psi(y,t).\]
     Now, both \[ h: y \rightarrow p \quad \text{ and } \tau^t(y): y \rightarrow p\]

are arrows with the same source and target. Hence \[ (\tau^t(y))^{-1} h \in \G_y.\]
Since  $\sigma(y) \in Z(\G_y)$, conjugation by $(\tau^t(y))^{-1}h$ fixes $\sigma(y).$ Therefore \[ h \sigma(y) h^{-1} = \tau^t(y) \sigma(y) \tau^t(y)^{-1} = \tilde{\sigma}(p).\]
     Hence the local germ of \[ \bigcup_{y \in S_x} C(\sigma(y))\]

near $\sigma(S_x)$ is contained in $im(\tilde{\sigma})$. Together with the previous inclusion, this proves that the germ of the union is precisely the image of $\tilde{\sigma.}$

Finally, Lemma \ref{lemma:conjugacy-class-embedded} ensures that, after the above shrinking, the relevant local conjugacy classes are embedded and meet each nearby isotropy fiber in at most one point. Thus the germ of the union is the graph of the smooth local section \[\tilde{\sigma}: U \rightarrow Z(\G).\]

\end{proof}

\begin{proposition}\label{lemma:Lie-grpd-has-holonomy}
     Every classical Lie groupoid has holonomy. 
\end{proposition}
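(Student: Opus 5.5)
The plan is to check the two conditions in the definition of ``having holonomy'' for a global Lie groupoid $\calG$. In the global setting the local algebra principle is not needed: all products, inverses and conjugations are defined everywhere and every groupoid identity holds. Condition (1) is then standard. For a (local) bisection $\sigma$, the conjugation $C_\sigma(g)=\sigma(t(g))\, g\, \sigma(s(g))^{-1}$ is a Lie groupoid automorphism of $\calG$ (restricted to $\calG|_U$ when $\sigma$ is local), covering $f=t\circ\sigma$. The functoriality of $\Lie$ then gives that $\Ad_\sigma=\Lie(C_\sigma)$ is a Lie algebroid isomorphism $A|_U\to A|_{f(U)}$. In particular, for a slice $S_x$ its restriction $A_{S_x}\to A_{f(S_x)}$ defines an element of $\FHT(A)$; this uses that $f(S_x)$ is a slice, because $\Ad_\sigma$ intertwines the anchors.

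For condition (2), let $\sigma_1,\sigma_2$ be two local bisections through $g$ with $s(g)=x$. Set $\epsilon:=\sigma_1^{-1}\cdot\sigma_2$, a local bisection near $x$ with $\epsilon(x)=1_x$. The identity $C_{\sigma_2}=C_{\sigma_1}\circ C_\epsilon$ holds on the nose, so $\Ad_{\sigma_2}=\Ad_{\sigma_1}\circ\Ad_\epsilon$. Since $\THT(A)$ is a normal subgroupoid (Lemma~\ref{lemma:trivial.holonomy.transformations.are.normal.subgroupoid}), it then suffices to show that $\Ad_\epsilon|_{A_{S_x}}$ is a trivial holonomy transformation.

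The main obstacle is this last step, which amounts to connecting $\epsilon$ to the identity through local bisections $\epsilon_t$ that all fix $x$ (that is, $\epsilon_t(x)=1_x$). Once that is done, differentiability of bisections gives a time-dependent section $\alpha(t)$ with $\hat\alpha$-flow producing $\epsilon_t$, namely $\alpha(t)=dR_{\epsilon_t^{-1}}(\epsilon_t')$. Since $\epsilon_t(x)$ is constant, $\alpha(t)\in I_x\Gamma(A)$ after multiplying by a bump function to make it compactly supported, which does not change the germ at $x$. Lemma~\ref{Lemma:Local-Integration-Adjoing-flow-adjoint-representation}, valid globally, then gives $\Ad_\epsilon=\Phi^1_\alpha$ near $x$, which is trivial.

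To build the path $\epsilon_t$, I would argue as in Lemma~\ref{Lemma:open-restriction-has-holonomy} and use a neighborhood of the units. Choose a tubular neighborhood identifying an open neighborhood $W$ of $1_M$ in $\calG$ with a neighborhood of the zero section in $A$, with the source map corresponding to the projection (for instance via an exponential of a connection on $A$). Shrinking $U$ around $x$ puts $\epsilon(U)$ inside $W$, since $\epsilon(x)=1_x$ and $\epsilon$ is continuous. The rescaled family $\epsilon_t:=t\cdot\epsilon$ is then a family of source-sections that are $C^1$-close to the unit section near $x$, hence local bisections after shrinking $U$ (the condition that $t\circ\epsilon_t$ be a local diffeomorphism at $x$ is open and holds at $x$). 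Moreover $\epsilon_t(x)=1_x$, and $\epsilon_0=1$, $\epsilon_1=\epsilon$. Compactness of $[0,1]$ lets one choose a single neighborhood $U$ that works for all $t$.
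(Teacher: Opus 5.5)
Your treatment of condition (1) and the reduction to $\epsilon=\sigma_1^{-1}\cdot\sigma_2$ are fine, and they are also the paper's reduction; the paper then simply applies Lemma~\ref{Lemma:open-restriction-has-holonomy} to an open restriction of $\calG$. The gap is in the part you inline from that lemma: the rescaled maps $\epsilon_\lambda:=\lambda\cdot\epsilon$ need not be local bisections. The hypothesis $\epsilon(x)=1_x$ puts $\epsilon$ close to the units at $x$ only in $C^0$, not in $C^1$. Let $e$ be the section of $A$ corresponding to $\epsilon$ (so $e(x)=0$), and let $D_xe\colon T_xM\to A_x$ be its intrinsic derivative at that zero. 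Then
\[ d(\trg\circ\epsilon_\lambda)_x=\Id+\lambda\,\rho_x\circ D_xe, \]
which can be singular for some $\lambda\in(0,1)$ whenever $\rho_x\neq 0$. So the open condition you invoke does not hold at $x$. For the pair groupoid of $\R$, with $x=0$ and $\epsilon(y)=(-y,y)$, one gets $\trg\circ\epsilon_{1/2}\equiv 0$. Worse, in this example no path of local bisections fixing $x$ joins the units to $\epsilon$ at all, since the derivative at $0$ of the base maps would have to pass from $1$ to $-1$. Moreover, $\Ad_\epsilon$ covers $y\mapsto -y$, so it agrees near $0$ with no $\Phi^1_\alpha$ for $\alpha\in I_0\Gamma^t_c(A)$: those cover diffeomorphisms with positive derivative at $0$. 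So the statement you aim for, $\Ad_\epsilon=\Phi^1_\alpha$ on a neighborhood of $x$ in $M$, is false in general. Triviality holds only after restricting to the slice.

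The missing idea is the step ``we may assume the orbit of $g$ is zero-dimensional'' in the proof of Lemma~\ref{Lemma:open-restriction-has-holonomy}. One way to carry it out is as follows. By Lemma~\ref{lemma:slice.algebroids.connected.by.trivial.holonomy}, choose $\beta\in I_x\Gamma^t_c(A)$ whose time-one flow carries the slice $\trg\circ\epsilon(S_x)$ back onto $S_x$ near $x$. Let $\tau$ be its time-one $\calG$-integration, so $\tau(x)=1_x$ and $\Ad_\tau=\Phi^1_\beta$, and replace $\epsilon$ by $\tau\cdot\epsilon$; this changes $\Ad_\epsilon|_{A_{S_x}}$ only by an element of $\THT(A)$. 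Now $\tau\cdot\epsilon$ restricts to a local bisection of $\calG|_{S_x}=\src^{-1}(S_x)\cap\trg^{-1}(S_x)$. This is a Lie groupoid integrating $A_{S_x}$ because $S_x$ is transverse to the orbits, and the anchor of $A_{S_x}$ vanishes at $x$ because $T_xS_x\cap\rho(A_x)=0$. In that groupoid the linearization above is $\Id$ for every $\lambda$, so your rescaling, compactness and differentiation argument goes through. It yields a time-dependent section of $A_{S_x}$ vanishing at $x$. Take a compactly supported extension of it to an element of $I_x\Gamma^t_c(A)$. Its anchor is tangent to $S_x$, so its adjoint flow preserves $A_{S_x}$ and restricts there to the flow you computed. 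This shows $\Ad_{\tau\cdot\epsilon}|_{A_{S_x}}\in\THT(A)$.
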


\begin{proof}

Let $g \in \calG$ be arbitrary.
We must show that given two bisections $\sigma_1$ and $\sigma_2$ are two local bisections  such that  \[ \sigma_1(x) = \sigma_2(x) = g\]
then we have that the holonomy transformation induced by $\Ad_{\sigma_1}$ is equal to the holonomy transformation induced by $\Ad_{\sigma_2}$. Let $\eta = \sigma_1^{-1} \sigma_2$. Then $\eta$ is a bisection with $\eta(x) = 1_x$. 
It suffices to show that $Ad_{\eta}$ induces a trivial holonomy transformation.

By Lemma~\ref{Lemma:open-restriction-has-holonomy} we know that there must exist an open restriction $\calG^\circ$ of $\calG$ where the holonomy transformation associated to any bisection through an identity element must be trivial. 
In particular, there is a neighborhood $U$ of $x$ where $\eta|_{U}$ is a local bisection contained in $\calG^\circ$ and so $\Ad_{\eta|_{U}}$ must induce a trivial holonomy transformation.

\end{proof}
\printbibliography

@article {Androulidakis+Zambon,
    AUTHOR = {Androulidakis, Iakovos and Zambon, Marco},
     TITLE = {Holonomy transformations for singular foliations},
   JOURNAL = {Adv. Math.},
  FJOURNAL = {Advances in Mathematics},
    VOLUME = {256},
      YEAR = {2014},
     PAGES = {348--397},
      ISSN = {0001-8708,1090-2082},
   MRCLASS = {53C12 (22A25 53C29 58H05)},
  MRNUMBER = {3177296},
MRREVIEWER = {Liviu\ Octavian\ Popescu},
       DOI = {10.1016/j.aim.2014.02.003},
       URL = {https://doi-org.proxyiub.uits.iu.edu/10.1016/j.aim.2014.02.003},
}

@book {Lie-groups-Duistermaat,
    AUTHOR = {Duistermaat, J. J. and Kolk, J. A. C.},
     TITLE = {Lie groups},
    SERIES = {Universitext},
 PUBLISHER = {Springer-Verlag, Berlin},
      YEAR = {2000},
     PAGES = {viii+344},
      ISBN = {3-540-15293-8},
   MRCLASS = {22Exx (22-01 22C05 43-01)},
  MRNUMBER = {1738431},
MRREVIEWER = {Aloysius\ Helminck},
       DOI = {10.1007/978-3-642-56936-4},
       URL = {https://doi.org/10.1007/978-3-642-56936-4},
}

@book {ReebStability,
    AUTHOR = {Reeb, Georges},
     TITLE = {Sur certaines propri\'et\'es topologiques des vari\'et\'es
              feuillet\'ees},
    SERIES = {Publications de l'Institut de Math\'ematiques de
              l'Universit\'e{} de Strasbourg [Publications of the
              Mathematical Institute of the University of Strasbourg]},
    VOLUME = {11},
      NOTE = {Actualit\'es Scientifiques et Industrielles, No. 1183.
              [Current Scientific and Industrial Topics]},
 PUBLISHER = {Hermann \& Cie, Paris},
      YEAR = {1952},
     PAGES = {5--89, 155--156},
   MRCLASS = {56.0X},
  MRNUMBER = {55692},
MRREVIEWER = {H.\ Samelson},
}

@article {ThurstonStability,
    AUTHOR = {Thurston, William P.},
     TITLE = {A generalization of the {R}eeb stability theorem},
   JOURNAL = {Topology},
  FJOURNAL = {Topology. An International Journal of Mathematics},
    VOLUME = {13},
      YEAR = {1974},
     PAGES = {347--352},
      ISSN = {0040-9383},
   MRCLASS = {57D30 (57A35)},
  MRNUMBER = {356087},
MRREVIEWER = {Robert\ Roussarie},
       DOI = {10.1016/0040-9383(74)90025-1},
       URL = {https://doi-org.proxyiub.uits.iu.edu/10.1016/0040-9383(74)90025-1},
}

@article {Fernandes-Daan-Associativity-Integrability,
    AUTHOR = {Fernandes, Rui Loja and Michiels, Daan},
     TITLE = {Associativity and integrability},
   JOURNAL = {Trans. Amer. Math. Soc.},
  FJOURNAL = {Transactions of the American Mathematical Society},
    VOLUME = {373},
      YEAR = {2020},
    NUMBER = {7},
     PAGES = {5057--5110},
      ISSN = {0002-9947,1088-6850},
   MRCLASS = {58H05 (22A22 22E05 53D17)},
  MRNUMBER = {4127871},
MRREVIEWER = {Mar\'ia\ Amelia\ Salazar},
       DOI = {10.1090/tran/8073},
       URL = {https://doi.org/10.1090/tran/8073},
}

@article {Cabrera-Ioan-Maria-local-integration-Lie-brackets,
    AUTHOR = {Cabrera, Alejandro and M\u arcu\c t, Ioan and Salazar, Mar\'ia
              Amelia},
     TITLE = {On local integration of {L}ie brackets},
   JOURNAL = {J. Reine Angew. Math.},
  FJOURNAL = {Journal f\"ur die Reine und Angewandte Mathematik. [Crelle's
              Journal]},
    VOLUME = {760},
      YEAR = {2020},
     PAGES = {267--293},
      ISSN = {0075-4102,1435-5345},
   MRCLASS = {58H05 (17B99 22A22)},
  MRNUMBER = {4069892},
MRREVIEWER = {Matias\ L.\ del Hoyo},
       DOI = {10.1515/crelle-2018-0011},
       URL = {https://doi.org/10.1515/crelle-2018-0011},
}

@article{Rui+holonomy+Charactclasses+2005,
    AUTHOR = {Fernandes, Rui Loja},
     TITLE = {Lie algebroids, holonomy and characteristic classes},
   JOURNAL = {Adv. Math.},
  FJOURNAL = {Advances in Mathematics},
    VOLUME = {170},
      YEAR = {2002},
    NUMBER = {1},
     PAGES = {119--179},
      ISSN = {0001-8708,1090-2082},
   MRCLASS = {58H05 (53C05 53C12 53C29 53D17 57R20 57R30)},
  MRNUMBER = {1929305},
MRREVIEWER = {Jan\ Kubarski},
       DOI = {10.1006/aima.2001.2070},
       URL = {https://doi.org/10.1006/aima.2001.2070},
}

@incollection {Rui+Marius,
    AUTHOR = {Crainic, Marius and Fernandes, Rui Loja},
     TITLE = {Lectures on integrability of {L}ie brackets},
 BOOKTITLE = {Lectures on {P}oisson geometry},
    SERIES = {Geom. Topol. Monogr.},
    VOLUME = {17},
     PAGES = {1--107},
 PUBLISHER = {Geom. Topol. Publ., Coventry},
      YEAR = {2011},
   MRCLASS = {53D17 (22A22 58H05)},
  MRNUMBER = {2795150},
MRREVIEWER = {Chenchang\ Zhu},
       DOI = {10.2140/gt},
       URL = {https://doi.org/10.2140/gt},
}

@article {SingJoelAlf,
    AUTHOR = {Garmendia, Alfonso and Villatoro, Joel},
     TITLE = {Integration of singular foliations via paths},
   JOURNAL = {Int. Math. Res. Not. IMRN},
  FJOURNAL = {International Mathematics Research Notices. IMRN},
      YEAR = {2022},
    NUMBER = {23},
     PAGES = {18401--18445},
      ISSN = {1073-7928,1687-0247},
   MRCLASS = {53C12},
  MRNUMBER = {4519148},
MRREVIEWER = {Sylvain\ Lavau},
       DOI = {10.1093/imrn/rnab177},
       URL = {https://doi.org/10.1093/imrn/rnab177},
}

@article {Androulidakis+Skandalis,
    AUTHOR = {Androulidakis, Iakovos and Skandalis, Georges},
     TITLE = {The holonomy groupoid of a singular foliation},
   JOURNAL = {J. Reine Angew. Math.},
  FJOURNAL = {Journal f\"ur die Reine und Angewandte Mathematik. [Crelle's
              Journal]},
    VOLUME = {626},
      YEAR = {2009},
     PAGES = {1--37},
      ISSN = {0075-4102,1435-5345},
   MRCLASS = {58H05 (46L80 57R30 58J42)},
  MRNUMBER = {2492988},
MRREVIEWER = {Yuri\ A.\ Kordyukov},
       DOI = {10.1515/CRELLE.2009.001},
       URL = {https://doi.org/10.1515/CRELLE.2009.001},
}

@article {LongtitudinalSmoothnesDebord,
    AUTHOR = {Debord, Claire},
     TITLE = {Longitudinal smoothness of the holonomy groupoid},
   JOURNAL = {C. R. Math. Acad. Sci. Paris},
  FJOURNAL = {Comptes Rendus Math\'ematique. Acad\'emie des Sciences. Paris},
    VOLUME = {351},
      YEAR = {2013},
    NUMBER = {15-16},
     PAGES = {613--616},
      ISSN = {1631-073X,1778-3569},
   MRCLASS = {53C12 (53C29)},
  MRNUMBER = {3119886},
MRREVIEWER = {Ryad\ Ghanam},
       DOI = {10.1016/j.crma.2013.07.025},
       URL = {https://doi.org/10.1016/j.crma.2013.07.025},
}

@article {IntegrabilityofLiebrackets,
    AUTHOR = {Crainic, Marius and Fernandes, Rui Loja},
     TITLE = {Integrability of {L}ie brackets},
   JOURNAL = {Ann. of Math. (2)},
  FJOURNAL = {Annals of Mathematics. Second Series},
    VOLUME = {157},
      YEAR = {2003},
    NUMBER = {2},
     PAGES = {575--620},
      ISSN = {0003-486X,1939-8980},
   MRCLASS = {58H05 (17B37 17B99 22A22 53D17)},
  MRNUMBER = {1973056},
MRREVIEWER = {Johannes\ Huebschmann},
       DOI = {10.4007/annals.2003.157.575},
       URL = {https://doi.org/10.4007/annals.2003.157.575},
}
\end{document}